\newcommand{\real}{\mathbb{R}}
\newcommand{\n}{\mathbb{N}}
\newcommand{\rN}{ {\mathbb{R}^N} }
\newcommand{\C}{\mathcal{C}_1}
\providecommand{\norm}[1]{\left \| #1\right \|}
\numberwithin{equation}{section}
\newtheorem{theorem}{Theorem}[section]
\newtheorem{lem}[theorem]{Lemma}
\newtheorem{prop}[theorem]{Proposition}
\newtheorem{corol}[theorem]{Corollary}
\theoremstyle{definition}
\newtheorem{rmk}[theorem]{Remark}
\newtheorem{example}[theorem]{Example}
\newtheorem{defin}[theorem]{Definition}
\newtheorem{claim}[theorem]{Claim}
\begin{document}

\title{\bf\Large Principal spectral curves for Lane-Emden fully nonlinear type systems and applications}

\author[1]{Ederson Moreira dos Santos\footnote{ederson@icmc.usp.br; partially supported by CNPq grant 309006/2019-8.}}
\author[2]{Gabrielle Nornberg\footnote{gnornberg@dim.uchile.cl; supported by FAPESP grants 2018/04000-9 and 2019/031019-9, São Paulo Research Foundation.}}
\author[3]{Delia Schiera\footnote{delia.schiera@uniroma1.it; 
 supported by project Vain-Hopes within the program VALERE: VAnviteLli pEr la RicErca.}}
\author[4]{Hugo Tavares\footnote{hugo.n.tavares@tecnico.ulisboa.pt; partially supported by the Portuguese government through FCT-Funda\c c\~ao para a Ci\^encia e a Tecnologia, I.P., under the projects PTDC/MAT-PUR/28686/2017 and UID/MAT/04459/2020.}}
\affil[1]{\small Instituto de Ci\^encias Matem\'aticas e de Computa\c c\~ao, Universidade de S\~ao Paulo,
	
 Av.\ Trabalhador São-carlense 400, 13566-590 Centro, São Carlos SP, Brazil}
\affil[2]{\small Departamento de Ingeniería Matemática, Universidad de Chile,	
	
	Beauchef 851, Torre Norte, Santiago, Chile}
\affil[3]{\small Dipartimento di Matematica e Fisica, Universit\`a degli Studi della Campania ``L. Vanvitelli'', \, Viale A. Lincoln 5, 81100 Caserta, Italy}
\affil[4]{\small CAMGSD and Mathematics Department, Instituto Superior T\'ecnico, Universidade de Lisboa, Av.\  Rovisco Pais, 1049-001 Lisboa, Portugal}

\date{\today}

\maketitle

{\small\noindent{\bf{Abstract.}} 
In this paper we exploit the phenomenon of two principal half eigenvalues in the context of fully nonlinear Lane-Emden type systems  with possibly unbounded coefficients and weights. We show that this gives rise to the existence of two principal spectral curves on the plane. 
We also construct a possible third spectral curve related to a second eigenvalue and an anti-maximum principle, which are novelties even for Lane-Emden systems involving linear operators.
As applications, we derive a maximum principle in small domains for these systems, as well as existence and uniqueness of positive solutions in the sublinear regime. 
Most of our results are new even in the scalar case, in particular for a class of Isaac's operators with unbounded coefficients, whose $W^{2,\varrho}$ regularity estimates we also prove.
\medskip

{\small\noindent{\bf{Keywords.}} {First eigenvalue; Lane-Emden elliptic systems; Fully nonlinear operators; Solvability.}

\medskip

{\small\noindent{\bf{MSC2020.}} 35D35, 35D40, 35J47, 35P30, 35B50.}

\tableofcontents
\section{Introduction and main results}\label{Introduction}

In this paper we study existence and uniqueness properties of the Dirichlet problem for partial differential fully nonlinear systems of Lane-Emden nature with weights, such as 
\begin{align}\label{Dir lambda mu}
\left\{
\begin{array}{rclcc}
F_1 (x,u,Du, D^2 u)+ \lambda\tau_1 (x) |v|^{q-1}v &=&f_1(x) &\mbox{in} & \;\Omega \\
F_2 (x,v,Dv,D^2 v)+ \mu \tau_2(x) |u|^{p-1}u &=& f_2(x) &\mbox{in} & \;\Omega \\
u\,\;= \,\;v\;&=& 0 &\mbox{on} & \;\partial\Omega ,
\end{array}
\right.
\end{align}
in the viscosity sense. Here $\Omega$ is a $C^{1,1}$ bounded domain in $\rN$ with $N\ge 1$,  $\lambda, \mu \in \real$ and $p,q>0$ are constants, $F_i$ is a uniformly elliptic fully nonlinear operator in nondivergence form,
 $f_i\in L^\varrho (\Omega)$ for some $\varrho > N$, $i=1,2$, and the respective weights satisfy
\begin{align}\label{H weights}
\textrm{$\tau_i \in L^\varrho(\Omega)$\; with \; $\tau_i\gneqq 0$ in $\Omega$, \;$i=1,2$, \quad } |\textrm{supp}\tau_1\cap\textrm{supp}\tau_2|>0.
\end{align}
Here $\tau_i \gneqq 0$ means that $\tau_i \geq 0$ a.e.\ in $\Omega$ and $\tau_i \not\equiv 0$. 
When $p=q=1$, $\tau_1=\tau_2=:\tau$, $F_1=F_2=:F$ and $f_1=f_2=:f$, we recover the scalar case
\begin{align}\label{F=f scalar intro}
F(x,u,Du,D^2u)+\lambda \tau(x)u =f(x)\;\; \textrm{ in }\, \Omega,\quad u=0\quad \textrm{ on }\, \partial \Omega,
\end{align} 
for which we also present new results. 

\smallbreak

Spectral properties of uniformly elliptic PDEs in nonvariational form have long been recognized since the seminal work \cite{BNV}. Its fully nonlinear scalar theory in terms of viscosity solutions was developed in \cite{BQeq}, for convex operators with bounded coefficients, and unveils the phenomenon of two half eigenvalues corresponding to both positive and negative eigenfunctions. The case of nonconvex operators (again with bounded coefficients) was analyzed in \cite{Arms2009} under additional continuity  restrictions on the data and on the operators. 

In general, problems involving systems may be much more involved than their scalar counterpart, specially in the strongly coupled case -- for instance we mention the so called Lane-Emden conjecture, see \cite{LE3d, LE4d}, a long standing open problem for which only partial results are known.  As far as spectral properties are concerned, in \cite{BQsystems} the authors extended their article \cite{BQeq} to gradient-like systems. 
Our systems, instead, have a strongly coupled nature, whose prototype is also called Hamiltonian. 
Spectral properties for related cooperative systems with linear operators in nondivergence form $F_i=L_i$ have been extensively investigated, see \cite{BMS99} for $p=q=1$ and references therein. 
When more general power-like nonlinearities are taken into account, still for lineal operators,
in \cite{Montenegro}  a spectral curve was constructed for \eqref{LE} when $pq=1$; more recently, related comparison principles appear in \cite{LeiteMontenegro}. Both \cite{Montenegro} and \cite{LeiteMontenegro} deal with linear operators with bounded drift and unbounded weights.

Our main goal here is to understand the phenomenon of two principal half eigenvalues induced by the fully nonlinear operators $F_1, F_2$ in light of \cite{BQeq}, under the framework of Lane-Emden systems in the regime $pq=1$, including nonconvex operators with possibly unbounded coefficients and weights. In this sense, we show that the homogeneous version of \eqref{Dir lambda mu}, i.e.\ 
\begin{align}\label{LE} \tag{LE}
\left\{
\begin{array}{rclcc}
F_1 (x,u,Du, D^2 u)+ \lambda\tau_1 (x) |v|^{q-1}v &=&0 &\mbox{in} & \;\Omega \\
F_2 (x,v,Dv,D^2 v)+ \mu \tau_2(x) |u|^{p-1}u &=& 0 &\mbox{in} & \;\Omega \\
u\,\;= \,\;v\;&=& 0 &\mbox{on} & \;\partial\Omega ,
\end{array}
\right.
\end{align}
gives rise to the existence of two principal spectral curves to \eqref{LE} in the plane $(\lambda, \mu)$.
We stress that principal eigenvalues are related to the solvability of \eqref{Dir lambda mu}, and to the validity of maximum principles, which we also study. Moreover, we construct a possible third spectral curve and an anti-maximum principle, which are novelties even for Lane-Emden systems involving linear operators. 
All our results are valid also for a class of Isaac's operators with unbounded coefficients \eqref{Isaacs} (see Example \ref{example} ahead), and therefore are new and improve results in the literature even in the scalar case. In this context, we mention that in \cite{regularidade} it was started a spectral analysis involving a class of proper operators with unbounded drift and weight in the scalar case, but only in what concerns existence of eigenvalues. Here we complement that study, by giving a full characterization of the first scalar eigenvalues in terms of validity of maximum principles, solvability of the Dirichlet problem, and more generally the validity of Alexandrov-Bakelman-Pucci (ABP) inequality for nonproper and possible nonconvex operators, under improved assumptions. Observe that, once ABP is proved, for any solution $(u,v)$ we have $uv>0$ in $\Omega$ whenever $u$ is signed in $\Omega$.

This problem brings about several applications. For instance, one may view the pair $(\lambda,\mu)$ as risk-sensitive averages of the weights $\tau_1$ and $\tau_2$, respectively, over the diffusions $F_1, F_2$, see \cite{controlJMPA2019, BFQsiam2010}. Besides, it characterizes the range of solvability for equations with superlinear gradient growth, as well as existence and uniqueness of positive solutions for \eqref{LE} in the sublinear regime $pq<1$, which we also prove.

For the Laplacian operator, the study of the problem with $pq=1$ involves basic and important questions in the theory of Harmonic Analysis. As a matter of fact, it is known that the standard Fourier series of an $L^r(0,1)$-function $f$ converges to $f$ in $L^r(0,1)$, for any $1 < r < \infty$.  This information was essential to treat the problem 
\[
-u''= \lambda |v|^{q-1}v \ \ \text{and} - v''= \mu |u|^{\frac{1}{q}-1}u \ \ \text{in $(0,1)$} \ \ \text{with} \ \ u(0)=v(0)=u(1)=v(1)=0,
\]
see \cite{DenisEderson2010}, where the asymptotic growth of the eigenvalues for the general case $pq=1$ was controlled through the eigenfunctions for $p=q=1$. However, the same question is much more challenging in higher dimensions, see \cite{CordobaAJM11977, CordobaAM1977}, and indeed it is false in general since the ``ball summation'' for the double Fourier series does not work; see \cite[Section 3.3 and Theorem 3.5.6]{Krantz}. In this case, when $\Omega \subset \mathbb{R}^2$ is a square, the Fourier functions (product of sines) do not form a Schauder basis in $L^r(\Omega)$, for $r \neq 2$. For systems with nondivergence operators we cannot expect such explicit formulas for eigenfunctions, and the problem is by far more delicate.

\subsection{Assumptions on the operators}

Next we list our hypotheses on the operators $F_1$ and $F_2$. We denote by $\mathbb{S}^N$ the space of $N\times N$--symmetric matrices. Let us bear in mind the following general structural hypothesis on a fully nonlinear operator $F:\Omega\times \mathbb{R}\times \mathbb{R}^N\times \mathbb{S}^N\to \mathbb{R}$ given by
\begin{align}\label{SC}\tag{H1}
\mathcal{L}^-(x,r-s,\xi-\eta, X-Y)  \le  F(x,r, \xi, X) - F(x,s, \eta, Y)\le \mathcal{L}^+(x,r-s,\xi-\eta, X-Y),
\end{align}
for all $X, Y  \in \mathbb{S}^N$, $\eta,\xi \in \mathbb{R}^N$, $r,s \in \mathbb{R}$, and $x  \in \Omega$, where $F(\cdot,0,0,0)\equiv 0$, and
 \begin{align}\label{def Lpm rho}
\mathcal{L}^\pm (x,r,\xi,X):= \mathcal{L}_0^\pm (x,\xi,X) \pm \vartheta (x) |r|, \;\; \textrm{ for }\; \mathcal{L}^\pm_0 (x,\xi,X):=\mathcal{M}^\pm (X)\pm \gamma (x)|\xi|,
\end{align}
for $\gamma,\vartheta\in L^\varrho (\Omega)$, $\varrho>N$, with $\gamma\ge 0$ and $\vartheta \gneqq 0$ a.e.\ in $\Omega$. Also, $\mathcal{M}^\pm=\mathcal{M}^\pm_{\alpha,\beta}$ are the Pucci's extremal operators with ellipticity constants $0<\alpha\leq \beta$, see \eqref{def Pucci} ahead.

Note that \eqref{SC} corresponds to a uniform bound for all operators satisfying a prescribed ellipticity. In order to measure how far a particular fully nonlinear operator $F$ is from a linear one, in the spirit of \cite{Arms2009, IY, BQeq}, we may construct from \eqref{SC} a more accurate structure: 
\[
F_*(x,r-s,\xi-\eta,X-Y) \le  F(x,r, \xi, X) - F(x,s, \eta, Y) \le F^*(x,r-s,\xi-\eta,X-Y)  ,
\]
where 
\vspace{-0.3cm}
\begin{align*}
F^*(x,r,\xi,X):=\sup_{r^\prime, \xi^\prime, X^\prime}\, \{ F(x,r+r^\prime, \xi+\xi^\prime,X+X^\prime) - F(x,r^\prime,\xi^\prime,X^\prime)\},
\end{align*}
\vspace{-0.7cm}
\begin{align*}
F_*(x,r,\xi,X):=\inf_{r^\prime, \xi^\prime, X^\prime} \,\{ F(x,r+r^\prime, \xi+\xi^\prime,X+X^\prime) - F(x,r^\prime,\xi^\prime,X^\prime)\},
\end{align*}
for all $x\in \Omega$, $r \in \mathbb{R}$, $\xi \in \mathbb{R}^N$, $X \in \mathbb{S}^N$.  Assume that $F$ satisfies  \eqref{SC}. Then both $F^*,F_*$ satisfy \eqref{SC} as well; $F^*$ is convex and $F_*$ is concave in $(r,\xi,X)$; $F=F^*$ if and only if $F$ is convex, $F=F_*$ if and only if $F$ is concave. Also, the following identity holds
\begin{center}
	$F^*(x,r,\xi,X)=-F_*(x,-r,-\xi,-X)$;
\end{center} see \cite[Proposition 4.2]{Arms2009}.
Finally, we have the ordering $ \mathcal{L}^- \le F_*\le F\le F^*\leq \mathcal{L}^+$.

 \begin{defin}\label{notation}
For a function $w$ and an operator $F$,  we consider the following notations:
\begin{enumerate}[(a)]
	\item \label{notation F, Liota}
		$F[w]:=F(x,w,Dw,D^2w)$;
	
	\item \label{notation lambda1 scalar} $\lambda_1^+(F(\vartheta))$ is the principal weighted eigenvalue associated to a positive eigenfunction of the scalar Dirichlet problem
	$ F[u] + \lambda \vartheta (x) u=0$ in $ \Omega$, $u=0$ on $\partial \Omega$, see Section \ref{Preliminaries_1};
	
	\item \label{notation viscosity} viscosity solutions are meant in the $L^N$-viscosity sense, see Section \ref{Preliminaries_2};

		\item \label{notation W2rho regularity} we say that $F$ enjoys $W^{2,\varrho}$ regularity if any viscosity solution $u$ of $F[u]=f(x)$ in $\Omega$ with $f\in L^\varrho(\Omega)$ belongs to $W^{2,\varrho}_{\text loc}(\Omega)$, and in addition $u\in W^{2,\varrho}(\Omega)$ if $u=\psi$ on $\partial\Omega$, $\psi\in W^{2,\varrho}(\Omega)$.
\end{enumerate}
\end{defin}

Having in mind the existence of eigenvalues, another condition we ask on an operator $F$ is that it satisfies a positive homogeneity of order one, namely
\begin{align}\label{H homogeneity}\tag{H2}
 F(x, t r, t\xi , tX)=t F(x, r, \xi, X) \; \text{ for all } t \ge 0,
 \textrm{ for any $X \in \mathbb{S}^N$, $\xi \in \mathbb{R}^N$, $r \in \mathbb{R}$, and $x \in \Omega$.}
\end{align}
Moreover, we consider the following control of oscillation in the $x$-entry:
\begin{align}\label{H continuity}
\forall \theta_0 >0,\quad \exists r_0>0: \quad \|\beta_F(x,\cdot)\|_{L^\varrho (B_r(x))}\le \theta_0\, r^{{N}/{\varrho}} \;\textrm{ for all } r\le r_0 , \; x\in \overline{\Omega},
\end{align}
where $\beta_F(x,y):=\sup_{X\in \mathbb{S}^N}{ |F(x,0,0,X)-F(y,0,0,X)|}{\|X\|^{-1}}$\, for $x,y\in \overline{\Omega}$. It holds for instance when $F$ satisfies $|F(x,0,0,X)-F(y,0,0,X)|\le \omega (|x-y|)\|X\|$, for all $x,y\in \overline{\Omega}$, $X\in \mathbb{S}^N$.

\smallskip

Finally, we assume that the Dirichlet problems associated to $F_i^*$ and $(F_i)_*$ are uniquely solvable (see Section \ref{section lambda1 scalar}) in the scalar sense together with regularity of solutions in a suitable Sobolev sense. In other words, in terms of Definition \ref{notation} \eqref{notation lambda1 scalar},\eqref{notation W2rho regularity},  we ask that $F_i$ satisfy the following 
\begin{align}\label{H lambdai>0}\tag{H3}
\lambda_1^+(F^*(\vartheta))>0, \qquad F, F^* \,\textrm{ satisfy } \eqref{H continuity},
\end{align}
\vspace{-0.7cm}
\begin{align}\label{H strong}\tag{H4}
\textrm{$F$\, enjoys $W^{2,\varrho}$ regularity.}
\end{align}

Here and onward in the text, the drift $\gamma$ and the zero order  term $\vartheta$  in \eqref{SC} may be unbounded, and this is an advantage of our paper over the usual literature  \cite{Arms2009, BQeq}, even in the scalar case. 
We observe that, in the system, the structure of each $F_i$ with respect to the zero order term in \eqref{SC} could be taken in terms of functions $\varrho_i$, which gives the possibility of prescribing different weights in \eqref{H lambdai>0}, see also Remark \ref{remark monotonicity}. We avoid including so many indexes in order to make the presentation cleaner.

Existence and positiveness in \eqref{H lambdai>0} are verified if $F^*$ is a  proper operator (nonincreasing in $r$) by \cite{regularidade}, and we will see this extends for nonproper operators as well, check Lemma \ref{remark nonproper scalar} ahead. Meanwhile, \eqref{H strong} will hold true if $F$ is a convex (or concave) operator in $X$ and satisfy \eqref{H continuity}, see Lemma \ref{lema convex}
(consequently, under \eqref{H continuity}, $F^*$ fulfills \eqref{H strong}). 
However, \eqref{H strong} also covers some nonconvex operators, for instance the asymptotic recession profiles in \cite{EE} for which a $W^{2,\varrho}$ theory is available. In particular, our results are valid for a class of Isaac's operators which are sufficiently close to a Bellman operator that has good regularity-estimates, see Example \ref{example}. Indeed, in Section \ref{appendix} we prove that they verify \eqref{H strong} under \eqref{HS}, even in the presence of unbounded coefficients.

It is worthwhile to mention that hypothesis \eqref{H strong} is not overly restrective and in fact it is a natural condition when dealing with comparison principles for $L^p$-viscosity solutions, see \cite{regularidade, arma2010}. 
Instead in the universe of $C$-viscosity solutions it is possible to skip it by the price of asking stronger continuity assumptions on the coefficients which is not our intention here, see \cite{Arms2009}.
Fully nonlinear equations with measurable ingredients were introduced in \cite{CCKS} for which a more general $L^p$-viscosity notion of solution is required. It is a modern theory which still develops, and results in such direction with unbounded coefficients are rather involved and delicate, see \cite{KSmpite2007, regularidade, BSvazquez}.

In this paper we treat the optimality of scalar spectral properties of fully nonlinear operators, and we also exploit the differences arising in the case of systems; both on a scenario with possible unbounded drift and weights.

\begin{rmk}
	Note that hypothesis \eqref{H lambdai>0} allows us to treat nonproper operators.
	This is equivalent to ask $\lambda_1^+(F(\vartheta))>0$ when $F$ is a linear operator. For systems, this condition on the operators $F_1$, $F_2$ is somehow required in \cite{LeiteMontenegro, Montenegro} in terms of MP's validity, see Definition \ref{def MP,mP}.
	Instead, in \cite[Theorem 1]{BQsystems} the coupling proposed does not fall upon the nonlinearities but on the operators.
	On the other hand, the regularity assumption \eqref{H strong} for $F_1$, $F_2$ seems to be optimal in the fully nonlinear case, with respect to the previous scalar works \cite{Arms2009, BQeq}, since there is no need to assume neither convexity nor continuity on the data.
\end{rmk}

\subsection{Statement of the main results} 
Let us consider the space $E_\varrho=W^{2,\varrho}(\Omega)\cap C(\overline{\Omega})$. Our main results are in the sequel. 
\begin{theorem}[Existence, simplicity, and asymptotics]\label{Th1 introduction}
Let $\Omega\subset\rN$ be a bounded $C^{1,1}$ domain. Assume $pq=1$, $\tau_1,\tau_2$ satisfy \eqref{H weights}, and $F_1,F_2$ satisfy \eqref{SC}--\eqref{H strong}.
Then there exist two spectral curves
\[
	\Lambda^\pm_1\, (\lambda) \,=\, (\,\lambda,\, \mu_1^\pm (\lambda)\,) \in \real^2,  \;\; \text{ for all } \lambda>0,
\]
in the first quadrant, corresponding to signed eigenfunctions $\varphi_1^\pm, \psi_1^\pm\in E_\varrho$ such that both the pairs $\varphi_1^+,\psi_1^+>0$ and $\varphi_1^-,\psi_1^-<0$  satisfy \eqref{LE} 
in the strong sense.

\smallskip

The eigenfunctions $(\varphi_1^+,\psi_1^+)$ and $(\varphi_1^-,\psi_1^-)$ are unique in the sense that any other eigenfunction $(u^{\pm}, v^\pm)$ corresponding to $\Lambda_1^\pm (\lambda)$ satisfies $u^\pm \equiv t \varphi^\pm$ and $v^\pm \equiv t^p \psi^\pm$ for a suitable $t  \in \mathbb{R}^+$. Furthermore, if $(u,v)$ is a signed solution of \eqref{LE}, then necessarily $(\lambda,\mu)\in \Lambda_1^\pm$.  
\smallskip

Moreover,  $\mu_1^\pm$ is continuous and strictly decreasing with $\lambda$, and the following asymptotic behavior holds
\begin{align}\label{asymptotics}
\textrm{$\mu_1^\pm (\lambda) \to \infty$\; as $\lambda \to 0$, \quad $\mu_1^\pm (\lambda)\to 0$\; as $\lambda \to \infty$.}
\end{align}
\end{theorem}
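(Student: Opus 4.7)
My plan is to reduce the two-parameter coupled eigenvalue problem to a scalar nonlinear eigenvalue problem for a compact, positively $1$-homogeneous, strongly positive operator on the positive cone, and then invoke a nonlinear Krein--Rutman theorem. First, under \eqref{SC}--\eqref{H strong}, for any $f\in L^\varrho(\Omega)$ with $f\gneqq 0$ the scalar Dirichlet problem $-F_i[w]=f$ in $\Omega$, $w=0$ on $\partial\Omega$, admits a unique nonnegative strong solution $\Phi_i[f]\in E_\varrho$, and $\Phi_i[f]>0$ in $\Omega$ by the strong maximum principle and Hopf lemma. By \eqref{H homogeneity}, $\Phi_i$ is $1$-homogeneous, and by \eqref{H strong} together with ABP it is continuous and compact as a map $L^\varrho(\Omega)\to C(\overline{\Omega})$.

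Given $\lambda>0$, any positive pair $(u,v)\in E_\varrho\times E_\varrho$ solving \eqref{LE} satisfies $v=\mu\,\Phi_2[\tau_2 u^p]$ and hence
\[
u \,=\, \lambda\mu^q\, T(u), \qquad T(u)\,:=\,\Phi_1\bigl[\,\tau_1\,(\Phi_2[\tau_2 u^p])^q\,\bigr].
\]
Because $pq=1$, the nonlinear operator $T$ is positively $1$-homogeneous, and by composition it is compact and strongly positive on the cone of nonnegative functions in $E_\varrho$ vanishing on $\partial\Omega$ (Hopf applied successively through $\Phi_2$ and $\Phi_1$). A nonlinear Krein--Rutman theorem then produces a unique positive eigenvalue $\kappa_+>0$ of $T$ with a strictly positive eigenfunction $\varphi_1^+\in E_\varrho$, unique up to positive multiples. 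Defining $\mu_1^+(\lambda):=(\kappa_+\lambda)^{-1/q}$ and $\psi_1^+:=\mu_1^+(\lambda)\,\Phi_2[\tau_2(\varphi_1^+)^p]$ yields the curve $\Lambda_1^+$; since $\mu_1^+(\lambda)=\kappa_+^{-1/q}\lambda^{-1/q}$, the continuity, strict decrease, and asymptotics \eqref{asymptotics} are automatic.

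The negative branch is obtained by applying the same construction to the conjugate operators $\check{F}_i(x,r,\xi,X):=-F_i(x,-r,-\xi,-X)$; these satisfy \eqref{SC}--\eqref{H strong} in view of the identity $F^*(x,r,\xi,X)=-F_*(x,-r,-\xi,-X)$ and the symmetry of the Pucci structure in sign, producing a constant $\kappa_->0$ and the curve $\mu_1^-(\lambda)=\kappa_-^{-1/q}\lambda^{-1/q}$, whose eigenpair is the negative of the positive eigenpair obtained for $\check{F}_1,\check{F}_2$. Simplicity on each curve, the scaling relation $u^\pm=t\varphi_1^\pm\Rightarrow v^\pm=t^p\psi_1^\pm$, and the converse that every signed solution of \eqref{LE} must lie on $\Lambda_1^\pm$ all follow from the uniqueness in Krein--Rutman, transported back via $v=\mu\,\Phi_2[\tau_2 u^p]$. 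The main technical obstacle I anticipate is setting up $\Phi_i$ rigorously with unbounded drift $\gamma$, unbounded zero-order term $\vartheta$, and unbounded weights $\tau_i\in L^\varrho$: this requires ABP/comparison for $L^\varrho$-viscosity solutions beyond the classical bounded-coefficient framework, the $W^{2,\varrho}$ estimates from \eqref{H strong}, and a Hopf lemma sharp enough to persist through composition with the merely H\"older map $u\mapsto u^p$; all pieces come from the scalar theory developed earlier in the paper, but their careful combination is where the delicate work lies.
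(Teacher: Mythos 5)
Your reduction to the composition operator $T(u)=\Phi_1[\tau_1(\Phi_2[\tau_2u^p])^q]$ is a legitimate alternative route (close in spirit to Montenegro's construction for linear operators), and the scaling bookkeeping giving $\mu_1^\pm(\lambda)\sim\lambda^{-p}$ is correct. The genuine gap is the claim that $T$ is \emph{strongly positive} on the cone, which is exactly where the hypothesis \eqref{H weights} bites: $\tau_1,\tau_2$ are only assumed nonnegative, not identically zero, with supports overlapping on a set of positive measure, so they may vanish on large parts of $\Omega$. If $u\gneqq 0$ is supported where $\tau_2$ vanishes, then $\tau_2u^p\equiv0$, hence $\Phi_2[\tau_2u^p]\equiv0$ and $T(u)\equiv0$; thus $T$ does not map $K\setminus\{0\}$ into the interior of the cone, and the nonlinear Krein--Rutman theorem you invoke is not applicable as stated — neither for producing an interior eigenfunction nor, more seriously, for the uniqueness of the eigenvalue admitting a positive eigenvector, on which your whole simplicity/rigidity argument rests. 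The paper handles precisely this degeneracy in two steps (Proposition \ref{KR nonnegative weight}): first strictly positive weights, via a degree-theoretic Krein--Rutman adaptation with an $\varepsilon$-perturbed map, and then an approximation $\tau_i+\varepsilon\to\tau_i$, which requires the uniform a priori bound on the eigenvalues of Proposition \ref{boundedness eig BQeq} (a nontrivial blow-up argument exploiting a common ball where both weights are bounded below), together with $C^{1,\alpha}$ compactness, stability, and a separate argument that the limit eigenvalue is strictly positive and the limit eigenfunctions are nontrivial. None of this is addressed in your proposal, and without it your construction does not cover the stated class of weights.

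A second, related shortfall: the theorem's uniqueness statement is stronger than uniqueness within the positive cone. It asserts that \emph{any} eigenfunction pair on $\Lambda_1^+(\lambda)$ — with no sign assumption — is a multiple of $(\varphi_1^+,\psi_1^+)$, and that every signed solution of \eqref{LE}, for whatever $(\lambda,\mu)$, lies on $\Lambda_1^\pm$. Krein--Rutman uniqueness (even granting strong positivity) only compares positive eigenfunctions of $T$; to exclude sign-changing eigenfunctions at the same point of the curve, and to identify the constructed eigenvalue with the sup-type definition of $\lambda_1^\pm$, the paper uses the sliding comparison result of Proposition \ref{th4.1 BQeq} (comparing a positive supersolution pair with a subsolution pair that is positive somewhere, via Hopf and the ABP/SMP machinery) and its Corollary \ref{cor uniqueness}. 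Your appeal to ``uniqueness in Krein--Rutman, transported back via $v=\mu\Phi_2[\tau_2u^p]$'' does not substitute for this step; you would need to prove an analogue of Proposition \ref{th4.1 BQeq} (or derive it from properties of $T$ that again presuppose strong positivity) to obtain the full simplicity and the assertion that signed solutions only occur on $\Lambda_1^\pm$.
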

\begin{rmk}
For the explicit shape of $\Lambda_1^\pm$, see \eqref{eq:thetwocurves} ahead.
\end{rmk}
In what follows we deal with geometric properties of the first spectral curves and their characterization. In what follows, for a parametrized curve $\Sigma=\Sigma (\lambda)=(\lambda,\sigma(\lambda))$, where $\lambda>0$ and $\sigma(\lambda)$ continous, we say that $\Sigma$ is above $\Lambda_1^\pm$  if $\mu_1^\pm(\lambda)<\sigma(\lambda)$ for every $\lambda>0$.

\begin{theorem}[Local isolation]\label{Th isolation Introdu} 
Assume $pq=1$, $\tau_1,\tau_2$ satisfy \eqref{H weights}, $F_1,F_2$ satisfy \eqref{SC}--\eqref{H strong}, and take $\Lambda_1^\pm$ from Theorem \ref{Th1 introduction}.
Then there exists a curve $\Sigma=(\lambda,\sigma(\lambda))$, strictly above $\Lambda_1^\pm$, such that $\sigma$ is strictly decreasing, $\sigma(\lambda)\to \infty$ as $\lambda\to 0$, $\sigma(\lambda)\to 0$ as $\lambda\to \infty$, with the property that:
if  $(\lambda,\mu)$ is an eigenvalue of \eqref{LE}  in the region
\[
\{\,(\lambda,\mu)\in \mathbb{R}^2:\ \lambda>0,\;\, 0<\mu<\sigma(\lambda)\,\},
\]
then necessarily $(\lambda,\mu)\in \Lambda_1^+\cup \Lambda_1^-$. 
 In other words, in the first quadrant, below and slightly above $\Lambda_1^\pm$ there are no other eigenvalues of  \eqref{LE}.
\end{theorem}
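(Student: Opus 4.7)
The plan is to argue by contradiction, combining the uniqueness of signed eigenpairs from Theorem~\ref{Th1 introduction} with a compactness argument and an Alexandrov--Bakelman--Pucci (ABP) estimate in small-measure domains. I would first define
\[
\sigma(\lambda) := \inf\bigl\{\,\mu > 0 : (\lambda, \mu) \text{ is an eigenvalue of } \eqref{LE} \text{ with } (\lambda,\mu) \notin \Lambda_1^+ \cup \Lambda_1^- \,\bigr\},
\]
with the convention $\sigma(\lambda) = +\infty$ if the set is empty. By construction, every eigenvalue $(\lambda, \mu)$ with $0 < \mu < \sigma(\lambda)$ lies in $\Lambda_1^+ \cup \Lambda_1^-$, so the task reduces to proving (i) $\sigma(\lambda) > \max\{\mu_1^+(\lambda), \mu_1^-(\lambda)\}$ for every $\lambda > 0$, and (ii) the function $\sigma$ is strictly decreasing, continuous, and has the prescribed behavior at $0$ and $\infty$.

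For (i), suppose by contradiction that there exist $\lambda_0 > 0$ and a sequence of eigenvalues $(\lambda_n, \mu_n) \notin \Lambda_1^\pm$ with $(\lambda_n, \mu_n) \to (\lambda_0, \mu_1^+(\lambda_0))$ (the $\Lambda_1^-$ case is symmetric). Let $(u_n, v_n)$ be associated eigenfunctions, normalized so that $\|u_n\|_{L^\infty} + \|v_n\|_{L^\infty} = 1$. By the $W^{2,\varrho}$ a priori estimate guaranteed by \eqref{H strong} and compactness in $C^{1}(\overline\Omega)$, up to a subsequence $(u_n, v_n) \to (u_0, v_0)$ uniformly, with $(u_0, v_0)$ solving \eqref{LE} at $(\lambda_0, \mu_1^+(\lambda_0))$. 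A short bootstrap using the coupling of the two equations, together with scalar ABP for $F_1, F_2$, excludes the trivial limit, so $(u_0, v_0) \not\equiv 0$. The uniqueness part of Theorem~\ref{Th1 introduction} then forces $(u_0, v_0)$ to be a positive multiple of $(\varphi_1^\pm, \psi_1^\pm)$; after choosing signs we may take $u_0, v_0 > 0$ in $\Omega$. On the other hand, because $(\lambda_n, \mu_n) \notin \Lambda_1^\pm$, Theorem~\ref{Th1 introduction} forbids signed eigenpairs there, so each $(u_n, v_n)$ must be sign-changing.

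The crucial geometric consequence is that the nodal sets $A_n := \{u_n < 0\}$ and $B_n := \{v_n < 0\}$ concentrate near $\partial\Omega$ and satisfy $|A_n| + |B_n| \to 0$. On $A_n$, applying \eqref{SC} to $u_n = -u_n^-$ and using the antisymmetry $\mathcal{L}^-(-w, -Dw, -D^2 w) = -\mathcal{L}^+(w, Dw, D^2 w)$ for $w = u_n^- \geq 0$, I obtain the pointwise inequality
\[
\mathcal{L}^+(x, u_n^-, Du_n^-, D^2 u_n^-) \,\geq\, \lambda_n \tau_1(x)\,\bigl((v_n^+)^q - (v_n^-)^q\bigr) \quad \text{in } A_n, \qquad u_n^- = 0 \text{ on } \partial A_n,
\]
together with the symmetric bound for $v_n^-$ on $B_n$. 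The small-measure ABP estimate for $\mathcal{L}^+$---valid under the integrability assumptions $\gamma, \vartheta, \tau_i \in L^\varrho$---yields $\|u_n^-\|_{L^\infty(A_n)} \leq C\, |A_n|^{\kappa}\, \lambda_n \|\tau_1 (v_n^-)^q\|_{L^\varrho(A_n)}$ and a symmetric bound for $v_n^-$, for some $\kappa > 0$ depending on $N, \varrho$. Coupling these two estimates and exploiting the Lane-Emden scaling invariance (available because $pq = 1$ and $F_1, F_2$ are positively homogeneous of degree one by \eqref{H homogeneity}), a short bootstrap forces $u_n^- \equiv v_n^- \equiv 0$ for $n$ large, contradicting the sign-changing property and establishing (i).

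For (ii), continuity of $\sigma$ and its decreasing character follow from variations of the same compactness argument applied to minimizing sequences in the definition of $\sigma(\lambda)$; strict monotonicity uses the strict ordering $\mathcal{L}^- < \mathcal{L}^+$ combined with \eqref{H homogeneity}. The asymptotic lower bound $\sigma(\lambda) \to \infty$ as $\lambda \to 0^+$ follows from $\sigma \geq \mu_1^+$ and \eqref{asymptotics}, while the upper bound $\sigma(\lambda) \to 0$ as $\lambda \to \infty$ is obtained by constructing explicit sign-changing competitors supported on complementary subdomains $\Omega_1, \Omega_2 \subset \Omega$, yielding an eigenvalue curve that lies above $\Lambda_1^\pm$ but decays at infinity. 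The main obstacle lies in the bootstrap at the end of (i): because the weights $\tau_i$ and the coefficients $\gamma, \vartheta$ are only $L^\varrho$ and the two ABP bounds are coupled through the nonlinear exponents $p, q$ with $pq = 1$, one must track the $L^\varrho$-integrability of $(v_n^-)^q, (u_n^-)^p$ together with the vanishing of $|A_n|, |B_n|$ in a quantitative way. This is precisely where the full strength of the regularity hypothesis \eqref{H strong}, the homogeneity \eqref{H homogeneity}, and the positivity of $\lambda_1^+(F^*(\vartheta))$ from \eqref{H lambdai>0} enter decisively.
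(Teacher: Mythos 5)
Your isolation-from-above argument is, at its core, a workable alternative to the paper's: the paper reduces to the scalar parameter via the scaling \eqref{eq:scaling}, takes eigenvalues $\lambda_n=M_1+\varepsilon_n\downarrow M_1$ (with $M_1$ as in \eqref{def m1,M1}), uses $C^{1,\alpha}$ compactness, stability and simplicity (Proposition \ref{th4.1 BQeq}) to get $C^1(\overline\Omega)$ convergence of the normalized eigenpairs to $t(\varphi_1^-,\psi_1^-)$, and then concludes that $u_n,v_n<0$ for large $n$ because the limit lies in the \emph{interior} of the negative cone (Hopf), whence Proposition \ref{th4.1 BQeq} forces $\lambda_n=\lambda_1^-$, a contradiction. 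You instead note that locally uniform convergence to a positive pair gives $|\{u_n<0\}|+|\{v_n<0\}|\to 0$ and then kill the negative parts by a small-measure maximum principle; this works, but there is no need to rebuild the coupled ABP bootstrap by hand (where the nonproper zero-order term $+\vartheta(x)|u|$ in $\mathcal{L}^+$ must be absorbed): it is exactly Theorem \ref{MP small domain}(i) applied on the open set $\{u_n<0\}\cup\{v_n<0\}$ to the pair $(-u_n,-v_n)$ with the operators $F_i^*$, observing that $\lambda_1^+(F_i^*(\vartheta))$ only increases on subdomains and that, since $pq=1$, the smallness threshold is independent of the $L^\infty$ norms.

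There are, however, two genuine gaps. First, the negation of your claim (i), $\sigma(\lambda_0)\le\max\{\mu_1^+(\lambda_0),\mu_1^-(\lambda_0)\}$, produces a sequence of non-principal eigenvalues whose limit may lie \emph{strictly below} or \emph{between} the curves, not on $\Lambda_1^\pm$ as you assume; this is precisely the ``below'' half of the statement, which your sketch never addresses and which requires the characterization of Theorem \ref{Lambda1pm} (MP/mP in $\overline{\C^\pm}\setminus\Lambda_1^\pm$) together with SMP/Hopf and Proposition \ref{th4.1 BQeq} (this is how the paper treats $0<\lambda<M_1$). Second, part (ii) as planned would fail: with your definition, $\sigma\equiv+\infty$ whenever $\lambda_2=+\infty$ (which the paper points out can happen, e.g.\ for nonsymmetric operators), so $\sigma(\lambda)\to 0$ cannot be proved, and your ``sign-changing competitors supported on complementary subdomains'' would amount to constructing actual higher eigenvalues of the coupled system -- neither possible in general nor needed. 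The correct and essentially free route is the $pq=1$ homogeneity: every eigenvalue in the first quadrant lies on a curve $\mu=\lambda_0^{p+1}/\lambda^p$ as in \eqref{eq:curve_Lambda}, so once the isolation argument yields $\delta>0$ such that no scalar eigenvalue parameter other than $\lambda_1^\pm$ lies in $(0,M_1+\delta]$, one simply takes $\sigma(\lambda)=(M_1+\delta)^{p+1}\lambda^{-p}$, for which strict monotonicity, the asymptotics, and ``strictly above $\Lambda_1^\pm$'' are automatic; your separate monotonicity/continuity arguments (e.g.\ invoking the ordering $\mathcal{L}^-\le\mathcal{L}^+$) are then unnecessary and, as sketched, are not proofs.
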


Next we see how the region below each curve $\Lambda_1^\pm$ gives a complete characterization of the plane $\real^2$ in terms of maximum and minimum principles, and in terms of the solvability of the associated Dirichlet problem. 
This extends \cite{LeiteMontenegro} to viscosity solutions, and plays the role of the condition $\lambda<\lambda_1^\pm$ in the scalar case. 

\begin{defin}[MP and mP]\label{def MP,mP}
	We say that the maximum principle (MP) holds for \eqref{LE} if any viscosity subsolution of \eqref{LE}, that is, any solution pair  $u,v\in C(\overline{\Omega})$ of
	\begin{align}\label{eq MP}
	F_1 [u]+\lambda \tau_1(x)|v|^{q-1} v  \geq 0 , \;\;\;
	F_2 [v]+\mu\tau_2(x) |u|^{p-1}u \geq 0 \;\;\textrm{ in } \Omega ,\quad\textrm{$u,v\leq 0$ on $\partial\Omega$}
	\end{align}
	satisfies $u, v \le 0$ in $\Omega$. 
	Likewise, we say that the minimum principle (mP) holds for \eqref{LE}
	if $u, v \ge 0$ in $\Omega$ for any viscosity supersolution pair $u,v\in C(\overline{\Omega})$ of
	\begin{align}\label{eq mP}
	F_1 [u]+\lambda \tau_1(x)|v|^{q-1} v  \leq 0 , \;\;\;
	F_2 [v]+\mu\tau_2(x) |u|^{p-1}u \leq 0 \;\;\textrm{ in } \Omega ,\quad\textrm{$u,v\geq 0$ on $\partial\Omega$}.
	\end{align} 
\end{defin}

Let $ \C^+$ be the open region in the first quadrant below $\Lambda_1^+$, and similarly for $\C^-$ associated to $\Lambda_1^-$, namely:
\begin{equation}\label{eq:C+-}
\C^\pm:=\{\, (\lambda,\mu)\in \mathbb{R}^2:\ \lambda>0,\;\, 0<\mu<\mu_1^\pm(\lambda) \,\}.
\end{equation}

\begin{theorem}[Characterization of $\Lambda_1^\pm$] \label{Lambda1pm} Assume $pq=1$, $\tau_1,\tau_2$ satisfy \eqref{H weights}, $F_1,F_2$ satisfy \eqref{SC}--\eqref{H strong}, and let $\Lambda_1^\pm$ be as in Theorem \ref{Th1 introduction}. Then:
\begin{enumerate}[(i)]
	\item $(\lambda,\mu)\in \overline{ \C^+} \setminus \Lambda_1^+$ if, and only if, MP holds for \eqref{LE};
	
	\item $(\lambda,\mu)\in \overline{ \C^-}  \setminus \Lambda_1^-$ if, and only if, mP holds for \eqref{LE}.
\end{enumerate}
\end{theorem}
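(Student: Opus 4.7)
The plan is to establish (i) in full; the symmetric part (ii) follows by replacing the positive eigenpairs from Theorem \ref{Th1 introduction} with the negative ones.

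For the implication ``MP $\Rightarrow (\lambda,\mu)\in \overline{\C^+}\setminus \Lambda_1^+$'' I argue by contrapositive. If $(\lambda,\mu)\in \Lambda_1^+$, the positive eigenpair $(\varphi_1^+,\psi_1^+)$ from Theorem \ref{Th1 introduction} solves \eqref{LE} with zero boundary data and hence is a subsolution which is positive in $\Omega$, contradicting MP. If instead $\lambda,\mu>0$ with $\mu>\mu_1^+(\lambda)$, the positive eigenpair at $(\lambda,\mu_1^+(\lambda))\in \Lambda_1^+$ satisfies the first equation of \eqref{LE} at $(\lambda,\mu)$ with equality and the second with nonnegative slack $(\mu-\mu_1^+(\lambda))\tau_2(\varphi_1^+)^p\ge 0$, and is therefore still a positive subsolution with zero boundary data, violating MP. Finally, points $(\lambda,\mu)\notin \overline{\C^+}$ with $\lambda\le 0$ or $\mu\le 0$ are ruled out using the scalar principal eigenfunctions associated with $F_i^*$, whose positivity is guaranteed by \eqref{H lambdai>0}.

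For the converse, let $(\lambda,\mu)\in \overline{\C^+}\setminus \Lambda_1^+$ and $(u,v)$ be a subsolution of \eqref{LE} with $u,v\le 0$ on $\partial\Omega$. The axis cases $\lambda=0$ or $\mu=0$ decouple \eqref{LE} and MP follows from two successive applications of the scalar maximum principle, valid by \eqref{H lambdai>0}. Assume henceforth $(\lambda,\mu)\in \C^+$. By continuity and strict monotonicity of $\mu_1^+$ (Theorem \ref{Th1 introduction}), choose $\lambda'>\lambda$ with $\mu':=\mu_1^+(\lambda')\in(\mu,\mu_1^+(\lambda))$ and let $(\varphi',\psi')$ be the associated positive eigenpair. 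Using \eqref{H homogeneity} and $pq=1$, one checks that $(\varphi',\psi')$ is a \emph{strict} positive supersolution of \eqref{LE} at $(\lambda,\mu)$, with slacks $-(\lambda'-\lambda)\tau_1(\psi')^q$ and $-(\mu'-\mu)\tau_2(\varphi')^p$, strictly negative on $\mathrm{supp}\,\tau_1$ and $\mathrm{supp}\,\tau_2$ respectively.

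Arguing by contradiction, suppose $u\not\le 0$ or $v\not\le 0$ in $\Omega$ and set
\[
t^*:=\inf\{\,t>0:\ t\varphi'\ge u\ \text{and}\ t^p\psi'\ge v\ \text{in }\Omega\,\},
\]
finite and strictly positive thanks to a Hopf-type boundary behavior of $(\varphi',\psi')$ together with the boundary conditions on $(u,v)$. At $t^*$ at least one of $w_1:=t^*\varphi'-u$ or $w_2:=(t^*)^p\psi'-v$ vanishes at some $x_0\in \Omega$; say $w_1$. Combining \eqref{SC}, the monotonicity of $r\mapsto |r|^{q-1}r$, the identity $pq=1$, and the sub-/strict super-solution inequalities, one obtains $\mathcal{L}^-[w_1]\le F_1[t^*\varphi']-F_1[u]\le -t^*(\lambda'-\lambda)\tau_1(\psi')^q\le 0$ in $\Omega$, with $w_1\ge 0$ and $w_1(x_0)=0$. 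The strong maximum principle for $\mathcal{L}^-$ (available with unbounded coefficients via tools developed earlier in the paper) then forces $w_1\equiv 0$, so $u\equiv t^*\varphi'$; plugging this back into the subsolution inequality and using the strict slack on $\mathrm{supp}\,\tau_1$ yields $\lambda\ge \lambda'$, contradicting $\lambda<\lambda'$. The case of $w_2$ is treated symmetrically. The principal obstacle is making this touching argument rigorous in the $L^\varrho$-viscosity framework with possibly unbounded drift and zero-order coefficients: the strong maximum principle, the Hopf lemma needed to keep $t^*$ finite and the touching interior, and the interpretation of the differential inequality for $w_1$ all require careful justification in this setting.
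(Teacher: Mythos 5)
Your overall route is the same as the paper's: failure of MP on and above $\Lambda_1^+$ via the (rescaled) positive eigenpair, and validity of MP inside $\C^+$ by comparing the subsolution pair $(u,v)$ against a strict positive supersolution built from a principal eigenpair at a nearby point of the curve. The only structural difference is that you re-derive the comparison step by hand: the paper simply feeds the strict supersolution and the offending subsolution pair into Proposition~\ref{th4.1 BQeq} (whose proof is exactly your sliding argument, carried out with all the $L^\varrho$-viscosity, SMP and Hopf care), concludes proportionality, and contradicts the strict slack. Since that proposition is available and its hypotheses (\eqref{SC}, \eqref{H homogeneity}, \eqref{H lambdai>0}, one pair in $E_\varrho$, $pq=1$, $\lambda,\mu\ge 0$) are met here, citing it would both shorten your proof and remove the delicate points you flag.

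Two steps, as written, do not go through. First, the claim that at $t^*$ one of $w_1,w_2$ vanishes at an interior point is false in general: the infimum defining $t^*$ can be attained ``at the boundary'', with $w_1,w_2>0$ throughout $\Omega$ while $u$ hugs $t^*\varphi'$ near $\partial\Omega$ (recall $u$ is only continuous, so $u^+/\varphi'$ need not be controlled a priori). The correct dichotomy is the one in the proof of Proposition~\ref{th4.1 BQeq}: derive $(F_1)_*[w_1]\le 0$, $(F_2)_*[w_2]\le 0$, apply Theorem~\ref{ABP-MP} and SMP to get each $w_i\equiv 0$ or $w_i>0$, and exclude the case $w_1,w_2>0$ by re-running the quantitative boundary (Hopf) step to lower $t$ below $t^*$; only then does your ``plug back and use the strict slack'' conclusion apply. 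Second, the case of a strictly negative parameter is a genuine gap: the scalar principal eigenfunctions of $F_i^*$ (whose weight is $\vartheta$, not $\tau_i$) do not produce a pair violating MP/mP for, say, $\lambda>0$, $\mu<0$. The paper's Step~2 needs a genuinely coupled construction: the principal eigenpair $(\tilde\varphi,\tilde\psi)$ of the auxiliary system with operators $(F_1,(F_2)_*)$, a sign flip in the second component (so that $F_2[-\tilde\psi]\le -(F_2)_*[\tilde\psi]$ can be used), and a choice of the scaling parameter $s$ to reach the prescribed sign pattern of $(\lambda,\mu)$. Without an argument of this type the ``only if'' part of the characterization outside the closed first quadrant is not established.
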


\smallskip

Let us now consider the Dirichlet problem 
\eqref{Dir lambda mu} in the viscosity sense, for functions $f_1,f_2\in L^\varrho(\Omega)$, $\varrho>N$, with $u,v\in C(\overline{\Omega})$. 

\begin{theorem}[Solvability of the Dirichlet problem]\label{ThDir solvability intro}
Assume $F_1,F_2$ satisfy \eqref{SC}--\eqref{H strong}, $pq=1$, $\tau_1,\tau_2$ satisfy \eqref{H weights}, and let $f_1,f_2\in L^\varrho(\Omega)$. 
\begin{enumerate}[(i)]
\item If $(\lambda,\mu)\in \C^+ \cap \C^-$, then \eqref{Dir lambda mu} is solvable among viscosity solutions.
	
\item If $f_1,f_2\leq 0$ a.e.\ and $(\lambda,\mu ) \in \C^+$, then \eqref{Dir lambda mu} has a unique nonnegative solution pair in $E_\varrho$.
			
\item If $f_1,f_2 \geq 0$ a.e.\ and $(\lambda,\mu) \in \C^-$, then \eqref{Dir lambda mu} has a unique nonpositive solution pair in $E_\varrho$.
\end{enumerate}
\end{theorem}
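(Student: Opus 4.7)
My plan is to cast \eqref{Dir lambda mu} as a fixed-point equation for a compact operator in $C(\overline\Omega)^2$ and apply Leray--Schauder degree, drawing all a priori estimates from the MP/mP characterization of Theorem \ref{Lambda1pm}. For $(\tilde u,\tilde v)\in C(\overline\Omega)^2$, set $T_{\lambda,\mu}(\tilde u,\tilde v):=(u,v)$, with $u,v\in E_\varrho$ the unique solutions of the decoupled scalar Dirichlet problems
\[
F_1[u]=f_1-\lambda\tau_1|\tilde v|^{q-1}\tilde v,\qquad F_2[v]=f_2-\mu\tau_2|\tilde u|^{p-1}\tilde u,\qquad u=v=0 \text{ on }\partial\Omega.
\]
Unique solvability in $E_\varrho$ is granted by \eqref{H lambdai>0} and \eqref{H strong}, while compactness of $T_{\lambda,\mu}$ in $C(\overline\Omega)^2$ comes from the compact embedding $W^{2,\varrho}\hookrightarrow\hookrightarrow C(\overline\Omega)$ (valid since $\varrho>N$). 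Fixed points of $T_{\lambda,\mu}$ are precisely the viscosity solutions of \eqref{Dir lambda mu} and automatically lie in $E_\varrho$.

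For (i), I would use the homotopy $t\mapsto(T_{t\lambda,t\mu},\,tf_1,\,tf_2)$, $t\in[0,1]$. Because $\mu_1^\pm$ are strictly decreasing, $t\mu<\mu<\mu_1^\pm(\lambda)\le\mu_1^\pm(t\lambda)$ for each $t\in(0,1]$, so $(t\lambda,t\mu)\in\C^+\cap\C^-$ and both MP and mP hold along the homotopy by Theorem \ref{Lambda1pm}. The main obstacle, in my view, is the uniform a priori bound $\|u\|_\infty+\|v\|_\infty\le C(\|f_1\|_{L^\varrho}+\|f_2\|_{L^\varrho})$; I would obtain it as an ABP-type inequality for the system by contradiction and rescaling, normalizing a hypothetical unbounded sequence and extracting, via $W^{2,\varrho}$ compactness, a nontrivial signed limit solving \eqref{LE} at some $(\lambda_\star,\mu_\star)\in\overline{\C^+\cap\C^-}$; the uniqueness clause of Theorem \ref{Th1 introduction} would then force $(\lambda_\star,\mu_\star)\in\Lambda_1^+\cup\Lambda_1^-$, contradicting its location in the open cone. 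With the uniform bound in hand, the degree on a sufficiently large ball coincides with the degree at $t=0$ (whose only fixed point is $(0,0)$), hence equals $1$, yielding a solution at $t=1$.

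For (ii), I truncate the couplings and solve instead
\[
F_1[u]+\lambda\tau_1(v_+)^q=f_1,\qquad F_2[v]+\mu\tau_2(u_+)^p=f_2,\qquad u=v=0 \text{ on }\partial\Omega.
\]
Every solution is automatically nonnegative: the identity $F_1[u]=f_1-\lambda\tau_1(v_+)^q\le 0$ combined with the scalar MP for $F_1$, which is valid because \eqref{H lambdai>0} yields $\lambda_1^+(F_1(\vartheta))>0$, forces $u\ge 0$, and symmetrically $v\ge 0$; hence $(v_+)^q=|v|^{q-1}v$ and $(u,v)$ solves \eqref{Dir lambda mu}. Existence for the truncated problem follows from the same compact-degree scheme, with the a priori bound now needing only the MP on $\C^+$ (Theorem \ref{Lambda1pm}(i)) and its ABP-type consequence applied to $(u,v)$ viewed as a subsolution pair of \eqref{LE} with forcing $f_i\le 0$. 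Case (iii) is fully symmetric, employing the truncations $v_-,u_-$ and the mP on $\C^-$.

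Uniqueness in (ii) and (iii) I would deduce by a standard comparison/linearization argument: for two nonnegative (resp.\ nonpositive) solutions, one controls their difference via \eqref{SC}, reducing to a coupled extremal system driven by the operators $\mathcal{L}^\pm$ at the same $(\lambda,\mu)$, on which the MP/mP of Theorem \ref{Lambda1pm} forces the difference to vanish.
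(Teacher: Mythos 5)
Your part (i) follows essentially the same route as the paper (decoupled scalar resolvent, homotopy $t\mapsto(t\lambda,t\mu,tf_1,tf_2)$, a priori bound by normalization/compactness, Leray--Schauder degree); the only slip is calling the blow-up limit ``signed'' --- it need not be, but since both MP and mP hold at every point of $\C^+\cap\C^-$ by Theorem \ref{Lambda1pm}, a nontrivial solution of \eqref{LE} there is impossible anyway, so the bound goes through. Your truncation device for the existence part of (ii)--(iii) is a genuinely different (and viable) alternative to the paper's monotone-iteration-plus-approximation scheme, with two small corrections: nonnegativity of solutions of the truncated system comes from the scalar \emph{minimum} principle ABP-mP for $F_i$, valid because $\lambda_1^-(F_i^*(\vartheta))\ge\lambda_1^+(F_i^*(\vartheta))>0$ by \eqref{relation lambda*} and \eqref{H lambdai>0} (not ``the scalar MP since $\lambda_1^+(F_1(\vartheta))>0$''), and the a priori bound is not a direct ``ABP-type consequence'' of MP but again a blow-up argument, in which MP on $\C^+$ excludes the nonnegative nontrivial limit eigenpair; also, with $pq=1$ and $p\neq 1$ the bound must respect the quasi-homogeneity (e.g.\ $\|u\|_\infty+\|v\|_\infty^p$), as in the paper.

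The genuine gap is the uniqueness step in (ii)--(iii). Subtracting two nonnegative solutions and invoking \eqref{SC} does \emph{not} produce a system to which Theorem \ref{Lambda1pm} applies at the same $(\lambda,\mu)$. First, for $q\neq 1$ the difference $|v_1|^{q-1}v_1-|v_2|^{q-1}v_2$ is not a fixed multiple of $v_1-v_2$: the mean-value factor behaves like $q\,\xi^{q-1}$, which is unbounded near the boundary when $q<1$ and depends on the solutions themselves when $q>1$, so the ``linearized'' system carries weights different from $\tau_1,\tau_2$ and its principal curves are not those of \eqref{LE}. Second, even when $p=q=1$, the pair $(w,z)=(u_1-u_2,v_1-v_2)$ only satisfies subsolution inequalities for $F_1^*,F_2^*$ (or $\mathcal{L}^+$), and since $F_i^*\ge F_i$ one has $\lambda_1^+(F_1^*,F_2^*)\le\lambda_1^+(F_1,F_2)$, possibly strictly (e.g.\ nonconvex Isaac's operators); hence $(\lambda,\mu)\in\C^+$ for $(F_1,F_2)$ gives no maximum principle for the system governing $(w,z)$, and the argument ``MP forces the difference to vanish'' breaks down. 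The paper avoids linearization altogether: uniqueness is obtained by the sliding/scaling comparison of Step 2 in the proof of Theorem \ref{sublinear}, i.e.\ one compares $(u_1,v_1)$ with the rescaled pair $(s^{\frac{p+1}{p}}u_2,s^{\frac{q+1}{q}}v_2)$, takes the optimal $s_*$, and uses the sign $f_i\le 0$, the $pq=1$ homogeneity, SMP and Hopf (together with the structure exploited in Proposition \ref{th4.1 BQeq}) to force $s_*=1$ and equality. You would need to replace your uniqueness paragraph by an argument of this type.
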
	

In other words, as long as we are in the region $\C^+ \cap \C^-$ we obtain the complete standard solvability of the Dirichlet problem \eqref{Dir lambda mu}. The solvability up to $\C^+\cup \C^-$ may not hold in general (e.g.\ \cite[Theorem 1.8]{BQeq}), in contrast to the case of linear eigenvalues in \cite{LeiteMontenegro}.
However, if the pair $f_1,f_2$ has the ``good" sign, we do obtain solvability in this larger region. 

\begin{rmk}
In particular, Theorem \ref{ThDir solvability intro} applied to $F_1=F_2=\mathcal{L}^+$, $\tau_1=\tau_2=\tau$, $p=q=1$ (scalar case) gives the optimal range of solvability in \cite[Proposition 3.4]{arma2010}; see also Remark p.595 therein where the problem of spectral properties for these operators under unbounded coefficients was left open. See also our Theorem \ref{ABP-MP} in Section \ref{section lambda1 scalar} for a priori bounds of ABP type to solutions produced by Theorem \ref{ThDir solvability intro} in the scalar case, as well as necessary conditions which characterize their validity.
\end{rmk}

In what concerns the optimality of Theorem \ref{ThDir solvability intro}, we show that an \textit{anti-maximum principle} occurs when we move a little bit above the region $\C^+\cup \C^-$. This type of result is essential, for instance, in bifurcation and resonance phenomena \cite{ArcoyaAMP2001, BFQjfaLL, BFQsiam2010}.
A classical reference for it in the linear scalar case is \cite{CPamp} (see also \cite{BiAMP95}), while its fully nonlinear scalar counterpart can be found in \cite[Theorem 2.5]{Arms2009}. Here we extend \cite{Arms2009} to fully nonlinear Lane-Emden systems as follows.

\begin{theorem}[Anti-maximum principle]\label{AMP}
Let $F_1, F_2$ satisfy \eqref{SC}--\eqref{H strong}, $pq=1$, $\tau_1,\tau_2$ satisfy \eqref{H weights},  and $f_i\in L^\varrho(\Omega)$ with $f_i\not\equiv 0$, $i=1,2$, $\varrho>N$. Then there exists a curve $\Gamma=(\lambda,\bar{\gamma}(\lambda))$, depending on $f_1,f_2$, which is above $\Lambda_1^\pm $, where $\bar{\gamma}$ is stricly decreasing, $\bar{\gamma}(\lambda)\to 0$ as  $\lambda\to \infty$, and $\bar{\gamma}(\lambda)\to \infty$  as  $\lambda\to 0$, such that
	\begin{enumerate}[(i)]
		\item if $f_1,f_2 \leq 0$ a.e., $\Lambda_1^+$ is below or coincides with $\Lambda_1^-$, and $(\lambda,\mu)$ is a pair between $\Lambda_1^-$ and $\Gamma$, then	
		any solution pair $u,v\in C(\overline{\Omega})$ of \eqref{Dir lambda mu} satisfies $u,v < 0$ in $\Omega$;
		\item if $f_1,f_2 \geq 0$ a.e., $\Lambda_1^-$ is below or coincides with $\Lambda_1^+$, and $(\lambda,\mu)$ is a pair between $\Lambda_1^+$ and $\Gamma$,
		then any solution pair $u,v\in C(\overline{\Omega})$ of \eqref{Dir lambda mu} verifies $u,v > 0$ in $\Omega$.
	\end{enumerate}
\end{theorem}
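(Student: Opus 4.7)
The plan is a contradiction-and-compactness argument in the spirit of the scalar anti-maximum principle of \cite[Theorem 2.5]{Arms2009}, adapted to the system through the key scaling $(u,v)\mapsto(tu,t^p v)$, $t>0$, which preserves \eqref{LE} thanks to $pq=1$ and the one-homogeneity \eqref{H homogeneity} of each $F_i$. I will write out case (i); case (ii) follows symmetrically upon reversing signs and exchanging the roles of $\Lambda_1^+$ and $\Lambda_1^-$. For each $\lambda>0$, define the candidate threshold
\[
\bar\gamma(\lambda):=\inf\bigl\{\mu>\mu_1^-(\lambda):\ \eqref{Dir lambda mu}\text{ admits }(u,v)\in E_\varrho\times E_\varrho\text{ with }\sup_\Omega u\ge 0\text{ or }\sup_\Omega v\ge 0\bigr\},
\]
with $\bar\gamma(\lambda):=+\infty$ when the set is empty. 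Once the key bound $\bar\gamma(\lambda)>\mu_1^-(\lambda)$ is proved for every $\lambda>0$, replacing $\bar\gamma$ by a suitable decreasing minorant, strict monotonicity and the limits $\bar\gamma(\lambda)\to\infty$ as $\lambda\to 0^+$, $\bar\gamma(\lambda)\to 0$ as $\lambda\to\infty$, follow by comparison with $\mu_1^-$ (whose asymptotics come from Theorem \ref{Th1 introduction}) together with the monotonicity of \eqref{Dir lambda mu} in $(\lambda,\mu)$.

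Assume by contradiction that $\bar\gamma(\lambda_\infty)=\mu_\infty:=\mu_1^-(\lambda_\infty)$ for some $\lambda_\infty>0$. Then there exist $(\lambda_n,\mu_n)\to(\lambda_\infty,\mu_\infty)$ with $\mu_n>\mu_1^-(\lambda_n)$ and solutions $(u_n,v_n)$ of \eqref{Dir lambda mu} with $\sup_\Omega u_n\ge 0$ or $\sup_\Omega v_n\ge 0$. Set
\[
M_n:=\max\bigl(\|u_n\|_\infty,\,\|v_n\|_\infty^{1/p}\bigr),\qquad (\hat u_n,\hat v_n):=(u_n/M_n,\,v_n/M_n^p).
\]
A direct computation using \eqref{H homogeneity} and the identity $pq=1$ shows that the rescaled pair satisfies
\[
F_1[\hat u_n]+\lambda_n\tau_1|\hat v_n|^{q-1}\hat v_n=f_1/M_n,\qquad F_2[\hat v_n]+\mu_n\tau_2|\hat u_n|^{p-1}\hat u_n=f_2/M_n^p,
\]
in $\Omega$, with $\hat u_n=\hat v_n=0$ on $\partial\Omega$ and $\max(\|\hat u_n\|_\infty,\|\hat v_n\|_\infty)=1$.

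In the expected blow-up case $M_n\to\infty$, both right-hand sides vanish in $L^\varrho(\Omega)$. The structural bound \eqref{SC} provides ABP estimates for each scalar component, and \eqref{H strong} upgrades them to $W^{2,\varrho}$ bounds. Compactness of the embedding $W^{2,\varrho}(\Omega)\hookrightarrow C^1(\overline\Omega)$ for $\varrho>N$, combined with the stability of viscosity solutions, yields a subsequential limit $(\hat u_\infty,\hat v_\infty)$ of unit sup-norm solving \eqref{LE} at $(\lambda_\infty,\mu_\infty)\in\Lambda_1^-$. By the simplicity statement in Theorem \ref{Th1 introduction}, necessarily $(\hat u_\infty,\hat v_\infty)=(c\varphi_1^-,c^p\psi_1^-)$ for some $c>0$, which is strictly negative in $\Omega$ with strict Hopf-type sign on $\partial\Omega$. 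The $C^1$ convergence $\hat u_n\to\hat u_\infty$ and $\hat u_n=0$ on $\partial\Omega$ then force $\hat u_n<0$ (and similarly $\hat v_n<0$) in $\Omega$ for all $n$ large, contradicting our choice of $(u_n,v_n)$.

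The residual possibility that $M_n$ remains bounded is the principal technical obstacle. The case $M_n\to 0$ is immediately excluded, since it yields $(u_n,v_n)\to(0,0)$ uniformly and the equation would imply $f_1\equiv f_2\equiv 0$; thus $M_n\to M_\infty\in(0,\infty)$. By the same compactness applied directly to $(u_n,v_n)$, a subsequence converges to $(u_\infty,v_\infty)\in E_\varrho\times E_\varrho$ solving \eqref{Dir lambda mu} at $(\lambda_\infty,\mu_\infty)\in\Lambda_1^-$ with the original $f_1,f_2\lneq 0$. The heart of the proof, which I expect to be the main difficulty, is to rule out this scenario by a nonlinear Fredholm-type obstruction: I would combine \eqref{SC} with the one-parameter family $(u_\infty+s\varphi_1^-,\,v_\infty+s^p\psi_1^-)_{s>0}$ and perform scalar comparisons against the negative eigenpair $(\varphi_1^-,\psi_1^-)$, aiming to derive a differential inequality inconsistent with the definite sign $f_1,f_2\lneq 0$. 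This substitutes the orthogonality-to-kernel argument available in the linear setting \cite{LeiteMontenegro} by a fully nonlinear comparison compatible with \eqref{SC}--\eqref{H strong}.
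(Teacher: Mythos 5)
Your overall strategy (argue by contradiction along a sequence approaching $\Lambda_1^-$, split into a blow-up case and a bounded case, and use simplicity plus Hopf in the blow-up case) is the same skeleton as the paper's proof, but as written the proposal has two genuine gaps, one of which you yourself flag without closing.

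First, the bounded case $M_n\to M_\infty\in(0,\infty)$ — which you correctly identify as the heart of the matter — is not proved. The device you propose, the additive family $(u_\infty+s\varphi_1^-,\,v_\infty+s^p\psi_1^-)$, does not fit the structure of the problem: $F_i$ is fully nonlinear (only the extremal bounds \eqref{SC} and one-homogeneity \eqref{H homogeneity} are available), and the coupling $|v|^{q-1}v$ is not additive, so superposing a solution with an eigenpair does not produce a usable differential inequality; the comparison that does work is the multiplicative sliding $u_1\ge s\,u_2$, $v_1\ge s^p v_2$ of Proposition \ref{th4.1 BQeq}. In the paper the bounded case is disposed of by Lemma \ref{PDEnonexistence}: if $\lambda_1^+\le\lambda\le\lambda_1^-$ and $f_1,f_2\lneqq 0$, then \eqref{Dir lambda mu} has \emph{no} solution at all — a nonnegative solution would be positive by SMP and the strong coupling, hence $\lambda=\lambda_1^+$ and, by Proposition \ref{th4.1 BQeq}, a multiple of $(\varphi_1^+,\psi_1^+)$, forcing $f_i\equiv 0$; a solution negative somewhere is, again by Proposition \ref{th4.1 BQeq}, a multiple of $(\varphi_1^-,\psi_1^-)$, forcing $\lambda=\lambda_1^-$ and $f_i\equiv 0$. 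This is exactly the "Fredholm-type obstruction" you were looking for, and it needs the hypothesis $\Lambda_1^+$ below or coinciding with $\Lambda_1^-$, which your sketch never uses.

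Second, in the blow-up case you assert that the limit must be $(c\varphi_1^-,c^p\psi_1^-)$ with $c>0$. This overlooks the admissible case in which $\Lambda_1^+$ \emph{coincides} with $\Lambda_1^-$ (explicitly allowed in the statement): then the limiting eigenfunction at $(\lambda_\infty,\mu_\infty)$ may instead be a positive multiple of $(\varphi_1^+,\psi_1^+)$, and no contradiction with "nonnegative somewhere" arises from Hopf and $C^1$ convergence — on the contrary, the rescaled solutions become positive on compact subsets. The paper handles this branch separately: it uses $C^1$ convergence to get $U_k,V_k>0$ on a large compact $K$, then the maximum principle in small domains (Theorem \ref{MP small domain}) on $\Omega\setminus K$ (using $f_1,f_2\le 0$) to conclude $u_k,v_k\ge 0$ in all of $\Omega$, and this contradicts the nonexistence of nonnegative solutions in Lemma \ref{PDEnonexistence}. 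Without this step your argument does not cover the coincidence case. Finally, a minor point: the strict monotonicity and asymptotics of the curve $\Gamma$ do not follow from "monotonicity of \eqref{Dir lambda mu} in $(\lambda,\mu)$" (there is no such monotonicity of the solution set); in the paper $\Gamma$ is obtained from the diagonal statement via the scaling of Section \ref{section scaling}, and your construction of $\bar\gamma$ would need an analogous argument.
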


We highlight that Theorem \ref{AMP} is new even in the case of the standard Lane-Emden system involving the Laplacian operator, i.e.\ when $F_1=F_2=\Delta$. Up to our knowledge, this is the first result on anti-maximum principle regarding strongly coupled systems. 

We also prove an existence result for the region above $\Lambda_1^+$ and $\Lambda_1^-$ when $p=q=1$. Let us consider the pairs $(\lambda^\pm_1,\lambda^\pm_1)=(\lambda_1^\pm(F_1,F_2),\lambda_1^\pm(F_1,F_2))$ in the intersection of the curve $\Lambda_1^\pm$ with the line $\lambda=\mu$ (cf. Sections \ref{Preliminaries_2} and \ref{section scaling}). Then we define the following quantity: 
\[ 
\lambda_2=\lambda_2(F_1,F_2,\Omega) := \inf \{ \lambda> \max\{\lambda_1^+(F_1,F_2), \lambda_1^-(F_1,F_2) \}: \, (\lambda, \lambda) \text{ is an eigenvalue of \eqref{LE}} \}, 
\]
which could be infinite. Denote by $K$ the first quadrant on the plane $(\lambda,\mu)$. 
\begin{theorem}[The second spectral curve and the Dirichlet problem]\label{Th lambda2 Introdu}
Let $F_1, F_2$ satisfy \eqref{SC}--\eqref{H strong}, $\tau_1,\tau_2$ satisfy \eqref{H weights}, and $pq=1$. 
\begin{enumerate}
\item[(i)]If $\lambda_2 < \infty$, then there exists a curve $\Lambda_2=(\lambda,\mu_2(\lambda))$ lying in the region $K\setminus \overline{ \C^+\cup \C^-}$.  Moreover, $\Lambda_2$ is
such that each point $(\lambda, \mu)$ on $\Lambda_2$ is an eigenvalue for \eqref{LE}. Also, $\mu_2(\lambda)$ is continuous, stricly decreasing, and satisfies
\[ \mu_2(\lambda) \to \infty, \text{ as } \lambda \to 0, \quad \mu_2(\lambda) \to 0 \text{ as } \lambda \to \infty. \]
\item[(ii)] Further, if $p=q=1$, the Dirichlet problem \eqref{Dir lambda mu} is solvable for $f_1,f_2\in L^\varrho(\Omega)$, $\varrho>N$, when $(\lambda, \mu)$ belongs to the region 
\[
\left(\mathcal{C}_2\setminus \overline{ \C^+\cup \C^-}\,\right)
\]
where $\mathcal{C}_2$ the region below $\Lambda_2$ in $K$ if $\lambda_2<\infty$, while $\mathcal{C}_2=K$ if $\lambda_2=\infty$.
\end{enumerate}
\end{theorem}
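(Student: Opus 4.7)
The proof rests on the scaling invariance of \eqref{LE} coming from the positive homogeneity \eqref{H homogeneity} together with $pq=1$, and, for part (ii), on a Leray--Schauder degree argument. The key observation is: if $(\varphi,\psi)$ is an eigenfunction pair at $(\lambda_0,\mu_0)$ and $s,t>0$, then by \eqref{H homogeneity} the rescaled pair $(s\varphi,t\psi)$ is an eigenfunction pair at $(\lambda_0 t^q/s,\mu_0 s^p/t)$, and the map $(s,t)\mapsto(\lambda_0 t^q/s,\mu_0 s^p/t)$ surjects onto the one-parameter curve $\{\lambda\mu^q=\lambda_0\mu_0^q\}$ in $K$ (using $pq=1$). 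Applied to $(\lambda_1^\pm,\lambda_1^\pm)$, this already yields the explicit form $\mu_1^\pm(\lambda)=\lambda_1^\pm(\lambda_1^\pm/\lambda)^p$ of the first curves appearing in \eqref{eq:thetwocurves}.

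For part (i), assuming $\lambda_2<\infty$, I would first verify that the infimum is attained. Taking a minimizing sequence $\lambda_n\downarrow\lambda_2$ of diagonal eigenvalues with normalized eigenfunctions $(u_n,v_n)$ satisfying $\max(\|u_n\|_\infty,\|v_n\|_\infty)=1$, the uniform $W^{2,\varrho}$-bounds from \eqref{H strong} together with the compact embedding into $C(\overline{\Omega})$ extract a limit pair $(u,v)$ that solves \eqref{LE} at $(\lambda_2,\lambda_2)$, is nontrivial because the normalization is preserved in the limit, and must be sign-changing: otherwise by the simplicity clause in Theorem \ref{Th1 introduction} the pair $(\lambda_2,\lambda_2)$ would lie on $\Lambda_1^\pm$, contradicting $\lambda_2>\max(\lambda_1^+,\lambda_1^-)$. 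Applying the scaling to $(\lambda_2,\lambda_2)$ then generates the whole curve $\Lambda_2=\{(\lambda,\mu_2(\lambda)):\lambda>0\}$ with $\mu_2(\lambda)=\lambda_2(\lambda_2/\lambda)^p$; continuity, strict monotonicity, and the asymptotics of $\mu_2$ are immediate from this formula. Comparison with $\mu_1^\pm(\lambda)=\lambda_1^\pm(\lambda_1^\pm/\lambda)^p$, together with $\lambda_2>\max(\lambda_1^+,\lambda_1^-)$, shows $\Lambda_2\subset K\setminus\overline{\C^+\cup\C^-}$.

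For part (ii), with $p=q=1$, the rescaling $(u,v)\mapsto(\alpha u,\beta v)$ sends \eqref{Dir lambda mu} with data $(\lambda,\mu,f_1,f_2)$ to the same problem with $(\lambda\alpha/\beta,\mu\beta/\alpha,\alpha f_1,\beta f_2)$; since this preserves $\lambda\mu$ and the $L^\varrho$-forcing is arbitrary, solvability at $(\lambda,\mu)$ is equivalent to solvability at the diagonal point $(t,t)$ with $t=\sqrt{\lambda\mu}$. The same scaling argument shows that every eigenvalue of \eqref{LE} lies on a hyperbola $\lambda\mu=c$, so the definition of $\lambda_2$ rules out eigenvalues of \eqref{LE} inside $\mathcal{C}_2\setminus\overline{\C^+\cup\C^-}$. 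I would then cast the diagonal Dirichlet problem as a fixed-point equation $(u,v)=T_t(u,v)$ on $C(\overline{\Omega})^2$, with $T_t(u,v)=(S_1(-t\tau_1 v+f_1),\,S_2(-t\tau_2 u+f_2))$ and $S_i$ the solution operator of $F_i[w]=g$ with zero boundary data (well defined by \eqref{H lambdai>0}--\eqref{H strong}); compactness of $T_t$ follows from $W^{2,\varrho}\hookrightarrow C(\overline{\Omega})$. A standard blow-up argument, combined with the absence of eigenvalues in our strip, yields uniform a priori bounds for fixed points of $T_t$ on compact subintervals of $(\max(\lambda_1^+,\lambda_1^-),\lambda_2)$.

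The main obstacle is to compute the Leray--Schauder degree $\deg(I-T_t,B_R,0)$ in that strip and verify it is nonzero, because the natural homotopy $t\mapsto(t,t)$ from a point of $\C^+\cap\C^-$ (where the degree equals $1$ by Theorem \ref{ThDir solvability intro}(i)) to the target range unavoidably crosses both principal curves $\Lambda_1^\pm$. My plan is to track the degree jumps through $\Lambda_1^+$ and $\Lambda_1^-$ by a Krasnoselsky-type bifurcation argument resting on the simplicity of the half-eigenvalues (Theorem \ref{Th1 introduction}) and the sign structure of the associated eigenfunctions, and to conclude that the accumulated change still leaves a nonzero final degree, which gives existence by Leray--Schauder.
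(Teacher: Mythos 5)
Your part (i) is essentially the paper's argument: compactness of a sequence of eigenfunctions at eigenvalues $\lambda_k\to\lambda_2$ via the $C^{1,\alpha}$/$W^{2,\varrho}$ estimates and stability, nontriviality from the normalization, and then the scaling of Section \ref{section scaling} producing $\mu_2(\lambda)=\lambda_2^{p+1}/\lambda^p$, which lies above both first curves since $\lambda_2>M_1$. That part is fine. Your reduction of (ii) to the diagonal $(t,t)$, $t=\sqrt{\lambda\mu}$, and the blow-up a priori bounds in the strip $(\max\{\lambda_1^+,\lambda_1^-\},\lambda_2)$ are also consistent with what is actually needed.

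The genuine gap is the degree computation in part (ii), which you leave as a ``plan'' to track index jumps across $\Lambda_1^+$ and $\Lambda_1^-$ by a Krasnoselskii-type bifurcation argument. For fully nonlinear, merely positively homogeneous operators there is no linearization at a solution, so there is no index-jump formula attached to crossing a simple half-eigenvalue; simplicity of $\lambda_1^\pm$ alone does not tell you how $\deg(I-T_t,B_R,0)$ changes when $t$ passes $\lambda_1^+$ or $\lambda_1^-$. Worse, the paper points out (after Theorem \ref{ThDir solvability intro}, citing \cite[Theorem 1.8]{BQeq}) that solvability can genuinely fail for parameters between the two first curves, which indicates the degree may be $0$ in that intermediate range; your assertion that ``the accumulated change still leaves a nonzero final degree'' after the second crossing is exactly the statement that needs proof, and nothing in your sketch rules out that the two jumps cancel. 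The paper avoids this obstruction entirely by homotoping the \emph{operators} rather than the spectral parameter: it deforms $F_i$ to $\beta\Delta$ and $\tau_i$ to constant weights via $F_i^s=\beta s\,\mathrm{tr}(X)+(1-s)F_i$, $\tau_i^s=s+(1-s)\tau_i$, uses continuity of $s\mapsto\lambda_{1,s}^\pm$ and lower semicontinuity of $s\mapsto\lambda_{2,s}$ to choose a continuous path $\mu_s$ with $\max\{\lambda_{1,s}^+,\lambda_{1,s}^-\}<\mu_s<\lambda_{2,s}$ for all $s$ (so no eigenvalue is ever crossed and the a priori bounds hold uniformly), and then computes the degree at the linear endpoint $s=1$ by Fourier expansion in the Laplacian eigenbasis, where $\lambda_1(\beta\Delta)<\mu_1<\lambda_2(\beta\Delta)$ gives bijectivity and degree $\pm1$; homotopy invariance then yields existence at $s=0$. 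To complete your route you would either have to prove a jump formula for half-eigenvalues of nonsmooth homogeneous systems (not available, and delicate precisely because $\lambda_1^+\neq\lambda_1^-$ in general), or switch to an operator homotopy of this Armstrong type, which also requires the continuity/semicontinuity statements for $\lambda_{1,s}^\pm$ and $\lambda_{2,s}$ that your proposal does not address.
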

\begin{rmk}
The explicit parametrization of $\Lambda_2$ is given in \eqref{eq:Lambda2} ahead.
\end{rmk}
This result is an extension for systems of \cite[Theorem 2.4]{Arms2009}. We mention that one may have $\lambda_2=+\infty$ if for instance $F_1,F_2$ are not symmetric, see \cite{Arms2009}. Meanwhile, $\lambda_2<+\infty$ in the scalar case when $F_1=F_2$ is a Pucci's radial operator and $\tau_1=\tau_2=1$, see \cite{EFQPucciradial}. Note that if $\tau_1=\tau_2=1$ then $\lambda_2(\Delta, \Delta)= \lambda_2(\Delta)$ (the second eigenvalue of the Laplacian operator).
In general, finding higher eigenvalues for systems is a difficult issue and it seems that only particular cases involving the Laplacian operator are available. We quote a one dimensional picture displayed in \cite[Section 3]{DenisEderson2010} for $pq=1,$ and \cite{CM} for a higher dimensional scenario when $p=q=1$; both explore the method of reduction by inversion which transforms the second order system into single equation of higher order.
Here instead we use a degree-theoretical approach which allows us to deal with fully nonlinear operators in any dimension when $p=q=1$; the general case $pq=1$ is still open.  

\subsection{Examples and applications}

We start by highlighting that the curves $\Lambda_1^+$ and $\Lambda_1^-$ obtained in Theorem \ref{Th1 introduction} can be different when the operator is not linear, as shows the following example.

\begin{example} In light of \cite{EFQPucciradial}, one may consider the Fucik-like spectrum 
	\begin{center}
		$Lu+\lambda u^+-\frac{\lambda}{\kappa}\,u^-=0$,
	\end{center}
	associated to the linear operator $Lu:=\mathrm{tr}(A(x)D^2u)+\gamma(x)\cdot Du$, where $\kappa>0$ is fixed, which can be viewed as the spectrum of a nonlinear convex or concave operator given by
	\begin{center}
		$F_1[u]:=\max\{Lu, \kappa Lu \} =-\lambda u$\, if $\kappa\ge 1$, \quad $F_2[u]:=\min\{Lu, \kappa Lu \} =-\lambda u$\, if $\kappa\le 1$.
	\end{center}	
	
	Now let us fix $\kappa>1$ and $p,q>0$ such that $pq=1$. 
	From \cite{Montenegro}, there exists a first positive eigenvalue-parameter $\sigma$ (see Section \ref{section scaling}) and an eigenfunction pair $(\varphi,\psi)$ so that
	\begin{align}\label{ex: Fucik linear Mont}
	\textrm{$L \varphi+\sigma\psi^q=0$, \; 
		$L \psi+\sigma\varphi^p=0$, \; $\varphi,\psi>0$ \; in $\Omega$, \qquad $\varphi,\psi=0$ \; on $\partial\Omega$. }
	\end{align}
	Then the pair $\varphi_1^+:=\varphi$, $\psi_1^+:=t\psi$, with $t>0$ to be chosen, solves
	\begin{center}
		$F_1[\varphi^+_1]=\max\{ L\varphi, \,\kappa L\varphi  \}= -\sigma  \psi^q=  -\sigma t^{-q} (\psi^+_1)^q$,\medskip
		
		$F_2[\psi^+_1]=\min\{ tL\psi,\, \kappa t L\psi  \}= - \kappa t\sigma  \varphi^p=  -\kappa t \sigma (\varphi^+_1)^p$,
	\end{center}
	so it is a positive eigenfunction pair if one choses $t:=\kappa^{-\frac{1}{q+1}}$. Moreover, this eigenpair is unique up to scaling by our Theorem \ref{Th1 introduction}, and we conclude
	$\lambda_1^+(F_1,F_2)=\kappa^{\frac{q}{q+1}}\sigma$. \smallskip
	
	Analogously, the pair given by $\varphi_1^-:=-\varphi$, $\psi_1^-:=-s\psi$ solves 
	\begin{center}
		$F_1[\varphi^-_1]=\max\{ -L\varphi ,- \kappa L\varphi \}= \kappa \sigma  \psi^q=  -\kappa\sigma s^{-q}\, |\psi^-_1|^{q-1}\psi^-_1$,\medskip
		
		$F_2[\psi^-_1]=\min\{ -sL\psi, -\kappa s L\psi  \}=  s\sigma  \varphi^p=  -s \sigma \,|\varphi^-_1|^{p-1}\varphi_1^-$,
	\end{center}
and becomes a negative eigenfunction pair when $s:=\kappa^{\frac{1}{q+1}}$ -- again unique up to scaling by Theorem \ref{Th1 introduction}, and 
	$\lambda_1^-(F_1,F_2)=\kappa^\frac{1}{q+1}\sigma$, compare with the scalar case in \cite[Example 3.10]{Arms2009}. \smallskip
	
	Now, since $\kappa>1$, then one always has $\lambda_1^+(F_1,F_2)\neq \lambda_1^-(F_1,F_2)$, for $q\neq 1$, and so by scaling one recovers that the two parallel curves
	$\Lambda^+_1 $ and $\Lambda^-_1$ are different, see Section \ref{section scaling}.
	Furthermore, $\Lambda_1^+$ stays below $\Lambda_1^-$ if $q<1$; while  $\Lambda_1^+$ lies above $\Lambda_1^-$ if $q>1$.
	
On the other hand, is also simple to verify that for $\kappa>1$, $\lambda_1^+(F_1, F_1) = \sigma < \kappa \sigma = \lambda_1^-(F_1, F_1)$ and $\lambda_1^-(F_2, F_2) = \sigma < \kappa \sigma = \lambda_1^+(F_2, F_2)$, for all $p,q>0$ with $pq=1$.
	
	More generally, one can also consider different operators $L_1, L_2$ in \eqref{ex: Fucik linear Mont}. 
\end{example}

The next examples comprise important classes of fully nonlinear operators for which all our results apply, being novelties even in the scalar case in the context of unbounded drift and weight.

\begin{example}\label{example pucci}
Simple prototypes we may have in mind are extremal operators involving Pucci's, for instance $F_1=F_1^*=\mathcal{L}^+$, $F_2=(F_2)_*=\mathcal{L}^-$, with $\vartheta=\ell \theta$, for some $\ell>0$ and $\theta\in L^\varrho(\Omega)$ satisfying $\theta\gneqq 0$ a.e.\ in $\Omega$.  They obviously fulfill \eqref{SC}, \eqref{H homogeneity}, \eqref{H continuity}, and \eqref{H strong}. Moreover, recall $\lambda_1^+(\mathcal{L}^+_0(\theta))>0$
	by \cite{regularidade} (see also our Proposition \ref{prop existence eigenv proper}), and $\lambda_1^+(\mathcal{L}^-(\vartheta))=\lambda_1^-(\mathcal{L}^+(\vartheta))\ge \lambda_1^+(\mathcal{L}^+(\vartheta))$.
	Thus \eqref{H lambdai>0} is verified for $F_1$, $F_2$ if one chooses  $\ell <\lambda_1^+(\mathcal{L}^+_0(\theta))$ as in \eqref{eq ell hatvartheta}, see Lemma \ref{remark nonproper scalar}. 
\end{example}

\begin{example}\label{example} A bit more sophisticated model case arising from control theory are Hamilton-Jacobi-Bellman-Isaac's type operators \cite{Arms2009, BFQsiam2010,BQeq, Eaihpan2019},  with unbounded coefficients, such as
	\begin{align}\label{Isaacs}
	\textstyle F_1[w]=\sup_{s\in \n} \inf_{t\in \n } L_{s,t} (w)\, , \qquad F_2[w]=\inf_{s\in \n} \sup_{t\in \n } L_{s,t}(w), 
	\end{align}	
	wher $L_{s,t}$ for $s,t\in \n$ is a linear operator in the form
	\begin{align}\label{Isaacs Ls,t}
	L_{s,t}[w]= \mathrm{tr} (A_{s,t}(x)D^2 w) +\gamma_{s,t}(x)\cdot Dw +\ell\vartheta_{s,t}(x)w, \quad \ell <\lambda_1^+(\mathcal{L}_0^+(\vartheta)),\qquad  \\
|\gamma_{s,t}| \leq \gamma , \;|\vartheta_{s,t}|\le \vartheta, \; \gamma,\vartheta\in L^\varrho_+(\Omega), \;\alpha I \le A_{s,t}\le \beta I, \; 	A_{s,t}\in C(\overline{\Omega}) \textrm{ uniformly in } s,t\in \n,\nonumber
	\end{align} for $0<\alpha\le\beta$.
	For instance, when $L_{s,t}\equiv L_{0,t}$ for all $s\in \n$ then $F_1$ and $F_2$ are called Bellman operators, which are concave and convex, respectively. In the general case, $F_1$ and $F_2$ are neither convex nor concave, and are called Isaac's operators.
	Again, \eqref{H lambdai>0} holds for $F_1$, $F_2$ under \eqref{Isaacs Ls,t} as in \eqref{eq ell hatvartheta}, see Lemma \ref{remark nonproper scalar}; while we show \eqref{H strong}  for \eqref{Isaacs}--\eqref{Isaacs Ls,t} under \eqref{HS} in Section \ref{appendix}. 
\end{example}

\paragraph{Results in the superlinear and sublinear regimes}
As a byproduct of our arguments, we complement a study on maximum principles in small domains for Lane-Emden systems. Besides being of independent interest \cite{BNV}, they play an important role in symmetry problems \cite{BN, BdaLioSym}, aside from spectral constructions when the domain is not smooth \cite{BQeq}.
It also appeared in \cite{UC} as the main tool to derive a Unique Continuation Principle of radial fully nonlinear type in the case $pq\ge 1$.
For $pq=1$, an explicit form  was previously/independently proved in \cite[Theorem 1.3, Corollary 1.1]{LeiteMontenegro}, under a smallness hypothesis on the weights. Here we instead make it an alternative, by asking either the domain or the weighs to be small. This in particular extends and unifies \cite[Lemma 5.4]{regularidade} (for domains with small measure) and \cite[the maximum principle in Proposition 3.4]{arma2010} (for operators with small weight) even in the scalar case. Furthermore, in what concerns the system, the result is valid not only for $pq=1$ but also for $pq \ge 1$.

\begin{theorem}\label{MP small domain}
Let $F_1, F_2$ satisfy \eqref{SC}. Let $\tau_1,\tau_2\in L^\varrho(\Omega)$, $\varrho>N$, $pq\geq  1$, and $\lambda,\mu\geq 0$. Then the following MP result holds.
\begin{itemize}
\item[(i)] Assume  \eqref{H continuity}, \eqref{H homogeneity} hold for $F_i^*$ with $\lambda_1^+(F^*_i(\vartheta))>0$, $i=1,2$.	
Then, there exists $\varepsilon_0>0$, depending on $N,\varrho,p,q,\alpha,\beta, \lambda,\mu,\| \gamma\|_{L^\varrho(\Omega)}$, $\| \vartheta\|_{L^\varrho(\Omega)}$, $\|\tau_1\|_{L^\varrho(\Omega)}$, $\|\tau_2\|_{L^\varrho(\Omega)}$, $\|u\|_\infty$, $\|v\|_\infty$, $\mathrm{diam}(\Omega)$, and $\lambda_1^+(F_i^*(\vartheta))$, such that if \begin{center}
	either \; $|\Omega|\leq \varepsilon_0$ \; or \; $\min \{\,\|\tau_1\|_{L^N(\Omega)}\,\|\tau_2\|^q_{L^N(\Omega)} , \,\|\tau_1\|^p_{L^N(\Omega)}\,\|\tau_2\|_{L^N(\Omega)} \}\le \epsilon_0$,
\end{center}
then any viscosity subsolution pair $u,v\in C(\overline{\Omega})$ of
	\begin{align*}
	F_1[u]+\lambda\tau_1(x)|v|^{q-1}v \geq  0, \quad F_2[v]+\mu\tau_2(x)|u|^{p-1}u \geq  0 \textrm{ in } \Omega,\quad	u,v \leq  0 \textrm{ on } \partial\Omega,
	\end{align*}
satisfies $u,v\leq 0$ in $\Omega$.  
\item[(ii)] Assume  \eqref{H continuity}, \eqref{H homogeneity} hold for $(F_i)_*$ with $\lambda_1^+((F_i)_*(\vartheta))>0$, $i=1,2$.
Then, there exists $\varepsilon_0>0$, depending on $N,\varrho,p,q,\alpha,\beta, \lambda,\mu,\| \gamma\|_{L^\varrho(\Omega)}$, $\| \vartheta\|_{L^\varrho(\Omega)}$, $\|\tau_1\|_{L^\varrho(\Omega)}$, $\|\tau_2\|_{L^\varrho(\Omega)}$, $\|u\|_\infty$, $\|v\|_\infty$, $\mathrm{diam}(\Omega)$, and $\lambda_1^+((F_i)_*(\vartheta))$, such that if 
\begin{center}
	either \; $|\Omega|\leq \varepsilon_0$ \; or \;  $\min \{\,\|\tau_1\|_{L^N(\Omega)}\,\|\tau_2\|^q_{L^N(\Omega)}, \,\|\tau_1\|^p_{L^N(\Omega)}\,\|\tau_2\|_{L^N(\Omega)}\}\le \epsilon_0$,
\end{center}
then for any $u,v\in C(\overline{\Omega})$ viscosity supersolution of
\[
 F_1[u]+\lambda\tau_1(x)|v|^{q-1}v \leq  0,\quad  F_2[v]+\mu\tau_2(x)|u|^{p-1}u \leq  0 \text{ in  } \Omega, \quad u,v\geq 0\text{ on }\partial\Omega,
\]
one has $u,v\geq 0$ in $\Omega$.
\end{itemize}
If $pq=1$, then $\varepsilon_0$ does not depend on a bound from above of the $L^\infty$ norms of $u,v$.
\end{theorem}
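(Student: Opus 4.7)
The plan is to reduce each part of the theorem to a scalar ABP-type maximum principle applied to the convex or concave envelopes of $F_1, F_2$, and then couple the two resulting estimates via the system structure.

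For part (i), I would first observe that since $F_i \leq F_i^*$, any viscosity subsolution of $F_i[u] + \ldots \geq 0$ is also a subsolution of the same inequality with $F_i$ replaced by its convex envelope $F_i^*$. Thus without loss of generality I may assume $F_1, F_2$ are convex. Under \eqref{H continuity}, \eqref{H homogeneity}, and the crucial scalar assumption $\lambda_1^+(F_i^*(\vartheta)) > 0$, the scalar quantitative ABP established in Section \ref{section lambda1 scalar} (cf.\ Theorem \ref{ABP-MP}) furnishes, for each equation treated separately with the other's coupling term as forcing,
\[
\sup_\Omega u^+ \leq C_1 \lambda \|\tau_1 (v^+)^q\|_{L^N(\Omega)} \leq C_1 \lambda \|\tau_1\|_{L^N(\Omega)} \|v^+\|_\infty^q,
\]
and analogously $\sup_\Omega v^+ \leq C_2 \mu \|\tau_2\|_{L^N(\Omega)} \|u^+\|_\infty^p$, where the constants $C_i$ depend only on $N, \varrho, \alpha, \beta, \|\gamma\|_{L^\varrho}, \|\vartheta\|_{L^\varrho}, \mathrm{diam}(\Omega),$ and $\lambda_1^+(F_i^*(\vartheta))$. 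The positivity of the scalar principal eigenvalue is precisely what makes the ABP constants finite in this nonproper, unbounded-coefficient setting.

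Iterating these two bounds gives
\[
\|u^+\|_\infty \leq C_1 C_2^q \lambda \mu^q \|\tau_1\|_{L^N(\Omega)} \|\tau_2\|_{L^N(\Omega)}^q \|u^+\|_\infty^{pq}.
\]
When $pq = 1$, any configuration with $C_1 C_2^q \lambda \mu^q \|\tau_1\|_{L^N} \|\tau_2\|_{L^N}^q < 1$ forces $\|u^+\|_\infty = 0$, and then the reverse bound yields $\|v^+\|_\infty = 0$. When $pq > 1$, I would split $\|u^+\|_\infty^{pq} = \|u^+\|_\infty^{pq-1} \|u^+\|_\infty \leq \|u\|_\infty^{pq-1} \|u^+\|_\infty$ to absorb the linear factor and close the estimate, now under a smallness condition depending also on $\|u\|_\infty$; this is precisely why $\varepsilon_0$ is independent of $\|u\|_\infty, \|v\|_\infty$ when $pq = 1$ but not in general. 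Iterating the two estimates in the reverse order (bound $v^+$ first) produces the symmetric threshold on $\|\tau_1\|_{L^N}^p \|\tau_2\|_{L^N}$, which is the source of the $\min$ in the statement. To handle the small-domain alternative, Hölder's inequality gives
\[
\|\tau_i\|_{L^N(\Omega)} \leq |\Omega|^{1/N - 1/\varrho} \|\tau_i\|_{L^\varrho(\Omega)},
\]
so smallness of $|\Omega|$ (with $\|\tau_i\|_{L^\varrho(\Omega)}$ entering the definition of $\varepsilon_0$) immediately reduces to the previous $L^N$-smallness case. Part (ii) is entirely dual: replace $u^+, v^+$ with $u^-, v^-$ and each $F_i^*$ with the concave envelope $(F_i)_*$; the scalar minimum principle associated to $\lambda_1^+((F_i)_*(\vartheta))>0$ supplies the corresponding estimates, after which the coupling and iteration proceed verbatim.

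The principal obstacle is securing the correct quantitative scalar ABP-MP in the regime of unbounded drift $\gamma \in L^\varrho$ and possibly nonproper zero-order coefficient $\vartheta \in L^\varrho$. A direct application of Koike-Świech-type ABP would deliver control by $\|f^-\|_{L^\varrho}$, whereas the statement demands the smallness threshold in $L^N$. The refinement that bounds $\sup u^+$ by $\|f^-\|_{L^N}$ while absorbing the $L^\varrho$ drift and the nonproper zero-order term into the constant — whose finiteness is equivalent to $\lambda_1^+ > 0$ — is the delicate point; it is precisely what the scalar theory of Section \ref{section lambda1 scalar} provides, and once it is in hand the rest of the argument is elementary algebra and Hölder.
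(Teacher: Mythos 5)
Your proposal is correct and follows essentially the same route as the paper's proof: pass to the envelopes $F_i^*$ (resp.\ $(F_i)_*$), apply the scalar ABP-MP of Theorem \ref{ABP-MP} to each equation with the coupling term as forcing, chain the two bounds, absorb the factor $\|u\|_\infty^{pq-1}$ when $pq>1$, obtain the $\min$ by iterating in the reverse order, and reduce the small-domain alternative to small $L^N$-norms of the weights via H\"older. No gaps to report.
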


On the other hand, the eigenvalue problem furnishes unique solvability in the sublinear regime.

\begin{theorem}[Sublinear regime]\label{sublinear}
Assume $F_1,F_2$ verify \eqref{SC}--\eqref{H strong}.  Let $f_i\in L^\varrho(\Omega)$ with $f_i\le 0$ a.e.\ in $\Omega$, $i=1,2$, and $p,q>0$ such that $pq<1$. Then the problem \eqref{Dir lambda mu} is uniquely solvable among positive viscosity solutions for all $\lambda,\mu>0$.
\end{theorem}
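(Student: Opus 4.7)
The plan is to prove existence by a Leray--Schauder continuation argument and uniqueness by a Krasnoselski-type sublinearity argument, both crucially exploiting $pq<1$.

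For existence, first define the solution operator $T:C(\overline\Omega)\times C(\overline\Omega)\to C(\overline\Omega)\times C(\overline\Omega)$ by $T(u,v)=(U,V)$, where $U,V\in E_\varrho$ are the unique $L^\varrho$-viscosity solutions of $F_1[U]=f_1-\lambda\tau_1|v|^{q-1}v$ and $F_2[V]=f_2-\mu\tau_2|u|^{p-1}u$ with zero Dirichlet data, whose existence and uniqueness are provided by the scalar theory of Section \ref{section lambda1 scalar} under \eqref{H lambdai>0}--\eqref{H strong}. Continuity of $T$ comes from stability of viscosity solutions under $L^\varrho$-convergence of the right-hand side, and compactness from the $W^{2,\varrho}$ estimates of \eqref{H strong} combined with the compact embedding $W^{2,\varrho}(\Omega)\hookrightarrow C(\overline\Omega)$. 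Positive solutions of \eqref{Dir lambda mu} are precisely positive fixed points of $T$. To apply Leray--Schauder, scale the right-hand sides by $\sigma\in[0,1]$ to get a compact homotopy $T_\sigma$ with $T_0\equiv 0$, and derive a uniform a priori bound by applying the scalar ABP-type inequality (Theorem \ref{ABP-MP}) to each equation:
\begin{equation*}
\|u\|_\infty\le C\bigl(\|f_1\|_{L^\varrho}+\|\tau_1\|_{L^\varrho}\|v\|_\infty^{q}\bigr),\qquad \|v\|_\infty\le C\bigl(\|f_2\|_{L^\varrho}+\|\tau_2\|_{L^\varrho}\|u\|_\infty^{p}\bigr).
\end{equation*}
Substituting one into the other yields $\|u\|_\infty\le A+B\|u\|_\infty^{pq}$ with $pq<1$, giving the required bound. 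Homotopy invariance of the degree then produces a fixed point of $T$, and nonnegativity together with strict positivity of $u,v$ follow from the minimum principle and the strong maximum principle applied to each equation, using $f_i\le 0$.

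For uniqueness, the key ingredient is a Krasnoselski-type sublinearity estimate for the scalar maps $T_1(v):=U$, the unique nonnegative solution of $F_1[U]=f_1-\lambda\tau_1 v^q$ with $U|_{\partial\Omega}=0$, and $T_2(u)$ defined analogously. Both are monotone increasing on the nonnegative cone by comparison, and the $1$-homogeneity \eqref{H homogeneity} of $F_i$ together with $f_i\le 0$ implies, for $s>1$,
\begin{equation*}
F_1[s^q T_1(v)]=s^q f_1-\lambda\tau_1 s^q v^q\le f_1-\lambda\tau_1 s^q v^q=F_1[T_1(sv)],
\end{equation*}
hence by comparison $T_1(sv)\le s^q T_1(v)$, equivalently $T_1(rv)\ge r^q T_1(v)$ for every $r\in(0,1)$; analogously $T_2(ru)\ge r^p T_2(u)$. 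Therefore the composition $\mathcal T:=T_1\circ T_2$, whose positive fixed points are the first coordinates of positive solutions of \eqref{Dir lambda mu}, satisfies $\mathcal T(ru)\ge r^{pq}\mathcal T(u)$ for every $r\in(0,1)$, with $pq<1$.

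Given two positive solution pairs $(u_i,v_i)$, $i=1,2$, the $W^{2,\varrho}$ regularity from \eqref{H strong} and the Hopf boundary lemma ensure each $u_j$ is comparable to $\mathrm{dist}(\cdot,\partial\Omega)$, so $t^\ast:=\sup\{t>0:u_1\ge tu_2\}$ is a positive real number. If $t^\ast<1$, monotonicity and the sublinearity give $u_1=\mathcal T(u_1)\ge\mathcal T(t^\ast u_2)\ge (t^\ast)^{pq}\mathcal T(u_2)=(t^\ast)^{pq}u_2$; since $(t^\ast)^{pq}>t^\ast$, this contradicts the maximality of $t^\ast$. Hence $t^\ast\ge 1$, i.e.\ $u_1\ge u_2$, and the symmetric argument gives $u_1=u_2$, whence $v_1=T_2(u_1)=T_2(u_2)=v_2$. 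The main obstacle I anticipate is the careful verification, for our nonproper Isaacs-type operators with unbounded coefficients, of both the comparison principle on positive functions needed for the Krasnoselski estimate and the sharp scalar ABP bound used to close the a priori estimate; once the scalar toolkit of Section \ref{section lambda1 scalar} is in hand, the Leray--Schauder setup and the sublinearity argument for uniqueness are standard.
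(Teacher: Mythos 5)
Your uniqueness argument is essentially sound and close in spirit to the paper's own: the paper also runs a Krasnoselskii-type sliding argument, though directly on the two pairs with the scalings $s^{\frac{p+1}{p}}u_2$, $s^{\frac{q+1}{q}}v_2$ and the concave minorants $(F_i)_*$ plus SMP, while you route the sublinearity $\mathcal T(ru)\ge r^{pq}\mathcal T(u)$ through the composed solution operator $\mathcal T=T_1\circ T_2$; both versions rest on exactly the comparison/ABP facts of Theorem \ref{ABP-MP} (available under \eqref{H lambdai>0}, \eqref{H strong}) that you flag at the end. The genuine gap is in the existence part. Your Leray--Schauder homotopy does produce a fixed point $(u,v)$ of \eqref{Dir lambda mu}, but nothing in the argument makes it positive: you cannot apply a minimum principle ``to each equation'', because the equation for $u$ reads $F_1[u]=f_1-\lambda\tau_1(x)|v|^{q-1}v$, whose right-hand side has no sign until you already know $v\ge 0$, and symmetrically for $v$ --- the positivity claim is circular. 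Moreover, a minimum principle for the sublinear system is simply false in general: if $(\varphi,\psi)$ is a positive solution of the homogeneous problem, then $(-\varphi,-\psi)$ solves it as well, so a fixed point of your scheme may be negative or sign-changing. The standard repair (truncate to $(v^+)^q$, $(u^+)^p$, then use ABP-mP on each scalar equation to get $u,v\ge 0$ and SMP for strict positivity) works only when $f_1\not\equiv 0$ or $f_2\not\equiv 0$; the statement allows $f_1\equiv f_2\equiv 0$, in which case the homotopy to $T_0\equiv 0$ can only certify the trivial solution and yields no positive solution at all.

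This is precisely why the paper does not use a plain degree argument here: it builds an explicit positive subsolution $(u_0,v_0)=(\epsilon\varphi,\epsilon^k\psi)$ from the principal eigenfunction pair of $\big((F_1)_*,(F_2)_*\big)$ given by Theorem \ref{Th1 introduction}, choosing $k\in(p,1/q)$ and $\epsilon$ small (this is where $pq<1$ enters), and then runs a monotone iteration starting from $(u_0,v_0)$: the iterates are nondecreasing, hence bounded below by the positive subsolution, a uniform $L^\infty$ bound follows from a blow-up argument again exploiting $pq<1$, and the limit is automatically a positive solution, including in the homogeneous case $f_1\equiv f_2\equiv 0$. To salvage your approach you would need either such a sub/supersolution scheme or a degree-in-cone (compression--expansion) argument, not a homotopy to the zero map; your a priori bound $\|u\|_\infty\le A+B\|u\|_\infty^{pq}$ is fine, but by itself it only gives solvability, not positivity.
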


Problems of this nature, for instance involving the Laplacian operator, have been studied in \cite[Theorem 3]{DalmassoUniq} \cite{Montenegro}, and \cite[Theorem 7.1]{EdTran2018}. We also mention that uniqueness results imply that solutions inherit all symmetries of the problem. For example, if the operator and its domain are radially symmetric, then so is the solution. Furthermore, uniqueness simplifies the dynamics of evolution problems, and in many cases provides global stability properties of equilibrium.

\subsection{Structure of the paper}
The rest of the paper is organized as follows. In Section \ref{Preliminaries} we recall some preliminary facts and definitions. In particular, we define the notion of principal eigenvalues for the system \eqref{LE}.

Section \ref{section lambda1 scalar} is devoted to the study of the scalar case, namely we prove how principal eigenvalues relate to maximum principles of ABP type, from which we obtain Theorem \ref{MP small domain} as a consequence. 

In Section \ref{section aux} we prove some auxiliary results for systems, in particular sufficient conditions that imply uniqueness of solutions to systems (up to scaling), and a priori bounds for the first eigenvalue.

Section~\ref{section pq=1}  addresses the main properties of the first eigenvalue problem when $pq=1$. It contains the proofs of Theorems \ref{Th1 introduction}, \ref{Th isolation Introdu}, \ref{Lambda1pm} and \ref{ThDir solvability intro} (i).

In Section \ref{sec:antimax} we deal with the anti-maximum principle, proving Theorem \ref{AMP}.

In Section  \ref{section second curve} we treat the second eigenvalue problem, namely Theorem \ref{Th lambda2 Introdu}.

Section \ref{section pq<1} is dedicated to solvability of the Dirichlet problem when the functions $f_i$ have a sign. In this spirit we develop a unified proof for both Theorem \ref{ThDir solvability intro} (ii)--(iii) and Theorem \ref{sublinear}. 

Finally, Section \ref{appendix} is devoted to  $W^{2,\varrho}$ regularity of Isaac's operators in Example \ref{example}.

\section{Preliminaries}\label{Preliminaries}

In this section we begin by recalling some of the different notions of viscosity solutions and their equivalence under regularity of the data. We then recall important results such as the Alexandrov-Bakelman-Pucci (ABP for short) maximum principle, the strong maximum principle and Hopf's lemma. In the second part we introduce the notion of principal eigenvalues and comment on the scaling properties of the system \eqref{Dir lambda mu}. All these results are used  throughout the paper.

\subsection{Some known results}\label{Preliminaries_1}

Let us start by recalling the definition of Pucci's operators
\begin{align}\label{def Pucci}
\textstyle \mathcal{M}^+_{\alpha,\beta}(X):=\sup_{\alpha I\leq A\leq \beta I} \mathrm{tr} (AX)\,,\quad \mathcal{M}^-_{\alpha,\beta}(X):=\inf_{\alpha I\leq A\leq \beta I} \mathrm{tr} (AX),
\end{align}
and of viscosity and strong solutions in what follows.
\begin{defin}\label{def Lp-viscosity sol}
	Let $f\in L^\varsigma_{\textrm{loc}}(\Omega)$ for some $\varsigma\geq N$, and $F$ an operator satisfying \eqref{SC}. 
We say that $u\in C(\Omega)$ is an \textit{$L^\varsigma$-viscosity subsolution} $($resp. supersolution$)$ of $F[u]=f(x)$ in $\Omega$ if whenever $\phi\in  W^{2,\varsigma}_{\mathrm{loc}}(\Omega)$, $\varepsilon>0$, and $\mathcal{O}\subset\Omega$ open are such that
	\begin{align*}
	F(x,u(x),D\phi(x),D^2\phi (x))-f(x)  \leq -\varepsilon\qquad
	( F(x,u(x),D\phi(x),D^2\phi (x))-f(x)  \geq \varepsilon )
	\end{align*}
for a.e. $x\in\mathcal{O}$, then $u-\phi$ cannot have a local maximum $($minimum$)$ in $\mathcal{O}$. 
In this case we also say that $u$ is a \emph{viscosity solution of the inequality} $F[u]\ge f(x)$ (resp.\ $F[u]\le f(x)$) in $\Omega$.

	A \textit{strong subsolution} (resp.\ supersolution) belongs to $W^{2,\varsigma}_{\mathrm{loc}}(\Omega)$ and satisfies the inequality $F[u]\ge f(x)$ (resp.\ $F[u]\le f(x)$) at almost every point $x\in \Omega$.  
	
	In each situation, a \textit{solution} $u \in C(\Omega)$ is meant to be both subsolution and supersolution.
\end{defin}

We now comment on the equivalence of these definitions. 
\begin{itemize}
\item The notions of $L^\varrho$-viscosity and strong solutions are equivalent whenever the solution belongs to the space $W^{2,\varrho}_{\mathrm{loc}}(\Omega)$, see \cite[Theorem 3.1, Proposition 9.1]{KSweakharnack}.
\item Under \eqref{SC}, the concepts of $L^\varrho$ and $L^N$ viscosity solutions are also equivalent whenever $\varrho>N$ and $f\in L^\varrho_{\mathrm{loc}}(\Omega)$, see \cite[Proposition 2.9]{tese} (observe that $L^\varrho_{\mathrm{loc}}(\Omega)\subset L^N_{\mathrm{loc}}(\Omega)$).
\end{itemize}
Thus, given $f\in L^\varrho_{\mathrm{loc}}(\Omega)$, throughout the text, we say simply \textit{viscosity solution} of $F[u]=f$ to mean an $L^N$-viscosity solution. This in turn is equivalent to be a strong solution when hypothesis \eqref{H strong} is in force.

\begin{rmk} In order to unify the notation, we always assume
$f,\vartheta\in L^\varrho$ by means of producing $C^{1,\alpha}$ solutions under \eqref{H continuity} as in Proposition \ref{C1,alpha regularity estimates geral} (such strategy was also employed in \cite{Arms2009} to treat nonconvex operators), despite sometimes this integrability can be relaxed to $L^N$.
\end{rmk}

Next we recall the ABP maximum principle for proper operators with unbounded drift (for a proof see \cite[Proposition 2.8]{KSite}). Recall from \eqref{def Lpm rho} the notation
\[
\mathcal{L}_0^\pm[u]:=\mathcal{L}_0^\pm(x,Du,D^2u)=\mathcal{M}^\pm (D^2u)\pm \gamma(x)|Du|, \qquad  \mathcal{L}^\pm[u]:=\mathcal{L}_0^\pm[u]\pm \vartheta (x)|u|.
\]

\begin{prop}[ABP]\label{ABPproper}
Let $f \in L^N(\Omega)$, $\gamma\in L_{+}^\varrho(\Omega)$ for $\varrho>N$, and $u \in C(\overline{\Omega})$ be a viscosity solution of $\mathcal{L}_0^{+}[u] \geq  f(x)$ in $\Omega^+$ (resp.\ $\mathcal{L}^{-}_0[u] \leq  f(x)$ in $\Omega^-$),
where $\Omega^\pm =\Omega\cap \{\pm u>0\}$.
Then
\begin{align}\label{eq:ABP-MP-mP}
\textstyle \max_{\overline{\Omega}} u \leq \max_{\partial \Omega} u^+ +C\,\|f^-\|_{L^N(\Omega)}  
 \quad (\mathrm{resp.}\;  \min_{\overline{\Omega}} u \geq \min_{\partial \Omega} (-u^-) -C\,\|f^+\|_{L^N(\Omega)}),
\end{align}
for a universal constant $C=C(N, \alpha,\beta, \|\gamma\|_\varrho, \mathrm{diam}(\Omega))>0$, which is bounded if theses quantities are bounded from above. We denote this constant by $C_A$.
\end{prop}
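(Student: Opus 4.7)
The plan is to adapt the classical Alexandrov--Bakelman--Pucci argument to $L^N$-viscosity solutions with drift $\gamma\in L^\varrho$, $\varrho>N$. I would focus on the subsolution case $\mathcal L_0^+[u]\ge f$ in $\Omega^+$, the supersolution case following by applying it to $-u$. After translating so that $u\le 0$ on $\partial\Omega$, I set $M:=\max_{\overline\Omega}u$ and assume $M>0$, the estimate being trivial otherwise.

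First I would extend $u^+$ by zero to a ball $B\supset\overline\Omega$ of diameter $d=\mathrm{diam}(\Omega)$ and take the concave envelope $\Gamma$ of this extension on $B$. Let $\Gamma^+:=\{x\in\Omega:\Gamma(x)=u(x)\}\subset\Omega^+$ be the upper contact set. A standard geometric lemma gives $D\Gamma(\Gamma^+)\supset B_{M/d}$, so the area formula yields
\[
\Big(\tfrac{M}{d}\Big)^{\!N}|B_1|\;\le\;\int_{\Gamma^+}|\det D^2\Gamma(x)|\,dx.
\]
By Aleksandrov's theorem, $\Gamma$ is twice differentiable almost everywhere with $D^2\Gamma\le 0$. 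Testing the $L^N$-viscosity inequality with the smooth concave paraboloids touching $u$ from above at each point of $\Gamma^+$ (and using the $L^N$--$L^\varrho$ equivalence of viscosity notions recalled in Section \ref{Preliminaries_1}, cf.\ \cite{CCKS}), one passes to the pointwise a.e.\ inequality
\[
\mathcal M^+(D^2u(x))+\gamma(x)|Du(x)|\ge f(x)\qquad\text{on }\Gamma^+,
\]
where $Du,D^2u$ denote the Aleksandrov derivatives, coinciding with those of $\Gamma$ on the contact set. Since the eigenvalues of $D^2u$ are non-positive there, $\mathcal M^+(D^2u)=\alpha\,\mathrm{tr}(D^2u)$, and the AM--GM inequality gives
\[
|\det D^2u(x)|^{1/N}\;\le\;\frac{1}{\alpha N}\bigl(f^-(x)+\gamma(x)|Du(x)|\bigr).
\]

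Inserting this into the previous display and using the geometric bound $|Du|\le C_0M/d$ on $\Gamma^+$, I arrive at
\[
\Big(\tfrac{M}{d}\Big)^{\!N}|B_1|\;\le\;C\,\|f^-\|_{L^N(\Omega)}^{N}\,+\,C\bigl(M/d\bigr)^{N}\|\gamma\|_{L^N(\Gamma^+)}^{N}.
\]
The hard part will be reabsorbing the drift term when $\gamma$ is only in $L^\varrho$: a blunt H\"older estimate $\|\gamma\|_{L^N(A)}\le |A|^{1/N-1/\varrho}\|\gamma\|_{L^\varrho}$ suffices only when $|\Omega|$ is small. To bypass any smallness, I would apply the baseline estimate to superlevel sets $\Omega_t:=\{u>t\}\cap\Gamma^+$ of shrinking measure and iterate in $t$, choosing the partition so that $C\|\gamma\|_{L^N(\Omega_t)}^N$ becomes strictly less than $|B_1|$ and can be transferred to the left-hand side; an equivalent route is the Cabr\'e--Krylov power substitution $w=\phi(u)$ designed to kill the gradient term in a Pucci-type inequality for $w$. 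Either path yields $M\le C_A\|f^-\|_{L^N(\Omega)}$ with $C_A=C_A(N,\alpha,\beta,\|\gamma\|_{L^\varrho},d)$, matching the constant in \cite[Proposition 2.8]{KSite}.
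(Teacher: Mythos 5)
The paper does not actually prove this proposition; it cites \cite[Proposition 2.8]{KSite}, so your sketch must be judged against what a complete proof requires, and there it has two genuine gaps. The first is the passage from the $L^N$-viscosity inequality to the pointwise a.e.\ inequality on the upper contact set. For $L^N$-viscosity solutions with merely measurable $f$ and $\gamma$, touching $u$ from above by a paraboloid at a single point yields no information: the definition (Definition \ref{def Lp-viscosity sol}) only constrains $u$ against $W^{2,\varsigma}$ test functions through a.e.\ inequalities on open sets, and $f$ is only defined up to null sets, so "evaluating" the inequality at contact points is meaningless without further work. Moreover $u$ is just continuous, so it has no Aleksandrov derivatives, and the statement that they ``coincide with those of $\Gamma$ on the contact set'' is not available; one must either regularize by sup-convolution (semiconvex functions do satisfy the inequality a.e.) or approximate the data by continuous functions and pass to the limit via comparison/stability. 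This step is precisely the technical heart of ABP in the $L^p$-viscosity framework and is what \cite{CCKS} and \cite{KSite} actually establish; it cannot be treated as routine.

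The second gap is the absorption of the drift, and neither of your two proposed fixes works as stated. The superlevel-set iteration does not close: the upper contact set of $(u-t)^+$ over $\{u>t\}$ is not contained in a thin slab $\{t<u\le t'\}$ (for concave $u$ it is the whole superlevel set, and these sets are nested and all contain the region around the maximum), so refining the partition in $t$ does not make $\|\gamma\|_{L^N}$ over the relevant contact sets small, and the number of iteration steps is not universally bounded. The substitution $w=\phi(u)$ cannot remove a gradient term with an unbounded $x$-dependent coefficient: completing the square replaces $\gamma(x)|Du|$ by $\varepsilon|Du|^2+\gamma(x)^2/(4\varepsilon)$, and after the exponential change of unknown one needs an ABP estimate with right-hand side $\gamma^2\in L^{\varrho/2}$, which forces $\varrho\ge 2N$ rather than $\varrho>N$. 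The arguments that do work are either the weighted normal map of Aleksandrov (as in Gilbarg--Trudinger, Theorem 9.1), which handles $\gamma\in L^N$ in one shot with a constant of the form $C\,\mathrm{diam}(\Omega)\,(\exp(C\|\gamma\|_{L^N}^N)-1)^{1/N}$ applied to the envelope inequality, or the iterated comparison function method of \cite{KSite}, which localizes to balls of radius $r$ where $\|\gamma\|_{L^N(B_r)}\le r^{1-N/\varrho}\|\gamma\|_{L^\varrho(\Omega)}$ is small --- this is exactly where the hypothesis $\varrho>N$ enters. As written, your outline reproduces the classical geometric skeleton but leaves both the viscosity-to-pointwise step and the drift absorption unresolved, so it does not yet constitute a proof.
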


\begin{rmk}\label{rmk ABP proper}
Recall that $F$ is called proper if $F (x,r,\xi, X)\le F(x,s,\xi,X)$ for $r\ge s$. Therefore, the previous statement can be applied to proper operators, since for instance 
\[
\mathcal{L}^+_0[u]\ge F(x,0,Du,D^2u)\ge F(x,u,Du,D^2u)=F[u]\;\; \textrm{ in }\Omega^+.
\]
\end{rmk}

A consequence of ABP is the following result on the stability of viscosity solutions (see \cite[Theorem 4]{arma2010}, which is based on \cite[Theorem 3.8]{CCKS}).

\begin{prop}[Stability]\label{stability}
	Let $F$, $F_k$ be operators satisfying \eqref{SC}, $f, \, f_k\in L^\varrho(\Omega)$. Let $u_k\in C(\Omega)$ be a viscosity solution of
	$F_k[u_k]\geq f_k(x) $ in $\Omega$  ({resp}.\ $\leq f(x)$) for all $k\in \n$.
	Suppose  $u_k\to u$ in $L_{\mathrm{loc}}^\infty  (\Omega)$ as $k\to \infty$ and that, for each ball $B\subset\subset \Omega$ and $\varphi\in W^{2,\varrho}(B)$, setting
	\begin{align*}
	g_k(x):=F_k(x,u_k,D\varphi,D^2\varphi)-f_k(x) \,\,\,\,\, \text{and}\,\,\,\,
	g(x):=F(x,u,D\varphi,D^2\varphi)-f(x),
	\end{align*}
	we have
	$ \| (g_k-g)^+\|_{L^\varrho(B)} \to 0$ as $ k\to \infty.$ (resp. $(\| (g_k-g)^-\|_{L^\varrho(B)}) \to 0 $ as $k\to 0$).
Then $u$ is a viscosity solution of $F[u]\geq f(x)$ (resp.\ $\leq f(x)$) in $\Omega$.
\end{prop}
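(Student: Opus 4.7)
The plan is to argue by contradiction, following the classical strategy for $L^p$--viscosity solutions due to Caffarelli--Crandall--Kocan--\'Swi\k{e}ch. Suppose $u$ is not an $L^N$--viscosity subsolution of $F[u]\ge f$ in $\Omega$. By Definition \ref{def Lp-viscosity sol}, there exist a test function $\varphi\in W^{2,\varrho}_{\mathrm{loc}}(\Omega)$, an open $\mathcal{O}\subset \Omega$, a point $x_0\in \mathcal{O}$, and $\varepsilon>0$ such that $u-\varphi$ attains a local maximum at $x_0$ while $g(x)=F(x,u,D\varphi,D^2\varphi)-f(x)\le -\varepsilon$ for a.e.\ $x$ in a ball $B=B_{r}(x_0)\subset\subset \mathcal{O}$. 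First I would replace $\varphi$ by $\varphi_\delta(x):=\varphi(x)+\delta|x-x_0|^2$ to turn the maximum into a strict one; using \eqref{SC} to control the perturbation of the Hessian term, for $\delta>0$ small enough we still have $g_\delta:=F[\varphi_\delta]-f\le -\varepsilon/2$ a.e.\ in $B$. Renaming $\varphi_\delta$ as $\varphi$ and $g_\delta$ as $g$, we may assume the maximum is strict at $x_0$.

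Next, by the uniform convergence $u_k\to u$ on $\overline{B}$, for $k$ large the function $u_k-\varphi$ attains its maximum over $\overline{B}$ at an interior point $x_k\to x_0$. To apply the viscosity subsolution property of $u_k$, I need a test function $\widetilde{\varphi}_k$ satisfying $F_k[\widetilde{\varphi}_k]-f_k\le -\eta$ a.e.\ in $B$ for some fixed $\eta>0$, such that $u_k-\widetilde{\varphi}_k$ still has an interior maximum. The obstacle is that the hypothesis gives only $\|(g_k-g)^+\|_{L^\varrho(B)}\to 0$, which does not translate into a pointwise a.e.\ control. I would bypass this by introducing a corrector $\psi_k\in W^{2,\varrho}(B)\cap C(\overline{B})$ solving the auxiliary Dirichlet problem
\[
\mathcal{L}^+[\psi_k]=-(g_k-g)^+\quad \text{in } B,\qquad \psi_k=0\quad \text{on } \partial B,
\]
which is solvable by the $W^{2,\varrho}$ theory for Pucci operators (on the small ball $B$, where the MP for $\mathcal{L}^+$ holds, cf.\ Theorem \ref{MP small domain} and the ABP of Proposition \ref{ABPproper} applied to the proper piece of $\mathcal{L}^+$). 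The ABP-type bound then yields
\[
\|\psi_k\|_{L^\infty(B)}\le C\,\|(g_k-g)^+\|_{L^\varrho(B)} \longrightarrow 0.
\]

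Finally I would set $\widetilde{\varphi}_k:=\varphi+\psi_k$. Since $\psi_k\to 0$ uniformly on $\overline{B}$ and $u-\varphi$ has a strict local maximum at $x_0$, by a standard perturbation argument $u_k-\widetilde{\varphi}_k$ still attains an interior local maximum at some $\widetilde{x}_k\in B$ with $\widetilde{x}_k\to x_0$. The structural condition \eqref{SC} gives
\[
F_k[\widetilde{\varphi}_k]-f_k \;\le\; F_k[\varphi]-f_k+\mathcal{L}^+[\psi_k] \;=\; g_k-(g_k-g)^+\;\le\; g\;\le\;-\varepsilon/2 \qquad \text{a.e.\ in } B,
\]
using $g_k-(g_k-g)^+=\min\{g_k,g\}\le g$. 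This contradicts the viscosity subsolution property of $u_k$ for $F_k[u_k]\ge f_k$, completing the proof; the supersolution case is symmetric, replacing $\varphi$ by a test function from below and using $\mathcal{L}^-$ together with $(g_k-g)^-$ in the corrector. The main difficulty, as anticipated, is constructing $\psi_k$ with the ABP bound when $\mathcal{L}^+$ carries the unbounded zero-order term $\vartheta$; this is handled by choosing the ball $B$ small enough so that the MP for $\mathcal{L}^+$ holds (alternatively, one can absorb the $\vartheta|\psi_k|$ term on a small enough ball by Proposition \ref{ABPproper} applied to $\mathcal{L}^+_0$).
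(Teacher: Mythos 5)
The paper does not reprove this proposition; it invokes \cite[Theorem 4]{arma2010}, itself modelled on \cite[Theorem 3.8]{CCKS}, and your corrector scheme (negate the definition, solve an extremal Dirichlet problem whose ABP bound converts the $L^\varrho$-smallness of $(g_k-g)^+$ into uniform smallness, then transfer the maximum) is exactly that standard argument. The architecture is therefore right, and your closing remark correctly identifies that the corrector should really be driven by $\mathcal{L}_0^+$ rather than $\mathcal{L}^+$: since the zero-order entry of $F_k$ is evaluated at $u_k$ in both $g_k$ and in $F_k(x,u_k,D\widetilde{\varphi}_k,D^2\widetilde{\varphi}_k)$, condition \eqref{SC} only produces the term $\mathcal{M}^+(D^2\psi_k)+\gamma(x)|D\psi_k|$, so taking $\mathcal{L}_0^+[\psi_k]=-(g_k-g)^+$ gives a proper operator for which solvability, $W^{2,\varrho}$ regularity and Proposition \ref{ABPproper} hold on every ball, with no eigenvalue or small-ball caveat.

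The genuine gap is the strict-maximum step. You claim that replacing $\varphi$ by $\varphi_\delta=\varphi+\delta|x-x_0|^2$ still gives $g_\delta\le -\varepsilon/2$ a.e.\ in $B$ for small $\delta$, "controlling the perturbation of the Hessian term". But \eqref{SC} yields
\[
F(x,u,D\varphi_\delta,D^2\varphi_\delta)\le F(x,u,D\varphi,D^2\varphi)+2N\beta\delta+2\delta\,\gamma(x)\,|x-x_0|,
\]
and in this paper $\gamma$ is only in $L^\varrho(\Omega)$, so the first-order error $2\delta\gamma(x)|x-x_0|$ admits no pointwise bound: the asserted a.e.\ inequality is false in general, no matter how small $\delta$ is. (In \cite{CCKS} this step is harmless because the drift coefficient there is a constant; the unbounded drift is precisely the delicate point in the present setting.) The fix stays within your scheme but must be quantitative: keep the constant Hessian error pointwise (choose $\delta$ with $2N\beta\delta\le\varepsilon/4$) and put the drift error into the corrector, i.e.\ solve $\mathcal{L}_0^+[\psi_k]=-\bigl[(g_k-g)^++2\delta\,\gamma(x)|x-x_0|\bigr]$ in $B_r(x_0)$, $\psi_k=0$ on $\partial B_r$. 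Then $\|\psi_k\|_\infty$ contains a term proportional to $\delta$ which does \emph{not} vanish as $k\to\infty$, so you must compare it with the strictness margin $\delta r^2$: since the ABP constant of Proposition \ref{ABPproper} on $B_r$ is $O(r)$ (with a factor depending only on $N,\alpha,\beta,\|\gamma\|_{L^\varrho(\Omega)}$) and $\|\gamma\|_{L^N(B_r)}\to 0$ as $r\to 0$, one can first fix $r$ small so that this $\delta$-term is at most $\delta r^2/2$, and then take $k$ large so that $2\|u_k-u\|_{L^\infty(B_r)}+C_A\|(g_k-g)^+\|_{L^N(B_r)}<\delta r^2/2$. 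With this bookkeeping the interior maximum of $u_k-\varphi_\delta-\psi_k$ is guaranteed and your contradiction goes through; without it, the "standard perturbation argument" you invoke is not justified in the unbounded-coefficient setting the proposition is stated for.
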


Next we recall some important results concerning the strong maximum principle and the Hopf lemma from \cite{B2016}.
We often refer to them simply by \textit{SMP} and \textit{Hopf} along the text.

\begin{prop}[SMP]\label{SMP}
	Let $\Omega$ be a $C^{1,1}$ domain and $u$ a viscosity solution of $\mathcal{L}^-[u]\leq 0$, $u\geq 0$ in $\Omega$, where $\gamma, \vartheta\in L^\varrho_+(\Omega)$. Then either $u>0$ in $\Omega$ or $u\equiv 0$ in $\Omega$.
\end{prop}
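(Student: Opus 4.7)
The plan is to argue by contradiction via the classical barrier technique combined with the ABP estimate (Proposition~\ref{ABPproper}). Suppose $u\not\equiv 0$ yet $u(x_0)=0$ at some $x_0\in\Omega$. Then $D:=\{u>0\}$ is a nonempty proper open subset of $\Omega$, and by connectedness $\partial D\cap\Omega\neq\emptyset$. A standard geometric argument (take the largest ball centered at a point of $D$ contained in $D$ before hitting $\partial\Omega$ or $\partial D$) produces $y\in D$ and $R>0$ with $B_R(y)\subset D$, $u>0$ in $B_R(y)$, and a touching point $z_0\in\partial B_R(y)\cap\Omega$ where $u(z_0)=0$.

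On the annulus $A:=B_R(y)\setminus\overline{B_{R/2}(y)}$ I would employ the classical barrier $w(x):=e^{-\lambda|x-y|^2}-e^{-\lambda R^2}$, which is smooth, non-negative on $\overline{A}$, vanishes on $\partial B_R(y)$, and is bounded below by a positive constant on $\partial B_{R/2}(y)$. A direct computation of the eigenvalues of $D^2w$ gives, for $|x-y|\ge R/2$,
\[
\mathcal{M}^-(D^2 w)\ge (\alpha\lambda^2 R^2-C_1\lambda)\,e^{-\lambda|x-y|^2},\quad |Dw|\le 2\lambda R\,e^{-\lambda|x-y|^2},\quad 0\le w\le e^{-\lambda|x-y|^2},
\]
where $C_1=C_1(\alpha,\beta,N)$. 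Consequently, a.e.\ in $A$,
\[
\mathcal{L}^-[w]\,\ge\,\bigl(\alpha\lambda^2 R^2-C_1\lambda-2\lambda R\,\gamma(x)-\vartheta(x)\bigr)\,e^{-\lambda|x-y|^2},
\]
and for $\lambda$ large---chosen in terms of $R$, $N$, $\alpha$, $\beta$, $\|\gamma\|_{L^\varrho(A)}$, $\|\vartheta\|_{L^\varrho(A)}$---the right-hand side is strictly positive in the $L^\varrho$-integrated sense required for the subsequent ABP comparison.

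Next, I would compare $u$ with $\epsilon w$ on $A$. On $\partial B_{R/2}(y)\subset D$, compactness gives $u\ge m>0$, so $u\ge\epsilon w$ there for $\epsilon$ small; on $\partial B_R(y)$, $w=0\le u$. Since $\mathcal{L}^-$ is concave and positively $1$-homogeneous in $(r,\xi,X)$, superadditivity yields $\mathcal{L}^-[u-\epsilon w]\le\mathcal{L}^-[u]-\mathcal{L}^-[\epsilon w]\le 0$ in the viscosity sense; on the (possibly empty) set $\{u<\epsilon w\}\subset A$ this reduces to $\mathcal{L}_0^-[u-\epsilon w]\le 0$, and Proposition~\ref{ABPproper} applied to $v:=u-\epsilon w$ yields $u\ge\epsilon w$ throughout $A$. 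Extending $\tilde w(x):=\epsilon(e^{-\lambda|x-y|^2}-e^{-\lambda R^2})$ smoothly to $\mathbb{R}^N$, we have $\tilde w\le u$ in a full neighborhood of $z_0$ in $\Omega$ (inside $B_R(y)$ by the comparison, outside since $\tilde w<0\le u$) with equality at $z_0$. Thus $u-\tilde w$ attains a local minimum at $z_0$, while the same barrier computation gives $\mathcal{L}^-[\tilde w]>0$ in a neighborhood of $z_0$ in the required integrated sense---contradicting the $L^N$-viscosity supersolution property of $u$ from Definition~\ref{def Lp-viscosity sol}.

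The main technical obstacle is engineering the barrier estimate to hold strictly in the integrated sense demanded by the $L^N$-viscosity definition in spite of the possibly unbounded $\gamma,\vartheta\in L^\varrho_+(\Omega)$: for bounded coefficients a pointwise inequality $\mathcal{L}^-[w]\ge\varepsilon_0>0$ follows trivially for $\lambda$ large, whereas in our setting one must quantitatively absorb $\|\gamma\|_{L^\varrho(A)}$ and $\|\vartheta\|_{L^\varrho(A)}$ into the $\alpha\lambda^2 R^2$ term, possibly by shrinking $A$ to a thinner annulus where these $L^\varrho$-norms become small.
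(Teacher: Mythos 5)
Your proposal is the classical exponential-barrier proof, and it founders exactly where you yourself flag the ``main technical obstacle'': for $\gamma,\vartheta\in L^\varrho_+(\Omega)$ the function $\mathcal{L}^-[w]$ is \emph{not} nonnegative a.e.\ on any annulus, however large $\lambda$ is and however thin you take $A$, because the set where $2\lambda R\,\gamma+\vartheta$ exceeds $\alpha\lambda^2R^2-C_1\lambda$ can have positive measure in every ball (shrinking $A$ makes $\|\gamma\|_{L^\varrho(A)},\|\vartheta\|_{L^\varrho(A)}$ small, but never restores a pointwise bound). This breaks the argument in two distinct places. First, in the comparison step: what you actually get from superadditivity is $\mathcal{L}_0^-[u-\epsilon w]\le -\epsilon\,\mathcal{L}^-[w]$ on $\{u<\epsilon w\}$, whose right-hand side has a nontrivial positive part; Proposition~\ref{ABPproper} then only yields $u\ge \epsilon w-\epsilon C_A\|(\mathcal{L}^-[w])^-\|_{L^N(A)}$, i.e.\ an approximate comparison, not $u\ge\epsilon w$, and the subsequent touching of $u$ by $\epsilon w$ at $z_0$ (where both vanish) is lost. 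Second, and independently, even if $u\ge\epsilon w$ in $A$ were granted, the final contradiction requires, by Definition~\ref{def Lp-viscosity sol}, an inequality $\mathcal{L}^-(x,u(x),D\psi(x),D^2\psi(x))\ge\varepsilon$ \emph{a.e.} in a neighborhood of $z_0$ for the smooth test function $\psi=\epsilon w$; since $\gamma(x)|D\psi|$ and $\vartheta(x)|u(x)|$ are unbounded on a set of positive measure in every such neighborhood, this strict a.e.\ inequality fails and no contradiction with the $L^N$-viscosity supersolution property is obtained. So the gap is genuine: the barrier computation that is ``trivial for $\lambda$ large'' with bounded coefficients is precisely the content that must be replaced when the coefficients are merely $L^\varrho$.

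For comparison, the paper does not reprove the result at all: it invokes \cite{B2016}, where the SMP (and Hopf) with unbounded drift is established through the Krylov--Safonov machinery for $L^\varrho$ ingredients (growth lemmas / weak Harnack-type estimates rather than explicit pointwise barriers), and then observes in the remark following the statement that the zero-order term is harmless here because $u\ge0$ makes $\mathcal{L}^-=\mathcal{L}_0^--\vartheta(x)u$ coercive, so the same proofs apply (see also \cite{BSvazquez} for weak Harnack with an unbounded coercive term). If you want a self-contained argument in this setting, the natural route is the weak Harnack inequality for $\mathcal{L}^-$ applied on small balls around a zero of $u$, which immediately forces $u\equiv0$ on a neighborhood and then on all of $\Omega$ by connectedness; a single explicit exponential barrier cannot substitute for it.
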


\begin{prop}[Hopf]\label{Hopf}
	Let $\Omega$ be a $C^{1,1}$ domain and $u$ a viscosity solution of $\mathcal{L}^-[u]\leq 0$, $u> 0$ in $\Omega$, where $\gamma, \vartheta\in L^\varrho_+(\Omega)$. If $u(x_0)=0$ for some $x_0\in\partial\Omega$, then $\varliminf_{t\to 0^+} {u(x_0+t\nu)}/{t} >0$, where $\nu$ is the interior unit normal vector to $\partial\Omega$ at $x_0$.
\end{prop}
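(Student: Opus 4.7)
The plan is to employ the classical barrier-function argument, carefully adapted to handle the $L^\varrho$ unboundedness of the lower-order coefficients. Since $\Omega$ is $C^{1,1}$, at $x_0$ there is an interior ball $B_R(y)\subset\Omega$ tangent to $\partial\Omega$ at $x_0$, with inward unit normal $\nu=(y-x_0)/R$. Because the conclusion is local and the hypotheses are stable under shrinking $R$, I am free to take $R$ later as small as needed; this flexibility will be crucial for coping with unbounded $\gamma,\vartheta$.

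On the annulus $A=B_R(y)\setminus\overline{B_{R/2}(y)}$ I would use the standard radial barrier $w(x)=e^{-k|x-y|^2}-e^{-kR^2}$ with $k$ large. A direct eigenvalue computation of $D^2w$ yields, in $A$, the bounds $\mathcal{M}^-(D^2w)\geq(\alpha k^2R^2-C(\alpha,\beta,N)k)e^{-k|x-y|^2}$, $|Dw|\leq 2kRe^{-k|x-y|^2}$, and $0<w\leq e^{-k|x-y|^2}\leq 1$. Setting $h(x):=2kR\,\gamma(x)+\vartheta(x)\in L^\varrho(A)$ and choosing $k$ large depending only on $\alpha,\beta,N$, one obtains the strong (and hence viscosity) inequality
\begin{equation*}
\mathcal{L}^-[w](x)\;\geq\; e^{-k|x-y|^2}\bigl(\tfrac{1}{2}\alpha k^2R^2 - h(x)\bigr)\quad\text{a.e.\ in }A.
\end{equation*}

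The central step is to upgrade this to the comparison $u\geq \delta w$ in $A$ for some $\delta>0$. Since $u>0$ on the compact inner sphere, $m:=\min_{\partial B_{R/2}(y)}u>0$; fixing $\delta\leq m/\max_{\overline A}w$ and using $w=0$ on $\partial B_R(y)$ gives $u\geq \delta w$ on $\partial A$. To propagate this inequality inside, I would apply the ABP estimate of Proposition~\ref{ABPproper} to $z:=\delta w-u$: the standard difference relation for the Pucci extremals leads, in $\{z>0\}$, to $\mathcal{M}^+(D^2 z)+\gamma|Dz|+\vartheta z\geq-\delta h$, and after absorbing the zero-order term $\vartheta z$ by requiring $R$ small so that $\|\vartheta\|_{L^\varrho(A)}$ is small, ABP yields $\sup_A z^+\leq C_A\,\delta\,\|h\|_{L^N(A)}$. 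Shrinking $R$ further so that $\|h\|_{L^N(A)}$ is small enough (using $\varrho>N$ and absolute continuity of the integral) makes the right-hand side strictly smaller than the positive gap already guaranteed on $\partial A$, forcing $z\leq 0$, i.e.\ $u\geq \delta w$ throughout $A$.

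Finally, from $u(x_0)=w(x_0)=0$,
\begin{equation*}
\varliminf_{t\to 0^+}\frac{u(x_0+t\nu)}{t}\;\geq\;\delta\,\partial_\nu w(x_0)\;=\;2kR\delta\,e^{-kR^2}\;>\;0,
\end{equation*}
which is the asserted Hopf estimate. The main obstacle is precisely the comparison step: in the unbounded-coefficient setting one cannot use a clean pointwise supersolution argument as in the classical bounded-$\gamma,\vartheta$ proof, and the substitute is the smallness of the $L^\varrho$ mass of $\gamma,\vartheta$ on small annuli, which renders the ABP forcing subcritical with respect to the gap between $\delta w$ and $u$ on $\partial A$.
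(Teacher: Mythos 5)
Your barrier computation and the viscosity inequality for $z=\delta w-u$ are fine, but the comparison step --- the heart of the proof --- does not work as written, for two separate reasons. First, the forcing you feed into ABP is not small: $h=2kR\,\gamma+\vartheta$ carries the factor $kR\sim R^{-1}$ (you must take $k\gtrsim R^{-2}$ to dominate the $-Ck$ term in the barrier inequality), so $\|h\|_{L^N(A)}\gtrsim R^{-1}\|\gamma\|_{L^N(A)}\sim R^{-N/\varrho}\,\|\gamma\|_{L^\varrho(A)}$, which in general does not tend to $0$ as $R\to 0$: it stays of order one already for bounded $\gamma$, and can blow up for genuinely unbounded $\gamma\in L^\varrho$. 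Absolute continuity of the integral does not compensate the $R^{-1}$. Second, and more fundamentally, even if $\|h\|_{L^N(A)}$ were small, ABP applied on $\{z>0\}$ only yields $\sup_A z^+\le \max_{\partial A}z^+ + C\,\delta\,\|h\|_{L^N(A)}$; since $\max_{\partial A}z^+=0$ (there is no usable ``gap'': at $x_0$ one has $z(x_0)=0$ exactly), this gives $u\ge \delta w - C\delta\|h\|_{L^N(A)}$, an estimate with an additive error that is useless precisely near $x_0$, where $w$ vanishes linearly. A strictly positive margin on part of $\partial A$ never enters the ABP bound, so you cannot conclude $z\le 0$. This is not a repairable detail of your scheme: when $\gamma$ is merely $L^\varrho$ no fixed smooth radial barrier is a pointwise subsolution on the annulus, and ABP with nonzero forcing never upgrades to an exact ordering.

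For comparison, the paper does not attempt a barrier proof at all: it invokes Sirakov's boundary estimates in \cite{B2016} (proved there for $\vartheta\equiv 0$ via quantitative boundary growth lemmas and weak Harnack-type arguments, i.e.\ iterated measure-theoretic estimates rather than explicit barriers), observing that the same proofs apply to the coercive operator $\mathcal{L}^-=\mathcal{L}^-_0-\vartheta(x)|\cdot|$ because $u\ge 0$; see also \cite{BSvazquez}. A self-contained argument would have to replace your explicit exponential barrier by that machinery, or by a barrier defined as the solution of an auxiliary Dirichlet problem whose linear growth away from the boundary is itself established through such boundary Harnack/Lipschitz estimates.
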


In \cite{B2016}, Propositions \ref{SMP} and \ref{Hopf} are proved for $\vartheta\equiv 0$, but the same proofs there work for any coercive operator, in particular for $\mathcal{L}^-=\mathcal{L}_0^--\vartheta (x)u$\, since $u\ge 0$; see also \cite{BSvazquez}.

To conclude we recall $C^{1,\alpha}$ regularity estimates for equations with unbounded drift from \cite{regularidade}. Since Theorem 1 therein was stated for bounded zero order terms, we briefly explain how to deduce them also for merely $L^\varrho$-integrable ones.

\begin{prop}
\label{C1,alpha regularity estimates geral}
Assume $F$ satisfies \eqref{SC}, $f  \in L^\varrho (\Omega)$, $\varrho>N$, and $\Omega\subset\rN$ is a bounded domain.
Let $u$ be a viscosity solution of $F[u]=f(x)$ in $\Omega$. Then, there exists $\alpha\in (0,1)$ and $\theta_0=\theta_0 (\alpha)$, depending on $N,\varrho,\lambda,\Lambda,\|\gamma\|_{L^\varrho(\Omega)}$, such that if \eqref{H continuity} holds for all $r\leq \min \{ r_0, \mathrm{dist} (x,\partial\Omega)\}$, for some $r_0>0$ and for all $x \in \Omega$, this implies that $u\in C^{1,\alpha}_{\mathrm{loc}} (\Omega)$ and for  any subdomain $\Omega^\prime \subset\subset \Omega$,
	\begin{align*}
	\|u\|_{C^{1,\alpha}(\overline{\Omega^\prime})} \leq C \,\{\, \| u \|_{L^{\infty} (\Omega)}  + \|f \|_{L^p (\Omega)} \}
	\end{align*}
where $C$ depends on $\,r_0,N,\varrho,\lambda,\Lambda,\alpha,  \| \gamma \|_{L^\varrho (\Omega)},\| \vartheta \|_{L^\varrho (\Omega)},\mathrm{diam} (\Omega)$, $\mathrm{dist} (\Omega^\prime,\partial\Omega)$.
	\vspace{0.02cm}
	
If in addition, $\partial\Omega\in C^{1,1}$ and $u\in C(\overline{\Omega})\cap C^{1,\tau} (\partial \Omega )$, then there exists $\alpha\in (0,\tau)$ and $\theta_0 =\theta_0 (\alpha)$, depending on $N,\varrho,\lambda , \Lambda , \|\gamma\|_{L^\varrho(\Omega)}$, so that if \eqref{H continuity} holds for some $r_0>0$ and for all $x \in \overline{\Omega}$, this implies that $u\in C^{1,\alpha}(\overline{\Omega})$ and 
	\begin{align*}
	\|u\|_{C^{1,\alpha}(\overline{\Omega})} \leq C\, \{ \,\| u \|_{L^{\infty} (\Omega)} + \|f \|_{L^p (\Omega)}  + \|u\|_{C^{1,\tau} (\partial\Omega)} \}
	\end{align*}
	where $C$ depends on $\,r_0,n,p,\lambda,\Lambda,\alpha$,$\| \gamma \|_{L^\varrho (\Omega)},\| \vartheta \|_{L^\varrho (\Omega)},\mathrm{diam} (\Omega), \partial\Omega$.
\end{prop}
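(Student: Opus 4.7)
The plan is to extend the $C^{1,\alpha}$ estimates of \cite[Theorem 1]{regularidade}, originally stated for operators with bounded zero-order coefficient, to the present setting with $\vartheta \in L^\varrho(\Omega)$, $\varrho>N$. The crucial observation is that once one knows $u \in L^\infty(\Omega)$, the product $\vartheta(x)\,|u(x)|$ remains an $L^\varrho$ function, with
\[
\|\vartheta\,|u|\|_{L^\varrho(\Omega)} \;\le\; \|\vartheta\|_{L^\varrho(\Omega)}\,\|u\|_{L^\infty(\Omega)}.
\]
Thus the zero-order contribution can be absorbed into the right-hand side as an $L^\varrho$ source, restoring the framework (bounded zero-order term, $L^\varrho$ right-hand side) of \cite{regularidade}.

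Concretely, I would first use \eqref{SC} together with the ABP inequality (Proposition \ref{ABPproper}) applied to the Pucci differential inequalities
\begin{align*}
\mathcal{M}^+(D^2 u) + \gamma(x)|Du| &\;\ge\; f(x) - \vartheta(x)\,\|u\|_{L^\infty(\Omega)},\\
\mathcal{M}^-(D^2 u) - \gamma(x)|Du| &\;\le\; f(x) + \vartheta(x)\,\|u\|_{L^\infty(\Omega)},
\end{align*}
which $u$ satisfies in the $L^\varrho$-viscosity sense as a direct consequence of $F[u]=f$ and \eqref{SC}, to obtain an a priori $L^\infty$ bound for $u$ in terms of $\|u\|_{L^\infty(\partial\Omega)}$, $\|f\|_{L^\varrho(\Omega)}$ and the data. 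Next, I would run the Caffarelli/Krylov-Safonov perturbation scheme underlying the proof of \cite[Theorem 1]{regularidade} directly on the equation $F[u]=f$. At each comparison step, the $\vartheta\,|u|$ contribution enters only through an ABP-type estimate whose source is bounded in $L^\varrho$ by $\|\vartheta\|_{L^\varrho}\,\|u\|_{L^\infty}$; meanwhile, the oscillation hypothesis \eqref{H continuity} on $F$ (which involves only $X$ at $r=\xi=0$) is unaffected by the replacement of $\|\vartheta\|_{L^\infty}$ by $\|\vartheta\|_{L^\varrho}$. This yields the interior $C^{1,\alpha}$ bound with the announced dependence of constants.

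For the global estimate under $\partial\Omega\in C^{1,1}$ and $u|_{\partial\Omega}\in C^{1,\tau}(\partial\Omega)$, I would apply the same reasoning combined with the boundary version of \cite[Theorem 1]{regularidade}, whose proof again relies only on ABP-type comparisons with $L^\varrho$ sources, now picking up the extra contribution $\|u\|_{C^{1,\tau}(\partial\Omega)}$. The main, and essentially only, technical point is to carefully trace the dependence of each constant on $\|\vartheta\|_{L^\varrho(\Omega)}$ in place of $\|\vartheta\|_{L^\infty(\Omega)}$ throughout the arguments of \cite{regularidade}; no new idea is required, since every estimate there depends only on the $L^\varrho$ integrability of sources and coefficients, and once $u$ is known to be in $L^\infty$ the unbounded zero-order term is transparently treated as a further $L^\varrho$ source of size $\|\vartheta\|_{L^\varrho}\|u\|_{L^\infty}$.
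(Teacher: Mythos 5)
Your main argument is essentially the paper's own proof: the paper likewise obtains the statement by revisiting the proof of \cite[Theorem 1]{regularidade} and observing that the unbounded zero-order contribution allowed by \eqref{SC} enters the Caffarelli-type comparison/ABP steps only through $L^\varrho$ quantities of size $\|\vartheta\|_{L^\varrho(\Omega)}\|u\|_{L^\infty(\Omega)}$; concretely, the paper replaces the rescaling of \cite[Remark 3.4]{regularidade} by the normalization $\widetilde u(x)=u(\sigma x)/W$ with $W=N(0)$ inside \cite[Claim 3.2]{regularidade}, using \eqref{H continuity} for $\beta_F$ in place of $\bar\beta_F$, which is the precise implementation of your ``trace the dependence on $\|\vartheta\|_{L^\varrho}$'' (note the rescaled weight has norm $\sigma^{2-N/\varrho}\|\vartheta\|_{L^\varrho}$, so it is indeed subordinate in the iteration). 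One preliminary step of yours should be dropped, however: the claimed a priori bound for $\|u\|_{L^\infty(\Omega)}$ in terms of $\|u\|_{L^\infty(\partial\Omega)}$ and $\|f\|_{L^\varrho(\Omega)}$ does not follow from ABP, since the Pucci inequalities you wrote carry $\vartheta(x)\|u\|_{L^\infty(\Omega)}$ on their right-hand sides, so ABP only gives $\sup_\Omega u\le \sup_{\partial\Omega}u^+ + C\|f\|_{L^N(\Omega)}+C\|\vartheta\|_{L^N(\Omega)}\|u\|_{L^\infty(\Omega)}$, which does not close unless $\|\vartheta\|_{L^N}$ is small; and no such bound can hold for general nonproper $F$ satisfying \eqref{SC} (principal eigenfunctions have $f\equiv 0$, $u=0$ on $\partial\Omega$, $u\not\equiv 0$). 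Fortunately this step is unnecessary: the estimates in the proposition have $\|u\|_{L^\infty(\Omega)}$ on the right-hand side, so it suffices to treat $\vartheta|u|$ as an $L^\varrho$ source of that size inside the perturbation scheme, exactly as in the rest of your argument and in the paper's proof.
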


\begin{proof} We first observe that our structural hypothesis \eqref{SC} takes into account a Lipschitz modulus of continuity as the zero order term. Moreover, when $\mu=0$ in \cite[Theorem 1]{regularidade}, one can perform a simpler rescaling of variable $W=N(0)$ as in \cite[Remark 3.4]{regularidade}, but instead we use directly \eqref{H continuity} (for $\beta_F$ in place of $\bar{\beta}_F$ there) and set $\widetilde{u}(x):=\frac{u(\sigma x)}{W}$ in \cite[Claim 3.2]{regularidade}. This allows us to achieve the regularity desired for $\vartheta\in L^\varrho(\Omega)$, with the estimates depending on $\|\vartheta\|_{L^\varrho(\Omega)}$.
\end{proof}

\subsection{Principal scalar eigenvalues for proper operators}

In the scalar case, for an operator $F$ satisfying \eqref{SC}, \eqref{H homogeneity}, we set
\begin{center}
	$\lambda_1^\pm\,(F(\vartheta))=\sup\left\{ \lambda \in \mathbb{R}\, , \, \Phi^\pm_\lambda\neq \emptyset\right  \},  $
\end{center}
where\vspace{-0.5cm}
\begin{align*}
&\Phi^+_\lambda=\left\{ \phi:\ \phi>0 \textrm{ in }\Omega,\; 	F[\phi] + \lambda\vartheta (x) \phi 
\leq 0 \textrm{ in }\Omega \right\},\\
&\Phi^-_\lambda=\left\{\phi:\ \phi<0 \textrm{ in }\Omega,\; 
F[\phi] +\lambda \vartheta (x)\phi 
\geq 0 \textrm{ in }\Omega \right\}.
\end{align*}
Our goal is to show that these suprema are achieved accordingly to Definition \ref{notation} \eqref{notation lambda1 scalar}, i.e.\ there exist eigenfunctions $u^\pm$ such that $-F[u]=\lambda_1^\pm u^\pm$ in $\Omega$ -- for instance when $F=F_i$, $ i=1,2$.

The first step is to deduce it, in light of \cite{regularidade}, for proper operators $F$ with unbounded drift and weight for which it holds \eqref{SC}, \eqref{H homogeneity}, \eqref{H continuity}, and \eqref{H strong}.
	
\begin{prop}\label{prop existence eigenv proper}
Let $\Omega\subset\rN$ be a bounded $C^{1,1}$ domain, $\tau\in L^\varrho(\Omega)$, $\tau\gneqq 0$, $\varrho>n$, where $F$ is a proper operator satisfying \eqref{SC}, \eqref{H homogeneity}, \eqref{H continuity}, and \eqref{H strong}, for $\gamma,\vartheta \in L^\varrho_+(\Omega)$. Then $\lambda_1^\pm>0$ and $F$ has two signed eigenfunctions $\varphi_1^\pm\in C^{1,\alpha}(\overline{\Omega})$ so that
		\begin{align*} 
\textstyle		F[\varphi_1^\pm]+\lambda_1^\pm \tau (x) \varphi_1^\pm =0 , \quad \pm \varphi_1^\pm >0 \textrm{ in } \Omega, \quad	\varphi_1^\pm = 0 \textrm{ on }\partial\Omega, \quad \max_{\overline{\Omega}}\,(\pm \varphi_1^\pm) =1.
		\end{align*}
	\end{prop}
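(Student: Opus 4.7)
The plan is to follow the Berestycki--Nirenberg--Varadhan variational characterization adapted to the fully nonlinear setting as in \cite{regularidade}, exploiting properness of $F$ so that ABP (Proposition \ref{ABPproper}, via Remark \ref{rmk ABP proper}) applies directly to $F$ itself, not only to its recession $F^*$. I focus on $\lambda_1^+$; the construction of $\varphi_1^-$ is symmetric. Throughout, let $\psi \in E_\varrho$ denote the unique viscosity/strong solution of $F[\psi] = -\tau$ in $\Omega$, $\psi = 0$ on $\partial \Omega$, whose existence and positivity in $\Omega$ are granted by \eqref{H strong}, properness, and ABP.

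\textbf{Step 1: $\lambda_1^+ > 0$.} I compute $F[\psi] + \lambda \tau \psi = -\tau(1 - \lambda \psi)$, which is $\leq 0$ whenever $\lambda \leq 1/\|\psi\|_\infty$. Hence $\psi \in \Phi_\lambda^+$ for all such $\lambda$, so $\lambda_1^+ \geq 1/\|\psi\|_\infty > 0$.

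\textbf{Step 2: Solvability of approximating problems.} For $\lambda \in (0,\lambda_1^+)$ I solve
\begin{equation*}
F[u_\lambda] + \lambda \tau(x) u_\lambda = -\tau(x) \text{ in } \Omega, \qquad u_\lambda = 0 \text{ on } \partial \Omega,
\end{equation*}
and claim $u_\lambda > 0$ and $u_\lambda \in E_\varrho$. The key ingredient is that, for any $\lambda < \lambda_1^+$, the operator $u \mapsto F[u] + \lambda \tau u$ satisfies the maximum principle: if $u$ is a subsolution with $u \leq 0$ on $\partial \Omega$, comparison with the positive test $\phi \in \Phi_{\lambda'}^+$ for some $\lambda' \in (\lambda, \lambda_1^+)$ (using properness of $F$ and the linearization trick of \cite{BNV}, together with ABP to rule out interior sign changes) forces $u \leq 0$. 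Combined with \eqref{H strong}, this yields a uniform a priori bound and enables existence via Perron's method or a continuity/degree argument along the parameter path from $\lambda = 0$ (where solvability is immediate by \eqref{H strong}) up to $\lambda$. Positivity of $u_\lambda$ and its membership in $W^{2,\varrho}(\Omega)$ follow from SMP (Proposition \ref{SMP}) and \eqref{H strong}.

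\textbf{Step 3: Blow-up and passage to the limit.} Pick $\lambda_n \nearrow \lambda_1^+$ and set $t_n := \|u_{\lambda_n}\|_\infty$. I argue $t_n \to \infty$: otherwise $C^{1,\alpha}$ estimates (Proposition \ref{C1,alpha regularity estimates geral}) together with stability (Proposition \ref{stability}) would produce $u_\ast \geq 0$ solving $F[u_\ast] + \lambda_1^+ \tau u_\ast = -\tau$; but then $F[u_\ast] + (\lambda_1^+ + \varepsilon)\tau u_\ast = \tau(\varepsilon u_\ast - 1) \leq 0$ for $\varepsilon < 1/\|u_\ast\|_\infty$, so $u_\ast \in \Phi_{\lambda_1^+ + \varepsilon}^+$, contradicting the definition of $\lambda_1^+$. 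Now renormalise $w_n := u_{\lambda_n}/t_n$, which satisfies, by the homogeneity \eqref{H homogeneity},
\begin{equation*}
F[w_n] + \lambda_n \tau(x) w_n = -\tau(x)/t_n \text{ in } \Omega, \quad w_n = 0 \text{ on } \partial \Omega, \quad \|w_n\|_\infty = 1.
\end{equation*}
Proposition \ref{C1,alpha regularity estimates geral} gives a uniform $C^{1,\alpha}(\overline{\Omega})$ bound; extracting a subsequence $w_n \to \varphi_1^+$ in $C^1$ and invoking the stability Proposition \ref{stability}, I obtain a nonnegative viscosity solution of $F[\varphi_1^+] + \lambda_1^+ \tau \varphi_1^+ = 0$ with $\varphi_1^+ = 0$ on $\partial \Omega$ and $\|\varphi_1^+\|_\infty = 1$. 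Finally, SMP (Proposition \ref{SMP}) upgrades $\varphi_1^+ \geq 0$ to $\varphi_1^+ > 0$ in $\Omega$, and \eqref{H strong} places $\varphi_1^+$ in $W^{2,\varrho}(\Omega) \hookrightarrow C^{1,\alpha}(\overline{\Omega})$.

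The main obstacle I anticipate is handling Step 2 under the unbounded coefficients $\gamma, \vartheta \in L^\varrho$: the comparison/maximum principle for $F[u] + \lambda \tau u$ when $\lambda < \lambda_1^+$ requires the BNV-type argument to go through in the $L^\varrho$-viscosity framework, and the a priori bounds for the continuity method must be uniform in $\lambda$, which forces one to invoke ABP in the form of Proposition \ref{ABPproper} combined with the properness of $F$ (Remark \ref{rmk ABP proper}) rather than an estimate valid only at the level of $F^*$. For the negative eigenfunction, the same argument applied to $\widetilde{F}(x,r,\xi,X) := -F(x,-r,-\xi,-X)$, which is again proper and satisfies all the hypotheses, produces $\varphi_1^-$ with the stated sign and normalization.
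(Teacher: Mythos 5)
Your route is genuinely different from the paper's (the paper does not reconstruct the eigenfunction: it reduces Proposition \ref{prop existence eigenv proper} to \cite[Theorem 5.2]{regularidade} and only checks that unbounded $\vartheta$ is harmless, via the $C^{1,\alpha}$ estimates of Proposition \ref{C1,alpha regularity estimates geral}, solvability coming from \eqref{H strong} together with \cite[Theorem 1]{arma2010}, and the eigenvalue bound via the blow-up of Step 2 of Proposition \ref{boundedness eig BQeq}). A self-contained inverse-iteration construction would be a legitimate alternative, but as written your Step 2 is asserted rather than proved, and in a way that would fail. The maximum principle for $F[\cdot]+\lambda\tau\,\cdot$ below $\lambda_1^+$ cannot be obtained by ``the linearization trick of \cite{BNV}'' against an arbitrary $\phi\in\Phi^+_{\lambda'}$: $F$ is fully nonlinear, and $\phi$ is merely a positive continuous $L^N$-viscosity supersolution in $\Omega$, with no boundary information, no Hopf behaviour, and no $W^{2,\varrho}$ regularity, so $u-t\phi$ is not a subsolution of any extremal operator and the sliding/ratio argument cannot be run --- this is precisely why Proposition \ref{th4.1 BQeq} requires one of the two functions to lie in $E_\varrho$, and why in the paper that tool is available only after the ABP-MP machinery of Theorem \ref{ABP-MP}, which itself rests on the present proposition (quoting it here would be circular). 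Worse, even granting that MP, it controls \emph{subsolutions} from above, whereas Step 3 needs $u_\lambda\ge 0$ for the resolvent solutions, i.e.\ a \emph{minimum} principle for supersolutions; that is governed by $\lambda_1^-$, not $\lambda_1^+$, and for a nonlinear proper $F$ one may have $\lambda_1^-<\lambda_1^+$, so ``positivity of $u_\lambda$ follows from SMP'' has no basis for a solution produced by Perron or degree (SMP needs nonnegativity as input). The standard repair is to build $u_\lambda$ by monotone iteration from $0$, keeping $\lambda\tau u$ on the right-hand side so that only comparison for the proper operator $F$ is used, which gives nonnegativity and monotonicity for free; but then the boundedness of the iterates for $\lambda<\lambda_1^+$ requires exactly the scalar comparison/simplicity argument (Hopf, SMP, one function strong) that your one-sentence remark hides.

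The second gap is that you never bound $\lambda_1^+$ from above, yet finiteness is both part of the statement and needed in your Step 3: the uniform $C^{1,\alpha}$ bound for $w_n$, which solves $F[w_n]=-\lambda_n\tau w_n-\tau/t_n$, requires $\sup_n\lambda_n<\infty$ to control the right-hand side in $L^\varrho$, and the limit equation $F[\varphi_1^+]+\lambda_1^+\tau\varphi_1^+=0$ is meaningless if $\lambda_1^+=+\infty$. An upper bound is a separate argument --- a barrier on a ball where $\tau\ge\delta>0$ combined with the comparison result, or the rescaling/blow-up of Step 2 in the proof of Proposition \ref{boundedness eig BQeq} taken with $u=v$, $\tau_1=\tau_2$, which is exactly what the paper's proof points to (replacing the bound (5.8) of \cite{regularidade} by Lemma 5.7 there) --- and it is absent from your proposal. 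Your Step 1 (positivity of $\lambda_1^\pm$ via the solution of $F[\psi]=-\tau$) and the reduction of the negative eigenfunction to $\widetilde F(x,r,\xi,X)=-F(x,-r,-\xi,-X)$ are fine.
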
	

\begin{proof}
Let us first observe that Proposition \ref{prop existence eigenv proper} is already proved in \cite[Theorem 5.2]{regularidade} when $\vartheta$ is bounded. We stress that $C^{1,\alpha}$ regularity estimates in Proposition \ref{C1,alpha regularity estimates geral} hold for unbounded $\vartheta$.
On the other hand, we note that the solvability asked in \cite[hypothesis (H) of Theorem 5.2]{regularidade} is now ensured due to \eqref{H strong}, since in this case unique solvability of the Dirichlet problem comes from \cite[Theorem 1 (i), (ii)]{arma2010}.
Moreover, the existence result on first eigenvalues does not require the drift nor the zero order term to be bounded. Indeed, the bound \cite[(5.8)]{regularidade} is replaced by Lemma 5.7 there, with the blow-up argument comprising an unbounded zero order term as well; see ahead Step 2 in the proof of our Proposition \ref{boundedness eig BQeq}, by taking $u=v$, $\tau_1=\tau_2$.
\end{proof}

Hence, for such $F$, the following ordering holds
\begin{align}\label{relation lambda*}
\lambda_1^+(F^*(\vartheta))=\lambda_1^-(F_*(\vartheta))\le \lambda_1^+(F(\vartheta)), \lambda_1^-(F(\vartheta)) \le \lambda_1^-(F^*(\vartheta))=\lambda_1^+(F_*(\vartheta)),
\end{align}
since $F^*$ is convex and $F_*$ is concave, see \cite[Proposition 4.2]{Arms2009} and \cite[Lemma 1.1]{BQeq}.
\begin{rmk}\label{remark monotonicity}
	The following monotonicity property with respect to the weight holds:
	\begin{align}\label{monotonicity}
	\textrm{if \; $\vartheta_1\leq \vartheta_2$ \; a.e.\ in $\Omega$ \; then \; $\lambda_1^+(F(\vartheta_1))\ge \lambda_1^+(F(\vartheta_2))$.}		\end{align}
	They are instrumental in risk-sensitive control and probabilistic arguments, see \cite{controlJMPA2019, Djaironotes81}.	
\end{rmk}

\subsection{Definition of principal eigenvalues for systems}\label{Preliminaries_2}

Inspired by \cite{BNV, BMS99, BQeq}, we define the notion of principal eigenvalues for the system \eqref{LE} as follows:
\[
\lambda_1^\pm=\lambda_1^\pm\,(F_1,F_2)=\lambda_1^\pm\,(F_1(\tau_1),F_2(\tau_2)):=\sup\left\{ \lambda \in \mathbb{R}\, , \, \Psi^\pm_\lambda\neq \emptyset\right  \},
\]
where 
\smallskip
\begin{align*}
& \Psi^+_\lambda=\left\{ (\varphi, \psi); \;\varphi,  \psi>0 \textrm{ in }\Omega,\; 
	F_1[\varphi] + \lambda\tau_1(x) \psi^{q} \le 0, \; F_2 [\psi]+\lambda\tau_2(x)\varphi^{p} 
	\leq 0 \textrm{ in }\Omega \right\},
	\\
	&\Psi^-_\lambda=\left\{ (\varphi, \psi); \;\varphi,  \psi<0 \textrm{ in }\Omega,\; 
	F_1[\varphi] +\lambda\tau_1(x) |\psi|^{q-1}\psi \ge 0, \; F_2 [\psi]+\lambda\tau_2(x) |\varphi|^{p-1}\varphi  
	\geq 0 \textrm{ in }\Omega \right\},
\end{align*}
\smallskip
with inequalities holding in the $L^N$-viscosity sense, for functions $\varphi, \psi \in C(\overline{\Omega})$. When necessary, we will also highlight the dependence of $\lambda_1^\pm$ on $\Omega$. Observe that
\begin{align}\label{def G}
\lambda_1^\pm (G_1,G_2)=\lambda_1^\mp (F_1,F_2),\;
\textrm{ for }\, G_i(x,r,p,X)=-F_i(x,-r,-p,-X).
\end{align}

\begin{rmk}\label{lambda1 F1,F2 ge 0}
By \eqref{relation lambda*} and hypothesis \eqref{H lambdai>0} on $F=F_i$, we have $\lambda_1^\pm(F_i(\vartheta))>0$, $i=1,2$. 
Then we infer that $\lambda_1^\pm (F_1,F_2)\geq 0$. Indeed, to fix the ideas let us consider the $\lambda_1^+$ case. Taking the positive eigenfunctions $\varphi_1^+$ and $\psi_1^+$ associated to $\lambda_1^+(F_1(\vartheta))>0$ and $\lambda_1^+(F_2(\vartheta))>0$ respectively, we have  $(\varphi_1^+,\psi_1^+)\in \Psi_0^+$, from which the desired bound follows. 
\end{rmk}

Let us also denote 
\begin{align}\label{def m1,M1}
m_1=\min \{ \lambda_1^+(F_1, F_2),  \lambda_1^-(F_1, F_2) \}, \;\;\quad M_1=\max \{ \lambda_1^+(F_1, F_2),  \lambda_1^-(F_1, F_2) \}.
\end{align}
\subsection{Scaling and asymptotic behavior}\label{section scaling}

In this section we show some equivalent forms of problem \eqref{LE} obtained by means of a suitable scaling, and how to build a spectral curve starting from a scalar-like eigenvalue. These observations are crucial in the proof of our main results, as often it will be convenient to take $\lambda=\mu$ in \eqref{LE}. 

Take $p,q>0$ with $pq=1$, and consider:
\begin{align}\label{hyperbola mu=lambda}
-F_1[u]= \lambda \tau_1(x) |v|^{q-1}v, \quad -F_2 [v]=\lambda \tau_2(x)|u|^{p-1}u \quad\textrm{ in } \;\Omega, \quad u=v=0 \textrm{ on } \partial \Omega.
\end{align}
Let us check that to study this system for $\lambda>0$ is, in a way, equivalent to study \eqref{LE} for $(\lambda,\mu)$ in the first quadrant.

Assume we have an eigenfunction pair $(u_0,v_0)$ associated to an eigenvalue $\lambda_0>0$, i.e. $(u_0,v_0)\neq(0,0) $ viscosity solution to
\begin{align}\label{eigenpair lambda,mu}
-F_1[u_0]= \lambda_0 \tau_1(x) |v_0|^{q-1}v_0, \quad -F_2 [v_0]= \lambda_0 \tau_2(x)|u_0|^{p-1}u_0 \quad\textrm{ in } \;\Omega \quad u_0=v_0=0 \textrm{ on } \partial \Omega.
\end{align}
for some $\lambda_0>0$. We infer that this implies the existence of a curve of eigenvalues of the form $(\lambda, \mu)\in \mathbb{R}^+\times \mathbb{R}^+$, with associated eigenfunctions $u,v$ such that
\begin{align}\label{hyperbola}
-F_1[u]= \lambda \tau_1(x) |v|^{q-1}v, \quad -F_2 [v]=\mu\tau_2(x) |u|^{p-1}u \quad\textrm{ in } \;\Omega \quad u=v=0 \textrm{ on } \partial \Omega.
\end{align}
(i.e, they solve \eqref{LE}). Indeed, given $\lambda>0$, set 
\begin{equation}\label{eq:scaling}
\textstyle u=u_0,\quad v=\frac{\lambda_0^{p}}{\lambda^p} v_0,\quad \text{ and } \quad \mu=\frac{\lambda_0^{p+1}}{\lambda^p}. 
\end{equation}
By the homogeneity assumption \eqref{H homogeneity}, we see that \eqref{hyperbola} is satisfied.
In other words, given $\lambda_0>0$,  \eqref{hyperbola mu=lambda} produces a spectral curve $\Lambda_{\lambda_0}$ parametrized by
\begin{equation}\label{eq:curve_Lambda}
\textstyle \Lambda_{\lambda_0}(\lambda) = (\lambda, \mu (\lambda)), \quad \text{ where } \;\mu (\lambda)=\frac{\lambda_0^{p+1}}{\lambda^p}, \; \lambda>0.
\end{equation}
Observe that $\lambda\mapsto \mu(\lambda)$ is one-to-one, $\mu(\lambda)\to 0$ as $\lambda\to \infty$, $\mu(\lambda)\to \infty$ as $\lambda\to 0$. Moreover, $\Lambda_{\lambda_0}\cap \Lambda_{\lambda_0'}$ if $\lambda_0\neq \lambda_0'$, and $\mathbb{R^+}\times \mathbb{R}^+=\cup_{\lambda_0} \Lambda_{\lambda_0}$.

From these comments one derives that, via the relation \eqref{eq:scaling}, to study  \eqref{LE} for $(\lambda,\mu)\in \mathbb{R}^+\times \mathbb{R}^+$  is equivalent to study
\eqref{hyperbola mu=lambda}, and that statements in the Introduction can be equivalently written in terms of these scalings. Since in what follows we are going to consider almost exclusively the first quadrant of the plane $(\lambda, \mu)$ (with the exception of Theorem \ref{Lambda1pm}), we will equivalently write \eqref{LE} in the form \eqref{hyperbola mu=lambda}.

In particular, ahead in Section \ref{sec:firstmaintheorem} we will prove that the principal eigenvalues $\lambda_1^\pm=\lambda_1^\pm(F_1,F_2)$, defined in the previous section, exist and are positive, being associated with positive solutions of 
\[
-F_1[u]= \lambda_1^\pm \tau_1(x) |v|^{q-1}v, \quad -F_2 [v]=\lambda_1^\pm \tau_2(x)|u|^{p-1}u \quad\textrm{ in } \;\Omega, \quad u=v=0 \textrm{ on } \partial \Omega.
\]
From this one builds two spectral curves 
\begin{equation}\label{eq:thetwocurves}
\textstyle \Lambda^\pm_1\, (\lambda) \,=\, (\,\lambda,\, \mu_1^\pm (\lambda)\,),  \quad \text{ where }\; \mu^{\pm}_1 (\lambda)= \frac{(\lambda_1^\pm )^{p+1}}{\lambda^p}, \;\; \text{ for all } \lambda>0. 
\end{equation}
Each $\mu_1^\pm(\lambda)$ is  strictly decreasing as a function of $\lambda$, satisfying the asymptotic behavior \eqref{asymptotics} stated in Theorem \ref{Th1 introduction}. Moreover, in view of the proof of Theorem \ref{Lambda1pm}, observe that saying that $(\lambda, \mu)$ lies for instance below the curve $\Lambda_1^+$ and in the first quadrant is completely equivalent to saying that 
\[ \textstyle
\mu<\frac{(\lambda_1)^{p+1}}{\lambda^p}\;\; \Leftrightarrow\;\; \lambda_0:=(\mu \lambda^p)^\frac{1}{p+1}<\lambda_1^+.
\]

\begin{rmk}\label{remark scaling}
Just to justify other scaling prototypes we might find in the literature, instead of \eqref{hyperbola mu=lambda} we could also have written either
\begin{align*}
-F_1[u]=  \tau_1(x) |v|^{q-1}v, \quad -F_2 [v]=\lambda \tau_2(x)|u|^{p-1}u \quad\textrm{ in } \;\Omega, \quad u=v=0 \textrm{ on } \partial \Omega;
\end{align*} 
or \vspace{-0.5cm}
\begin{align*}
-F_1[u]= \lambda \tau_1(x) |v|^{q-1}v, \quad -F_2 [v]=\tau_2(x)|u|^{p-1}u \quad\textrm{ in } \;\Omega, \quad u=v=0 \textrm{ on } \partial \Omega;
\end{align*}
which are both equivalent to \eqref{hyperbola mu=lambda} whenever we are in the first quadrant.
	
On the other hand, we point out that  we may reparametrize the curve \eqref{eq:curve_Lambda} as
\[
\textstyle \Lambda_{\lambda_0}(a) =(\lambda(a),\mu (a)), \qquad \text{ where } \mu (a)=a\lambda(a), \quad \text{ for }  a=\frac{\mu}{\lambda}=\frac{\lambda_0^{p+1}}{\lambda^{p+1}}. 
\]
This way one recovers the notation and asymptotic behavior from \cite{Montenegro},
\begin{center}
$\lambda(a)= \frac{\lambda_0 }{a^{1/(p+1)}} \to 0$\, as $a\to +\infty$, \quad 
$\lambda(a)\to +\infty$\, as $a\to 0$,
\smallskip

$\mu(a)= a\lambda= a^{\frac{p}{p+1}} \lambda_0 \to +\infty$\, as $a\to +\infty$, \quad 
$\mu(a)\to 0$\, as $a\to 0$.\\
\end{center}
 \end{rmk}

\section{The scalar case with unbounded coefficients}\label{section lambda1 scalar}

We first recall that, in the case of proper operators with unbounded drift and weights, existence of positive principal eigenvalues $\lambda_1^\pm$ with associated eigenfunctions $\varphi_1^\pm$  is proved in Proposition \ref{prop existence eigenv proper}. 
In this section we start by extending the existence of eigenvalues for  nonproper operators.

\begin{lem}\label{remark nonproper scalar}
Set
$F_0[u]:=F[u]-\vartheta(x)u$, where $F$ satisfies \eqref{SC}, \eqref{H homogeneity}, \eqref{H continuity}, and \eqref{H strong}.
Then the quantity defined by\vspace{-0.3cm}
\begin{align}\label{def lambda1 nonproper}
\lambda_1^+(F({\vartheta}))\,:=\,\lambda_1^+(F_0({\vartheta}))-1 
\end{align}
is the first eigenvalue associated to a positive eigenfunction of the
scalar Dirichlet problem 
		$F[u]+\lambda \vartheta(x) u=0$ in $\Omega$,  $u=0$ on $\partial \Omega$.
Moreover, if $C_A$ is the ABP-constant in Proposition \ref{ABPproper}, then
\begin{align}\label{cotaB lambda1}
\lambda^+_1(F(\vartheta))\ge\, \frac{1}{\,C_A\,\|\vartheta\|_{L^N(\Omega)}}-1.
\end{align}
In particular, assumption  \eqref{H lambdai>0} is verified whenever $\|\vartheta\|_{L^N(\Omega)}<\frac{1}{C_A}$.

An analogous result holds for $\lambda_1^-(F(\vartheta))=\lambda_1^+(G(\vartheta))$ by applying it to $G$, see \eqref{def G}.
\end{lem}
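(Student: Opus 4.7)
}
The key observation is that subtracting $\vartheta(x)u$ from $F$ produces a proper operator to which Proposition \ref{prop existence eigenv proper} applies. First I would check that $F_0[u] = F[u] - \vartheta(x)u$ is proper: for $r\ge s$, using the upper bound in \eqref{SC} with the same $\xi,X$,
\[
F_0(x,r,\xi,X) - F_0(x,s,\xi,X) \le \vartheta(x)(r-s) - \vartheta(x)(r-s) = 0.
\]
A similar routine check shows $F_0$ inherits \eqref{SC} (with $\vartheta$ replaced by $2\vartheta$), \eqref{H homogeneity}, \eqref{H continuity}, and \eqref{H strong} from $F$, so Proposition \ref{prop existence eigenv proper} yields a positive eigenfunction $\varphi_1^+$ and an eigenvalue $\lambda_1^+(F_0(\vartheta))>0$ of the proper problem $F_0[u]+\lambda\vartheta(x)u=0$.

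Next I would exploit the algebraic identity $F[u] + \lambda\vartheta(x)u = F_0[u] + (\lambda+1)\vartheta(x)u$, from which $u$ is an eigenfunction of $F[u]+\lambda\vartheta(x)u=0$ with positive sign iff it is an eigenfunction of $F_0[u]+(\lambda+1)\vartheta(x)u=0$. This justifies definition \eqref{def lambda1 nonproper} and produces the desired positive eigenfunction for $F$.

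For the quantitative bound \eqref{cotaB lambda1}, normalize $\max_{\overline\Omega}\varphi_1^+=1$. Since $F_0$ is proper with $F_0(\cdot,0,0,0)\equiv 0$, the structural bound gives $\mathcal{L}_0^+[\varphi_1^+] \ge F_0[\varphi_1^+] = -\lambda_1^+(F_0(\vartheta))\,\vartheta(x)\varphi_1^+$ in $\Omega^+=\Omega$, so Proposition \ref{ABPproper} applied with $\varphi_1^+=0$ on $\partial\Omega$ gives
\[
1 = \max_{\overline\Omega}\varphi_1^+ \le C_A\,\lambda_1^+(F_0(\vartheta))\,\|\vartheta\,\varphi_1^+\|_{L^N(\Omega)} \le C_A\,\lambda_1^+(F_0(\vartheta))\,\|\vartheta\|_{L^N(\Omega)}.
\]
Rearranging and subtracting $1$ yields \eqref{cotaB lambda1}; the last assertion on \eqref{H lambdai>0} follows at once by requiring the right-hand side to be positive. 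The final statement for $\lambda_1^-$ follows by applying the whole argument to $G(x,r,p,X):=-F(x,-r,-p,-X)$ and using $\lambda_1^-(F(\vartheta))=\lambda_1^+(G(\vartheta))$ as in \eqref{def G}.

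The main technical point to verify carefully is that $F_0$ retains the full set of hypotheses needed to invoke Proposition \ref{prop existence eigenv proper}, in particular the $W^{2,\varrho}$-regularity in \eqref{H strong}; this should nonetheless follow by inspection since $F_0$ and $F$ differ only by an $L^\varrho$ multiplicative zero-order term which can be absorbed into the right-hand side of the equation.
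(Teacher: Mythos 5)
Your proposal is correct and follows essentially the same route as the paper: you verify that $F_0$ is proper and inherits the structural hypotheses, invoke the proper-operator eigenvalue result (Proposition \ref{prop existence eigenv proper}), use the shift identity $F[u]+\lambda\vartheta u=F_0[u]+(\lambda+1)\vartheta u$, and obtain \eqref{cotaB lambda1} by applying ABP to the normalized positive eigenfunction exactly as in the paper's estimate \eqref{ABP lambda1}. The only cosmetic difference is that the paper additionally normalizes $\vartheta=\ell\hat\vartheta$ to record the equivalence \eqref{eq ell hatvartheta}, which you bypass by working directly with $\vartheta$.
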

\begin{proof}
Notice that $F_0$ is a proper operator, i.e.\ $F_0 (x,r,\xi, X)\le F_0(x,s,\xi,X)$ for $r\ge s$, since
\begin{center}
$F_0 (x,r,\xi, X)-F_0(x,s,\xi,X)\leq \vartheta (x) |r-s|-\vartheta(x)(r-s)=2\vartheta (x) (r-s)^-$ \, for $r,s\in \real$.
\end{center}
We evoke the existence and positivity of the first eigenvalue $\lambda_1^+(F_0(\hat{\vartheta}))$ for the proper fully nonlinear operator $F_0$ with unbounded drift $\gamma$ and weight $\hat{\vartheta}$ from \cite{regularidade}. Hence, by the definition of scalar eigenvalue, 
one derives the first statement. \smallskip

Now, if one writes $\vartheta = \ell \hat{\vartheta}$, for $\ell=\|\vartheta\|_{L^N(\Omega)}$ and $\|\hat{\vartheta}\|_{L^N(\Omega)} =1$, then by definition of $\lambda^+_1$, one deduces $\ell \lambda_1^+(F_0({\vartheta}))=\lambda_1^+(F_0(\hat{\vartheta}))$, and so
\begin{align}\label{eq ell hatvartheta}
\lambda_1^+(F(\vartheta))>0  \;\;\Leftrightarrow\;\; \ell<\lambda_1^+(F_0(\hat{\vartheta})).
\end{align}
Let us check that \eqref{eq ell hatvartheta} is verified if $\|\vartheta\|_{N}$ is sufficiently small. 
We claim that $\lambda_1^+(F_0(\theta))\geq C>0$ uniformly in $\theta$ whenever $\|\theta\|_{L^N(\Omega)}$ is fixed.
Indeed, since $\lambda_0=\lambda_1^+(F_0(\theta))$ is well defined and positively attained by \cite{regularidade}, then there exists a positive eigenfunction $\phi_0$ related to $\lambda_0$ such that 
\begin{center}
$F_0[\phi_0]=-\lambda_0\, \theta (x) \phi_0$, \; $\phi_0>0$\, in $\Omega$, \quad $\phi_0=0$\, on $\partial\Omega$.
\end{center} Since $F_0$ is a proper operator, then 
$\mathcal{L}^+_0[\phi_0]\ge -\lambda_0\, \theta (x)\phi_0$\,  in $\Omega^+$ (see Remark \ref{rmk ABP proper}). Therefore, ABP (Proposition \ref{ABPproper}) and $\phi_0>0$ yield the existence of a universal constant $C_A$ such that \begin{align}\label{ABP lambda1}
\textstyle \sup_{\Omega}\phi_0\le C_A\, \lambda_0\, \sup_\Omega \phi_0 \, \|\theta\|_{L^N(\Omega)} .
\end{align}
In particular we achieve \eqref{eq ell hatvartheta} by taking  $\theta=\hat{\vartheta}$ and $\ell C_A<1$. Equivalently, for $\theta=\vartheta$ we derive \eqref{cotaB lambda1} due to relation \eqref{def lambda1 nonproper}.	
\end{proof}

\begin{rmk}
The bound from below \eqref{cotaB lambda1} extends and improves \cite[Proposition 3.3]{arma2010} to the context of unbounded coefficients. In particular, with our spectral tools, the proof of \cite[Theorem 1]{arma2010} can now be considerably shorten, see also Remark in p.595 there.
\end{rmk}

We now show how scalar principal eigenvalues are related to the validity of the Alexandrov-Bakelman-Pucci estimate in the scalar case, in the following sense.

\begin{defin}\label{def ABP-MP}
Let $F$ satisfy \eqref{SC}. We say that ABP-MP (resp.\ ABP-mP) holds for $F$ in $\Omega$ if whenever $f\in L^N(\Omega)$ and $u\in C(\overline{\Omega})$ viscosity solution of $F[u]\ge f(x)$ (resp.\ $F[u]\le f(x)$) in $\Omega$, then 
\begin{align}\label{eq:ABP-MP-mP CB}
\textstyle \max_{\overline{\Omega}} u \leq C_B\{\max_{\partial \Omega} u^+ +\|f^-\|_{L^N(\Omega)} \} 
\;\; (\mathrm{resp.}\;  \min_{\overline{\Omega}} u \geq C_B\{\min_{\partial \Omega} (-u^-) -\,\|f^+\|_{L^N(\Omega)}\}),
\end{align}
for some positive constant $C_B$ not
depending on the norm of $u$. 
\end{defin}

\begin{theorem}\label{ABP-MP}
Let $\Omega$ be a bounded $C^{1,1}$ domain. Assume \eqref{SC}, \eqref{H continuity} on $F$, \eqref{H continuity}, \eqref{H homogeneity} on $F^*$.
\begin{enumerate}[(i)]
	\item If $\lambda_1^+(F^*(\vartheta))=\lambda_1^-(F_*(\vartheta))>0$ then ABP-MP holds for $F$ in $\Omega$, with $C_B$ depending on $N, \varrho,\alpha, \beta, \|\gamma\|_{\varrho}, \|\vartheta\|_{\varrho}, \mathrm{diam}(\Omega), \lambda_1^+(F^*(\vartheta))$. 
	On the other hand, if $F$ satisfies \eqref{H homogeneity}, \eqref{H strong}, and $\lambda_1^+(F(\vartheta)) \le 0$ then ABP-MP does not hold for $F$; 
	
	\item If $\lambda_1^-(F^*(\vartheta))=\lambda_1^+(F_*(\vartheta))>0$ then ABP-mP holds for $F$ in $\Omega$, with $C_B$ depending on $N, \varrho,\alpha, \beta, \|\gamma\|_{\varrho},\|\vartheta\|_{\varrho}, \mathrm{diam}(\Omega),\lambda_1^-(F^*(\vartheta))$. On the other side, if $F$ satisfies \eqref{H homogeneity}, \eqref{H strong}, and $\lambda_1^-(F(\vartheta)) \le 0$ then ABP-mP does not hold for $F$.
\end{enumerate}
\end{theorem}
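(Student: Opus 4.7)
I would prove (i) in detail and obtain (ii) by a dual argument with $F_*$ (concave) in place of $F^*$ (convex), the minimum principle replacing the maximum principle, and negative eigenfunctions where appropriate. The plan for (i) is to reduce to the convex operator $F^*$ via the pointwise bound $F\le F^*$, establish the qualitative MP for $F^*$ through a refined-MP scheme \`a la Berestycki--Nirenberg--Varadhan using the positive eigenfunction of $F^*$, and then upgrade it to the quantitative ABP-MP bound \eqref{eq:ABP-MP-mP CB} by contradiction and compactness.

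\textbf{Sufficiency in (i).} A direct inspection of Definition \ref{def Lp-viscosity sol} shows that any $L^N$-viscosity subsolution of $F[u]=f$ is also a viscosity subsolution of $F^*[u]=f$, since $F\le F^*$ pointwise. Being convex and fulfilling \eqref{H continuity}, the operator $F^*$ enjoys \eqref{H strong} by Lemma \ref{lema convex}, so Lemma \ref{remark nonproper scalar} produces a positive eigenfunction $\phi^*\in E_\varrho$, $\phi^*>0$ in $\Omega$, $\phi^*=0$ on $\partial\Omega$, with $F^*[\phi^*]+\lambda_1^+(F^*(\vartheta))\vartheta\phi^*=0$. For any $u$ with $F^*[u]\ge 0$ in $\Omega$, $u\le 0$ on $\partial\Omega$, I would show $u\le 0$ by sliding: set $t_0:=\inf\{t\ge 0:t\phi^*\ge u\text{ in }\Omega\}$, which is finite by Hopf (Proposition \ref{Hopf}) applied to $\phi^*$ together with the $C^{1,\alpha}$ regularity of $u$. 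Then $w:=t_0\phi^*-u\ge 0$, and the structural inequality \eqref{SC} for $F^*$ combined with \eqref{H homogeneity} for $F^*$ yields
\[
\mathcal{L}^-[w]\;\le\;F^*[t_0\phi^*]-F^*[u]\;\le\;-t_0\,\lambda_1^+(F^*(\vartheta))\,\vartheta\,\phi^*\;\le\;0 \quad \text{in }\Omega.
\]
If $t_0>0$, then $w\ge 0$ solves $\mathcal{L}^-[w]\le 0$ with a negative right-hand side on $\{\phi^*>0\}=\Omega$; SMP (Proposition \ref{SMP}) and Hopf applied to $w$ give $\partial_\nu w<0$, so one can decrease $t_0$ and still keep $t_0\phi^*\ge u$, contradicting its minimality. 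Hence $t_0=0$, i.e.\ $u\le 0$.

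\textbf{From qualitative to quantitative MP.} The bound \eqref{eq:ABP-MP-mP CB} for $F^*$ follows by contradiction and compactness. After the translation $\tilde u=u-\max_{\partial\Omega}u^+$, which introduces an extra $\vartheta\max_{\partial\Omega}u^+$ in the right-hand side via \eqref{SC}, one reduces to $u\le 0$ on $\partial\Omega$. If the bound failed, there would exist $u_n\in C(\overline\Omega)$ and $f_n\in L^N(\Omega)$ with $F^*[u_n]\ge f_n$, $u_n\le 0$ on $\partial\Omega$, $\max_{\overline\Omega} u_n=1$, while $\|f_n^-\|_{L^N(\Omega)}\to 0$. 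Krylov--Safonov type $C^\alpha$ estimates for $L^N$-viscosity subsolutions with unbounded $\gamma,\vartheta\in L^\varrho$, underlying Proposition \ref{C1,alpha regularity estimates geral}, together with $C^{1,1}$ boundary barriers, yield equicontinuity of $\{u_n\}$ up to $\overline\Omega$. The uniform limit $u_\infty$ satisfies $\|u_\infty\|_\infty=1$, $u_\infty\le 0$ on $\partial\Omega$, and $F^*[u_\infty]\ge 0$ by stability (Proposition \ref{stability}); the qualitative MP then forces $u_\infty\le 0$, a contradiction.

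\textbf{Necessity, part (ii), and main obstacle.} For the converse in (i), when $\lambda_1^+(F(\vartheta))\le 0$, the hypotheses \eqref{H homogeneity} and \eqref{H strong} combined with Lemma \ref{remark nonproper scalar} furnish a positive eigenfunction $\phi\in E_\varrho$ satisfying $F[\phi]=-\lambda_1^+(F(\vartheta))\vartheta\phi\ge 0$ with vanishing trace; then \eqref{eq:ABP-MP-mP CB} applied with $f\equiv 0$ would imply $\max\phi\le 0$, contradicting $\phi>0$. Part (ii) proceeds dually: $F\ge F_*$ so any supersolution of $F[u]=f$ is a supersolution of $F_*[u]=f$, and the concave operator $F_*$ enjoys \eqref{H strong} by Lemma \ref{lema convex}, admitting a positive eigenfunction attached to $\lambda_1^+(F_*(\vartheta))=\lambda_1^-(F^*(\vartheta))>0$; the sliding plus compactness scheme adapts verbatim to mP. For its necessity, $\lambda_1^-(F(\vartheta))\le 0$ yields a negative eigenfunction $\phi<0$ with $F[\phi]\le 0$, serving as the counterexample. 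The main obstacle is the compactness step under unbounded coefficients: one must track that the Krylov--Safonov $C^\alpha$ modulus and Proposition \ref{stability} operate uniformly on the sequences $u_n,f_n$ and depend on the data only through the quantities listed in the dependence of $C_B$, which in particular forces a careful accounting in the boundary-translation step because $\vartheta$ is only in $L^\varrho$.
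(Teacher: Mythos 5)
Your necessity argument and the dual reduction for (ii) match the paper, but the heart of the sufficiency direction has a genuine gap: the upgrade from the qualitative MP to the quantitative ABP-MP bound by contradiction and compactness does not work. The functions in the contradiction sequence are only viscosity \emph{subsolutions} of $F^*[u_n]\ge f_n$, and subsolutions do not satisfy Krylov--Safonov type $C^\alpha$ estimates -- those are two-sided estimates requiring both extremal inequalities; a subsolution satisfies only the one-sided local maximum principle ($\sup$ bounded by an $L^p$ norm), which gives no modulus of continuity. A uniformly bounded sequence of subsolutions (think of downward fundamental-solution spikes, which are subharmonic) need not be precompact in $C(\overline\Omega)$ or even in $C_{\mathrm{loc}}(\Omega)$, so you cannot extract a uniform limit, the normalization $\max u_n=1$ need not pass to the limit, and Proposition \ref{stability} (which requires $L^\infty_{\mathrm{loc}}$ convergence) is unavailable. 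Moreover, even if compactness held for the fixed operator, the constant so produced would depend on $F$ itself rather than only on $N,\varrho,\alpha,\beta,\|\gamma\|_\varrho,\|\vartheta\|_\varrho,\mathrm{diam}(\Omega),\lambda_1^+(F^*(\vartheta))$, and this uniformity over the class is exactly what is needed later (e.g.\ in Theorem \ref{MP small domain} and in blow-up arguments). A smaller but real defect: in the sliding step you invoke ``the $C^{1,\alpha}$ regularity of $u$'' to make $t_0$ finite, but $u$ is merely a continuous subsolution; one must instead use a one-sided boundary Lipschitz estimate for subsolutions (as in \cite{B2016}), and the decrease-$t_0$ step near $\partial\Omega$ needs the same care as in Proposition \ref{th4.1 BQeq}, where the $W^{2,\varrho}$ regularity of one member is used.

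The paper's proof avoids all of this by never separating qualitative from quantitative. It constructs a strong supersolution $\psi$ of $F^*[\psi]\le 0$ with $\psi=1$ on $\partial\Omega$ and $a\le\psi\le b$ for constants controlled by the listed quantities (take $\psi=1+w+A\varphi_1^+$, where $\varphi_1^+$ is the eigenfunction of $F^*$, $w$ solves an auxiliary Dirichlet problem with right-hand side $-2\vartheta$ off a compact set where $\varphi_1^+\ge\eta$ by a Harnack-type bound, and $A\lambda_1^+\eta=2$). Then the change of unknown $v=u/\psi$ turns $F$ into an operator $F_\psi$ which, thanks to the homogeneity \eqref{H homogeneity} of $F^*$ and $F^*[\psi]\le 0$, is \emph{proper} and still satisfies \eqref{SC} with comparable constants; the classical ABP for proper operators (Proposition \ref{ABPproper}) applied to $v$ then yields \eqref{eq:ABP-MP-mP CB} directly, with $C_B$ of order $b\,C_A$ and exactly the stated dependence. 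If you want to keep your route, you would have to replace the compactness step by such an explicit quantitative mechanism; as written, the proposal proves at best the qualitative MP.
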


Note that, in order prove ABP-mP and ABP-MP, we do not need impose $F$ verifying \eqref{H homogeneity}. This is good since, for instance, one may take $F^*$ to be $\mathcal{L}^+$, which always satisfies \eqref{H homogeneity} even when $F$ does not. Of course notice that, if $F$ satisfies \eqref{H homogeneity}, then this is also the case for $F^*$.  

\begin{lem}\label{lema convex}
Let $\Omega\in C^{1,1}$ be a bounded domain. If $F$ is either a convex or concave operator in the $X$-entry, for which it holds \eqref{SC} and \eqref{H continuity}, then $F$ satisfies \eqref{H strong}. In particular, $\lambda_1^+(F(\vartheta))$ as in Lemma \ref{remark nonproper scalar} is well defined for convex (or concave) operators satisfying only \eqref{SC}, \eqref{H homogeneity}, and \eqref{H continuity}.
\end{lem}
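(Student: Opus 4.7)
The plan is to reduce the claim to standard $W^{2,\varrho}$ regularity theory for convex (or concave) fully nonlinear operators, after absorbing the unbounded lower order terms of \eqref{SC} into the right-hand side. Let $u\in C(\Omega)$ be a viscosity solution of $F[u]=f$ with $f\in L^\varrho(\Omega)$. By \eqref{SC} and \eqref{H continuity}, Proposition \ref{C1,alpha regularity estimates geral} yields that $u\in C^{1,\alpha}_{\textrm{loc}}(\Omega)$ (and in $C^{1,\alpha}(\overline{\Omega})$ when a boundary trace in $W^{2,\varrho}(\Omega)$ is prescribed, in light of the $C^{1,\tau}$ trace on $\partial\Omega$). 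In particular $u\in L^\infty_{\textrm{loc}}(\Omega)$ with quantitative local bounds.

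The second step is to \emph{freeze} the zero order entry. Define
\[
\widetilde{F}(x,\xi,X):=F(x,0,\xi,X),
\]
which by convexity (or concavity) of $F$ in $X$ inherits the same property in the $X$-variable; moreover it satisfies the structure $\mathcal{L}_0^-(x,\xi-\eta,X-Y)\le \widetilde{F}(x,\xi,X)-\widetilde{F}(x,\eta,Y)\le \mathcal{L}_0^+(x,\xi-\eta,X-Y)$ with the \emph{same} unbounded drift $\gamma$ but \emph{no} zero order unbounded term, and fulfills \eqref{H continuity} with the same $\beta$-modulus of $F$ since the $x$-oscillation is measured at $(0,0,X)$. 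Now $u$ viscosity-solves
\[
\widetilde{F}(x,Du,D^2u)=f(x)-h(x),\qquad h(x):=F(x,u,Du,D^2u)-F(x,0,Du,D^2u),
\]
and \eqref{SC} gives $|h(x)|\le \vartheta(x)|u(x)|$. Since $u\in L^\infty_{\textrm{loc}}$ and $\vartheta\in L^\varrho(\Omega)$, the right-hand side is in $L^\varrho_{\textrm{loc}}(\Omega)$, with local bounds controlled by $\|u\|_\infty$ on sub-domains and, globally, by the $C^{1,\alpha}(\overline{\Omega})$ bound of $u$.

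Third, one invokes the $W^{2,\varrho}$ theory for convex/concave fully nonlinear operators with \emph{unbounded drift}. Interiorly, this is the Caffarelli--Crandall--Kocan--\'Swi\c{e}ch machinery in the $L^\varrho$-viscosity framework, extended to drifts in $L^\varrho$ in \cite{regularidade} (whose small $\beta$-oscillation hypothesis is precisely \eqref{H continuity}, guaranteeing the frozen-coefficient perturbative argument closes). Applied to $\widetilde{F}$ with right-hand side $f-h\in L^\varrho_{\textrm{loc}}$ and the convexity/concavity of $\widetilde{F}$ in $X$, this yields $u\in W^{2,\varrho}_{\textrm{loc}}(\Omega)$. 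For the global claim, under $\partial\Omega\in C^{1,1}$ and the prescribed trace $\psi\in W^{2,\varrho}(\Omega)$, the standard boundary flattening plus the global $W^{2,\varrho}$ estimate for convex/concave operators (again with unbounded drift, as in \cite{regularidade}) upgrades $u\in W^{2,\varrho}(\Omega)$.

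The second (``in particular'') assertion is then immediate: once \eqref{H strong} is verified, the hypotheses of Lemma \ref{remark nonproper scalar} are met by $F$, so $\lambda_1^+(F(\vartheta))$ is well defined through the shift $F_0=F-\vartheta(x)u$ and \eqref{def lambda1 nonproper}. The main technical obstacle is the third step: the classical CCKS $W^{2,\varrho}$ theory is stated for bounded ingredients, and the extension to unbounded $\gamma$ (and the handling of the $\vartheta$ term as a right-hand side via the $L^\infty$ bound on $u$) is exactly what requires the oscillation assumption \eqref{H continuity} together with the $C^{1,\alpha}$ estimate from Proposition \ref{C1,alpha regularity estimates geral} to kick in; fortunately, this has been carried out in \cite{regularidade}, which we quote directly.
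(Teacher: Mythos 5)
Your overall strategy is the same as the paper's: this lemma is proved by assembling known $W^{2,\varrho}$ regularity theory for convex/concave operators, using \eqref{H continuity} and the $C^{1,\alpha}$ estimates of Proposition \ref{C1,alpha regularity estimates geral}, and the ``in particular'' part is indeed immediate from Lemma \ref{remark nonproper scalar} once \eqref{H strong} is in hand. However, the two places where you discharge the actual regularity content are not supported by the references you invoke. For the interior step you attribute the $W^{2,\varrho}$ theory with drift $\gamma\in L^\varrho$ to \cite{regularidade} (as an extension of \cite{CCKS}); but the $W^{2,\varrho}$ results of \cite{regularidade} do not cover unbounded coefficients -- this is precisely the extension carried out in \cite[Theorem 5.3]{Swiech2020}, which is what the paper cites, and which moreover handles the zero order term directly, making your freezing step unnecessary. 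Note also that your freezing is not quite licit as written: $h(x)=F(x,u,Du,D^2u)-F(x,0,Du,D^2u)$ is defined through $D^2u$, which is exactly what you are trying to construct; the correct formulation is that $u$ is an $L^\varrho$-viscosity subsolution of $\widetilde F(x,Du,D^2u)\ge f-\vartheta|u|$ and supersolution of $\widetilde F(x,Du,D^2u)\le f+\vartheta|u|$, and one must then check that the $W^{2,\varrho}$ theorem being quoted accepts such one-sided data. In any case the freezing only removes the zero order term, while the genuinely delicate ingredient is the unbounded drift $\gamma$, which your reduction leaves untouched.

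The more serious gap is the global statement. You assert ``standard boundary flattening plus the global $W^{2,\varrho}$ estimate for convex/concave operators (again with unbounded drift, as in \cite{regularidade})'', but no such global boundary estimate with $\gamma\in L^\varrho$ is available off the shelf there (the boundary $W^{2,p}$ theory of \cite{Winter} is for bounded ingredients), so this step is an assertion of the conclusion rather than a proof. The paper's mechanism is different and is worth noting: once the interior theory shows $u\in W^{2,\varrho}_{\mathrm{loc}}(\Omega)$, $u$ is a strong solution; the global $C^{1,\alpha}(\overline\Omega)$ estimate of Proposition \ref{C1,alpha regularity estimates geral} controls $u$ up to the $C^{1,1}$ boundary, and the generalized Nagumo lemma of \cite[Lemma 4.4]{regularidade} then converts the interior $W^{2,\varrho}$ estimates together with the boundary datum $\psi\in W^{2,\varrho}(\Omega)$ into the global $W^{2,\varrho}(\Omega)$ bound. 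To close your argument you should either reproduce that Nagumo-type argument or cite a boundary $W^{2,\varrho}$ estimate that is actually valid for unbounded drift.
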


\begin{proof}
By \cite[Theorem 5.3]{Swiech2020} we already know that $F$ enjoys $W^{2,\varrho}$
interior regularity estimates. Thus it is enough to obtain the global statement.
Let $u$ be a viscosity solution of $F[u]=f(x)$ in $\Omega$, where $f\in L^\varrho(\Omega)$, $u=\psi$ on $\partial\Omega $ for some $\psi\in W^{2,\varrho}(\Omega)$. Then by the local regularity we know that $u$ is a strong solution. Thus, the $C^{1,\alpha}$ global regularity in Proposition \ref{C1,alpha regularity estimates geral} and the proof of Nagumo's lemma in \cite[Lemma 4.4]{regularidade} imply the desired global regularity and estimates.
\end{proof}

\begin{proof}[Proof of Theorem \ref{ABP-MP}]
We only show item (i), since (ii) is analogous. Assume $u\in C(\overline{\Omega})$ is a viscosity solution of $F[u] \geq  f(x)$ in $ \Omega$.
We first notice that, if $\lambda_1^+(F(\vartheta)) \le 0$, then ABP-MP is not satisfied. Indeed, in this situation we obtain
\[
F[\varphi_1^+]  =- \lambda_1^+\vartheta(x)\varphi_1^+ \ge 0\textrm{ in } \Omega,
\]
with $\varphi^+_1=0$ on $\partial\Omega$, but $\varphi_1^+ >0$ in $\Omega$. 
Consequently, ABP-MP does not hold in general.

\smallskip

Now set $\lambda_1^+:=\lambda_1^+(F^*(\vartheta))>0$. Let us show that this is a sufficient condition for ABP-MP.

\smallskip

Step 1) Let us check that, if there exists a solution $ \psi \in W^{2,\varrho}(\Omega)\cup C^1(\overline{\Omega})$ of
$F^* [\psi] \leq 0$ in $\Omega$ so that
$\psi > 0$ in $\overline{\Omega}$,  $\psi=1$ on $\partial\Omega$, and $\psi\in [a,b]$ for some universal constants $0<a<b$,
then $F$ satisfies ABP-MP in $\Omega$, for a constant that depends also on $a=\inf_{\Omega} \psi$ and $b=\sup_{\Omega} \psi$. 

\smallskip

Set $D:=\|\psi\|_{C^1(\overline{\Omega})}$.
Note that $v=\frac{u}{\psi}$ is a viscosity solution of $F_\psi [v] \ge f(x)$ in $\Omega$, where
\begin{align}\label{def Fpsi}
F_\psi (x,r,\xi, X):= F(x,r \psi,\, r D \psi  + \psi \xi, \, rD^2\psi  + \psi X +2 D\psi \otimes \xi).
\end{align}
The operator $F_\psi$ satisfies \eqref{SC} with ellipticity constants $a\alpha$, $b\beta$, with drift term $\gamma_\psi (x)=(2D+b)\gamma (x)$. Furthermore, $F_\psi$ is a proper operator: indeed, by applying \eqref{SC} for $F$, and \eqref{H homogeneity} for $F^*$, one finds
\begin{center}
$F_\psi (x,r,\xi,X)-F_\psi(x,r,\eta,Y)\le \mathcal{M}^+_{a\alpha,\,b\beta}\,(X-Y)+\gamma_\psi(x)|\xi-\eta|$,\vspace{0.35cm}
\\
$F_\psi (x,r,\xi,X)-F_\psi (x,s,\xi,X)\leq F^*[(r-s)\psi]=(r-s)F^*[\psi]\le 0 $\; for $r\geq s$. 
\end{center}
Now ABP for proper operators with unbounded coefficients (Proposition \ref{ABPproper} and subsequent remark) produces the estimate \eqref{eq:ABP-MP-mP} for $v=u/\psi$. In addition, $u\leq 0$ in the set where $v\le 0$, and $u=v\psi \le bv$ in the set where $v>0$, and so $u$ satisfies \eqref{eq:ABP-MP-mP CB}.

\medskip

Step 2) Now we prove that $\lambda_1^+>0$ yields the existence of a function $\psi$ as in Step 1.

Note that there exists a neighborhood of $\partial\Omega$ such that $\varphi_1^+$ attains its global maximum outside it. Moreover, as in \cite[Lemma 4.5]{BQeq}, by the Lipschitz estimate (see \cite[Theorem 2.3]{B2016} for a version of \cite[Proposition 4.9]{BQeq} for unbounded coefficients) we may take this neighborhood  $\mathcal{U}$ depending only on $N,\lambda,\Lambda,\|\gamma\|_{\varrho}, \|\vartheta\|_N, \lambda_1^\pm$, i.e.\ uniform with respect to the class of equations we consider. 
Indeed, $F^*[\varphi^+_1]+\lambda_1^+\vartheta(x)\varphi_1^+=0$ in $\Omega$, and so $\varphi_1^+$ is a viscosity positive solution of $\mathcal{L}_0^+[\varphi^+_1]\ge -(1+\lambda_1^+)\vartheta (x)\varphi^+_1$ in $\Omega$, $\varphi^+_1=0$ on $\partial\Omega$, and by \cite[Theorem 2.3]{B2016} we get 
\begin{align*}
\textstyle 1=\varphi^+_1(x_0)=\max_{\Omega} \varphi^+_1 \le C (1+\lambda_1^+)\|\vartheta\|_{N}\,\mathrm{dist}(x_0,\partial\Omega) \Rightarrow\,  \mathrm{dist}(x_0,\partial\Omega) \geq (C(1+\lambda_1^+)\|\vartheta\|_{N})^{-1},
\end{align*}
where $C$ is a universal positive constant depending only on $n,\varrho,\alpha,\beta, \|\gamma\|_{L^\varrho(\Omega)}, \Omega$.
We then take a compact set $K\subset (\rN\setminus \mathcal{U})$ 
such that $\varphi_1^+$ attains its maximum equal to $1$ in $K$ and 
\begin{center}
	$|\Omega \setminus K| \le\varepsilon :=(2C_A \|\vartheta \|_{L^\varrho(\Omega)})^\frac{-N\varrho}{\varrho-N}$
\end{center} where
$C_A$ is the constant in Proposition \ref{ABPproper}. Since $F^*$ satisfies \eqref{SC}, by \cite[Theorem 1.1(ii)]{arma2010} we may consider $w\in C(\overline{\Omega})$ a viscosity solution of the Dirichlet problem
\begin{align}\label{eq:claim visc}
\textrm{$F^*(x,0,0,D^2 w) + \gamma (x)|Dw| =f(x)$ \;in $\Omega$, \quad $w=0$ \,on $\partial\Omega$,}
\end{align}
where $f(x)= -2 \vartheta (x)$ in $\Omega\setminus K$ and $f(x)=0$ in $K$. 
Note that $w\in W^{2,\varrho} (\Omega)$ by Lemma \ref{lema convex}.

Next, we apply ABP (Proposition \ref{ABPproper}) and H\"older's inequality to find \begin{center}
	$0<w\le 2C_A \|\vartheta \|_{L^N(\Omega\setminus K)}\le 2C_A |\Omega\setminus K|^{{1}/{N}-{1}/{\varrho}}\,\|\vartheta \|_{L^\varrho(\Omega\setminus K)} \le 2C_A \varepsilon^{\frac{\varrho-N}{N\varrho}} \|\vartheta \|_{L^\varrho(\Omega)}=1$\; in $\Omega$.
\end{center} Then $w$ also solves, in the strong sense,
\begin{center}
$F^* [w+1]  \le  F^*(x,0,0,D^2 w)+ \gamma (x)|Dw|+ \vartheta (x)(w+1) 
 =  \vartheta (x)(w-1)
\le 0$ \; in $\Omega\setminus K$.
\end{center}
Now we infer that Harnack inequality gives us $\varphi_1^+\ge \eta$ on $K$, for some $\eta >0$. 
In fact, since $\varphi^+_1$ is a positive solution of the inequalities $\mathcal{L}_0^+[\varphi^+_1]+(1+\lambda_1^+)\vartheta(x)\varphi_1^+ \ge 0$ and $ \mathcal{L}^-[\varphi^+_1]\le 0$ in $\Omega$, this is a combination of the Local Maximum Principle for the nonproper operator $\mathcal{L}_0^+ +(1+\lambda_1^+)\vartheta(x)$ with unbounded zero order term (which is obtained from \cite[Theorem 2.5]{B2016} through a Moser type argument, see details in \cite[proof of Theorem 2.2]{tese}) followed by the Weak Harnack inequality for the proper operator $\mathcal{L}^-=\mathcal{L}^-_0-\vartheta $ (since $\varphi^+_1>0$) with unbounded coercive term, see \cite[Theorem 2.1]{BSvazquez}. This produces a positive constant $\eta$ depending on $n,\varrho, \alpha, \beta,\Omega, \|\gamma\|_{L^\varrho(\Omega)}, \|\vartheta\|_{L^\varrho (\Omega)}$ and $\lambda_1^+$. 

Now we set $A\lambda_1^+ \eta  = 2$ and $\psi = 1 + w + A\varphi_1^+$. Thus $1\le \psi \le 2+A=b$ in $\Omega$, $\psi=1$ on $\partial\Omega$, and $\psi$ is a strong solution of
\begin{align*}
F^* [\psi] \le  F^* [w+1]+ F^*[A\varphi_1^+] 
\le f(x)+\vartheta (x)(w+1)-A\lambda_1^+\vartheta(x) \varphi_1^+=: h(x) \le 0 \; \textrm{ in } \Omega,
\end{align*}
since $h(x)=\vartheta(x)(w+1-A\lambda_1^+ \eta)\le 0$\, in $K$. 
Note that $a=\inf_{\Omega}\psi=1$, and $b=\sup_{\Omega}\psi$ depend only upon the same constants of ABP (see for instance \cite{KSite}) since $\psi=1$ on $\partial\Omega$.
In conclusion, $F$ satisfies ABP-MP in $\Omega$, which proves the theorem.
\end{proof}

\begin{rmk}
If $F^*$ is a proper operator then Proposition \ref{ABP-MP} reduces to the usual ABP (Proposition \ref{ABPproper}). Moreover, in this case $\psi \equiv 1$ verifies the conditions in Step 1, since $F^*[1] \leq F^*[0]= 0$.
\end{rmk}

Let us now discuss some applications of Theorem \ref{ABP-MP}.
In what concerns the Dirichlet problem, the following solvability in the scalar case will be essential for the solvability of the system.

\begin{theorem}\label{Dirichlet scalar}
Let $\Omega\in C^{1,1}$ be a bounded domain. Assume \eqref{SC}, \eqref{H lambdai>0} on  $F$, and \eqref{H homogeneity} on  $F^*$. Let $f\in L^\varrho (\Omega)$, $\varrho>N$. Then there exists a viscosity solution $u\in  C^{1,\alpha}(\overline{\Omega})$ of the problem 
\begin{align}\label{solv scalar}
F(x, u, Du, D^2u)=f(x) \text{ in } \Omega, \;\;\quad u=0\text{ on } \partial \Omega .
\end{align}

Further, if \eqref{solv scalar} possesses a strong solution $u\in W^{2,\varrho}_{\mathrm{loc}}(\Omega)$, then $u$ is the unique solution of \eqref{solv scalar} in the class of viscosity solutions. In particular, \eqref{solv scalar} is uniquely solvable under \eqref{H strong}.
\end{theorem}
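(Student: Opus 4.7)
The approach is to derive a priori $L^\infty$ and $C^{1,\alpha}$ bounds from Theorem \ref{ABP-MP} and Proposition \ref{C1,alpha regularity estimates geral}, then use a compactness/approximation argument for existence, and finally invoke ABP-MP applied to $F^\ast$ evaluated on the difference of two solutions for uniqueness.

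\textbf{Existence.} First I would extract a priori bounds for any viscosity solution $u$ of \eqref{solv scalar}. Hypothesis \eqref{H lambdai>0} supplies $\lambda_1^+(F^\ast(\vartheta))>0$, and the ordering \eqref{relation lambda*} together with $(F^*)^*=F^*$ additionally gives $\lambda_1^-(F^\ast(\vartheta))\geq \lambda_1^+(F^\ast(\vartheta))>0$. Hence Theorem \ref{ABP-MP}(i)--(ii) applied to $F$ yields both ABP-MP and ABP-mP, and combining them with $u\equiv 0$ on $\partial\Omega$ produces $\|u\|_{L^\infty(\Omega)}\leq C\|f\|_{L^N(\Omega)}$. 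The global boundary $C^{1,\alpha}$ estimate of Proposition \ref{C1,alpha regularity estimates geral}, available because \eqref{H continuity} holds for $F$, then upgrades this to $\|u\|_{C^{1,\alpha}(\overline\Omega)}\leq C\|f\|_{L^\varrho(\Omega)}$. To produce an actual solution I would regularize the data: choose continuous bounded $\gamma_n,\vartheta_n\to \gamma,\vartheta$ in $L^\varrho(\Omega)$ and $f_n\in C^\infty(\overline\Omega)\to f$ in $L^\varrho(\Omega)$, and construct operators $F_n$ built from the regularized coefficients for which the Dirichlet problem is solvable by the classical fully nonlinear theory (for example \cite{Arms2009,arma2010}). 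The $C^{1,\alpha}$ bounds are uniform in $n$, so Arzel\`a--Ascoli gives $u_n\to u$ in $C^1(\overline\Omega)$ along a subsequence, and Proposition \ref{stability} identifies the limit $u$ as an $L^N$-viscosity solution of \eqref{solv scalar}.

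\textbf{Uniqueness.} Under \eqref{H strong} every viscosity solution belongs to $W^{2,\varrho}_{\mathrm{loc}}(\Omega)\cap C(\overline\Omega)$ and thus is a strong solution. Let $u_1,u_2$ be two such solutions and set $w:=u_1-u_2\in W^{2,\varrho}_{\mathrm{loc}}(\Omega)$ with $w=0$ on $\partial\Omega$. From the definition of $F^\ast,F_\ast$ and $F[u_1]=F[u_2]=f$ a.e., one obtains $F^\ast[w]\geq 0$ and $F_\ast[w]\leq 0$ pointwise a.e.; by the identity $F^\ast(x,r,\xi,X)=-F_\ast(x,-r,-\xi,-X)$, the second inequality is equivalent to $F^\ast[-w]\geq 0$. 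Since these strong inequalities also hold in the $L^\varrho$-viscosity sense, and since $(F^\ast)^\ast=F^\ast$ together with \eqref{H lambdai>0} guarantees that Theorem \ref{ABP-MP}(i) applies to $F^\ast$ itself, ABP-MP delivers $\sup_\Omega w\leq 0$ and $\sup_\Omega(-w)\leq 0$, so $w\equiv 0$.

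\textbf{Main obstacle.} The existence step is where the real work lies: one has to design the regularized operators $F_n$ so that the approximating convergence hypothesis of Proposition \ref{stability} can be verified---namely that for every test function $\varphi\in W^{2,\varrho}$ on a ball, $F_n(x,u_n,D\varphi,D^2\varphi)-f_n\to F(x,u,D\varphi,D^2\varphi)-f$ in $L^\varrho$. This is what dictates the choice of $L^\varrho$-topology for the approximation of $\gamma,\vartheta$, and it relies critically on the $C^1$-precompactness of $\{u_n\}$ coming from the uniform $C^{1,\alpha}$ a priori bound. Once this step is carried out, uniqueness is essentially a one-line consequence of the ABP-MP machinery already developed in Theorem \ref{ABP-MP}.
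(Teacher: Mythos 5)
Your uniqueness argument under \eqref{H strong} is essentially sound (both solutions become strong, the difference $w$ satisfies $F^*[w]\ge 0$ and $F^*[-w]=-F_*[w]\ge 0$, and ABP-MP for $F^*$ applies since \eqref{H lambdai>0} gives exactly $\lambda_1^+((F^*)^*(\vartheta))=\lambda_1^+(F^*(\vartheta))>0$), but note that the theorem's ``Further'' clause is stronger: it asserts uniqueness among \emph{all} viscosity solutions as soon as \emph{one} strong solution exists, without assuming \eqref{H strong}. Your argument only covers the case where every solution is strong, so the comparison between a strong and a merely viscosity solution — which in the paper is delegated to \cite[Theorem 1(iii)]{arma2010} after a change of unknown — is left unaddressed.

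The genuine gap is in the existence step, and it is not just ``real work left to do'': the strategy itself does not close. The operator $F$ is abstract, with $\gamma,\vartheta$ entering only through the structure inequality \eqref{SC}, so there is no canonical way to ``build $F_n$ from regularized coefficients''; and even granting such a construction, the approximating Dirichlet problems would still involve a \emph{nonproper}, possibly nonconvex operator with merely measurable $x$-dependence, for which solvability is not classical — \cite{BQeq} needs convexity and \cite{Arms2009} needs continuous data, and for nonproper operators solvability is precisely the kind of statement being proved here. Moreover, your uniform $L^\infty$ bounds for $u_n$ would require ABP-MP/mP for $F_n$ with constants uniform in $n$, and these constants depend on $\lambda_1^\pm(F_n^*(\vartheta_n))$; nothing in the proposal prevents these eigenvalues from degenerating along the approximation. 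The paper avoids all of this with one idea you are missing: using the strict positive supersolution $\psi\in W^{2,\varrho}(\Omega)$, $F^*[\psi]\le 0$, $a\le\psi\le b$, produced in Step 2 of the proof of Theorem \ref{ABP-MP} (this is where $\lambda_1^+(F^*(\vartheta))>0$ is used), one sets $u=\psi v$ and checks that the transformed operator $F_\psi$ in \eqref{def Fpsi} is \emph{proper} and still satisfies \eqref{SC} with unbounded ingredients. Existence (and the comparison needed for the ``Further'' clause) then follows directly from Sirakov's theorem for proper operators \cite[Theorem 1]{arma2010}, and the $C^{1,\alpha}$ regularity from Proposition \ref{C1,alpha regularity estimates geral}. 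Without this reduction to the proper case (or a substitute for it), your existence proof does not go through.
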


\begin{proof}
By Step 2 in the proof of Theorem \ref{ABP-MP} we know that there exists a function $\psi \in W^{2,\varrho}(\Omega)$ with $\psi>0$ in $\overline{\Omega}$ such that $\psi$ solves $F^*[\psi]\le 0$ in $\Omega$ in the strong sense.
Next, by Step 1 in that proof, one may define \eqref{def Fpsi}, from which we see that solving \eqref{solv scalar} is equivalent to solve $F_\psi[v]=f(x)$ in $\Omega$, where $F_\psi$ is a proper operator satisfying \eqref{SC}.
In turn, the existence of a viscosity solution $v\in C(\overline{\Omega})$ to $F_\psi[v]=f(x)$ comes from \cite[Theorem 1(ii), case $\mu=0$]{arma2010}; and the uniqueness in the presence of a strong solution follows by \cite[Theorem 1(iii), case $\mu=0$]{arma2010}. The regularity assertions are an immediate consequence of the $C^{1,\alpha}$  results in Proposition \ref{C1,alpha regularity estimates geral}.
\end{proof}

As a nontrivial application of Theorem \ref{ABP-MP} to systems, we prove MP and mP for either domains with small measure or weights with small $L^N$-norm.

\begin{proof}[Proof of Theorem \ref{MP small domain}] We only prove item $(i)$, as $(ii)$ is completely analogous. We start choosing $A>0$ such that
$
\lambda\mu^q\,  C_B^{1+q} \leq A,
$
where $C_B>0$ is the universal constant in Definition \ref{def ABP-MP}, which depends upon $N,\alpha,\beta,\mathrm{diam}(\Omega)$, $\|\gamma\|_{L^\varrho(\Omega)}$, 
in addition to $\lambda_1^+(F_i^*(\vartheta))$.

Note that $(u,v)$ is a pair of viscosity solutions to
	\begin{center}
$-F_1^* [u]\leq \lambda \tau_1 (x)(v^+)^q $, \quad $-F_2^* [v]\leq \mu \tau_2 (x)(u^+)^p $ \; in $\Omega$.
	\end{center}
Hence, Theorem \ref{ABP-MP} just proved, applied to the scalar equation for $u$ and $v$, yields
\begin{align}\label{abp uv}
\textstyle	\sup_{\Omega}u \leq  C_B\lambda\, \| \tau_1\|_{L^N(\Omega)} \sup_\Omega (v^+)^q ,
\quad	\sup_{\Omega}v \leq C_B\,\mu\, \| \tau_2\|_{L^N(\Omega)} \sup_\Omega (u^+)^p .
	\end{align}
If we had either $u\le 0$ or $v\le 0$ in $\Omega$, then by \eqref{abp uv} we would obtain $u,v\le 0$ in $\Omega$, and the proof is done. We then assume that both $u$ and $v$ assume their positive maxima in $\Omega$. 
Then, 
\begin{align*}
\textstyle 	\sup_{\Omega}(u^+)
 &\leq  C_B^{1+q} \,\lambda \mu^q\, \|\tau_1\|_{L^N(\Omega)}\,\|\tau_2\|_{L^N(\Omega)}^q\,   \sup_{\Omega}(u^+)^{pq} \\
 &\textstyle\leq  A\, \|\tau_1\|_{L^N(\Omega)}\,\|\tau_2\|_{L^N(\Omega)}^q\,   \|u\|_\infty^{pq-1} \,\sup_{\Omega}(u^+).
\end{align*}

For MP with small weights $\tau_1 , \tau_2\in L^N_+(\Omega)$, we choose $\varepsilon_0>0$ with $\|\tau_1\|_{L^N(\Omega)}\,\|\tau_2\|_{L^N(\Omega)}^q\le \varepsilon_0$ so that $A\varepsilon_0\,   \|u\|_\infty^{pq-1}\le 1/2$. Then, $u\le 0$ in $\Omega$, and so does $v$  by \eqref{abp uv}. 
Upon performing the above argument with $\sup_\Omega (v^+)$, one can assume instead $\|\tau_1\|^p_{L^N(\Omega)}\,\|\tau_2\|_{L^N(\Omega)}\le \varepsilon_0$. 

On the other hand, for $\tau_1 , \tau_2\in L^\varrho_+(\Omega)$, say $\|\tau_1\|_{L^\varrho(\Omega)}\,\|\tau_2\|_{L^\varrho(\Omega)}^q\le W$, we apply Holder inequality to obtain
	\begin{align*}
\textstyle 	\sup_{\Omega}(u)^+\leq  AW\, |\Omega|^{(\frac{1}{N}-\frac{1}{\varrho})(1+q)} \|u\|_{\infty}^{pq-1} \sup_{\Omega}(u^+).
	\end{align*}
Then we pick $\varepsilon_0>0$ with $|\Omega|\le \varepsilon_0$ such that $ AW \varepsilon_0^{(\frac{1}{N}-\frac{1}{\varrho})(1+q)}  \|u\|_{\infty}^{pq-1} \leq 1/2$, from which we also derive $u,v\le 0 $ in $\Omega$.  

The argument for supersolutions is analogous, by considering the negative parts. In any case, observe that, if $pq=1$, then $\varepsilon_0$ can be chosen independently of the $L^\infty$-norm of $u$ and $v$.
\end{proof}

\section{Auxiliary results for systems}\label{section aux}

In this section we consider some fundamental results which appear throughout the text. 
We start with an instrumental proposition to our analysis of uniqueness results.

\begin{prop}\label{th4.1 BQeq}
	Let $F_1, F_2$ satisfy \eqref{SC}, \eqref{H homogeneity}, \eqref{H lambdai>0}. 
	Let $pq=1$, $\lambda, \mu \geq 0$, and $(u_1, v_1)$, $(u_2, v_2)$ in $C(\overline{\Omega})$ be viscosity solutions of 
	\begin{align*}
	\left\{
	\begin{array}{rclcc}
	F_1 [u_1]+\, \lambda \tau_1(x) v_1^{q} &\le&0 &\mbox{in} & \;\Omega \\
	F_2 [v_1]+\,\mu \tau_2(x) u_1^p &\le&0 &\mbox{in} & \;\Omega \\
	u_1\, ,\; v_1\;&>& 0 &\mbox{in} & \;\Omega 
	\end{array}
	\right.
	\;\;,\;\;\;
	\left\{
	\begin{array}{rclcc}
	F_1[u_2]+\,\lambda \tau_1(x)|v_2|^{q-1}v_2 &\ge&0 &\mbox{in} & \;\Omega \\
	F_2 [v_2]+\,\mu \tau_2(x) |u_2|^{p-1}u_2 &\ge&0 &\mbox{in} & \;\Omega \\
	u_2\, ,\; v_2\;&\le& 0 &\mbox{on} & \;\partial \Omega.
	\end{array}
	\right.
	\end{align*}
	In addition, assume that 
	\begin{align}\label{H no MP}
	\textrm{either\; $u_2(x_0)>0$\; or\; $v_2 (x_0)>0$,\;\;\; for some $x_0 \in \Omega$;}
	\end{align}
	and that one of the  pairs of solutions is in $E_\varrho=W^{2,\varrho}(\Omega)\cap C(\overline{\Omega})$. Then $u_1 \equiv tu_2$ and $v_1 \equiv t^p v_2$ in $\Omega$ for some $t >0$. 
	
\smallskip

Analogously, if $pq=1$, $\lambda, \mu \geq 0$, and $(u_1, v_1)$, $(u_2, v_2)$ in $C(\overline{\Omega})$ satisfy
\begin{align*}
\left\{
\begin{array}{rclcc}
F_1 [u_1]+\, \lambda \tau_1(x) |v_1|^{q-1}v_1 &\ge&0 &\mbox{in} & \;\Omega \\
F_2 [v_1]+\,\mu \tau_2(x) |u_1|^{p-1}u_1 &\ge&0 &\mbox{in} & \;\Omega \\
u_1\, ,\; v_1\;&<& 0 &\mbox{in} & \;\Omega 
\end{array}
\right.
\;\;,\;\;\;
\left\{
\begin{array}{rclcc}
F_1[u_2]+\,\lambda \tau_1(x)|v_2|^{q-1}v_2 &\le&0 &\mbox{in} & \;\Omega \\
F_2 [v_2]+\,\mu \tau_2(x) |u_2|^{p-1}u_2 &\le&0 &\mbox{in} & \;\Omega \\
u_2\, ,\; v_2\;&\ge& 0 &\mbox{on} & \;\partial \Omega.
\end{array}
\right.
\end{align*}
with either
$u_2(x_0)<0$ or $v_2 (x_0)<0$, for some $x_0 \in \Omega$; and that one of the  pairs of solutions is in $E_\varrho$. Then $u_1 \equiv tu_2$ and $v_1 \equiv t^{p} v_2$ in $\Omega$ for some $t >0$. 
\end{prop}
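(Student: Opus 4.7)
I focus on the first (positive) part of the statement; the second (negative) part will follow by applying the first to the pair $(\tilde{u}_i,\tilde{v}_i)=(-u_i,-v_i)$ and the operators $G_i(x,r,\xi,X)=-F_i(x,-r,-\xi,-X)$ as in \eqref{def G}, which reverts the signs of the solutions and the directions of the inequalities. Inspired by \cite{BQeq}, the core idea is to introduce
\[
t^{*}:=\sup\bigl\{\,t>0:\ t\,u_2\le u_1\ \text{and}\ t^p v_2\le v_1\ \text{in }\Omega\,\bigr\}
\]
and show that the conclusion holds with $t=t^{*}$. The bound $t^{*}<+\infty$ is an immediate consequence of \eqref{H no MP}: if $u_2(x_0)>0$ (resp.\ $v_2(x_0)>0$), then $t\,u_2(x_0)>u_1(x_0)$ (resp.\ $t^{p}v_2(x_0)>v_1(x_0)$) for $t$ large. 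The bound $t^{*}>0$ uses the strict positivity of $u_1,v_1$ in $\Omega$, the inequality $u_2,v_2\le 0$ on $\partial\Omega$, and Hopf's lemma (Proposition~\ref{Hopf}) applied to $u_1,v_1$; indeed $F_1[u_1]\le -\lambda\tau_1 v_1^{q}\le 0$, hence $\mathcal{L}^{-}[u_1]\le 0$ via \eqref{SC}, and analogously for $v_1$.

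The central step relies on the hypothesis $pq=1$ together with \eqref{H homogeneity}. Since $(t^{*})^{pq}=t^{*}$, the scaled pair $(t^{*}u_2,(t^{*})^{p}v_2)$ satisfies
\[
F_1[t^{*}u_2]+\lambda\tau_1\bigl|(t^{*})^{p}v_2\bigr|^{q-1}(t^{*})^{p}v_2\ge 0,\qquad F_2[(t^{*})^{p}v_2]+\mu\tau_2\bigl|t^{*}u_2\bigr|^{p-1}(t^{*}u_2)\ge 0
\]
in the viscosity sense. Setting $w_u:=u_1-t^{*}u_2\ge 0$ and $w_v:=v_1-(t^{*})^{p}v_2\ge 0$ in $\Omega$, subtracting from the opposite inequalities satisfied by $(u_1,v_1)$, and using \eqref{SC} together with the monotonicity of $s\mapsto|s|^{q-1}s$ in combination with $w_u,w_v\ge 0$, one obtains
\[
\mathcal{L}^{-}[w_u]\le 0\quad\text{and}\quad \mathcal{L}^{-}[w_v]\le 0\quad\text{in }\Omega.
\]
Here the assumption that one of the pairs belongs to $E_\varrho=W^{2,\varrho}(\Omega)\cap C(\overline{\Omega})$ is essential, since it makes one side of the subtraction a pointwise a.e.\ inequality while the other is a viscosity inequality on the same continuous function.

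The strong maximum principle (Proposition~\ref{SMP}) then forces each of $w_u,w_v$ to be either identically zero or strictly positive in $\Omega$. I claim that $w_u\equiv 0$. Otherwise, $w_u>0$ in $\Omega$ and Hopf's lemma (Proposition~\ref{Hopf}) yields a strictly positive inner normal derivative of $w_u$ on $\partial\Omega$; combined with $u_2\le 0$ on $\partial\Omega$ and the compactness of $\overline{\Omega}$, one can produce $\epsilon>0$ with $w_u\ge \epsilon\,u_2^{+}$ in $\Omega$ (the boundary strip controlled by Hopf, the interior by continuity and strict positivity of $w_u$), whence $(t^{*}+\epsilon)u_2\le u_1$, contradicting the maximality of $t^{*}$. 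Hence $u_1\equiv t^{*}u_2$ in $\Omega$. Plugging this back into the first $F_1$-inequalities and using $F_1[u_1]=t^{*}F_1[u_2]$ a.e., we deduce
\[
v_1^{q}\le t^{*}\,|v_2|^{q-1}v_2\quad\text{a.e.\ on }\textrm{supp}(\tau_1)\cap\Omega.
\]
Since $v_1>0$, this forces $v_2>0$ a.e.\ on that positive-measure set and gives $v_1\le (t^{*})^{p}v_2$ there. Combined with $v_1\ge (t^{*})^{p}v_2$ in $\Omega$, we obtain $w_v=0$ on a set of positive measure; in particular $w_v(x_1)=0$ at some $x_1\in\Omega$, and SMP applied to $w_v$ yields $w_v\equiv 0$, finishing the proof with $t=t^{*}>0$.

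The main obstacle I anticipate is the Hopf-based step used to rule out $w_u>0$: one must handle simultaneously the boundary strip (where Hopf furnishes a linear lower bound on $w_u$ while $u_2^{+}$ may be small but positive) and the interior (where $u_2$ may be large but $w_u$ is strictly bounded below). A secondary technical point is the rigorous transition from viscosity to a.e.\ inequalities in the subtraction step, which ultimately relies on the assumption that one of the two pairs belongs to $E_\varrho$.
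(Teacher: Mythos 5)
Your overall strategy (the sliding/scaling comparison in the spirit of \cite{BQeq}) is essentially the paper's, but there is a genuine gap at the decisive step. You define $t^{*}=\sup\{t>0:\ t\,u_2\le u_1\ \text{and}\ t^{p}v_2\le v_1\ \text{in }\Omega\}$ and, to rule out $w_u=u_1-t^{*}u_2>0$, you produce $\epsilon>0$ with $(t^{*}+\epsilon)u_2\le u_1$ and declare this to contradict the maximality of $t^{*}$. It does not: $t^{*}$ is a supremum over $t$ satisfying the \emph{two} constraints simultaneously, and improving only the $u$-constraint says nothing about $(t^{*}+\epsilon)^{p}v_2\le v_1$. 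Since $\sup_\Omega v_2>0$ (a fact that must, and can, be established first via ABP, as the paper does), the $v$-constraint may well be the binding one --- for instance if $w_v\equiv 0$ while $w_u>0$ --- and then $t^{*}+\epsilon$ is simply not admissible, so no contradiction arises. The paper handles this by first showing that $w_u>0$ is equivalent to $w_v>0$ (from the coupled extremal inequalities satisfied by $w_u,w_v$), and then, when both are positive, re-running the interior-plus-boundary comparison with the positive pair $(w_u,w_v)$ so as to improve \emph{both} inequalities at once before contradicting the supremum. Your sketch could be repaired along these lines: your ``plug back'' step (which needs $\lambda>0$ and $|\mathrm{supp}\,\tau_1|>0$; note $\lambda=0$ or $\mu=0$ make the hypotheses inconsistent via ABP-MP, so that case should be disposed of separately) does exclude the mixed case $w_u\equiv 0$, $w_v>0$, and its symmetric version excludes the other mixed case, but as written the claimed contradiction is not established.

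A secondary point: both the nonemptiness of your admissible set (i.e.\ $t^{*}>0$) and the bound $w_u\ge\epsilon\,u_2^{+}$ require an upper bound on the decay of $u_2^{+}$ (and $v_2^{+}$) near $\partial\Omega$ of the order of the distance to the boundary; Hopf (Proposition~\ref{Hopf}) applied to $u_1,v_1$ or to $w_u$ only gives the lower bound. This is exactly where the paper uses the assumption that one of the pairs lies in $E_\varrho\subset C^{1}(\overline{\Omega})$, comparing at each boundary point the one-sided difference-quotient quantities, one of which is then a genuine (finite) normal derivative. You invoke $E_\varrho$ only for the viscosity-versus-strong subtraction (where it is indeed also needed), leaving the boundary comparison against a merely continuous $u_2,v_2$ unjustified.
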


\begin{rmk}
	The assumption \eqref{H no MP} on the pair $(u_2,v_2)$ means that the system \eqref{LE} does not satisfy the maximum principle for $(\lambda, \mu)$. 
\end{rmk}

\begin{proof}
Let us prove the first statement, since the other one is carried out similarly.

	We observe that \eqref{H no MP} implies $\sup_\Omega u_2>0$ and $\sup_\Omega v_2>0$. 
	Indeed, say $u_2(x_0)>0$, which yields $\sup_\Omega u_2>0$. If we had $v_2\leq 0$ in $\Omega$ then $-F_1^*[u_2]\leq 0$ in $\Omega$, $u_2\leq 0$ on $\partial\Omega$, so we would get $u_2\leq 0$ by ABP-MP (since we assume $F_1$ satisfies \eqref{H lambdai>0}, the first eigenvalue of $F_1^*$ is positive and so Theorem \ref{ABP-MP} can be applied). This yields a contradiction.
		
	Observe that for each compact set $K\subset\Omega$, with $x_0\in K$, there exists $s_K$ such that $u_1 > s_K u_2$, $v_1 > s_K^p v_2$ in $K$. This comes from the fact that $\min u_1$, $\min {v_1}$, $\max u_2$, $\max v_2$ are positive over $K$.
	
	Next we claim that $u_1 \geq  s u_2$, $v_1 \geq s^p v_2$ in a neighborhood of $\partial\Omega$ for some small $s>0$. 
	
	It is enough to prove the first inequality; the second one is analogous.
	Notice that $u_1-su_2\geq 0$ on $\partial\Omega$, for all $s>0$. Fix $\hat{x}\in \partial\Omega$.
	If $u_1(\hat{x})-su_2(\hat{x})> 0$, then by continuity of $u_1$ and $u_2$ up to the boundary there exists a neighborhood of $\hat{x}$, namely $\hat{B}$, such that $u_1-su_2> 0$ in $\Omega\cap \hat{B}$.
	Assume then $u_1(\hat{x})-su_2(\hat{x})= 0$ for some $s$.
	Thus $u_1(\hat{x})=u_2(\hat{x})=0$. 
	Let us look at the quantities
	\begin{center}
		$A_i =\varliminf_{t \to 0^+} \frac{u_i(\hat{x}+t\nu)-u_i(\hat{x})}{t}$, \; $i=1,2$,
	\end{center}
	where $\nu$ is the interior unit normal vector to $\partial\Omega$ at $\hat{x}$. 
	Hopf's lemma for viscosity solutions (Proposition \ref{Hopf}) yields $A_1>0$.  If we had $A_2\leq 0$, then $A_1-sA_2>0$ for all $s>0$. Otherwise, if $A_2>0$ then we may pick some small $\hat{s}>0$ such that $A_1-\hat{s} A_2>0$. Recall that one of the solutions pair is in $E_\varrho$, then one of the $A_i$'s is the normal derivative of $u_i$ at $\hat{x}$. Since $u_1(\hat{x})-su_2(\hat{x})= 0$, this is enough to ensure that $u_1-su_2>0$ in $\Omega\cap \hat{B}$.
	A covering argument then concludes the claim.
	
	Therefore one obtains the existence of some $s_0>0$ such that $u_1 \geq  s_0 u_2$ and $v_1 \geq s_0^p v_2$ in $\Omega$, for all $s\leq s_0$. In particular, the following set is nonempty,
	\begin{center}
		$ S=\{ s>0 \, : \, u_1 > su_2, \, v_1 > s^pv_2 \textrm{\, in }\Omega \} $, 
	\end{center}
	and the quantity $s_*=\sup S$\, is well defined. We have $s_*<+\infty$ by \eqref{H no MP}.
	
	\smallskip
	
	Notice that $w:=u_1 -  s_* u_2\geq 0$ and $z:=v_1 - s_*^p\, v_2\geq 0$ in $\Omega$.
	Moreover, $s_*^p\, |u_2|^{p-1}u_2\leq u_1^p $ and $s_* |v_2|^{q-1}v_2\le v_1^q$ in $\Omega$.
	By using also \eqref{SC}, \eqref{H homogeneity}, and $\lambda, \mu \geq 0$, one sees that $w,z$ satisfy
	\begin{align}\label{eq w,z}
	(F_1)_*[w] \le F_1 [u_1] - s_*\, F_1 [u_2] \le -\lambda \tau_1(x)v_1^{q} + \lambda  \tau_1(x)\,s_*\,|v_2|^{q-1}v_2 \le  0, \nonumber\\
	(F_2)_*[z]   \le F_2 [v_1] - s_*^p\, F_2 [v_2] \le - \mu \tau_2(x) u_1^p + \mu \tau_2(x) \,s_*^p \,|u_2|^{p-1}u_2 \le  0,
	\end{align}
	in the viscosity sense in $\Omega$, with $w,z\ge 0$ on $\partial\Omega$.
	Whence, by applying twice Theorem \ref{ABP-MP} and SMP for scalar equations we get either $w>0$ or $w\equiv 0$ in $\Omega$; and either $z>0$ or $z\equiv 0$ in $\Omega$.
	
	Notice that $w> 0$ is equivalent to $z> 0$ by \eqref{eq w,z}. In other words, one has either $w,z>0$ or $w,z\equiv 0$. Under the latter we are done. 
	
	Suppose on the contrary that $w,z>0$ in $\Omega$.
	Now we may reproduce the preceding argument with the pair $(u_2,v_2)$ replaced by $(w,z)$ in order to conclude the existence of some small $\varepsilon>0$ such that
	$u_1 > (s_*+\varepsilon) u_2$ in $\Omega$. But this contradicts the definition of $s_ *$ as the supremum of $S$.
\end{proof}

\begin{corol}\label{cor uniqueness}
Let $F_1, F_2$ satisfy \eqref{SC}, \eqref{H homogeneity}, \eqref{H lambdai>0}. 
Let $pq=1$, $\lambda \geq 0$, and let $(u_1, v_1)$, $(u_2, v_2)$ be viscosity solutions of 
\begin{align*}
\left\{
\begin{array}{rclcc}
F_1 [u_1]+\, \lambda \tau_1(x) |v_1|^{q-1}v_1 &=&0 &\mbox{in} & \;\Omega \\
F_2 [v_2]+\, \lambda \tau_2(x)|u_1|^{p-1}u_1 &=&0 &\mbox{in} & \;\Omega \\
u_1\, ,\; v_1\;&=& 0 &\mbox{on} & \;\partial\Omega 
\end{array}
\right.
\;\;,\;\;\;
\left\{
\begin{array}{rclcc}
F_1[u_2]+\, \Lambda\tau_1(x) |v_2|^{q-1}v_2 &=&0 &\mbox{in} & \;\Omega \\
F_2 [v_2]+\, \Lambda \tau_2(x)|u_2|^{p-1}u_2 &=&0 &\mbox{in} & \;\Omega \\
u_2\, ,\; v_2\;&=& 0 &\mbox{on} & \;\partial \Omega
\end{array}
\right.
\end{align*}
with one of the solutions pair in $E_\varrho$. Suppose that either $u_i,v_i>0$ in $\Omega$ or  $u_i,v_i<0$ in $\Omega$, $i=1,2$. 
Then $\lambda=\Lambda$, and $u_1 \equiv tu_2$, $v_1 \equiv t^p v_2$ in $\Omega$ for some $t >0$.
\end{corol}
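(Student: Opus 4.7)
The plan is to reduce the corollary to a direct application of Proposition \ref{th4.1 BQeq} and to exploit the homogeneity $pq = 1$. I will treat the case of two positive pairs; the negative case is symmetric and uses the second half of Proposition \ref{th4.1 BQeq}. Without loss of generality assume $\lambda \leq \Lambda$ (otherwise swap the roles of the two pairs throughout).

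Since $(u_2, v_2)$ solves the system with parameter $\Lambda$ as an equality and $v_2^q, u_2^p > 0$, $\tau_i \gneqq 0$, the difference $\Lambda - \lambda \geq 0$ yields
\[
F_1[u_2] + \lambda \tau_1(x) v_2^q = -(\Lambda - \lambda)\tau_1(x) v_2^q \leq 0, \quad F_2[v_2] + \lambda \tau_2(x) u_2^p \leq 0 \quad \text{in } \Omega,
\]
so $(u_2, v_2)$ is a supersolution pair at level $(\lambda, \lambda)$ in the sense of the first part of Proposition \ref{th4.1 BQeq}. Meanwhile $(u_1, v_1)$ is an equality-solution pair at level $(\lambda, \lambda)$, hence in particular a subsolution pair; the assumption \eqref{H no MP} is trivially satisfied because $u_1, v_1 > 0$ in $\Omega$, and one of the two pairs lies in $E_\varrho$ by hypothesis. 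Proposition \ref{th4.1 BQeq} then produces $t > 0$ such that $u_2 \equiv t u_1$ and $v_2 \equiv t^{p} v_1$ in $\Omega$.

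To recover $\lambda = \Lambda$, I substitute this proportionality into the first equation satisfied by $(u_2, v_2)$. Using \eqref{H homogeneity} and $pq = 1$,
\[
0 = F_1[t u_1] + \Lambda \tau_1(x) (t^{p} v_1)^q = t\bigl(F_1[u_1] + \Lambda \tau_1(x) v_1^q\bigr),
\]
so $F_1[u_1] + \Lambda \tau_1(x) v_1^q = 0$. Comparing with $F_1[u_1] + \lambda \tau_1(x) v_1^q = 0$ gives $(\Lambda - \lambda)\tau_1(x) v_1^q \equiv 0$; since $v_1 > 0$ in $\Omega$ and $\tau_1 \gneqq 0$, this forces $\lambda = \Lambda$. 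Setting $t' := t^{-1}$ we obtain $u_1 = t' u_2$ and $v_1 = (t')^p v_2$, which is the claim.

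I do not anticipate any serious obstruction: the entire argument is a bookkeeping reduction to Proposition \ref{th4.1 BQeq}, and the only subtlety is the choice of which pair plays the role of sub/supersolution (determined by the sign of $\Lambda - \lambda$), together with the homogeneity identity $t^{pq} = t$ that makes the scaled equation coincide with the original one.
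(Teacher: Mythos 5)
Your argument is correct and takes essentially the route the paper intends: Corollary \ref{cor uniqueness} is stated as a direct consequence of Proposition \ref{th4.1 BQeq}, and your reduction (assume $\lambda\le\Lambda$, observe that the $\Lambda$-pair is a positive supersolution at level $\lambda$ while the $\lambda$-pair is a subsolution satisfying \eqref{H no MP}, invoke the proposition, then use \eqref{H homogeneity} and $t^{pq}=t$ to force $\lambda=\Lambda$) is exactly that derivation, with the negative case handled by the second half of the proposition as you say. The only point worth making explicit is that the final subtraction giving $(\Lambda-\lambda)\tau_1(x)v_1^q\equiv 0$ should be read almost everywhere: since one pair lies in $E_\varrho$ and the proportionality $u_2\equiv t u_1$, $v_2\equiv t^p v_1$ transfers this regularity to the other pair, both are strong solutions (viscosity and strong solutions coincide in $W^{2,\varrho}_{\mathrm{loc}}$), so the two equations for $u_1$ hold a.e.\ and may be subtracted pointwise.
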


We conclude the section presenting a priori bounds for the first eigenvalue.

\begin{lem}\label{lemma bound c=1}
Let $F_1, F_2$ satisfy \eqref{SC}, \eqref{H homogeneity}, \eqref{H lambdai>0} with $\tau_i(x)\geq \delta$ a.e.\ in $B_R$ for $i=1,2$, for some $B_R\subset\subset\Omega$. then
	$$\lambda_1^\pm (F_1(\tau_1),F_2(\tau_2),\Omega) \leq \delta^{-1} \,\lambda_1^\pm (F_1(1),F_2(1),B_R). $$
\end{lem}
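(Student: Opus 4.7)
The strategy is to combine the definition of the first eigenvalue for the system with the localization property of viscosity inequalities, together with the homogeneity hypothesis \eqref{H homogeneity}. Specifically, I will show that any admissible supersolution pair for the system on $\Omega$ with weights $\tau_i$ automatically yields an admissible supersolution pair on the smaller ball $B_R$ for the rescaled eigenvalue $\lambda\delta$ and constant weights equal to $1$.

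Concretely, for the $+$ case, I would fix an arbitrary $\lambda$ with $\Psi_\lambda^+(F_1(\tau_1),F_2(\tau_2),\Omega)\neq\emptyset$ and pick $(\varphi,\psi)$ in this set, so that $\varphi,\psi>0$ in $\Omega$ and the viscosity inequalities
\[
F_1[\varphi]+\lambda\tau_1(x)\psi^q\le 0,\qquad F_2[\psi]+\lambda\tau_2(x)\varphi^p\le 0
\]
hold in $\Omega$. Restricting to $B_R\subset\subset \Omega$, the viscosity inequalities remain valid on $B_R$ (the notion is local). Since $\tau_i\ge \delta$ a.e.\ in $B_R$ and $\varphi^p,\psi^q>0$ in $B_R$, the inequalities only get stronger after replacing $\lambda\tau_i$ by $\lambda\delta$ in the absorbing term: for example,
\[
F_1[\varphi]+\lambda\delta\,\psi^q \le F_1[\varphi]+\lambda\tau_1(x)\psi^q\le 0\quad\text{in } B_R,
\]
and likewise for the second equation. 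Therefore $(\varphi,\psi)$ restricted to $\overline{B_R}$ belongs to $\Psi_{\lambda\delta}^+(F_1(1),F_2(1),B_R)$, which by definition of the system's principal eigenvalue forces $\lambda\delta\le \lambda_1^+(F_1(1),F_2(1),B_R)$. Taking the supremum over admissible $\lambda$ gives the desired bound.

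The $-$ case is completely analogous using $\Psi_\lambda^-$: with $\varphi,\psi<0$, one has $|\psi|^{q-1}\psi<0$ and $|\varphi|^{p-1}\varphi<0$, so $\tau_i\ge \delta$ on $B_R$ now yields $\lambda\tau_i|\cdot|^{\cdot-1}(\cdot)\le \lambda\delta|\cdot|^{\cdot-1}(\cdot)$, and the reverse inequalities $F_i[\cdot]+\lambda\delta(\cdot)\ge 0$ hold in $B_R$.

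There is no real obstacle here: the lemma is essentially a combined monotonicity statement (in the domain, and in the constant weight via homogeneity). The only technical point that deserves a line is that the restriction to $B_R\subset\subset\Omega$ preserves both the viscosity inequalities (since they are local) and the strict positivity (resp.\ negativity) of $\varphi,\psi$ in the interior, so the restricted pair genuinely lies in $\Psi_{\lambda\delta}^\pm(F_1(1),F_2(1),B_R)$.
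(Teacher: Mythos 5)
Your proposal is correct and follows essentially the same route as the paper: restrict an admissible pair $(\varphi,\psi)\in\Psi_\lambda^\pm(\Omega)$ to $B_R$, use $\tau_i\ge\delta$ to pass to the constant weight, conclude $\lambda\delta$ is admissible for $(F_1(1),F_2(1),B_R)$, and take suprema. The one point to make explicit is that the comparison $\lambda\delta\,\psi^q\le\lambda\tau_1(x)\psi^q$ (and its analogue in the negative case) requires $\lambda\ge 0$; the paper handles this by noting that $\lambda_1^\pm(F_1,F_2)\ge 0$ (Remark \ref{lambda1 F1,F2 ge 0}), so the suprema may be taken over $\lambda\ge 0$ only, negative $\lambda$ being irrelevant since the right-hand side is nonnegative — also, where you treat the $\lambda_1^-$ case directly with sign-reversed inequalities, the paper equivalently reduces it to the $+$ case via $G_i(x,r,p,X)=-F_i(x,-r,-p,-X)$.
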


\begin{proof}
We work on the $\lambda^+_1$ case, since for $\lambda_1^-$ it is just a question of replacing $F_i$ by $G_i(x,r,p,X)=-F_i(x,-r,-p,-X)$, recall \eqref{def G}.
Observe that $\lambda_1^\pm(F_i, \Omega) \leq \lambda_1^\pm (F_i, B_R)$, by definition.
Also, both quantities are nonnegative by Remark~\ref{lambda1 F1,F2 ge 0}. Hence, given \[
\mathcal{A}:=\{\lambda\in \mathcal{R},\ \Psi_\lambda^\pm (\Omega)\neq \emptyset\},\qquad \mathcal{B}:=\{\lambda\in \mathcal{R},\ \Psi_\lambda^\pm (B_R)\neq \emptyset\},
\]
it is enough to see that $\mathcal{A}\cap \{\lambda\geq 0\} \subset\mathcal{B}/\delta \cap \{\lambda\geq 0\}$, since
$$
\lambda_1^+(F_1(\tau_1), F_2(\tau_2),\Omega)=\sup_\mathcal{A} \lambda = \sup_{\mathcal{A}\cap \{\lambda\geq 0\}} \lambda\; ,\;\;  \lambda_1^+(F_1(1),F_2(1), B_R)=\sup_\mathcal{B} \lambda = \sup_{\mathcal{B}\cap \{\lambda\geq 0\}} \lambda,
$$
as settled before in Section \ref{Preliminaries_2}. Let $\lambda\in \mathcal{A}\cap \{\lambda\geq 0\}$, then there exist positive functions $\varphi,\psi\in C(\overline{\Omega})$ solving
$F_1[\varphi]+\lambda \tau_1(x)\psi^q \leq 0$, 	$F_2[\psi]+\lambda \tau_2(x)\varphi^p \leq 0$  in $\Omega$,
in the viscosity sense. Hence, $(\varphi,\psi)$ is a positive viscosity solution of 
$F_1[\varphi]+\lambda \delta\psi^q \leq 0$, $F_2[\psi]+\lambda \delta\varphi^p \leq 0$  in $B_R$, and so $\delta\lambda\in \mathcal{B}$.
\end{proof}

\begin{prop} \label{boundedness eig BQeq}
Suppose $F_1,F_2$ verify \eqref{SC}--\eqref{H strong}, $pq=1$, and $\tau_1\geq \delta>0$, $\tau_2\geq \delta>0$ a.e.\ in $B_R\subset\subset\Omega$, for some $R\leq 1$.
Let $\lambda>0$ and $(u,v)$, with $uv>0$ in $\Omega$, be a solution of 
\[
-F_1[u]= \lambda \tau_1(x) |v|^{q-1}v, \quad -F_2 [v]=\lambda \tau_2(x)|u|^{p-1}u \ \textrm{ in } \;\Omega, \quad u=v=0 \textrm{ on } \partial \Omega.
\]
Then $\lambda\leq C$, for a positive constant $C$ depending on $N,\alpha,\beta,\Omega, \|\gamma\|_{L^\varrho(\Omega)}$, $\|\vartheta\|_{L^\varrho(\Omega)}$, $\delta$, and $R$.
\end{prop}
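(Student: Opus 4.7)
We argue by contradiction. Assume there exist sequences $\lambda_n \to \infty$ and solution pairs $(u_n, v_n)$ with $u_n v_n > 0$ in $\Omega$; by connectedness and continuity each pair has constant sign, and via the dual operators $G_i(x,r,\xi,X) := -F_i(x,-r,-\xi,-X)$ together with \eqref{def G} we may reduce to the case $u_n, v_n > 0$. Since $\tau_i \geq \delta$ on $B_R$, in that ball $F_1[u_n] + \delta\lambda_n v_n^q \leq F_1[u_n] + \lambda_n \tau_1 v_n^q = 0$, and similarly for $v_n$. Therefore $(u_n, v_n)|_{B_R}$ belongs to $\Psi^+_{\delta\lambda_n}(B_R)$ for the unit-weight system, so by the very definition of $\lambda_1^+$ from Section \ref{Preliminaries_2} (and as used in Lemma \ref{lemma bound c=1}),
\[
\delta\lambda_n \;\leq\; \lambda_1^+\bigl(F_1(1), F_2(1), B_R\bigr).
\]
It therefore suffices to prove that this latter quantity is finite; assuming the contrary, we obtain pairs $(\varphi_n, \psi_n) \in \Psi^+_{\lambda_n}(B_R)$ with $\lambda_n \to \infty$, and derive a contradiction via blow-up.

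\textbf{Blow-up.} The scaling $(\varphi, \psi) \mapsto (t\varphi, t^p \psi)$ leaves $\Psi^+_\lambda$ invariant, by $pq = 1$; normalize so that $\max(\max\varphi_n^p, \max\psi_n) = 1$, hence $\varphi_n, \psi_n \in [0,1]$, and WLOG $\varphi_n(x_n) = 1$ for some $x_n \in \overline{B_R}$. Set $r_n := \lambda_n^{-1/2} \to 0$, $\rho_n := \mathrm{dist}(x_n, \partial B_R)/r_n$, and define
\[
\tilde\varphi_n(y) := \varphi_n(x_n + r_n y), \quad \tilde\psi_n(y) := \psi_n(x_n + r_n y), \quad \tilde F_i^{(n)}(y, r, \xi, X) := F_i(x_n + r_n y, r_n^2 r, r_n \xi, X).
\]
Using \eqref{H homogeneity} and the lower bound $\tau_i \geq \delta$ on $B_R$, the rescaled pair satisfies
\[
\tilde F_1^{(n)}[\tilde\varphi_n] + \delta\tilde\psi_n^q \leq 0, \qquad \tilde F_2^{(n)}[\tilde\psi_n] + \delta\tilde\varphi_n^p \leq 0 \quad \text{in } B_{\rho_n}.
\]
The drift and zero-order terms of $\tilde F_i^{(n)}$ are $r_n\gamma(x_n + r_n\cdot)$ and $r_n^2\vartheta(x_n + r_n\cdot)$, whose $L^\varrho(B_1)$-norms are controlled by $r_n^{1-N/\varrho}\|\gamma\|_{L^\varrho(\Omega)}$ and $r_n^{2-N/\varrho}\|\vartheta\|_{L^\varrho(\Omega)}$, and hence vanish as $n \to \infty$ since $\varrho > N$.

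\textbf{Passing to the limit.} Up to subsequence, $x_n \to x_0 \in \overline{B_R}$. The uniform $C^{1,\alpha}_{\mathrm{loc}}$-estimates of Proposition \ref{C1,alpha regularity estimates geral} and Arzel\`a-Ascoli yield local uniform convergence of $(\tilde\varphi_n, \tilde\psi_n)$ to a limit pair $(\tilde\varphi, \tilde\psi)$ defined on $\real^N$ (when $\rho_n \to \infty$) or on a half-space $\mathcal{H}$ with $\tilde\varphi = \tilde\psi = 0$ on $\partial\mathcal{H}$ (when $\rho_n$ is bounded). Stability of viscosity solutions (Proposition \ref{stability}) then shows that the limit solves
\[
\tilde F_1^\infty[\tilde\varphi] + \delta\tilde\psi^q \leq 0, \qquad \tilde F_2^\infty[\tilde\psi] + \delta\tilde\varphi^p \leq 0,
\]
for uniformly elliptic operators $\tilde F_i^\infty$ (with the same constants $\alpha, \beta$) free of drift and zero-order terms, with $\tilde\varphi, \tilde\psi \in [0,1]$ and $\tilde\varphi(0) = 1$. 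The inequality $\mathcal{M}^-[\tilde\varphi] \leq \tilde F_1^\infty[\tilde\varphi] \leq 0$, combined with a Liouville theorem for bounded Pucci-supersolutions on $\real^N$, forces $\tilde\varphi$ to be constant; then $\tilde\psi \equiv 0$ from $\delta\tilde\psi^q \leq -\tilde F_1^\infty[\tilde\varphi] = 0$, and symmetrically $\tilde\varphi \equiv 0$, contradicting $\tilde\varphi(0) = 1$.

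\textbf{Main obstacle.} The principal technical difficulty is the Liouville step above, particularly the boundary blow-up case where the limit lives on a half-space with Dirichlet data: there one cannot directly invoke the Liouville theorem on $\real^N$, but must instead straighten $\partial B_R$ (smooth by hypothesis), use the global $C^{1,\alpha}$-estimates of Proposition \ref{C1,alpha regularity estimates geral} under \eqref{H continuity} to preserve the zero boundary condition in the limit, and combine Hopf's lemma (Proposition \ref{Hopf}) with the small-weight maximum principle of Theorem \ref{MP small domain} applied on arbitrarily large balls of the rescaled system. A secondary technical point is that we avoid dealing with pointwise convergence of $\tau_i(x_n + r_n\cdot)$ (which is merely $L^\varrho$) by exploiting only its uniform lower bound $\delta$ on $B_R$, which is enough to preserve a nontrivial coupling in the limit.
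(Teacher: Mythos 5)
Your opening reduction (restricting to $B_R$, absorbing the weights through $\tau_i\ge\delta$, and bounding $\delta\lambda$ by $\lambda_1^+(F_1(1),F_2(1),B_R)$) is exactly the paper's first step via Lemma \ref{lemma bound c=1}, and the scaling $r_n=\lambda_n^{-1/2}$ is also the one used there. The core of your argument, however, has two genuine gaps. First, you run the blow-up on arbitrary pairs $(\varphi_n,\psi_n)\in\Psi^+_{\lambda_n}(B_R)$, which are only viscosity \emph{supersolution} pairs: Proposition \ref{C1,alpha regularity estimates geral} (like any interior $C^{1,\alpha}$ or equicontinuity estimate) applies to solutions of equations, not to one-sided inequalities, so the Arzel\`a--Ascoli step has no basis. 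A bounded family of positive supersolutions of $\mathcal{M}^-(D^2u)\le 0$ need not be equicontinuous (for $N\ge 3$ take $u_\varepsilon=\min\{1,\varepsilon^{N-2}|x|^{2-N}\}$, which is superharmonic, takes values in $(0,1]$, and converges to a discontinuous limit). The paper never compactifies supersolutions: the finiteness of $\lambda_1^\pm(F_1(1),F_2(1),B_R)$ (for bounded, and in particular vanishing, lower-order coefficients) is obtained by the explicit construction $U=(R^2-|x|^2)^2$, $V$ the solution of $F_2[V]+U^p=0$ in $B_R$, followed by Hopf and the comparison result Proposition \ref{th4.1 BQeq}; the blow-up is performed only on the genuine eigenfunction pairs of the statement, which solve equations and therefore do enjoy the $C^{1,\alpha}$ estimates and stability you invoke.

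Second, the Liouville step is false as stated: bounded positive supersolutions of $\mathcal{M}^-(D^2u)\le 0$ in $\mathbb{R}^N$ need not be constant (already for the Laplacian and $N\ge 3$, $u=\min\{1,|x|^{2-N}\}$); constancy of Pucci supersolutions holds only under a restrictive relation between $\alpha,\beta,N$. What actually excludes the limit pair is not constancy of $\tilde\varphi$ but the coupling with the fixed constant $\delta$: a positive supersolution pair of the limit system on every ball $B_r$ forces $\delta\le\lambda_1^+(J_1,J_2,B_r)$, and the paper's Step 1 supplies the crucial decay $\lambda_1^+(J_1,J_2,B_r)\le C_0/r^2\to 0$ as $r\to\infty$ --- an estimate your proposal does not prove and which again rests on the barrier pair plus Proposition \ref{th4.1 BQeq}, not on a scalar Liouville theorem. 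Incidentally, the half-space alternative you single out as the main obstacle does not arise in the paper's argument, since the blow-up is centred at interior maximum points of the normalized eigenfunctions, which stay at a distance from the boundary bounded below (so the limit problem lives on all of $\mathbb{R}^N$).
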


\begin{proof}
We use some constructions from \cite{BNV, Montenegro, regularidade}.  By Lemma \ref{lemma bound c=1}, it is enough to prove the result for $\tau_i \equiv 1$, $i=1,2$. Let us consider $u>0$ and $v>0$ in $\Omega$;  for the case with $u<0$ and $v<0$ in $\Omega$ it is sufficient to apply the case with positive sign to $G_i$  in place of $F_i$, see \eqref{def G}. 

\vspace{0.02cm}

\textit{Step 1)} We first consider a bounded drift and zero order term $\gamma\in L^\infty_+(\Omega)$, namely $|\gamma(x)|\leq \gamma$ and $|\vartheta|\le \vartheta$ for a.e.\ $x\in \Omega$.
Let us prove in what follows that there exists  $C>0$ depending only on $N,\alpha,\beta,R,\gamma $, and $\vartheta$, such that
\begin{center}
	$\lambda_1^\pm (F_1(1),F_2(1),B_R)\leq\frac{C}{ R^2}.$
\end{center}
Moreover, if $\gamma, \vartheta=0$ then the constant does not depend on $R$, for all $R>0$ such that $B_R\subset \Omega$.

\vspace{0.02cm}

Note that the function $U (x)=(R^2-|x|^2)^2$ is a positive strong solution of
$F_1[U]+ \frac{C_0}{ R^2}  U \geq 0$ in $B_R$, see \cite{BNV}.
Next we consider the strong solution of
\begin{center}
$F_2[V]+U^p=0$ in $B_R$, \quad $V=0$ on $\partial B_r$,
\end{center}
given for instance in \cite{BQeq}. Here $V>0$ in $B_R$ by Theorem \ref{ABP-MP} and SMP,  since $\tau_2>0$ a.e.\ and $U>0$ in $B_R$.
Also, $\partial_\nu V>0$ on $\partial\Omega$ by Hopf, where $\nu$ is the interior unit normal.
Now, since $pq=1$, without loss of generality we may assume $q\leq 1$.
Thus $U^{1/q}\in C^1(\overline{\Omega})$, and we can pick up some $a>0$ large enough so that 
\begin{center}
$U^{1/q}\leq a V$ \;\;in $B_R$.
\end{center}
Therefore $U$ becomes a strong solution of $F_1[U]+\frac{a^q C_0}{ R^2}V^q \geq 0$ in $B_R$.

Set $C=a^q C_0 >1$.
Suppose by contradiction that there exists some $\lambda>\frac{C}{ R^2}$ such that $\varphi,\psi\in\Psi^+_\lambda(B_R)\neq\emptyset$, i.e.\ let $(\varphi,\psi)\in C(\overline{B}_R)$ be positive viscosity solutions of $F_1[\varphi]+\lambda \psi^q\leq 0$ and $F_2[\psi]+\lambda \varphi^p\leq 0$ in $B_R$. 
They also solve
$F_1[\varphi]+\frac{C}{ R^2}\psi^q\leq0$, $F_2[\psi]+\frac{C}{ R^2} \varphi^p\leq 0$ in $B_R$.
\smallskip

Now we apply Proposition \ref{th4.1 BQeq} to obtain that $\varphi = tU$ and $\psi=t^p V$ in $B_R$ for some $t>0$. However, this is not possible since $\varphi >0$ on $\partial B_R\subset\Omega$, while $U =0$ on $\partial B_R\,$.

\smallskip

\textit{Step 2)} In general case we assume $\gamma, \vartheta\in L^\varrho_+(\Omega)$. Let us show that there exists a universal constant such that
$ \lambda^{1-\frac{N}{\varrho}}\le C\, ( \|\gamma\|_{L^\varrho(\Omega)} +\|\vartheta\|_{L^\varrho(\Omega)} )^2.
$

\smallskip

We argue by contradiction. Suppose there exist sequences $\gamma_k, \vartheta_k\in L^\infty_+ (\Omega)$ with $\|\gamma_k\|_{L^\varrho(\Omega)}+\|\vartheta_k\|_{L^\varrho(\Omega)}\leq C$, but $\|\gamma_k\|_{L^\varrho(\Omega)} +\|\vartheta_k\|_{L^\infty (\Omega)}\rightarrow +\infty$, and let the respective eigenvalue problem
\begin{center}
$F_1^k [u_k]+\lambda_k v_k^{q}=0$, \; $F_2^k [v_k]+\lambda_k u_k^{p}=0$, \;\; $u_k,v_k>0$ \; in $B_R$, \;\; $u_k,v_k=0$ on $\partial B_R$,
\end{center}
in the viscosity sense, such that $\lambda_k\to +\infty$ as $k\to \infty$, where $F_i^k$ is a fully nonlinear operator satisfying \eqref{SC}--\eqref{H strong} for the respective $\gamma_k$ and $\vartheta_k$.  Up to using the rescaling $(u,v) \longmapsto (tu, t^pv)$ with $t= 1/\|u\|_{\infty}$, we may assume $\max_{\overline{\Omega}}\,u_k=1$, $\max_{\overline{\Omega}}\,v_k =  c_0$ for all $k\in\n$. Say  $\max_{\overline{\Omega}}u_k=u_k(x_0^k)$ for $x_0^k\in\Omega$. Then, $x_0^k\rightarrow x_0\in \overline{\Omega}$ as $k\rightarrow +\infty$, up to a subsequence.
	
	\vspace{0.1cm}

Since $B_R$ is a convex domain we know that $x_0\in\Omega$. Let $2\rho=\mathrm{dist}(x_0,\partial B_R)>0$, so $x_0^k\in B_\rho (x_0)$ for all $k\geq k_0$.
	Set $r_k={\lambda_k}^{-{1}/{2}}$ and $U_k(x)=U_k(x_0^k+r_k x)$, $V_k(x)=V_k(x_0^k+r_k x)$. Thus, $(U_k,V_k)$ is a viscosity solution pair of
\begin{align}\label{eq Uk,Vk}
\textrm{$\tilde{F}_1^k \,[U_k]+ V_k^{q}=0$, \; $\tilde{F}_2^k\, [V_k]+U_k^{p}=0$, \quad $U_k,V_k>0$ \; in $\widetilde{B}_k:=B_{\rho/{r_k}}(0)$,}
\end{align}
where $\widetilde{F}_i^k(x,r,p,X)=r^2_k\, F_i^k(x_0^k+r_k x,r,p/{r_k},X/{r_k^2})$ satisfies \eqref{SC}--\eqref{H strong} for $\tilde{\gamma_k}$ and $\tilde{\vartheta}_k$, where $\tilde{\gamma}_k(x)=r_k \,\gamma_k(x_0^k+r_k x) $ and $\tilde{\vartheta}_k=r_k^2 \,\vartheta_k(x_0^k+r_k x)$. Then one has
\begin{center}
$\|\tilde{\gamma}_k\|_{L^\varrho}=r_k^{1-\frac{N}{\varrho}}\|\gamma_k\|_{L^\varrho (\Omega)}\to 0$ \; and \;  $\|\tilde{\vartheta}_k\|_{L^\varrho(\Omega)}=r_k^{2-\frac{N}{\varrho}}\|\vartheta_k\|_{L^\varrho (\Omega)}\to 0$ \quad as $k\to \infty$.
\end{center} 
Also, $\sup_{\widetilde{B}_k}U_k=U_k(0)=1$ and $\sup_{\widetilde{B}_k}V_k=V_k(0)\le c_0$  for all $k\in\n$ with $B_R(0)\subset\subset \widetilde{B}_k$ for large $k$, for any fixed $R>0$.
	By $C^{1,\alpha}$ regularity estimates (Proposition \ref{C1,alpha regularity estimates geral}) we have $U_k,V_k\in C_{\mathrm{loc}}^{1,\alpha}$ and 
	\begin{align*}
	\|U_k\|_{C^{1,\alpha}(\overline{B}_R(0))}, \;	\|V_k\|_{C^{1,\alpha}(\overline{B}_R(0))}\leq  C,
	\end{align*}
since the constant depends only on a bound from above on the $L^\varrho$-norm of the coefficient $\gamma_k$, which is uniformly bounded. Hence, by compact embeddings, we have that there exists $U,V\in C^1(\overline{B}_R(0))$ such that $U_k\to U$, $V_k\to V$ as $k\to +\infty$, up to a subsequence. Doing the same for each ball $B_R(0)$, for every $R>0$, it yields $U_k\rightarrow U$, $V_k\to V$ in $L^{\infty}_{\mathrm{loc}}(\rN)$.
By applying a stability argument (Proposition \ref{stability}) in each ball, one gets that  $U,V$ is a viscosity solution of 
\begin{center}
$J_1(x,D^2U)+V^q=0$, \; $J_2(x,D^2V)+U^p=0$ \; in $\rN$,
\end{center}
for some measurable operators $J_1,J_2$ satisfying \eqref{SC} with $\vartheta$ and $\gamma$ equal to zero. The operators $J_i$ are obtained by using Arzela-Ascoli theorem, since $\widetilde{F}_i^k$ are $(\alpha,\beta)$--uniformly elliptic, with zero and first order coefficients converging to zero.
Further,  $U(0)=1$ implies $U>0$ in $\rN$ by SMP, and so $V>0$ in $\rN$. Thus, by Step 1,
\begin{center}
$1\leq \lambda_1^+ (J_1,J_2,B_r)\leq \frac{C_0}{r^2}$ for all $r>0$,
\end{center}
where $C_0$ does not depend on $r$. One derives a contradiction by letting $r\rightarrow +\infty$.
\end{proof}

\section{The first eigenvalue problem}\label{section pq=1}

In this section we investigate the main properties of the first eigenvalue problem. We stress once again that we can equivalently consider \eqref{LE}, \eqref{hyperbola mu=lambda} if positive parameters are taken into account, recall Section \ref{section scaling}. In particular, we recall that the existence of a principal eigenvalue for problem \eqref{hyperbola mu=lambda}  implies the existence of a spectral curve for problem \eqref{LE}. Moreover,  $(\lambda, \mu)$ is below the curve $\Lambda_1^\pm$ if and only if $\lambda_0 < \lambda_1^\pm(F_1,F_2)$, where $\lambda_0$ is defined through the identity $\mu=\frac{\lambda_0^{p+1}}{\lambda^p}$. Therefore, in what follows, we will always reduce ourselves to \eqref{hyperbola mu=lambda} by exploiting this scaling. 

\subsection{Existence and simplicity}\label{sec:firstmaintheorem}

We start by recalling a well known result from degree theory, see \cite{BR} for the proof.

\begin{prop}\label{KR}
Let $(\mathcal{E}, \norm{ \cdot})$ be a Banach space. Let $T \colon \mathbb{R}^+_0 \times \mathcal{E} \to \mathcal{E}$ be a completely continuous operator such that $T(0, u)=0$ for all $u \in \mathcal{E}$; then there exists an unbounded, connected component $\mathcal{C}$ of $\mathbb{R}^+ \times \mathcal{E} $ of solutions of $u=T(\mu, u)$ and starting from $(0, 0)$. 
\end{prop}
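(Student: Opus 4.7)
The plan is to use Leray--Schauder degree theory combined with a Whyburn-type connectedness argument, which is the classical Rabinowitz route. First I would set $F(\mu,u):=u-T(\mu,u)$ and let $S:=F^{-1}(0)\subset \mathbb{R}_0^+\times \mathcal{E}$ be the full solution set. The hypothesis $T(0,\cdot)\equiv 0$ implies that the only solution at $\mu=0$ is $(0,0)$, so in particular $(0,0)\in S$. Denote by $\mathcal{C}$ the connected component of $S$ containing $(0,0)$; the goal is to show $\mathcal{C}$ is unbounded.

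Argue by contradiction, assuming $\mathcal{C}$ is bounded in $\mathbb{R}_0^+\times \mathcal{E}$. Complete continuity of $T$ implies that, for any bounded $B\subset \mathbb{R}_0^+\times \mathcal{E}$, the intersection $S\cap \overline{B}$ is compact in $\mathbb{R}\times \mathcal{E}$: indeed, from a bounded sequence $(\mu_n,u_n)\in S\cap \overline{B}$, passing to a subsequence gives $\mu_n\to \mu$, and then $u_n=T(\mu_n,u_n)$ lies in a precompact set, so up to a further subsequence $u_n\to u$ with $u=T(\mu,u)$. A classical separation lemma for compact metric spaces (Whyburn's lemma, as used in \cite{BR}) then yields a bounded open neighborhood $\mathcal{O}\subset \mathbb{R}_0^+\times \mathcal{E}$ of $\mathcal{C}$ such that $\partial \mathcal{O}\cap S=\emptyset$.

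Next, for each $\mu\ge 0$ introduce the slice $\mathcal{O}_\mu:=\{u\in \mathcal{E}:(\mu,u)\in \mathcal{O}\}$. Since $\partial \mathcal{O}\cap S=\emptyset$, one has $u\neq T(\mu,u)$ for every $u\in \partial \mathcal{O}_\mu$ and every $\mu$ in the projection $\pi(\overline{\mathcal{O}})\subset[0,\mu^*]$, which is bounded. Hence the Leray--Schauder degree
\[
d(\mu):=\deg\bigl(I-T(\mu,\cdot),\,\mathcal{O}_\mu,\,0\bigr)
\]
is well-defined, and by the generalized homotopy property of the degree (valid for completely continuous perturbations with continuously varying domain) the map $\mu\mapsto d(\mu)$ is constant on its domain of definition.

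Finally, evaluate $d$ at the two extremes. At $\mu=0$, $I-T(0,\cdot)=I$ and $0\in \mathcal{O}_0$, so $d(0)=\deg(I,\mathcal{O}_0,0)=1$. On the other hand, for every $\mu>\mu^*$ the slice $\mathcal{O}_\mu$ is empty, whence $d(\mu)=0$, contradicting constancy. Therefore $\mathcal{C}$ must be unbounded. The step I expect to be most delicate is producing the separating neighborhood $\mathcal{O}$: one must apply the Whyburn-type lemma inside a large compact set $K\supset \mathcal{C}$ of $S$, and verify that the connected component of $(0,0)$ within this compact metric subspace coincides with $\mathcal{C}$, a point where complete continuity of $T$ and the boundedness assumption combine crucially.
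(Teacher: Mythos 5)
Your argument is correct: it is the classical Leray--Schauder continuation proof (compactness of $S\cap\overline{B}$ from complete continuity, a Whyburn-type separation yielding a bounded open $\mathcal{O}$ with no solutions on $\partial\mathcal{O}$, and the degree jump $d(0)=1$ versus $d(\mu)=0$ for $\mu$ beyond the projection of $\overline{\mathcal{O}}$), which is exactly the degree-theoretic approach of the reference \cite{BR} that the paper cites for this proposition, the paper itself giving no proof. The only step to spell out further is the one you already flag: building $\mathcal{O}$ from the Whyburn separation of $S\cap\overline{B}_R$ into two disjoint compacta after checking that $\mathcal{C}$ is a component of this compact set.
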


We first show the following result.
\begin{prop}\label{KR nonnegative weight}
Let $F_1, F_2$ satisfy \eqref{SC}--\eqref{H strong}, $pq=1$, and assume \eqref{H weights}. Then there exist positive numbers $\sigma_1^\pm$, and signed functions $\varphi_1^\pm, \psi_1^\pm$ in $E_\varrho=W^{2,\varrho}(\Omega)\cap C(\overline{\Omega})$ such that
\[
\left\{
\begin{array}{rclcc}
F_1 [\varphi_1^\pm]+\sigma_1^\pm\,\tau_1(x)(\psi_1^\pm)^q &=&0 &\mbox{in} & \;\Omega \\
F_2 [\psi_1^\pm]+\sigma_1^\pm \, \tau_2(x) (\varphi_1^\pm)^p &=&0 &\mbox{in} & \;\Omega \\
\pm\varphi_1^\pm\, ,\;\pm \psi_1^\pm\;&>& 0 &\mbox{in} & \;\Omega \\
\varphi_1^\pm\,\;= \,\;\psi_1^\pm\;&=& 0 &\mbox{on} & \;\partial\Omega ,
\end{array}
\right.
\]
\end{prop}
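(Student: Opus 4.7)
The plan is to reduce the proposition to finding $\sigma_1^\pm>0$ and signed eigenpairs $(\varphi_1^\pm,\psi_1^\pm)$ for the symmetric version \eqref{hyperbola mu=lambda} with parameter $\lambda=\sigma_1^\pm$; the case of a general positive parameter then follows by the scaling from Section~\ref{section scaling}. Since the $\sigma_1^-$ case reduces to $\sigma_1^+$ upon replacing $F_i$ by $G_i(x,r,\xi,X):=-F_i(x,-r,-\xi,-X)$ from \eqref{def G} and flipping signs, I focus on the positive eigenpair.

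For the fixed-point framework, given $f\in L^\varrho(\Omega)$, Theorem~\ref{Dirichlet scalar} produces a unique $S_i(f)\in E_\varrho$ solving $F_i[S_i(f)]+f=0$ in $\Omega$ with zero Dirichlet data, and Proposition~\ref{C1,alpha regularity estimates geral} makes $S_i\colon L^\varrho(\Omega)\to C(\overline{\Omega})$ completely continuous via the compact embedding $C^{1,\alpha}\hookrightarrow C$. I then set
\[
T(\sigma,(u,v)):=\bigl(S_1(\sigma\tau_1(v^+)^q),\,S_2(\sigma\tau_2(u^+)^p)\bigr)
\]
on $\mathbb{R}_0^+\times C(\overline{\Omega})^2$; this $T$ is completely continuous with $T(0,\cdot)\equiv 0$, and any nontrivial fixed point satisfies $F_i[u],F_i[v]\le 0$ in $\Omega$ with zero boundary data, so that Theorem~\ref{ABP-MP} gives $u,v\ge 0$ and Propositions~\ref{SMP}--\ref{Hopf} upgrade this to $u,v>0$ in $\Omega$ (using the overlap in \eqref{H weights} to propagate positivity across the coupling). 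Such a fixed point is precisely a positive eigenpair at $\sigma$.

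Proposition~\ref{KR} then yields an unbounded connected component $\mathcal{C}^+$ of solutions of $(u,v)=T(\sigma,(u,v))$ emanating from $(0,0,0)$. The decisive ingredient is that Proposition~\ref{boundedness eig BQeq} bounds $\sigma\le C$ along every positive eigenpair in $\mathcal{C}^+$, with $C$ independent of $(u,v)$. Unboundedness of $\mathcal{C}^+$ together with this $\sigma$-bound forces a sequence $(\sigma_n,(u_n,v_n))\in\mathcal{C}^+$ with $\sigma_n\to\sigma_1^+\in[0,C]$ and $\|u_n\|_\infty+\|v_n\|_\infty\to\infty$. Using $pq=1$ and \eqref{H homogeneity}, I rescale via $t_n:=\|u_n\|_\infty^{-1}$ and $(\widetilde u_n,\widetilde v_n):=(t_nu_n,t_n^pv_n)$: homogeneity leaves the equations at parameter $\sigma_n$ invariant while $\|\widetilde u_n\|_\infty=1$. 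The global $C^{1,\alpha}$ estimates of Proposition~\ref{C1,alpha regularity estimates geral} --- combined with a uniform $L^\infty$ control of $\widetilde v_n$ obtained from ABP applied to $-F_2[\widetilde v_n]=\sigma_n\tau_2\widetilde u_n^p$ --- give a subsequential limit $(\widetilde u_n,\widetilde v_n)\to(\varphi_1^+,\psi_1^+)$ in $C^1(\overline{\Omega})$, which Proposition~\ref{stability} identifies as a viscosity (hence strong, by \eqref{H strong}) solution of the eigenvalue system. Normalization gives $\varphi_1^+\not\equiv 0$, SMP forces $\varphi_1^+>0$ in $\Omega$, and the coupled equation for $\psi_1^+$ together with \eqref{H weights}, Propositions~\ref{SMP}--\ref{Hopf} yield $\psi_1^+>0$ in $\Omega$. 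Finally, $\sigma_1^+=0$ is excluded because it would force $F_1[\varphi_1^+]=0$ with zero Dirichlet data, hence $\varphi_1^+\equiv 0$ by uniqueness in Theorem~\ref{Dirichlet scalar}, contradicting the normalization.

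The main obstacle I anticipate is the extraction of a nondegenerate limit pair after rescaling. Ensuring that $\|\widetilde v_n\|_\infty$ neither blows up nor collapses to zero is not automatic: the upper bound uses ABP on the second equation with $\sigma_n$ bounded and $\|\widetilde u_n\|_\infty=1$, while ruling out $\psi_1^+\equiv 0$ in the limit relies crucially on the overlap $|\mathrm{supp}\,\tau_1\cap\mathrm{supp}\,\tau_2|>0$ from \eqref{H weights}, which is precisely what prevents the limiting system from decoupling.
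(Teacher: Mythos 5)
There is a genuine gap at the heart of your argument: the degree-theoretic step is vacuous for the unperturbed operator $T(\sigma,(u,v))=(S_1(\sigma\tau_1(v^+)^q),\,S_2(\sigma\tau_2(u^+)^p))$. Since $S_i(0)=0$, the pair $(u,v)=(0,0)$ is a fixed point of $T(\sigma,\cdot)$ for \emph{every} $\sigma\ge 0$, so the connected component through $(0,0,0)$ given by Proposition \ref{KR} already contains the whole trivial ray $\{(\sigma,0,0):\sigma\ge 0\}$ and is therefore unbounded for free. Its unboundedness gives no information: the component may consist of nothing but trivial solutions, and then there are no positive eigenpairs in $\mathcal{C}^+$ to which Proposition \ref{boundedness eig BQeq} could be applied, no uniform bound on $\sigma$ along the component, and no sequence with $\|u_n\|_\infty+\|v_n\|_\infty\to\infty$ consisting of nontrivial solutions. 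Everything downstream (the rescaling, the passage to the limit, the exclusion of $\sigma_1^+=0$) is fine in itself, but it rests on a blow-up sequence of genuine solutions that your argument does not produce.

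This is exactly the difficulty the paper's proof is built to circumvent. In Step 1 (for a.e.\ positive weights) it perturbs the map, setting $T_\varepsilon(\mu,u,v)=(\mu\mathcal{F}_1(v^q)+\varepsilon\mu\mathcal{F}_2(w_0),\,\mu\mathcal{F}_2(u^p)+\varepsilon\mu\mathcal{F}_1(w_0))$ for a fixed $w_0\in K\setminus\{0\}$: the forcing terms destroy the trivial branch for $\mu>0$ and, more importantly, give the lower bound $u\ge \frac{\mu}{M}\varepsilon w_0$ that feeds the iteration $u\ge(\mu/M)^{\alpha_k}\varepsilon w_0$, which bounds $\mu\le M$ on the perturbed component (note this bound is \emph{not} obtained from Proposition \ref{boundedness eig BQeq}, which does not apply to the perturbed, non-homogeneous system). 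Unboundedness of $\mathcal{C}_\varepsilon$ then forces points with $\|(u_\varepsilon,v_\varepsilon)\|_\infty=1$, and one passes $\varepsilon\to 0$ at fixed normalization; general weights satisfying \eqref{H weights} are handled afterwards by a second approximation $\tau_i^\varepsilon=\tau_i+\varepsilon$, where Proposition \ref{boundedness eig BQeq} (this is where the overlap of the supports enters) gives the uniform bound on $\sigma_1^\varepsilon$. To repair your proof you would need either this kind of positive perturbation of the fixed-point map (or some other device, e.g.\ genuine bifurcation from a nontrivial branch), since as written Proposition \ref{KR} applied to $T$ cannot by itself yield any nontrivial solution.
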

\begin{proof} We prove the $\sigma_1^+$ case; for $\sigma_1^-$ is analogous by replacing the operator $F_i$ by $G_i$, see \eqref{def G}.

\textit{Step 1)} Assume that $\tau_i(x)>0$ a.e.\ in $\Omega$ for $i=1,2$.

In this case we adapt Krein-Rutman's theorem, exploiting ideas from \cite{BR, Quaas2004}. 
Let us consider the Banach space $E=  C^1_0(\bar \Omega)$.
Define $\mathcal{F}_i=-F_i^{-1}\circ \tau_i$ in $E$, i.e.\ 
\begin{center}
	$\mathcal{F}_i \,u=U \;\;\Leftrightarrow \;\;\, -F_i\,[U]=\tau_i(x) u$\; in $\Omega$, \;\;$U=0$\; on $\partial\Omega$, \;\;$i=1,2$,
\end{center}
in the viscosity sense. Thus $\mathcal{F}_i : E \to E$ is well defined and completely continuous. 
Indeed, this comes from \eqref{H lambdai>0}, \eqref{H strong} for $F_i$, and Theorem \ref{Dirichlet scalar}.

Consider the closed positive cone $K:=\{w\in E, \, w\geq 0\}$. Note that $K$ is solid, that is, it has nonempty interior.
Then $\mathcal{F}_i$ is strictly positive with respect to $K$, in the sense that $\mathcal{F}_i\,(K\setminus\{0\})\subset \mathrm{int} K$.
This is due to ABP-MP, SMP, and Hopf for scalar equations (see Theorem \ref{ABP-MP}, and Propositions \ref{SMP}, \ref{Hopf}), since the weight $\tau_i$ is strictly positive in $\Omega$.

We first choose $w_0 \in K \setminus \{ 0 \}$. We can choose $M>0$ such that $M\mathcal{F}_2(w_0) \ge w_0$, $M\mathcal{F}_2(w_0^p) \ge w_0^p$ and $M\mathcal{F}_1(w_0) \ge w_0$. In fact, this choice is possible because $K$ is solid and each $\mathcal{F}_i$ is a strongly positive operator with respect to $K$.
Fix $\varepsilon >0$ and define $T_{\varepsilon} : \mathbb{R}^+ \times K\times K \to K\times K$ as
\[\textstyle{ T_{\varepsilon} (\mu, u, v)=\left(\mu \mathcal{F}_1(v^q) + \varepsilon \mu \mathcal{F}_2(w_0), \mu \mathcal{F}_2(u^p)+ \varepsilon \mu \mathcal{F}_1(w_0)\right).} \]
By Proposition \ref{KR}, there exists $\mathcal{C}_{\varepsilon}$, an unbounded connected component of solutions of $(u, v)=T_{\varepsilon}(\mu, u, v)$ which contains $(0,0,0)$. 
We claim that $\mathcal{C}_{\varepsilon} \subset [0, M] \times K\times K$. Indeed, let $(\mu, u, v) \in  \mathcal{C}_{\varepsilon}$. In particular, 
\[ \textstyle{ u=\mu \mathcal{F}_1(v^q) + \varepsilon \mu \mathcal{F}_2(w_0) \quad  \textrm{and}\quad  v= \mu \mathcal{F}_2(u^p)+ \varepsilon \mu \mathcal{F}_1(w_0).} \]
Then,
\[ \textstyle{u\ge \mu \varepsilon \mathcal{F}_2(w_0) \ge \frac{\mu}{M} \varepsilon w_0, } \]
which implies that $u^p \ge \frac{\mu^p}{M^p} \varepsilon^p w_0^p$. By applying $\mathcal{F}_2$, and using the comparison principle, see Theorem \ref{ABP-MP}, we have 
\[ \textstyle{\mathcal{F}_2(u^p) \ge \frac{\mu^p}{M^p} \varepsilon^p \mathcal{F}_2(w_0^p) \ge \frac{\mu^p}{M^p} \varepsilon^p  \frac{1}{M} w_0^p.} \] 
Moreover, 
$v \ge \mu \mathcal{F}_2(u^p) \ge \frac{\mu^{p+1}}{M^{p+1}} \varepsilon^p w_0^p, $
whence $v^q \ge \frac{\mu^{q+1}}{M^{q+1}}\varepsilon w_0$ since $pq=1$. Now, applying $\mathcal{F}_1$, 
\[\textstyle{ \mathcal{F}_1(v^q) \ge \frac{\mu^{q+1}}{M^{q+1}}\varepsilon \mathcal{F}_1(w_0) \ge \frac{\mu^{q+1}}{M^{q+1}}\varepsilon \frac{1}{M} w_0,}  \]
and therefore $u \ge \mu \mathcal{F}_1(v^q) \ge \left( \frac{\mu}{M}\right)^{q+2} \varepsilon w_0$. Upon iteration, one gets 
\[ \textstyle{u \ge \left( \frac{\mu}{M} \right)^{\alpha_k} \varepsilon w_0 \;\; \textrm{ for all $k \ge 2$,\quad where\; $\alpha_k=k(q+1)+1$. }}\]
This gives us $\mu \le M$, and the claim is proved. 

Therefore, there exists $(\mu_{\varepsilon}, u_{\varepsilon}, v_{\varepsilon}) \in \mathcal{C}_{\varepsilon}$ such that $\norm{(u_{\varepsilon}, v_{\varepsilon})}_{\infty}=1$. 
By compactness of $\mathcal{F}_1$ and $ \mathcal{F}_2$, Theorem \ref{ABP-MP}, and SMP, by taking $\varepsilon \to 0$ we conclude that there exists $\sigma_1>0$, in addition to $u_1, v_1$ such that $\norm{(u_1, v_1)}_{\infty}=1$, 
\begin{center}
	$ u_1 =\sigma_1 \mathcal{F}_1(v_1^q) \quad  \textrm{and}\quad
 v_1=\sigma_1 \mathcal{F}_2(u_1^p).$
\end{center} 
The eigenfunctions belong to the interior of the positive cone $K$, by the positivity of the operators $\mathcal{F}_i$ for $i=1,2$. Then the result follows in the case of positive weights.

\smallskip

\textit{Step 2)} In the general case we argue by approximation.
Since $\tau_1,\tau_2>0$ a.e.\ in a common set of positive measure -- assumption \eqref{H weights} -- then there exists some $\delta>0$ such that $| \{\tau_1\geq \delta\}\cap \{\tau_2\geq \delta\} | >0$. 
Say $\tau_1,\tau_2\geq \delta$ a.e.\ in some ball $B_R\subset\subset\Omega$, $R\leq 1$. 
Take $\varepsilon\in (0,1)$ and define $\tau_i^\varepsilon:=\tau_i+\varepsilon >0$ in $\Omega$. By \textit{Step 1} we obtain the existence of $\sigma_1^\varepsilon>0$ and $\varphi_1^\varepsilon, \psi_1^\varepsilon\in C^{1}(\overline{\Omega})$ such that
\begin{align*}
F_1[\varphi_1^\varepsilon]+\sigma_1^\varepsilon \,\tau_1^\varepsilon(x) (\psi_1^\varepsilon)^q=0, \quad
F_2[\psi_1^\varepsilon]+ \sigma_1^\varepsilon \, \tau_2^\varepsilon(x) (\varphi_1^\varepsilon)^p = 0  \;\; \textrm{ in } \Omega \\
	\varphi_1^\varepsilon, \psi_1^\varepsilon>0  \;\;\textrm{ in } \Omega, \quad
	\varphi_1^\varepsilon , \psi_1^\varepsilon = 0 \;\;\textrm{ on } \partial\Omega,\quad  \textstyle{\max_{\overline{\Omega}}\,\varphi_1^\varepsilon=1},\;\; \textstyle{\max_{\overline{\Omega}}\,\psi_1^\varepsilon=1}.
\end{align*}
Then, by Proposition \ref{boundedness eig BQeq},
$0<\sigma_1^\varepsilon\leq C_0$ for all $\varepsilon\in (0,1)$. So $\sigma_1^\varepsilon\rightarrow\sigma_1 \in [0,C_0]$ up to a subsequence. Then, applying $C^{1,\alpha}$ global regularity (Proposition \ref{C1,alpha regularity estimates geral}), yields
	\begin{align*}
	\|\varphi_1^\varepsilon\|_{C^{1,\alpha}(\overline{\Omega})} \leq C\,\{\,\|\varphi_1^\varepsilon\|_{L^\infty(\Omega)} +\sigma_1^\varepsilon\, \|\tau_1^\varepsilon\|_{L^\varrho(\Omega)} \,\|\varphi_1^\varepsilon \|^q_\infty \,\}
	\leq C \,(\|\tau_1\|_{L^\varrho(\Omega)} +1) \,\} \leq C, 
	\end{align*}
and analogously $\|\psi_1^\varepsilon\|_{C^{1,\alpha}(\overline{\Omega})} \leq C$. Hence the compact inclusion $C^{1,\alpha}(\overline{\Omega})\subset C^1(\overline{\Omega})$ yields $\varphi_1^\varepsilon \rightarrow \varphi_1$ and $\psi_1^\varepsilon \rightarrow \psi_1$ in $ C^1(\overline{\Omega})$, up to a subsequence, with $\max_{\overline{\Omega}}\,\varphi_1=1$, $\max_{\overline{\Omega}}\,\psi_1=1$, $\varphi_1, \psi_1\geq 0$ in $\Omega$, and $\varphi_1,\psi_1=0$ on $\partial\Omega$. 
Since $\tau_i^\varepsilon\rightarrow \tau_i$ in $L^\varrho(\Omega)$ as $\varepsilon\rightarrow 0$, by stability of viscosity solutions we derive that $\varphi_1$, $\psi_1$ is an $L^N$-viscosity solution pair of $F_1[\varphi_1]+\sigma_1 \tau_1(x)\psi_1^q=0$, $F_2[\psi_1]+\sigma_1 \tau_2(x)\varphi_1^p=0$ in $\Omega$, which allows us to apply $C^{1,\alpha}$ regularity again to obtain that $\varphi_1,\psi_1\in C^{1,\alpha}(\overline{\Omega})$.

Next, by Theorem \ref{ABP-MP} and SMP, we have that $\varphi_1>0$ in $\Omega$; similarly $\psi_1>0$. 
Moreover, we must have $\sigma_1>0$. Indeed, $\sigma_1=0$ produces $F_1[ \varphi_1]= 0$ in $\Omega$ which in turn would give us $\varphi_1\leq 0$ in $\Omega$ by MP, since we are assuming $\lambda_1^+(F_1^*)>0$. Using the regularity property in \eqref{H strong}, $\varphi_1$ and $\psi_1$ turn out to be strong solutions.  
\end{proof}

\begin{proof}[Proof of Theorem \ref{Th1 introduction}]
\textit{Existence.} We preliminarily notice that $\lambda_1^+$ is finite due to Proposition \ref{boundedness eig BQeq}. Take $\sigma_1^\pm$ as in Proposition \ref{KR nonnegative weight}.
Let us prove that $\sigma_1^\pm =\lambda_1^\pm$. 
By definition of $\lambda_1^+$, we have $\sigma_1^+ \le \lambda_1^+$. Suppose there exists $\varepsilon>0$ such that $\sigma_1^+ < \lambda_1^+ - \varepsilon$. Then, by definition of $\lambda_1^+$, we can take $\varphi, \psi>0$ such that 
\[ F_1[\varphi] + (\lambda_1^+ - \varepsilon)\tau_1(x)\psi^{q} \le 0, \quad F_2[\psi] +(\lambda_1^+ - \varepsilon) \tau_2(x) \varphi^{p} \le 0. \]
Since $ F_1[ \varphi_1^+] + (\lambda_1^+ - \varepsilon)\tau_1(x) (\psi_1^+)^q > F_1[\varphi_1^+] +\sigma_1^+\tau_1(x) (\psi_1^+)^q = 0$, then
by Proposition \ref{th4.1 BQeq} (and up to suitable rescaling as in Section \ref{section scaling}) we have $\varphi= t \varphi_1^+$ and $\psi=t^p \psi_1^+$ for a suitable $t>0$, which is a contradiction. Similarly, we obtain $\sigma_1^-=\lambda_1^-$. 
\smallskip

\textit{Simplicity.}
Let us prove the case of $\lambda_1^+$, since for $\lambda_1^-$ is analogous. Assume that $(u,v)$ is another eigenfunction corresponding to $\lambda_1^+$, with $u\not\equiv 0$ or $v\not\equiv 0$. 
 
In the case that $u$ or $v$ attains a positive maximum in $\Omega$, then we consider the positive eigenfunctions pair $(\varphi_1^+,\psi^+_1)$ of the operators $(F_1,F_2)$. Then we apply Proposition \ref{th4.1 BQeq} to both pairs $u,v$ and $\varphi_1^+,\psi^+_1$ to obtain that $u\equiv t \varphi_1^+$ and $v\equiv t^p \psi^+_1$ in $\Omega$ for a suitable $t  \in \mathbb{R}$. 

If, in turn, $u\leq 0$ and $v\leq 0$ in $\Omega$, then $u<0$ and $v<0$ in $\Omega$ by the strong maximum principle for scalar equations. Notice that if one between $u$ and $v$ is $\equiv 0$, then also the other one has to be null. Then we consider the negative eigenfunctions pair $(\varphi_1^-,\psi_1^-)$ of the operators $(F_1,F_2)$.
Therefore we use the fact that $\lambda_1^-$ is the only positive eigenvalue corresponding to a negative eigenfunction.
In other words, we apply Corollary \ref{cor uniqueness} to obtain that $\lambda_1^+=\lambda_1^-$, as well as $u\equiv t \varphi^-_1$ and $v\equiv t^p \psi_1^-$ in $\Omega$. 
\end{proof}

\subsection{Maximum principles}

\begin{proof}[Proof of Theorem \ref{Lambda1pm}] We prove the statement regarding MP; the case mP is analogous.

\smallbreak

Step 1) Let us first assume that $(\lambda,\mu)\in \mathbb{R}^+\times \mathbb{R}^+$. Then, via the scaling \eqref{eq:scaling}, we reduce the problem to check if MP holds for
\begin{equation}\label{Theorem1.3_aux}
-F_1[u]= \lambda_0 \tau_1(x) |v|^{q-1}v_0, \quad -F_2 [v]= \lambda_0 \tau_2(x)|u|^{p-1}u_0 \quad\textrm{ in } \;\Omega \quad u=v=0 \textrm{ on } \partial \Omega.
\end{equation}
if and only if $\lambda_0<\lambda_1^+$. We first notice that, if $\lambda_0 \geq  \lambda_1^+$, then MP is not satisfied. Indeed, when $\lambda_0\geq \lambda_1^+$ we get 
\[
F_1[\varphi_1^+] + \lambda_0\tau_1(x)(\psi_1^+)^q =- \lambda_1^+\tau_1(x)(\psi_1^+)^q +\lambda_0 \tau_1(x)(\psi_1^+)^q\ge 0, \quad
F_2[\psi_1^+] +\lambda_0  \tau_2(x) (\varphi_1^+)^p \ge 0 \textrm{ in } \Omega,
\]
with $\varphi^+_1,\psi_1^+=0$ on $\partial\Omega$, but $\varphi_1^+ >0,\, \psi_1^+>0$ in $\Omega$.

Now we prove that MP holds if $0 <  \lambda_0 < \lambda_1^+$.
Let $(u, v)$ be a viscosity solution of 
\[
	F_1 [u]+\lambda_0 \tau_1(x)|v|^{q-1} v  \geq 0 , \;\;\;
	F_2 [v]+\lambda_0\tau_2(x) |u|^{p-1}u \geq 0 \;\;\textrm{ in } \Omega ,\quad\textrm{$u,v\leq 0$ on $\partial\Omega$}.
\]
 Assume by contradiction that one between $u,v$ has a positive maximum in $\Omega$.
Observe that $\varphi_1^+,\psi_1^+$ is a strong solution pair of
\begin{center}
$F_1 [\varphi_1^+]+\, \lambda_0 \tau_1(x)(\psi_1^+)^q \le 0$,\;\; $F_2 [\psi_1^+]+\lambda_0 \tau_2(x)(\varphi_1^+)^p \le 0$\; in $\Omega$,\;\; $\varphi_1^+,\psi_1^+>0$ in $\Omega$.
\end{center}
Then Proposition \ref{th4.1 BQeq} yields $u=t\varphi_1^+$, $v=t^p \psi_1^+$ in $\Omega$, which contradicts $\lambda_0 \ne \lambda_1^+$. So $u, v \le 0$ in $\Omega$. 

\smallbreak

Step 2) Let us now check that, if $(\lambda,\mu)\notin \mathbb{R}^+\times \mathbb{R}^+$, then MP does not hold. If $\lambda=0$ or $\mu=0$, then we reduce ourselves to the scalar case, and the conclusion follows from \cite{BQeq} in the case of bounded drifts, while for unbounded ones this is a new contribution of this paper, see Section \ref{section lambda1 scalar}. 
The only thing which is left to prove is that if at least one between $\lambda, \mu$ is negative, then the MP and the mP do not hold. Assume for instance $\lambda <0$. Consider the eigenvalue problem
\begin{align*}
F_1 [\tilde \varphi]+\lambda_0 \tau_1(x)\tilde \psi^q  =0 ,\quad
(F_2)_*[ \tilde \psi] +\, \lambda_0 \tau_2(x)\tilde \varphi^p =0 \textrm{ in } \Omega,
\end{align*}
where $\tilde \varphi, \tilde \psi>0$ in $\Omega$,  $\tilde \varphi, \tilde \psi=0$ on $\partial\Omega$, $\lambda_0=\lambda_1^+(F_1, (F_2)_*)$. Let $\bar{\lambda}=1/s^q$, $\bar{\mu}=s \lambda$. Hence, choosing a suitable $s>0$, we have $\bar{\lambda}>0$ and $\bar{\mu}<0$ such that  $\bar{\mu} \le -\lambda_0$. Hence, 
\[
\begin{cases}
F_1[\tilde \varphi] + \bar{\lambda}\tau_1(x) | \tilde \psi|^{q-1}(-\tilde{\psi})= F_1[\tilde \varphi] - \bar{\lambda}\tau_1(x) (\tilde \psi)^q=-\tau_1(x)\,(\lambda_0 +\bar{\lambda} ) \, (\tilde \psi)^q\le 0, \\
F_2[-\tilde \psi] + \bar{\mu} \tau_2(x) (\tilde \varphi)^p \le -(F_2)_*[\tilde \psi] + \bar{\mu} \tau_2(x)(\tilde \varphi)^p=\tau_2(x)\,(\lambda_0 + \bar{\mu} )\,  \varphi)^p \le 0.
\end{cases}
\]
However, $-\tilde \psi <0$ in $\Omega$.
\end{proof}

\subsection{The Dirichlet problem for $\lambda<m_1$}\label{section lambda<m1}

Now we focus on the nonhomogeneous Dirichlet Lane-Emden problem, proving item $(i)$ of Theorem \ref{ThDir solvability intro}. We postpone the proof of items $(ii)$ and $(iii)$ to Section \ref{section pq<1} ahead, as they turn out to follow from an adaptation of the proof of Theorem \ref{sublinear}. Recall $m_1, M_1$ from \eqref{def m1,M1}.
We consider functions $f_i\in L^\varrho(\Omega)$. Via the scaling \eqref{eq:scaling}, we reduce our problem to the study of system
\begin{align}\label{Dir}
\left\{
\begin{array}{rclcc}
F_1 [u]+\lambda\tau_1(x) |v|^{q-1}v  &=&f_1(x) &\mbox{in} & \;\Omega, \\
F_2 [v]+\lambda \tau_2(x)|u|^{p-1}u &=&f_2(x) &\mbox{in} & \;\Omega, \\
u\,\;= \,\;v\;&=& 0 &\mbox{on} & \;\partial\Omega.
\end{array}
\right.
\end{align}
and the condition $(\lambda,\mu)\in \mathcal{C}_1^+\cap \mathcal{C}_1^-$ for \eqref{LE} translates to $0<\lambda<m_1$ for \eqref{Dir}.

\begin{proof}[Proof of Theorem \ref{ThDir solvability intro} (i)]
We sketch the proof in light of \cite{LeiteMontenegro}. One first obtains the following a priori bounds.
\begin{claim}\label{claim Dir}
Let $(u, v)$ be a viscosity solution of \eqref{Dir}. Then 
\begin{center}
	$ \norm{u}_{\infty} + \|{v}\|_{\infty}^p \le C \{\,\|{f_1}\|_{L^N(\Omega)}+\|{f_2}\|_{L^N(\Omega)}\} , $ \; for all $0 < \lambda < m_1$.
\end{center}
\end{claim}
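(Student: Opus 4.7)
The plan is to argue by contradiction, reducing to the MP/mP characterization of Theorem~\ref{Lambda1pm}. Suppose Claim~\ref{claim Dir} fails; then there exist sequences $f_{i,n}\in L^\varrho(\Omega)$ and viscosity solutions $(u_n,v_n)$ of \eqref{Dir} (with the fixed $\lambda<m_1$) such that
\[
R_n:=\|u_n\|_\infty+\|v_n\|_\infty^p \quad\text{satisfies}\quad R_n/\big(\|f_{1,n}\|_N+\|f_{2,n}\|_N\big)\to\infty.
\]
Passing to a subsequence, I may assume that one of the two terms in $R_n$ dominates and tends to infinity. Say $\|u_n\|_\infty\to\infty$, and set $k_n=\|u_n\|_\infty$, $\tilde u_n=u_n/k_n$, $\tilde v_n=v_n/k_n^p$. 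Using the homogeneity \eqref{H homogeneity} together with $pq=1$, the pair $(\tilde u_n,\tilde v_n)$ solves
\begin{align*}
F_1[\tilde u_n]+\lambda\tau_1(x)|\tilde v_n|^{q-1}\tilde v_n&=f_{1,n}/k_n,\\
F_2[\tilde v_n]+\lambda\tau_2(x)|\tilde u_n|^{p-1}\tilde u_n&=f_{2,n}/k_n^p,
\end{align*}
with $\tilde u_n=\tilde v_n=0$ on $\partial\Omega$, $\|\tilde u_n\|_\infty=1$, and both forcings tending to zero in $L^N$. The scalar ABP of Theorem~\ref{ABP-MP}, applied to the second equation, yields a uniform bound $\|\tilde v_n\|_\infty\le C$.

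Next I would extract a limit. Uniform $L^\infty$-bounds on $(\tilde u_n,\tilde v_n)$ together with Krylov--Safonov-type $C^\alpha$ estimates (the sources of the two scalar equations are uniformly bounded in $L^N$, as $\tau_i\in L^\varrho\subset L^N$ and the coupling terms are $L^\infty$-bounded) produce uniform $C^\alpha(\overline\Omega)$ bounds. Compactness then gives a subsequence converging in $C(\overline\Omega)$ to some $(\tilde u,\tilde v)$ with $\|\tilde u\|_\infty=1$ and $\tilde u=\tilde v=0$ on $\partial\Omega$. The stability Proposition~\ref{stability} passes to the limit, so that $(\tilde u,\tilde v)$ is a viscosity solution of the homogeneous eigenvalue system
\begin{equation*}
F_1[\tilde u]+\lambda\tau_1(x)|\tilde v|^{q-1}\tilde v=0,\qquad F_2[\tilde v]+\lambda\tau_2(x)|\tilde u|^{p-1}\tilde u=0\quad\text{in }\Omega.
\end{equation*}

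Finally, since $0<\lambda<m_1$ corresponds, via the scaling recalled in Section~\ref{section scaling}, to $(\lambda,\lambda)\in \C^+\cap \C^-$, Theorem~\ref{Lambda1pm} ensures that both MP and mP hold for this parameter pair. Viewing the equalities above simultaneously as sub- and super-solution inequalities with zero boundary data, MP forces $\tilde u,\tilde v\le 0$ and mP forces $\tilde u,\tilde v\ge 0$, so $\tilde u\equiv\tilde v\equiv 0$ in $\Omega$, contradicting $\|\tilde u\|_\infty=1$. The main obstacle is the rescaling step: the quantity $\|u\|_\infty+\|v\|_\infty^p$ is not invariant under the natural $(k,k^p)$-scaling of the system (unless $p=1$), so one must split into cases depending on which term of $R_n$ dominates and choose $k_n$ accordingly (in the symmetric case $\|v_n\|_\infty^p\to\infty$ one takes $k_n=\|v_n\|_\infty^{1/p}$); a secondary technical point is that compactness is available only through $C^\alpha$ (rather than the $C^{1,\alpha}$ of Proposition~\ref{C1,alpha regularity estimates geral}) because the rescaled data are controlled only in $L^N$, which is precisely the norm required by the stability statement.
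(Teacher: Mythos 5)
Your overall architecture is the same as the paper's: argue by contradiction, renormalize the solutions so that the rescaled data tend to zero, pass to the limit by compactness and stability to obtain a nontrivial solution of the homogeneous system at level $\lambda<m_1$, and rule it out (you do this via Theorem \ref{Lambda1pm}, the paper by observing the limit would be an eigenfunction below $m_1$; these are the same fact). The endgame is fine: for $(\lambda,\lambda)$ with $\lambda<m_1$ both MP and mP hold, so the limit pair vanishes, contradicting the normalization.

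The genuine gap is in the assertion that ``both forcings tend to zero in $L^N$'' after the $(k_n,k_n^p)$ rescaling. The contradiction hypothesis only gives $\|f_{1,n}\|_N+\|f_{2,n}\|_N\le \epsilon_n R_n$ with $R_n=\|u_n\|_\infty+\|v_n\|_\infty^p$ and $\epsilon_n\to 0$. In your Case $k_n=\|u_n\|_\infty\ge \|v_n\|_\infty^p$ this yields $\|f_{2,n}\|_N/k_n^p\le 2\epsilon_n\,k_n^{1-p}$, which need not vanish when $p\neq 1$ (e.g.\ $p<1$ with $k_n\to\infty$ faster than $\epsilon_n^{-1/(1-p)}$, or $p>1$ with $k_n\to 0$); the symmetric case $k_n=\|v_n\|_\infty^{1/p}$ has the mirror-image problem. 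So the case split you propose does not by itself close the argument: the quantity $\|u\|_\infty+\|v\|_\infty^p$ and the plain sum $\|f_1\|_N+\|f_2\|_N$ scale anisotropically under $(u,v,f_1,f_2)\mapsto(tu,t^pv,tf_1,t^pf_2)$, which is exactly the obstruction you flag but do not remove. Your argument is complete only for $p=q=1$. The standard repair (and in effect what the paper's appeal to the Leite--Montenegro normalization amounts to) is to work with the scaling-homogeneous quantities, e.g.\ normalize $t_n:=\|u_n\|_\infty+\|v_n\|_\infty^{q}$ against $\|f_{1,n}\|_N+\|f_{2,n}\|_N^{q}$, for which $\tilde u_n=u_n/t_n$, $\tilde v_n=v_n/t_n^{p}$ satisfy $\|\tilde u_n\|_\infty+\|\tilde v_n\|_\infty^{q}=1$ and both rescaled data do go to zero; the rest of your proof (ABP bound on the second component, $C^\alpha$ compactness, stability, Theorem \ref{Lambda1pm}) then goes through unchanged. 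A secondary point: the stability result (Proposition \ref{stability}) is stated with $L^\varrho$ convergence of the data, not $L^N$, so you should either track the rescaled data in $L^\varrho$ or invoke an $L^N$-stability statement explicitly.
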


In order to see this, let us assume by contradiction that there exist solutions such that
\[ \textstyle{\norm{u_k}_\infty + \norm{v_k}^p_\infty > k\,\{\,\|{f_1^k}\|_{\varrho}+\|{f_2^k}\|_{\varrho}\}. }\]
We normalize as in \cite[Section 5]{LeiteMontenegro} to get new functions $\tilde u_k, \tilde v_k, \tilde f_1^k, \tilde f_2^k$  such that
\[ \textstyle{ \norm{\tilde u_k}_\infty + \norm{\tilde v_k}^p_\infty =1, \quad \|{\tilde f_1^k}\|_\varrho,\; \|{\tilde f_2^k}\|_\varrho < 1/k. }\]
The limit functions $\tilde u, \tilde v$ solve \eqref{LE} for $\lambda=\mu$, and they are nonzero due to our normalization. Hence, they are an eigenfunction, however $\lambda < m_1$, a contradiction. So Claim \ref{claim Dir} is true.

\smallskip 

Define $H(t, u, v)=(U, V)$ as the viscosity solution to the problem 
\begin{align*}
\left\{
\begin{array}{rclcc}
F_1\, [U]+\tau_1(x) t\lambda |v|^{q-1}v  &=&tf_1(x) &\mbox{in} & \;\Omega ,\\
F_2\, [V]+\tau_2(x) t\lambda |u|^{p-1}u &=&tf_2(x)&\mbox{in} & \;\Omega, \\
U\,\;= \,\;V\;&=& 0 &\mbox{on} & \;\partial\Omega,
\end{array}
\right.
\end{align*}
in the space $C^1(\overline{\Omega})$.
By \eqref{SC}-\eqref{H strong} for $F_1,F_2$, together with regularity-estimates $C^{1,\alpha}$ theory of viscosity solutions, the map $H$ is well defined and completely continuous. Observe that $t\lambda < \min \{ \lambda_1^-, \lambda_1^+ \}$ for any $t \in [0, 1]$. By using this and the estimates above, we have that any $(u, v)$ which satisfies $H(t, u, v)=(u, v)$ is bounded by a constant not depending on $t$. By degree fixed point theory we get a solution to our problem. 
\end{proof}

\subsection{Local isolation}

\begin{proof}[Proof of Theorem \ref{Th isolation Introdu}]
Once again, via the scaling \eqref{eq:scaling}, we reduce the problem to the study of \eqref{LE} with $\lambda=\mu$, that is \eqref{hyperbola mu=lambda}. Recall $M_1, m_1$ from \eqref{def m1,M1}. For $0<\lambda<M_1$ then the system \eqref{LE} with $\mu=\lambda$ satisfies MP or mP depending on whether $M_1=\lambda_1^+$ or $\lambda_1^-$. Suppose that $u,v \in C(\bar{\Omega})$ is a nontrivial eigenfunction pair associated to the eigenvalue $\lambda$, and let $M_1=\lambda_1^-$. 
If $\lambda < \lambda_1^-$ then \eqref{LE} with $\lambda=\mu$ satisfies mP. Therefore $u,v \geq 0$ in $\Omega$, and $u,v > 0$ in $\Omega$ by Hopf. By Proposition \ref{th4.1 BQeq} one has $\lambda=m_1= \lambda_1^+$. Analogously we deduce that if $M_1= \lambda^+_1$ then $\lambda = \lambda^-_1$.
In particular, no eigenfunctions exist on the left of $M_1$, except from the one corresponding to $m_1$.

Let us show that there is a neighborhood on the right of $M_1$ where eigenfunctions do not exist.
We assume w.l.g.\ that $M_1=\lambda_1^-$ (the other case is analogous). Let us take a sequence of positive eigenvalues $\lambda_n =\lambda_1^- +\varepsilon_n$ related to normalized eigenfunctions $(u_n, v_n)$ such that $\varepsilon_n \to 0$. 
Then by stability of viscosity solutions (Proposition \ref{stability}), $(u_n, v_n) \to (u, v)$ eigenfunction related to $\lambda_1^-$. 
Then, by Proposition \ref{th4.1 BQeq} one concludes $u=t \varphi_1^-$ and $v=t^p \psi_1^-$ for some $t >0$.  Note that by Krein-Rutman theorem $\varphi_1^-,\psi_1^-$ belong to the interior of the cone of negative solutions.  Then $u_n, v_n<0$ for large $n$. This implies by Proposition \ref{th4.1 BQeq} that $\lambda_n = \lambda_1^-$, which is a contradiction. 
\end{proof}

\section{The anti-maximum principle}\label{sec:antimax}

In this section we go along with the validity of the maximum principle, this time when it fails and takes the form of an anti-maximum principle. We move towards the proof of Theorem \ref{AMP}. Our approach relies on some arguments in \cite{Arms2009, IY}. Once again, we reduce ourselves to one of the equivalent forms of \eqref{LE}, and in particular we will always assume that the system is written in the form \eqref{hyperbola mu=lambda}, see Section \ref{section scaling}. 
 
We start with an auxiliary nonexistence result. We assume $F_i$ satisfies \eqref{SC}--\eqref{H lambdai>0}, $i=1,2$.

\begin{lem}\label{PDEnonexistence}
Let $\tau_1,\tau_2\in L^\varrho_+(\Omega)$ for some $\varrho>N$, and $f_i \in L^\varrho(\Omega)$ such that $f_i \not\equiv 0$. 
\begin{enumerate}[(i)]
\item If $\lambda \geq \lambda_1^+$ and $f_i \leq 0$ then there is no nonnegative solution $u,v\in C(\overline{\Omega})$ of
\begin{align}\label{eq PDEnonexistence1}
F_1 [u]+\lambda\tau_1(x) |v|^{q-1}v  \leq f_1, \;\;\;
F_2 [v]+\lambda \tau_2(x)|u|^{p-1}u \leq f_2\;\; \textrm{ in } \;\Omega,\quad u,v\geq 0 \textrm{ on } \partial\Omega.
\end{align}
If in addition $\lambda_1^+ \leq \lambda \leq \lambda_1^-$, the problem \eqref{eq PDEnonexistence1} does not possess a solution $u,v\in C(\overline{\Omega})$.

\item On the other hand, if $\lambda \geq \lambda_1^-$ and $f_i \geq 0$ there is no nonpositive solution $u,v \in C(\overline{\Omega})$ of
\begin{equation}\label{eq PDEnonexistence2}
F_1 [u]+\lambda\tau_1(x) |v|^{q-1}v  \geq f_1, \;\;\;
F_2 [v]+\lambda \tau_2(x)|u|^{p-1}u \geq f_2\;\; \textrm{ in } \;\Omega,\quad u,v\leq 0 \textrm{ on } \partial\Omega.
\end{equation}
Moreover, if $\lambda_1^- \leq \lambda \leq  \lambda_1^+$, the problem \eqref{eq PDEnonexistence2} does not possess a solution $u,v \in C(\overline{\Omega})$.
\end{enumerate}
\end{lem}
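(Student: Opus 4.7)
The plan is to argue by contradiction, exploiting the uniqueness-up-to-scaling result in Proposition~\ref{th4.1 BQeq} applied to one of the signed eigenfunction pairs $(\varphi_1^\pm,\psi_1^\pm)\in E_\varrho$ furnished by Theorem~\ref{Th1 introduction}. I present the argument for part~(i); part~(ii) then follows by applying~(i) to the operators $G_i(x,r,p,X)=-F_i(x,-r,-p,-X)$, using the relation $\lambda_1^\pm(G_1,G_2)=\lambda_1^\mp(F_1,F_2)$ from~\eqref{def G} together with the change of unknowns $(u,v,f_1,f_2)\mapsto(-u,-v,-f_1,-f_2)$.

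For the first nonexistence claim of~(i), suppose by contradiction that $(u,v)\in C(\overline{\Omega})$ is a nonnegative solution of~\eqref{eq PDEnonexistence1} with $\lambda\geq\lambda_1^+$. The structural inequality~\eqref{SC} combined with the first inequality gives $\mathcal{L}^-[u]\leq F_1[u]\leq -\lambda\tau_1 v^q+f_1\leq 0$ in $\Omega$, so Proposition~\ref{SMP} forces either $u\equiv 0$ or $u>0$ in $\Omega$, and likewise for $v$. If $u\equiv 0$, the first inequality reduces to $\lambda\tau_1 v^q\leq f_1$: the left-hand side is $\geq 0$ while $f_1\leq 0$, so both vanish a.e., contradicting $f_1\not\equiv 0$; the case $v\equiv 0$ is symmetric. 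In the remaining case $u,v>0$ in $\Omega$, I invoke the first version of Proposition~\ref{th4.1 BQeq} with $(u_1,v_1)=(u,v)$ and $(u_2,v_2)=(\varphi_1^+,\psi_1^+)$: the hypothesis~\eqref{H no MP} is provided by $\varphi_1^+,\psi_1^+>0$, the regularity condition by $(\varphi_1^+,\psi_1^+)\in E_\varrho$, and
\[
F_1[\varphi_1^+]+\lambda\tau_1(\psi_1^+)^q=(\lambda-\lambda_1^+)\tau_1(\psi_1^+)^q\geq 0,
\]
with the analogous inequality for $F_2$. The proposition yields $u=t\varphi_1^+$, $v=t^p\psi_1^+$ for some $t>0$. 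Substituting back into~\eqref{eq PDEnonexistence1} and using $pq=1$ gives $t(\lambda-\lambda_1^+)\tau_1(\psi_1^+)^q\leq f_1$ a.e., whose left-hand side is $\geq 0$ while $f_1\leq 0$; both then vanish a.e., contradicting $f_1\not\equiv 0$.

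For the stronger assertion in the band $\lambda_1^+\leq\lambda\leq\lambda_1^-$, let $(u,v)\in C(\overline{\Omega})$ be any solution of~\eqref{eq PDEnonexistence1}. If $u,v\geq 0$ in $\Omega$ we are in the previous situation. Otherwise there is $x_0\in\Omega$ with $u(x_0)<0$ or $v(x_0)<0$, and I apply the second version of Proposition~\ref{th4.1 BQeq} with $(u_1,v_1)=(\varphi_1^-,\psi_1^-)$ and $(u_2,v_2)=(u,v)$: indeed $\varphi_1^-,\psi_1^-<0$ with $(\varphi_1^-,\psi_1^-)\in E_\varrho$, the pair $(u,v)$ satisfies the required $\leq 0$ inequalities in $\Omega$ (since $f_i\leq 0$) together with $u=v=0\geq 0$ on $\partial\Omega$, and
\[
F_1[\varphi_1^-]+\lambda\tau_1|\psi_1^-|^{q-1}\psi_1^-=(\lambda-\lambda_1^-)\tau_1|\psi_1^-|^{q-1}\psi_1^-\geq 0,
\]
because the two factors $(\lambda-\lambda_1^-)$ and $|\psi_1^-|^{q-1}\psi_1^-$ are both nonpositive, and analogously for $F_2$. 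The proposition then yields $\varphi_1^-=tu$, $\psi_1^-=t^p v$ for some $t>0$; plugging this into~\eqref{eq PDEnonexistence1} and using $pq=1$ gives
\[
\tfrac{1}{t}(\lambda-\lambda_1^-)\tau_1|\psi_1^-|^{q-1}\psi_1^-\leq f_1\qquad\text{a.e.\ in }\Omega.
\]
Since the left-hand side is $\geq 0$ while $f_1\leq 0$, both must vanish a.e.; using $\tau_1\gneqq 0$ from~\eqref{H weights}, this forces $f_1\equiv 0$, a contradiction.

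The main delicate point I anticipate is the borderline behaviour at the endpoints $\lambda=\lambda_1^\pm$, where the factor $(\lambda-\lambda_1^\pm)$ degenerates and one cannot extract any strict sign from the eigenvalue gap. The argument still closes because, in both displayed inequalities above, the vanishing of the left-hand side combined with $f_i\leq 0$ directly collapses $f_i$ to zero, which is incompatible with the hypothesis $f_i\not\equiv 0$; no strict separation of $\lambda$ from $\lambda_1^\pm$ is ever needed.
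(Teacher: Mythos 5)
Your proof is correct and follows essentially the same route as the paper: a contradiction argument combining SMP with the uniqueness-up-to-scaling result of Proposition~\ref{th4.1 BQeq} applied against the principal eigenfunction pairs $(\varphi_1^\pm,\psi_1^\pm)\in E_\varrho$, then reading the resulting a.e.\ inequality to force $f_i\equiv 0$ (your duality reduction of (ii) to (i) via $G_i$ is also the paper's "analogous" case). The only cosmetic difference is that the paper first observes $(u,v)\in\Psi_\lambda^+$ forces $\lambda\le\lambda_1^+$, hence $\lambda=\lambda_1^+$, before invoking the comparison, whereas you apply it directly at any $\lambda\ge\lambda_1^+$; your minor slips (writing $u=v=0$ on $\partial\Omega$ instead of $u,v\ge 0$, and the superfluous appeal to $\tau_1\gneqq 0$) do not affect the argument.
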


\begin{proof} We prove just $(i)$, since the case $(ii)$ is analogous. Assume by contradiction that there exists a solution pair $u,v\geq 0$ of \eqref{PDEnonexistence} with $\lambda \geq \lambda_1^+$ and $f_i \leq 0$. 
Note that the case $u,v \equiv 0$ is not allowed by the hypothesis $f_i \not \equiv 0$.
Thus, assume that $u\not \equiv 0$ or $v\not \equiv 0$. Thus by SMP we have $u,v > 0$ in $\Omega$. The definition of $\lambda^+_1$ implies $\lambda \leq \lambda^+_1$, from which we deduce $\lambda=\lambda^+_1$. By Proposition \ref{th4.1 BQeq} we obtain that $u=t\varphi^+_1$ and $v=t^p\psi_1^+$ for some $t>0$. However, this leads to $f_i \equiv 0$, $i=1,2$, which contradicts the hypoteses.

Now assume in addition that $\lambda \leq \lambda^-_1$, and on the contrary that there exists a solution $u,v$ of \eqref{eq PDEnonexistence1}. By what we just proved, either $u$ or $v$ must be negative somewhere in $\Omega$. Applying Proposition \ref{th4.1 BQeq} we get that $u\equiv t\varphi^-_1$ and $v\equiv t^p\psi^-_1$ in $\Omega$ for some $t > 0$. This yields $\lambda = \lambda^-_1$, and so $f_i \equiv 0$, $i=1,2$, again a contradiction.
\end{proof}

\begin{proof}[Proof of Theorem \ref{AMP}]
We prove only $(i)$; $(ii)$ is similar. In order to get a contradiction, suppose that there exist values $\lambda_k$ above $\lambda_1^-$,  and $u_k, v_k\in C(\overline{\Omega})$ such that $\lambda_k  \to \lambda_1^-$, and $u_k,v_k\in C(\overline{\Omega})$ satisfying
\begin{align}\label{AMB-false-k}
\left\{
\begin{array}{rclcc}
F_1 [u_k]+\lambda_k\tau_1(x) |v_k|^{q-1}v_k  &=&f_1(x) &\mbox{in} & \;\Omega \\
F_2 [v_k]+\lambda_k\tau_2(x)|u_k|^{p-1}u_k &=&f_2(x) &\mbox{in} & \;\Omega \\
u_k\,\;= \,\;v_k\;&=& 0 &\mbox{on} & \;\partial\Omega
\end{array}
\right.
\end{align}
such that at least one between $u_k, v_k$ is nonnegative somewhere in $\Omega$. It then turns out that $u_k$ or $v_k$ is nonnegative somewhere for infinite $k$'s, say $u_k$. Thus, take such $x_k\in \Omega$ where $u_k$ attains a nonnegative maximum at $x_k$. In particular, for this subsequence one has
	\begin{equation}\label{sequenceantimax1}
\textstyle{	u_k(x_k) \geq 0 \quad \mbox{and} \quad Du_k(x_k) = 0.}
	\end{equation}
By taking a further subsequence we may assume $	x_k \rightarrow x_0$ for some $x_0\in \overline{\Omega}$. We claim that
	\begin{equation}\label{unbounded uk or vk}
\textstyle{\mbox{either}\quad\sup_{k} \| u_k \|_{L^\infty(\Omega)} = +\infty \quad\mbox{ or } \;\quad\sup_{k} \| v_k \|_{L^\infty(\Omega)} = +\infty.}
	\end{equation}
Otherwise, if $ \| u_k \|_{L^\infty(\Omega)} , \| v_k \|_{L^\infty(\Omega)}\le C$ for all $k$, then by $C^{1,\alpha}$ estimates and compact inclusion we obtain $u,v \in C^1(\overline{\Omega})$ such that $u_k \rightarrow u$  and $v_k \rightarrow v$ uniformly in $\overline{\Omega}$. By stability of viscosity solutions we may pass to limits in \eqref{AMB-false-k}, then $u,v$ become solution of the problem
\begin{align*}
F_1 [u]+\lambda_1^- \tau_1(x) |v|^{q-1}v  =f_1(x) , \quad
F_2 [v]+\lambda_1^-\tau_2(x)|u|^{p-1}u =f_2(x)\;\; \mbox{ in }  \;\Omega , \quad u=v=0 \mbox{ on }  \;\partial\Omega.
\end{align*}
But this contradicts Lemma \ref{PDEnonexistence} (i), whence \eqref{unbounded uk or vk} is proved.

Let us assume w.l.g. that $\norm{u_k}_{\infty} \to \infty$ (up to a subsequence), and let us define $\theta_k=\|u_k\|_\infty$. We also consider the rescaling $u_k = \theta_k U_k$ and $v_k = \theta_k^{p}\, V_k$. Since the operators $F_1,F_2$ are positively $1$-homogeneous,
$$
\textstyle -F_1 [U_{k}] = - \frac{F_1 [u_{k}]}{\theta_{k}}  
  = \frac{1}{\theta_{k}} \{ \lambda_k\tau_1(x) |v_k|^{q-1}v_k-f_1(x) \}  = \lambda_k\tau_1(x)  |V_k|^{q-1}V_k-\frac{f_1(x)}{\theta_k},
$$
and
$$
\textstyle -F_2 [V_{k}]=  -\frac{F_2 [v_{k}]}{\theta_{k}^{p}} 
= \frac{1}{\theta_{k}^{p}}\{ \lambda_k \tau_2(x) |u_k|^{p-1}u_k-f_2(x)\}  = {\lambda_k\tau_2(x)} |U_k|^{p-1}U_k -\frac{f_2(x)}{\theta_k^p} ,
$$
with $U_k=V_k=0$ on $\partial\Omega$.
Again by $C^{1,\alpha}$ estimates, our construction, and taking a subsequence if necessary, we may assume 
$U_k \rightarrow U$, $V_k \rightarrow V$ in $ C^1(\overline{\Omega})$ for some $U,V\in C^1(\overline{\Omega})$. Notice that $\norm{U_k} =1$, and hence the RHS in the last equality is uniformly bounded, therefore by Theorem \ref{ABP-MP} $V_k$ is also bounded. 
Passing to limits via stability of viscosity solutions, 
we deduce that $U,V$ is a solution of
\begin{align*}
F_1 [U]+\lambda_1^- \tau_1(x) |V|^{q-1}V  =0 , \quad
F_2 [V]+\lambda_1^-\tau_2(x)|U|^{p-1}U =0\;\; \mbox{ in }  \;\Omega , \quad U=V=0 \mbox{ on }  \;\partial\Omega.
\end{align*}
If instead $\norm{v_k}_{\infty} \to \infty$ we argue similarly, by defining $\theta_k=\norm{v_k}_{\infty}^{q}$.

\vspace{0.15cm}

Anyway, our construction produces $\| U\|_\infty =1$ or $\| V\|_\infty =1$.
Notice that if either $V\equiv 0$ or $U\equiv 0$ in $\Omega$, then $U\equiv V \equiv 0$ in $\Omega$ which produces a contradiction. W.l.g. suppose $\|U\|_{\infty}=1$ and fix $x_1 \in \Omega$ such that $U(x_1) \neq 0$. By Proposition \ref{th4.1 BQeq} we conclude that $U\equiv t\varphi$ and $V\equiv t^p \psi$ for some $t>0$, where $\varphi = \varphi_1^+,\, \psi = \psi_1^+$ if $U(x_1)>0$ (since $\lambda_1^- \geq \lambda_1^+$), while  $\varphi = \varphi_1^-,\, \psi = \psi_1^-$ if $U(x_1)<0$. 
Let us finish the proof by showing that both cases are not admissible.  

First, if $U(x_1)<0$, then $U,V < 0$ in $\Omega$. Using \eqref{sequenceantimax1} we deduce that $U(x_0) = 0$, and $x_0\in \partial \Omega$. However, $DU(x_0) = 0$, in violation of Hopf lemma, and so $U(x_1)<0$ fails to be true.

Finally, if $U(x_1)>0$, then $U,V> 0$ in $\Omega$. Note that $U_k,V_k>0$ in any given compact set $K\subset\Omega$, for $k$ suitably large.
We claim that for $k$ sufficiently large we have $U_k,V_k \geq  0$ in $\Omega$. 
Indeed, we consider $K\subset\Omega$ such that $|\Omega\setminus K|<\varepsilon_0$, where $\varepsilon_0$ is the constant of Theorem \ref{MP small domain}. Thus $U_k,V_k\ge 0$ in $\partial(\Omega\setminus K)$, and so in $\Omega\setminus K$ by Theorem \ref{MP small domain} since $\lambda_1^-(F_1^*(\vartheta)),\,\lambda_1^-(F_2^*(\vartheta))>0$ and $f_1,f_2\le 0$.
Hence we derive a contradiction with Lemma \ref{PDEnonexistence}, from where $U(x_1)>0 $ is also impossible.
\end{proof}

\section{The second eigenvalue problem}\label{section second curve}

This section is dedicated to the proof of Theorem \ref{Th lambda2 Introdu}. Throughout this section, we will assume $F_i$ both satisfy \eqref{SC}--\eqref{H strong}.
We start recalling 
\[ \lambda_2=\lambda_2(F_1,F_2,\Omega) = \inf \{ \lambda> M_1: \, \lambda \text{ is an eigenvalue of \eqref{hyperbola mu=lambda}} \}, \]
with $M_1=\max \{ \lambda_1^+(F_1, F_2),  \lambda_1^-(F_1, F_2) \}$, as in \eqref{def m1,M1}. By Theorem \ref{Th isolation Introdu} we know that $\lambda_2>M_1$.
Note that one may have $\lambda_2=+\infty$, for instance if $F_1,F_2$ are not symmetric, see \cite{Arms2009}. However, when $\lambda_2$ is finite then it is in fact an eigenvalue, as it is shown below. 

\begin{lem}\label{lambda2 is igenvalue}
	If $\lambda_2< +\infty$, then there is a nontrivial solution pair $\varphi_2 , \psi_2\in  E_\varrho$ of
\begin{equation}\label{eq 2}
F_1[\varphi_2]+\lambda_2\tau_1(x)|\psi_2|^{q-1}\psi_2 =0 ,\; F_2[\psi_2]+\lambda_2\tau_2(x)|\varphi_2|^{p-1}\varphi_2 =0\textrm{ in } \Omega \;\;
\varphi_2,\psi_2 = 0 \textrm{ in } \partial \Omega.
\end{equation}
\end{lem}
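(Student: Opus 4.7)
\textbf{Proof plan for Lemma \ref{lambda2 is igenvalue}.}

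The natural strategy is to realize $\lambda_2$ as the limit of a sequence of eigenvalues and then to pass to the limit on the corresponding normalized eigenfunctions. By definition of $\lambda_2$ as an infimum, I can pick a sequence $\lambda_k>M_1$, with $\lambda_k\searrow \lambda_2$, such that each $\lambda_k$ is an eigenvalue of \eqref{hyperbola mu=lambda}; thus there exist nontrivial pairs $(\varphi_k,\psi_k)\in E_\varrho$ satisfying
\[
F_1[\varphi_k]+\lambda_k\tau_1(x)|\psi_k|^{q-1}\psi_k=0,\qquad F_2[\psi_k]+\lambda_k\tau_2(x)|\varphi_k|^{p-1}\psi_k=0\quad\text{in }\Omega,
\]
with zero boundary data. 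By homogeneity \eqref{H homogeneity}, I rescale via $\varphi_k\mapsto t\varphi_k$, $\psi_k\mapsto t^{p}\psi_k$ and hence normalize so that $\|\varphi_k\|_\infty=1$. Using ABP-MP from Theorem \ref{ABP-MP} applied to the second scalar equation together with the bound $\lambda_k\le \lambda_1+1$ (for $k$ large), I also obtain $\|\psi_k\|_\infty\le C$ uniformly, since $F_2^*$ satisfies $\lambda_1^+(F_2^*(\vartheta))>0$ by \eqref{H lambdai>0}.

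Next, I would apply the global $C^{1,\alpha}$ regularity estimates of Proposition \ref{C1,alpha regularity estimates geral} to both $\varphi_k$ and $\psi_k$: the right-hand sides $\lambda_k\tau_i(x)|\cdot|^{q-1}(\cdot)$ (or the $p$-analogue) are bounded in $L^\varrho(\Omega)$ uniformly in $k$, so
\[
\|\varphi_k\|_{C^{1,\alpha}(\overline\Omega)}+\|\psi_k\|_{C^{1,\alpha}(\overline\Omega)}\le C.
\]
By the compact embedding $C^{1,\alpha}(\overline\Omega)\hookrightarrow C^1(\overline\Omega)$ I extract, up to a subsequence, a limit pair $(\varphi_2,\psi_2)\in C^1(\overline\Omega)$ with $\varphi_k\to\varphi_2$, $\psi_k\to\psi_2$ in $C^1(\overline\Omega)$, and $\varphi_2=\psi_2=0$ on $\partial\Omega$. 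Together with $\lambda_k\to\lambda_2$, the stability result Proposition \ref{stability} allows me to pass to the viscosity limit in both equations, so that $(\varphi_2,\psi_2)$ solves \eqref{eq 2}. The $W^{2,\varrho}$-regularity assumption \eqref{H strong} upgrades this to $(\varphi_2,\psi_2)\in E_\varrho$, and these are then strong solutions.

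The main obstacle is showing nontriviality, namely that neither $\varphi_2$ nor $\psi_2$ vanishes identically. Since the convergence is in $C^1(\overline\Omega)$, the normalization $\|\varphi_k\|_\infty=1$ passes to the limit, giving $\|\varphi_2\|_\infty=1$, so $\varphi_2\not\equiv 0$. Were $\psi_2\equiv 0$, then the first equation of \eqref{eq 2} would reduce to $F_1[\varphi_2]=0$ in $\Omega$ with $\varphi_2=0$ on $\partial\Omega$; since $\lambda_1^+(F_1^*(\vartheta))>0$ by \eqref{H lambdai>0}, the scalar maximum and minimum principles from Theorem \ref{ABP-MP} would force $\varphi_2\equiv 0$, a contradiction. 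Hence $(\varphi_2,\psi_2)$ is a genuine nontrivial eigenpair at $\lambda=\lambda_2$, completing the proof.
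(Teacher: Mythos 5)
Your proposal is correct and follows essentially the same route as the paper's proof: take eigenvalues $\lambda_k\to\lambda_2$ with normalized eigenfunctions, obtain uniform $C^{1,\alpha}$ bounds from Proposition \ref{C1,alpha regularity estimates geral}, pass to the limit by compactness and the stability result, and invoke \eqref{H strong} for membership in $E_\varrho$. You in fact spell out two details the paper leaves implicit (the uniform $L^\infty$ bound on $\psi_k$ via ABP-MP and the argument ruling out $\psi_2\equiv 0$), which is fine; only note the small typo $|\varphi_k|^{p-1}\psi_k$ in your second equation, which should read $|\varphi_k|^{p-1}\varphi_k$.
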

\begin{proof}
Take a sequence $\lambda_k \to \lambda_2$ of eigenvalues, with corresponding eigenfunctions $u_k,v_k \in E_\varrho$, with $\| u_k \|_{\infty} = 1$, and solving, in the viscosity sense,
\begin{equation}\label{eq 2k}
F_1[u_k]+\lambda_k\tau_1(x)|v_k|^{q-1}v_k =0 ,\; F_2[v_k]+\lambda_k\tau_2(x)|u_k|^{p-1}u_k=0\textrm{ in } \Omega \;\;
u_k, v_k = 0 \textrm{ in } \partial \Omega.
\end{equation}
By $C^{1,\alpha}$ regularity-estimates for scalar equations (Proposition \ref{C1,alpha regularity estimates geral}) we have 
$\| u_k \|_{C^{1,\alpha}(\overline{\Omega})} , \| v_k \|_{C^{1,\alpha}(\overline{\Omega})}\leq C.$
Thus, up to a subsequence, $u_k,v_k$ converge to functions $\varphi_2,\psi_2 \in C^1(\overline{\Omega})$. Since $\| \varphi_2 \|_{\infty}= 1$ we may pass to the limit in \eqref{eq 2k} via stability of viscosity solutions. Then $\varphi_2,\psi_2$ is a nontrivial pair of solutions to the problem \eqref{eq 2}. Observe that $\varphi_2,\psi_2\in E_\varrho$ by hypothesis \eqref{H strong}.
\end{proof}

\begin{proof}[Proof of Theorem \ref{Th lambda2 Introdu}-(i)] From the previous lemma and by scaling as in Section \ref{section scaling}, a second spectral curve $\Lambda_2$ is produced in the first quadrant if $\lambda_2<+\infty$:
\begin{equation}\label{eq:Lambda2}
\textstyle \Lambda_2\, (\lambda) \,=\, (\,\lambda,\, \mu_2(\lambda)\,),  \quad \text{ where }\; \mu_2 (\lambda)= \frac{(\lambda_2 )^{p+1}}{\lambda^p}, \;\; \text{ for all } \lambda>0. 
\end{equation}
Notice that $\Lambda_2$ and the curve originating from $M_1$ cannot intersect. This is a consequence of $M_1<\lambda_2$ together with the definition of the curves, given by \eqref{eq:thetwocurves} and  \eqref{eq:Lambda2}.
\end{proof}
\smallskip

Now we turn to the Dirichlet problem \eqref{Dir} for $\lambda\in ( M_1,\lambda_2)$ in the case $p=q=1$, proving Theorem \ref{Th lambda2 Introdu}-(ii).

We closely follow \cite[\S 5]{Arms2009}, highlighting only the main differences. We define a homotopy between \eqref{Dir lambda mu} and the corresponding Dirichlet problem for the Laplacian with constant weights. 
For each $0 \leq s \leq 1$, define the fully nonlinear operators
\begin{equation}\label{eq:F-homotopy}
F^s_i(x,r,\eta,X) = \beta s\, \mathrm{tr}(X) + (1-s) F_i(x,r,\eta,X), \;\;i=1,2.
\end{equation}
It is easy to verify that $F^s_i$ satisfies \eqref{SC}-\eqref{H strong}. Note that
\begin{center}
$
\lambda^\pm_{1,s}:=\lambda^\pm_1(F_1^s(\tau_1^s),F^s_2(\tau_2^s),\Omega)  < \infty,
$
\end{center}
where $\tau_i^s=s+(1-s)\tau_i$ due to our Proposition \ref{boundedness eig BQeq}. Analogously to \cite[Lemmas 5.4 \& 5.5]{Arms2009}, one sees that the maps $s \mapsto \lambda^+_{1,s}$ and $s \mapsto \lambda^-_{1,s} $ are continuous on $[0,1]$, while the map 
\begin{center}
	$s \mapsto \lambda_{2,s}=\lambda_2 (F^s_1(\tau_1^s),F^s_2(\tau_2^s),\Omega)$ 
\end{center}
is lower semi-continuous on $[0,1]$. We also have the following analog of \cite[Proposition 5.6]{Arms2009}.

\begin{lem} \label{lem M}
Let $M_1 < \lambda < \lambda_2.$ Then there is a continuous function $\mu : [0,1] \to \real$ such that $\mu_0 = \lambda$ and
	$\max\{ \lambda_{1,s}^-, \lambda_{1,s}^+ \} < \mu_s < \lambda_{2,s} $ for all $s\in [0,1].$
Furthermore, for every constant $M>0$, there is a constant $C>0$ such that for any $f_1,f_2 \in L^\varrho(\Omega)$ satisfying $\|f_1 \|_\varrho ,\|f_2\|_\varrho \leq M$, for all $0\leq s \leq 1$, and for any solution $u,v\in C(\overline{\Omega})$ of the Dirichlet problem
\begin{align}\label{Dirs}
\left\{
\begin{array}{rclcc}
F_1^s \,[u]+\mu_s\tau_1^s(x) v  &=&f_1(x) &\mbox{in} & \;\Omega, \\
F_2^s\, [v]+\mu_s\tau_2^s(x)u &=&f_2(x) &\mbox{in} & \;\Omega, \\
u\,\;= \,\;v\;&=& 0 &\mbox{on} & \;\partial\Omega,
\end{array}
\right.
\end{align}
we have the estimate
$	\| u \|_{C^{1,\alpha}(\overline{\Omega})} , \| v \|_{C^{1,\alpha}(\overline{\Omega})}< C( 1+ \max\{\|f_1 \|_\varrho,\|f_2 \|_\varrho\} ).
$
\end{lem}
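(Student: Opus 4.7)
My plan is to split the lemma into two parts, as it naturally decomposes.

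\textbf{Construction of $\mu$.} I would exploit the continuity of $s\mapsto\lambda^{\pm}_{1,s}$ together with the lower semi-continuity of $s\mapsto \lambda_{2,s}$. For each $s_0\in[0,1]$, since $\max\{\lambda^{\pm}_{1,s_0}\}<\lambda_{2,s_0}$ (by the local isolation result Theorem \ref{Th isolation Introdu} applied to $F_i^{s_0}$), the midpoint $c_{s_0}=\tfrac{1}{2}(\max\{\lambda^{\pm}_{1,s_0}\}+\lambda_{2,s_0})$ satisfies the strict double inequality at $s_0$. By continuity of $\max\{\lambda^{\pm}_{1,s}\}$ and l.s.c.\ of $\lambda_{2,s}$, this inequality persists in an open neighborhood $I(s_0)$ of $s_0$. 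Compactness of $[0,1]$ gives a finite subcover $\{I(s_j)\}_{j=0}^{n}$ with $s_0=0$, and I would define $\mu_s$ to be equal to $\lambda$ on a small initial segment, then piecewise constantly equal to $c_{s_j}$ on each $I(s_j)$, glued together by linear interpolation on the (nonempty) overlaps. Since each interpolation takes values between two constants that both lie inside the open fibre $\bigl(\max\{\lambda^{\pm}_{1,s}\},\lambda_{2,s}\bigr)$ on the overlap, and since this fibre is convex in $\mu$, the resulting $\mu:[0,1]\to\mathbb{R}$ is continuous, starts at $\mu_0=\lambda$, and satisfies $\max\{\lambda^{-}_{1,s},\lambda^{+}_{1,s}\}<\mu_s<\lambda_{2,s}$ throughout.

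\textbf{A priori bound: reduction to $L^{\infty}$.} By the scalar regularity estimate in Proposition \ref{C1,alpha regularity estimates geral} applied to each equation of \eqref{Dirs} (treating the coupling term $\mu_s\tau_i^s(x)v$ or $\mu_s\tau_i^s(x)u$ as part of the RHS), we obtain
\[
\|u\|_{C^{1,\alpha}(\overline{\Omega})}\le C\bigl(\|u\|_\infty+\mu_s\|\tau_1^s\|_\varrho\|v\|_\infty+\|f_1\|_\varrho\bigr),
\]
and analogously for $v$. Hence it suffices to prove the $L^{\infty}$-bound $\|u\|_\infty+\|v\|_\infty\le C(1+\max\{\|f_1\|_\varrho,\|f_2\|_\varrho\})$.

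\textbf{The $L^{\infty}$ bound by blow-up.} I would argue by contradiction: suppose there exist $s_k\in[0,1]$, $f_i^k$ with $\|f_i^k\|_\varrho\le M$, and solutions $(u_k,v_k)$ of \eqref{Dirs} at $s=s_k$ such that $\theta_k:=\|u_k\|_\infty+\|v_k\|_\infty\to\infty$. Set $U_k=u_k/\theta_k$, $V_k=v_k/\theta_k$. Using the positive $1$-homogeneity of $F_i^{s_k}$ (hypothesis \eqref{H homogeneity} passes to the homotopy \eqref{eq:F-homotopy}), we get
\[
F_1^{s_k}[U_k]+\mu_{s_k}\tau_1^{s_k}(x)V_k=\tfrac{f_1^k}{\theta_k},\qquad F_2^{s_k}[V_k]+\mu_{s_k}\tau_2^{s_k}(x)U_k=\tfrac{f_2^k}{\theta_k},
\]
with RHSs tending to $0$ in $L^{\varrho}$. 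By $C^{1,\alpha}$ estimates (Proposition \ref{C1,alpha regularity estimates geral}, with coefficients uniform in $s_k$), the pair $(U_k,V_k)$ is equibounded in $C^{1,\alpha}(\overline{\Omega})$; up to subsequences $U_k\to U$, $V_k\to V$ in $C^1(\overline{\Omega})$, $s_k\to s_*\in[0,1]$, $\mu_{s_k}\to\mu_{s_*}$ by continuity of $\mu$, $\tau_i^{s_k}\to\tau_i^{s_*}$ in $L^{\varrho}$, and $F_i^{s_k}$ converge locally uniformly to $F_i^{s_*}$. Stability of viscosity solutions (Proposition \ref{stability}) then yields that $(U,V)$ is a nontrivial viscosity solution (the normalization forces $\|U\|_\infty+\|V\|_\infty=1$) of the homogeneous eigenvalue system \eqref{hyperbola mu=lambda} at $s=s_*$ with parameter $\mu_{s_*}$. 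But by construction of $\mu$ we have $\max\{\lambda^{\pm}_{1,s_*}\}<\mu_{s_*}<\lambda_{2,s_*}$, which excludes $\mu_{s_*}$ from the spectrum by Theorem \ref{Th isolation Introdu} and the definition of $\lambda_{2,s_*}$, a contradiction.

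\textbf{Main obstacle.} The genuinely delicate step is the stability passage with \emph{varying} operators $F_i^{s_k}$, varying weights $\tau_i^{s_k}$ and varying parameters $\mu_{s_k}$ under merely $L^{\varrho}$-integrable coefficients. This requires checking that the perturbation of $F_i^{s_k}$ into $F_i^{s_*}$ (and of $\tau_i^{s_k}$ into $\tau_i^{s_*}$) is compatible with the $L^{\varrho}$-convergence hypothesis of Proposition \ref{stability}, which uses that the ellipticity constants and the $\gamma,\vartheta$ controls can be chosen uniformly along the homotopy \eqref{eq:F-homotopy}, a fact that must be verified from the explicit form of $F_i^{s}$.
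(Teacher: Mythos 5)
Your proposal is correct and follows essentially the same route as the paper: the curve $\mu_s$ is obtained from the continuity of $s\mapsto\lambda^{\pm}_{1,s}$ and the lower semi-continuity of $s\mapsto\lambda_{2,s}$ (your explicit covering/interpolation construction just fills in what the paper leaves implicit), and the a priori bound is reduced to an $L^{\infty}$ bound proved by the same blow-up/normalization, $C^{1,\alpha}$ compactness and viscosity-stability argument, producing a nontrivial solution of the homogeneous system at $\mu_{\bar s}$ in the spectral gap, a contradiction. The only cosmetic difference is your normalization by $\|u_k\|_\infty+\|v_k\|_\infty$ instead of the paper's choice of the larger norm, and your uniformity check on the homotopy \eqref{eq:F-homotopy} is exactly the "easy to verify" point the paper asserts.
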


\begin{proof} The existence of $\mu_s$ follows from the continuity statements that precede the lemma.  As for the $C^{1,\alpha}$ estimates, they follow once we obtain $L^\infty$ bounds. Assuming the conclusion is false, we have the existence of $M>0$ and sequences $0\leq s_k\leq 1$, $f_{1k},f_{2k}\in L^\varrho(\Omega)$ with $\|f_{1k}\|_\varrho, \|f_{2k}\|_{\varrho}\le M$, and $u_k,v_k\in C(\overline \Omega)$ solutions of 
\begin{align*}
\left\{
\begin{array}{rclcc}
F_1^s \,[u_k]+\mu_{s_k}\tau_1^{s_k}(x) v_k  &=&f_{1k}(x) &\mbox{in} & \;\Omega, \\
F_2^s\, [v_k]+\mu_{s_k}\tau_2^{s_k}(x)u_k &=&f_{2k}(x) &\mbox{in} & \;\Omega, \\
u_k\,\;= \,\;v_k\;&=& 0 &\mbox{on} & \;\partial\Omega,
\end{array}
\right.
\end{align*}
such that  (without loss of generality) 
\[
\frac{\|v_k\|_\infty}{1+ \max\{\|f_{1k} \|_\varrho,\|f_{2k} \|_\varrho\}}\leq \frac{\|u_k\|_\infty}{1+ \max\{\|f_{1k} \|_\varrho,\|f_{2k} \|_\varrho\}}\to +\infty \qquad \text{ as } k\to +\infty.
\]
By letting 
\[
\tilde u_k:=\frac{u_k}{\|u_k\|_\infty},\quad  \tilde v_k:=\frac{v_k}{\|u_k\|_\infty},
\]
we have $\|\tilde u_k\|_\infty=1$, $\|\tilde v_k\|_\infty\leq 1$ and 
\[
F_1^s[\tilde u_k]+\mu_{s_k} \tau_1^{s_k} \tilde v_k= \frac{f_{1k}}{\|u_k\|_\infty},\quad F_2^s[\tilde v_k]+\mu_{s_k} \tau_1^{s_k} \tilde u_k= \frac{f_{2k}}{\|u_k\|_\infty} \text{ in } \Omega.
\]
Observing that the right hand sides converge to $0$ in $L^\varrho$, by $C^{1,\alpha}$ estimates we can pass to the limit an  obtain the existence of $\bar s \in [0,1]$ for which $\mu_{\bar s}$ is an eigenvalue, a contradiction. \end{proof}

Let us fix $f_i\in L^\varrho(\Omega)$, and set $E=C^1(\overline{\Omega})^2$. We define a map $\mathcal{A}_{ s}=\mathcal{A}_{f_1,f_2,s}: E\times [0,1] \to E$ by $\mathcal{A}_{ s}(u,v) = (U,V)$,
where $(U,V)\in E$ is the unique strong solution of the Dirichlet problem 
\begin{align}\label{Dir U,V}
\left\{
\begin{array}{rclcc}
F_1^s \,[U]+\mu_s\tau_1^s(x) v  &=&f_1(x) &\mbox{in} & \;\Omega, \\
F_2^s\, [V]+\mu_s\tau_2^s(x)u &=&f_2(x) &\mbox{in} & \;\Omega, \\
U\,\;= \,\;V\;&=& 0 &\mbox{on} & \;\partial\Omega.
\end{array}
\right.
\end{align}
Observe that the map is well defined by Theorem \ref{Dirichlet scalar}. 

\begin{lem}\label{lem:nice-homotopy}
	The map $\mathcal{A}_{s}$ is a homotopy of completely continuous transformations on $E$.
\end{lem}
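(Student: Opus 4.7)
The plan is to verify separately three things: (i) $\mathcal{A}_s$ is well defined on $E$ for every $s\in[0,1]$; (ii) $\mathcal{A}_s$ maps bounded sets of $E$ into relatively compact sets of $E$, uniformly in $s$; (iii) the assignment $(u,v,s)\mapsto \mathcal{A}_s(u,v)$ is continuous on $E\times[0,1]$.

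For (i), fix $(u,v)\in E$ and $s\in[0,1]$. The two Dirichlet problems in \eqref{Dir U,V} are decoupled: $U$ is determined only by the datum $g_1:=f_1-\mu_s\tau_1^s v$ and $V$ only by $g_2:=f_2-\mu_s\tau_2^s u$. Since $\tau_i^s=s+(1-s)\tau_i\in L^\varrho(\Omega)$ and $u,v\in C(\overline{\Omega})$, both $g_i\in L^\varrho(\Omega)$. As $F_i^s$ inherits \eqref{SC}--\eqref{H strong} (see the remark preceding Lemma~\ref{lem M}), Theorem~\ref{Dirichlet scalar} produces unique viscosity solutions $U,V\in C(\overline{\Omega})$; by \eqref{H strong} they are strong solutions in $W^{2,\varrho}(\Omega)$, and by Proposition~\ref{C1,alpha regularity estimates geral} they lie in $C^{1,\alpha}(\overline{\Omega})\subset C^1(\overline{\Omega})$. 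For (ii), let $B\subset E$ be bounded. For every $(u,v)\in B$ and every $s\in[0,1]$, the data $g_i$ are uniformly bounded in $L^\varrho(\Omega)$, since $\mu_s$ is continuous on $[0,1]$ (by the continuity statement preceding Lemma~\ref{lem M}) and $\|\tau_i^s\|_{L^\varrho}\leq 1+\|\tau_i\|_{L^\varrho}$. Applying ABP-MP and ABP-mP (Theorem~\ref{ABP-MP}) to the scalar equations for $U$ and $V$ with operators $F_i^s$ — whose ellipticity constants stay in $[\alpha,\beta]$ and whose structural ingredients $\gamma,\vartheta$ are unchanged by the convex combination — yields a uniform $L^\infty$ bound on $U,V$. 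The global $C^{1,\alpha}(\overline{\Omega})$ estimate in Proposition~\ref{C1,alpha regularity estimates geral} then delivers a uniform $C^{1,\alpha}$ bound, and the compact embedding $C^{1,\alpha}(\overline{\Omega})\hookrightarrow C^1(\overline{\Omega})$ gives the sought relative compactness.

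For (iii), take $(u_n,v_n,s_n)\to(u,v,s)$ in $E\times[0,1]$ and write $(U_n,V_n):=\mathcal{A}_{s_n}(u_n,v_n)$. Step (ii) shows $(U_n,V_n)$ is precompact in $E$, so along a subsequence $(U_n,V_n)\to(\widetilde U,\widetilde V)$ in $E$. I would then invoke the stability principle (Proposition~\ref{stability}) to identify the limit. Concretely, for every $\varphi\in W^{2,\varrho}_{\mathrm{loc}}(\Omega)$ and every ball $B\subset\subset\Omega$, I need
$\|g_1^n-g_1\|_{L^\varrho(B)}\to 0$, where
\[
g_1^n(x)=F_1^{s_n}(x,U_n,D\varphi,D^2\varphi)-[f_1(x)-\mu_{s_n}\tau_1^{s_n}(x)v_n(x)],
\]
and $g_1$ is the analogous expression at the limit. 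This convergence follows from the explicit convex-combination structure \eqref{eq:F-homotopy}, the uniform convergence $U_n\to\widetilde U$ combined with the Lipschitz control in the $r$-entry coming from \eqref{SC} (i.e.\ $|F_1(x,U_n,\cdot,\cdot)-F_1(x,\widetilde U,\cdot,\cdot)|\leq \vartheta(x)|U_n-\widetilde U|$), continuity of $s\mapsto\mu_s$, continuity of $s\mapsto\tau_1^s$ in $L^\varrho(\Omega)$, and the uniform convergence $v_n\to v$. Hence $\widetilde U$ solves $F_1^s[\widetilde U]=f_1-\mu_s\tau_1^s v$ in the viscosity sense, and by an identical argument $\widetilde V$ solves $F_2^s[\widetilde V]=f_2-\mu_s\tau_2^s u$. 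Uniqueness of solutions under \eqref{H strong} (Theorem~\ref{Dirichlet scalar}) forces $(\widetilde U,\widetilde V)=\mathcal{A}_s(u,v)$. Since every subsequence of $(U_n,V_n)$ admits a further subsequence converging to the same limit, the full sequence converges, establishing joint continuity.

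The main technical obstacle is the stability step: one must juggle simultaneously the $s$-dependence of the operator $F_i^s$, the $s$-dependence of the weight $\tau_i^s$ and of $\mu_s$, and the unboundedness of $\vartheta,\gamma$ in $L^\varrho\setminus L^\infty$. The convex-combination form \eqref{eq:F-homotopy} keeps ellipticity, drift, and zero-order weight uniform in $s$, which is what makes uniform ABP and uniform $C^{1,\alpha}$ estimates available and makes the $L^\varrho$ convergence of $g_1^n$ to $g_1$ a consequence of dominated convergence applied in each ball $B\subset\subset\Omega$.
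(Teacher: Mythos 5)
Your proposal is correct and follows essentially the same route as the paper: a priori $L^\infty$ (ABP) and global $C^{1,\alpha}$ bounds uniform in $s$ give complete continuity via the compact embedding $C^{1,\alpha}(\overline{\Omega})\hookrightarrow C^1(\overline{\Omega})$, and the continuity in the parameter is obtained by passing to limits with the stability of viscosity solutions and then invoking the uniqueness from Theorem \ref{Dirichlet scalar}. The only difference is organizational — you prove joint sequential continuity directly by a subsequence argument, while the paper argues by contradiction comparing $\mathcal{A}_{s_k}$ and $\mathcal{A}_{t_k}$ at the same bounded inputs — which does not change the substance of the argument.
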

\begin{proof}
	For each $s \in [0,1]$, whenever $(U,V) = \mathcal{A}_{s}(u,v)$
	\begin{equation}\label{eq:nice-homotopy-est}
\textstyle	\| U \|_{C^{1,\alpha}(\overline{\Omega})} < C( \max_{s\in [0,1]} |\mu_s| \, \| v \|_{L^\varrho(\Omega)} + \| f_1 \|_{L^\varrho(\Omega)}  ) \leq C( 1 + \| v \|_{L^\infty(\Omega)}),
	\end{equation}
and also $\| V \|_{C^{1,\alpha}(\overline{\Omega})} <  C ( 1 + \| u \|_{L^\infty(\Omega)} ).$ Thus the operator $(u,v) \mapsto \mathcal{A}_{ s}(u,v)$ is completely continuous for each fixed $s \in [0,1]$.
Let us show that for each constant $R>0$, the map $(u, v, s) \mapsto \mathcal{A}_{s}(u, v)$ is uniformly continuous on the set 
$B^E_R (0)\times [0,1].$
By contradiction assume there exist $\varepsilon > 0$, and sequences of numbers $s_k, t_k \in [0,1]$ and functions $u_k,v_k\in C^1(\overline{\Omega})$ such that
$| s_k - t_k | \rightarrow 0$, $\|(u_k, v_k) \|_{E} <R$	but
	\begin{equation}\label{nice-homotopy-1}
	\| (U_k,V_k) - (\tilde{U}_k,\tilde{V}_k) \|_{E} \geq \varepsilon,
	\end{equation}
where $(U_k,V_k) = \mathcal{A}_{ s}(u_k,v_k,s_k)$ and $(\tilde{U}_k,\tilde{V}_k) = \mathcal{A}_{ s}(u_k,v_k,t_k)$. By \eqref{eq:nice-homotopy-est} and up to a subsequence, we can find $s\in [0,1]$, functions $(u,v) \in E$ and $(U,V),(\tilde{U},\tilde{V}) \in E$ such that
$s_k \rightarrow s$, $t_k \rightarrow s$, $	v_k \rightarrow v $ uniformly on $ \overline{\Omega}$, $(U_k,V_k) \rightarrow (U,V) $ in $ E$, $(\tilde{U}_k,\tilde{V}_k) \rightarrow (\tilde{U},\tilde{V})$ in $E$. Passing to limits, we deduce that $U,V$ and $\tilde{U}, \tilde{V}$ are both solutions of the problem
\begin{align*}
\left\{
\begin{array}{rclcc}
F_1^s \,[U]+\mu_s\tau_1^s(x) v  &=&f_1(x) &\mbox{in} & \;\Omega, \\
F_2^s\, [V]+\mu_s\tau_2^s(x)u &=&f_2(x) &\mbox{in} & \;\Omega, \\
U\,\;= \,\;V\;&=& 0 &\mbox{on} & \;\partial\Omega.
\end{array}
\right.
\end{align*}
By the uniqueness properties of the operators $F_1^s,F_2^s$ (Theorem \ref{Dirichlet scalar}) one has $U = \tilde{U}$, $V=\tilde{V}$, which contradicts \eqref{nice-homotopy-1}. This completes the proof.
\end{proof}

Now we look at the following operator $\mathcal{B}_{s}=\mathcal{B}_{f_1,f_2,s} : E \to E$ by $\mathcal{B}_{s}(u,v) =(u,v)-  \mathcal{A}_{s}(u,v).$
For $(u,v) \in E$, set $(w,z):=\mathcal{A}_s(u,v)$, then $(U,V) = \mathcal{B}_{s}(u,v)$ is equivalent to $w=u-U$ and $z=v-V$ solving the Dirichlet problem
\begin{align}\label{Dir w,z}
\left\{
\begin{array}{rclcc}
F_1^s \,[w]+\mu_s\tau_1^s(x) v  &=&f_1(x) &\mbox{in} & \;\Omega, \\
F_2^s\, [z]+\mu_s\tau_2^s(x)u &=&f_2(x) &\mbox{in} & \;\Omega, \\
w\,\;= \,\;z\;&=& 0 &\mbox{on} & \;\partial\Omega.
\end{array}
\right.
\end{align}
Our goal is to show the existence of a solution $(u,v) \in E$ of the equation $\mathcal{B}_{0}(u,v) = 0,$ where $0 = (0,0) \in E$. This will be accomplished by showing that $\deg( \mathcal{B}_{0},B_R^E,0) \neq 0$ for some ball $B_R^E \subseteq E$, and then appealing Leray-Schauder degree theory.

\begin{lem}\label{lem:degree-B-1}
Let $R:=1+C( 1+ \max\{\|f_1 \|_\varrho,\|f_2 \|_\varrho\} )$, for $C$ as in Lemma \ref{lem M}. Then \begin{center}
	$\deg ( \mathcal{B}_{1} , B_R^E(0) , 0) = \pm 1.$
\end{center}
\end{lem}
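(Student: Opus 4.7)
The plan is to exploit that at $s=1$ the problem linearizes completely, so that $\mathcal{B}_1$ becomes an affine compact perturbation of the identity, and then to apply the Leray--Schauder formula for the degree of such maps. Indeed, by \eqref{eq:F-homotopy} one has $F_1^1 = F_2^1 = \beta\,\mathrm{tr}(D^2 \cdot) = \beta\Delta$ and $\tau_1^1 = \tau_2^1 \equiv 1$, so $\mathcal{A}_1(u,v)=(U,V)$ solves the linear Dirichlet system
\[
\beta\Delta U+\mu_1 v=f_1,\quad \beta\Delta V+\mu_1 u=f_2\;\text{ in }\Omega,\quad U=V=0\;\text{ on }\partial\Omega.
\]
Splitting $(U,V)$ into the $(u,v)$-dependent piece and the $(f_1,f_2)$-dependent piece, I can write $\mathcal{A}_1 = L + c_0$, where $L\colon E\to E$ is a compact linear operator (compactness coming from Schauder theory for $\beta\Delta$) and $c_0\in E$ depends only on $f_1,f_2$. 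Hence $\mathcal{B}_1 = (I-L)-c_0$.

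Next, I will show that $I-L$ is an isomorphism of $E$. Any $(\varphi,\psi)\in\ker(I-L)$ would be an eigenpair at eigenvalue $\mu_1$ for the linear system with $F_i^1=\beta\Delta$ and unit weights; however, Lemma \ref{lem M} at $s=1$ places $\mu_1$ strictly between $\max\{\lambda_{1,1}^+,\lambda_{1,1}^-\}$ and $\lambda_{2,1}$, so by the definition of $\lambda_{2,1}$ no eigenvalue of \eqref{hyperbola mu=lambda} at $s=1$ can live there. Thus $I-L$ is injective, and the Fredholm alternative (which applies since $L$ is compact) upgrades this to invertibility. Consequently $\mathcal{B}_1$ has the unique zero $(u_*,v_*):=(I-L)^{-1}c_0$, and by Lemma \ref{lem M} this zero satisfies $\|(u_*,v_*)\|_E<R$, i.e. it lies inside $B_R^E(0)$.

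Finally, I will compute the degree via the affine homotopy $\mathcal{H}(t,(u,v)):=(I-L)(u,v)-tc_0$, $t\in[0,1]$, whose unique zero $t(I-L)^{-1}c_0$ has norm at most $\|(u_*,v_*)\|_E<R$ for every $t\in[0,1]$, so the homotopy is admissible on $\partial B_R^E(0)$. By homotopy invariance,
\[
\deg(\mathcal{B}_1,B_R^E(0),0)=\deg(I-L,B_R^E(0),0),
\]
and the latter equals $(-1)^{\beta}$ by the classical Leray--Schauder index formula, where $\beta$ is the (finite) sum of algebraic multiplicities of the eigenvalues of $L$ lying in $(1,+\infty)$. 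Since $L$ is compact these multiplicities are finite and their sum is a nonnegative integer, so the degree is $\pm 1$.

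The only genuine obstacle is verifying that $\mu_1$ is not in the spectrum of the system at $s=1$; this is precisely the reason Lemma \ref{lem M} was designed to keep $\mu_s$ strictly between the first eigenvalues and $\lambda_{2,s}$. Once this point is secured, the rest is purely functional-analytic degree theory and the quantitative a priori bound already provided by Lemma \ref{lem M}.
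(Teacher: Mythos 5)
Your argument is correct, and it rests on the same pivotal facts as the paper's proof: at $s=1$ the problem degenerates to the constant-coefficient Laplacian system with unit weights, and Lemma \ref{lem M} guarantees $\max\{\lambda_{1,1}^+,\lambda_{1,1}^-\}<\mu_1<\lambda_{2,1}$, which is exactly what rules out a nontrivial kernel (any $(\varphi,\psi)\in\ker(I-L)$ would make $\mu_1$ an eigenvalue of the system in the strip where, by the very definition of $\lambda_{2,1}$ as an infimum, none can exist). Where you diverge is in the mechanics of passing from this spectral gap to the degree computation. The paper proves directly that $\mathcal{B}_1=I-\mathcal{A}_1$ is bijective by solving the linear system $\beta\Delta w+\mu_1 z=g_1$, $\beta\Delta z+\mu_1 w=g_2$ explicitly through the $L^2$ eigenfunction expansion of $\beta\Delta$ (computing the coefficients $a_i,b_i$, which is legitimate because $\mu_1\neq\lambda_i$ for every $i$), first for $g_i\in C_c^\infty$ and then by density in $L^\varrho$; the conclusion $\deg=\pm1$ is then the standard consequence of invertibility of an affine compact perturbation of the identity. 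You instead split $\mathcal{A}_1=L+c_0$, obtain injectivity of $I-L$ from the spectral gap, upgrade to invertibility via the Fredholm alternative (valid since $L$ is compact, which the paper has already secured through the complete continuity of $\mathcal{A}_s$ in Lemma \ref{lem:nice-homotopy}), locate the unique zero inside $B_R^E(0)$ via the a priori bound of Lemma \ref{lem M}, remove the constant $c_0$ by an admissible affine homotopy, and finish with the Leray--Schauder index formula $\deg(I-L,B_R^E(0),0)=(-1)^{\beta}$. Your route buys a cleaner, purely functional-analytic argument that avoids the explicit Fourier computation and the approximation step for $L^\varrho$ data; the paper's route buys an explicit inverse (hence bijectivity of $\mathcal{B}_1$ on all of $E$, not just invertibility of its linear part) at the cost of the hands-on spectral calculation. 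Either way the degree is $\pm1$, so your proposal is a valid alternative proof.
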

\begin{proof}
We will show that $\mathcal{B}_1=I-\mathcal{A}_{1}$\, is bijective. This is equivalent to prove that for any $(U,V) \in E$ there exists a unique solution $(w, z)$ to the following
\[
\beta \Delta w+\mu_1 z + \mu_1 V  =f_1, \quad
\beta \Delta z+\mu_1 w + \mu_1 U = f_2  \;\textrm{ in } \;\Omega, \quad 
w\,= \,z= 0 \;\textrm{ on }  \;\partial\Omega. 
\]
namely to the following
\begin{equation}\label{sys inject}
\beta \Delta w+\mu_1 z=g_1, \quad
\beta \Delta z+\mu_1 w  = g_2  \;\textrm{ in } \;\Omega, \quad 
w\,= \,z= 0 \;\textrm{ on }  \;\partial\Omega,
\end{equation}
for any $g_1, g_2$ functions in $L^\varrho(\Omega)$. Recall that $\mu_1$ satisfies $\lambda_1(\beta \Delta)=\lambda_1(\beta \Delta, \beta \Delta) < \mu_1 < \lambda_2(\beta \Delta, \beta \Delta)=\lambda_2(\beta \Delta)$. We first consider the case $g_1, g_2 \in C_c^{\infty}(\Omega)$. 
Take a basis for $L^2$ given by positive eigenvectors $\varphi_i$ of the operator $\beta \Delta$ with related eigenvalues $\lambda_i$, for $i=1, \dots, N$. Then we can write
\[ \textstyle w= \sum_i a_i \varphi_i, \quad z = \sum_i b_i \varphi_i \]
for some coefficients $a_i, b_i$. Then, if $w, z$ satisfy the system \eqref{sys inject}, we conclude that 
\[ \textstyle - \sum_i \lambda_i a_i \varphi_i + \mu_1 \sum_i b_i \varphi_i = \sum_i \langle f_1, \varphi_i \rangle \varphi_i \]
and similarly
\[ \textstyle - \sum_i \lambda_i b_i \varphi_i + \mu_1 \sum_i a_i \varphi_i = \sum_i \langle f_2, \varphi_i \rangle \varphi_i. \]
Therefore, by the first equation for any $i$ we need 
\[\textstyle  b_i= \frac 1{\mu_1} \langle f_1, \varphi_i \rangle + \frac {\lambda_i}{\mu_1} a_i \]
 and putting this information into the second equation
 \[\textstyle -\frac{\lambda_i}{\mu_1} \langle f_1, \varphi_i \rangle + \frac {-\lambda_i^2}{\mu_1} a_i + \mu_1a_i = \langle f_2, \varphi_i \rangle \]
which implies
\[\textstyle a_i= \frac{1}{\mu_1^2-\lambda_i^2}\, \{ \,\mu_1\, \langle f_2, \varphi_i \rangle + \lambda_i\, \langle f_1, \varphi_i \rangle\,\}  \]
since $\mu_1 \ne \lambda_i$. This proves that \eqref{sys inject} has a (unique) solution if $g_1, g_2 \in C_c^{\infty}(\Omega)$. The general case follows by approximation arguments, by recalling that $C_c^\infty(\Omega)$ is dense in $L^\varrho(\Omega)$. 

The fact that the solution is unique can be proved considering the problem
\[ \beta\Delta w+\mu_1 z=0, \quad
\beta \Delta z+\mu_1 w  = 0  \;\textrm{ in } \;\Omega, \quad 
w\,= \,z= 0 \;\textrm{ on }  \;\partial\Omega,
\] 
and recalling that $\lambda_1(\beta\Delta,\beta \Delta) < \mu_1 < \lambda_2(\beta\Delta, \beta\Delta)$.
\end{proof}

\begin{proof}[Conclusion of the proof of Theorem \ref{Th lambda2 Introdu}-(ii)]
Taking, as before $R:=1+C( 1+ \max\{\|f_1 \|_\varrho,\|f_2 \|_\varrho\} )$, we have by homotopy invariance of the degree that
$\deg ( \mathcal{B}_{0}, B_R^E(0) , 0) = \pm 1.$ Therefore there exists $(u,v)\in B_R^E(0)$ such that  $\mathcal{A}_{0}(u,v)=(u,v)$,
and this gives us the desired existence result. Notice that $0 \not\in \mathcal B_s(\partial B_R^E(0))$ due to the a priori estimates we get in Lemma \ref{eq:nice-homotopy-est}.  
\end{proof}

\section{The signed Dirichlet problem}\label{section pq<1}

In this section we draw some attention to the
Dirichlet problem \eqref{Dir lambda mu}, or equivalently to \eqref{hyperbola mu=lambda}, see Section \ref{section scaling}, in the case the functions $f_1,f_2$ have the ``good" sign.

When $pq=1$ we have seen that the Dirichlet problem \eqref{Dir} is solvable for any $\lambda <m_1$, with $m_1$ as in \eqref{def m1,M1}, independently of sign on $f_1,f_2$. Now we turn to the case $\lambda\in (m_1,M_1)$.

Moreover, a variation of such argument applies to show that the Dirichlet problem is uniquely solvable among positive viscosity solutions in the sublinear regime $pq<1$.
We start with the latter, by proving Theorem \ref{sublinear}.

\begin{proof}[Proof of Theorem \ref{sublinear}]
We borrow some ideas from \cite[Section 5]{E} and \cite[Theorem 4.1]{Montenegro}, based on a Krasnoselskii \cite{Krasnoselskii} type argument.

	\textit{Step 1) Existence:}
	Let us consider the eigenvalue problem obtained in Theorem \ref{Th1 introduction} for the operators $(F_1)_*$ and $(F_2)_*$, i.e.\ we take the strong solution pair $\varphi, \psi\in W^{2,p}_{\mathrm{loc}}(\Omega)$ of
	\begin{align}\label{eigenvalue L-}
	\left\{
	\begin{array}{rclcc}
	(F_1)_* [\varphi] + \lambda_1^+ \tau_1(x)\psi^{1/p}=0 &\mbox{in} & \;\Omega \\
	(F_2)_* [\psi] + \lambda_1^+ \tau_2(x) \varphi^p=0 &\mbox{in} & \;\Omega \\
	\varphi \, , \;\, \psi\, &>& 0 &\mbox{in} & \;\Omega \\
	\varphi\;\,= \,\;\psi\, &=& 0 &\mbox{on} & \;\partial\Omega .
	\end{array}
	\right.
	\end{align}
	Notice that the operators $(F_i)_*$ are homogeneous if $F_i$ are homogenous, namely satisfy \eqref{H homogeneity}. Moreover, $\lambda_1^+(((F_i)_*)^*)=\lambda_1^-(((F_i)_*)_*)=\lambda_1^-((F_i)_*) = \lambda_1^+(F_i^*)>0$, hence also \eqref{H lambdai>0} is true. Finally, 
	$(F_i)_*$ satisfy \eqref{H strong} as they are concave operators with $(F_i)_*(\cdot, 0, 0, 0)=0$, see Lemma \ref{lema convex}. 
	\smallskip
For the sake of convenience we now look at the Dirichlet problem
\begin{align}\label{Dir lambda,lambda}
\left\{
\begin{array}{rclcc}
F_1 [u]+\tau_1(x) |v|^{q-1}v  &=&f_1(x) &\mbox{in} & \;\Omega, \\
F_2 [v]+\tau_2(x)|u|^{p-1}u &=&f_2(x) &\mbox{in} & \;\Omega, \\
u\,\;= \,\;v\;&=& 0 &\mbox{on} & \;\partial\Omega.
\end{array}
\right.
\end{align}
Observe that \eqref{Dir lambda mu} and \eqref{Dir lambda,lambda} are equivalent up to scaling, since $pq<1$, see Section \ref{section scaling}.
	
	We first construct a subsolution pair $(u_0,v_0)=(\epsilon\varphi, \; \epsilon^k \psi)$  to \eqref{Dir lambda,lambda}, say for $f_i \in L^\varrho (\Omega)$ with $f_i\le 0$ a.e.\ in $\Omega$. We use \eqref{eigenvalue L-} and pick up some positive constants $\varepsilon, k$ to be chosen. 
	For this, we write in the a.e.\ sense,
	\begin{align}\label{eqF1}
	-F_1[u_0] \leq -(F_1)_*[\varepsilon \varphi]=  \lambda_1^+ \epsilon \tau_1 \psi ^{1/p} =\lambda_1^+  \tau_1 (\epsilon^p \psi)^{1/p} \leq \tau_1 v_0^q -f_1(x),
	\end{align}
	and \vspace{-0.5cm}
	\begin{align}\label{eqF2}
	-F_2[v_0] \leq -(F_2)_*[\varepsilon^k \psi]= \lambda_1^+ \epsilon^k  \tau_2 \varphi^p =  \lambda_1^+\tau_2 (\epsilon^{k/p} \varphi)^p \leq \tau_2 u_0^p -f_2(x).
	\end{align}
	The choice of $\epsilon $ is made in order to have $\lambda_1^+ \epsilon^{1-k q} \leq \|\psi\|_{\infty}^{(pq-1)/p}$ in \eqref{eqF1}, for some $1- k q > 0$.
	In addition, for \eqref{eqF2} we require $\lambda_1^+ \epsilon^{k - p} \leq 1$, with $k>p$. Then, by diminishing $\epsilon$ if necessary, it is enough to choose $k \in (p,{1}/{q})$.
	
	Next, for each $n\geq 0$ we define recursively $(u_{n+1},v_{n+1})$ as the unique strong solution of 
	$$
	-F_1[u_{n+1}] = \tau_1(x) v_n^q -f_1(x)\, ,\;\; -F_2 [v_{n+1}] = \tau_2(x) u_n^q -f_2(x)\;\textrm{ in } \Omega\, , \quad u_{n+1}=v_{n+1}=0\;\, \textrm{ on }\partial\Omega.
	$$
Note that by \eqref{H lambdai>0} and ABP-mP for scalar equations (Theorem \ref{ABP-MP}, since $\lambda_1^- (F_i^*)\ge \lambda_1^+(F_i^*)>0$) one has $u_{n+1}$, $v_{n+1}\geq 0$ in $\Omega$, for all $n\geq 0$.
	
Now we infer that the sequences $(u_n)$ and $(v_n)$ are monotone nondecreasing. 
	This is accomplished via a monotone iterations technique.
	Indeed,
	\[
	F_1^*[u_{n} - u_{n+1}] \geq -F_1[u_{n+1}] +F_1[u_n] = \tau_1(x)(v_n^q - v_{n-1}^q) , \quad F_2^* [v_{n} - v_{n+1}]  \geq \tau_2(x)(v_n^p - v_{n-1}^p),
	\]
	for all $n \geq 1$. For $n=0$ one has
	\[
	F_1^* [u_0 - u_1] \geq -F_1[u_1] +F_1[u_0] \geq \tau_1(x)(v_0^q - v_0^q) =0, \quad 
	F_2^* [v_0 - v_1]  \geq  \tau_2(x)(u_0^p - u_0^p)  =0 .
	\]
	This implies $u_1 \geq u_0$ and $v_1 \geq v_0$ in $\Omega$, by \eqref{H lambdai>0} and ABP-MP for scalar equations (Theorem \ref{ABP-MP}), as desired.

Next we claim that
	$(u_n,\; v_n)$ is bounded in $L^\infty \times L^\infty$.
	To see this we use the blow up method. Suppose $\alpha_n = \|u_n\|_\infty \to \infty$ in order to get a contradiction. 
	We know that $(\alpha_n)$ is a nondecreasing sequence.
	Hence we define the rescaled pair
	\begin{center}
		$u_n = \alpha_n U_n$\quad\; and\;\quad $ v_n = \alpha_n^{Q} V_n$\,,\qquad  where\; $Q=\frac{p+1}{q+1}$.
	\end{center}
	
	Properties of $F_1^*$, $F_2^*$ and \eqref{SC} for $F_1$, $F_2$ give us
	$$
	-F_1^*[U_{n+1}] 
	\leq  -\frac{F_1 [u_{n+1}]}{\alpha_{n+1}}  = \frac{\tau_1(x) v_n^q}{\alpha_{n+1}} -\frac{f_1(x)}{\alpha_{n+1}}  \leq \tau_1(x) \alpha_n^{\frac{pq -1}{q+1}} V_n^q - f_1^n(x),
	$$
	and analogously,
	$$
	-F_2^* [V_{n+1}]\leq  -\frac{F_2 [v_{n+1}]}{\alpha_{n+1}^{Q}} 
	= \frac{\tau_2(x) u_n^p}{\alpha_{n+1}^{Q}}-\frac{f_2(x)}{\alpha_{n+1}^{Q}}  \leq \tau_2(x) \alpha_n^{\frac{pq -1}{q+1}} U_n^p -f_2^n(x),
	$$
where $f_1^n(x)={f_1(x)}\alpha_{n+1}^{-1}$ and $f_2^n(x)={f_2(x)}{\alpha_{n+1}^{-Q}}$ for all $n\ge 1$.
	Since the last RHS converges to zero in $L^\varrho$, then by \eqref{H lambdai>0} and ABP-MP we obtain that $V_{n+1} \to 0$. So, again ABP-MP for the first equation  yields $U_{n+2} \to 0$, which derives a contradiction.

	Therefore, since the sequences $u_n^q$ and $v_n^p $ are uniformly bounded from above and from below, by $C^\alpha$ regularity estimates $u_n,v_n\in C^\alpha (\overline{\Omega})$ with $\|u_n\|_{C^\alpha (\overline{\Omega})}, \|v_n\|_{C^\alpha(\overline{\Omega})}\leq C$. Thus compact inclusion and a standard stability argument of viscosity solutions lead to solution pair $u,v$ of \eqref{LE}. Here $u,v>0$ in $\Omega$ by the uniformly bound from below via $u_0,v_0$.
	
	\medskip
	
	\textit{Step 2) Uniqueness:}	

	Let $(u_1,v_1)$ and $(u_2,v_2)$ be two positive pairs of viscosity solutions to \eqref{LE}. Since $f_i \le 0$ we can use Hopf lemma to conclude ${\partial_\nu u_i} < 0$ and ${\partial}_\nu v_i < 0$ on $\partial \Omega$, $i=1,2$, where $\nu$ is the exterior unit normal.
	Thus we may define (the nonempty set)
	\begin{center}
		$ S=\{ s>0 \, : \; u_1 > s^{\frac{p+1}{p}} u_2 \, , \; v_1 > s^{\frac{q+1}{q}} v_2 \textrm{\, in }\Omega\, \} $, \quad $s_*=\sup S$.
	\end{center}
	Here $s_*<+\infty$ since $u_2$ and $v_2$ are positive. 
	So, up to exchanging the roles of $(u_1, v_1)$ and $( u_2, v_2)$ if necessary, say that $s_*\le 1$.
	Let us look at the nonnegative functions $w,z$ given by
	\begin{center}
		$w:=u_1 -  s_*^{\frac{p+1}{p}} u_2$ \;\; and \;\; $z:=v_1 - s_*^{\frac{q+1}{q}}\, v_2$ \quad in $\Omega$,
	\end{center}
	which satisfy in $\Omega$, in the viscosity sense,
	\begin{align*}
	-(F_1)_*[w] &\ge - F_1[u_1] + s_*^{\frac{p+1}{p}} F_1[u_2] 
	=\tau_1(x) v_1^q -f_1(x)- \tau_1 (x)s_*^{\frac{p+1}{p}}  v_2^q   +s_*^{\frac{p+1}{p}} f_1(x)\\
	&\ge 
	\tau_1(x)\, v_1^q \,( 1- s_*^{\frac{1-pq}{p}}  ) -f_1(x)\,(1-s_*^{\frac{1-pq}{p}})
	\ge 0,
	\end{align*}
	and similarly $
	-(F_2)_*[z]\, \ge 0.
	$
	Whence SMP for scalar equations and the strongly coupling of the Lane-Emden type system imply either $w,z>0$ or $w,z\equiv 0$ in $\Omega$. 
	
	Suppose on the contrary that $w,z>0$ in $\Omega$.
	Now we may repeat the preceding argument with the pair $(u_2,v_2)$ replaced by $(w,z)$ in order to conclude the existence of some small $\varepsilon>0$ such that
	$u_1 > (s_*+\varepsilon) u_2$ in $\Omega$. But this contradicts the definition of $s_ *$ as the supremum of $S$. 
	
	Therefore $w,z\equiv 0$ in $\Omega$. 
	To finish we infer that $s^*=1$; otherwise the strict inequalities above and SMP would be in force to produce the positivity of $w$ and $z$. So one concludes $u_1\equiv u_2$ and $v_1\equiv v_2$ in $\Omega$, as desired. 
\end{proof}

Next we return to the regime $pq=1$.

\begin{proof}[Proof of Theorem \ref{ThDir solvability intro} (ii), (iii)]
We prove $(ii)$; the case $(iii)$ is similar.
Since we have $pq=1$, and $f_i\equiv 0$ is an eigenvalue problem which we have already studied, is enough to consider either $f_1\not\equiv 0$ or $f_2\not\equiv 0$. Therefore, we are going to obtain positive solutions, by SMP and strong coupling of the system.
In particular, the uniqueness can be carried out as in Step 2 of the proof of Theorem \ref{sublinear}.
Let us show the existence assertion in $(ii)$.
	
	\smallskip
	
	Step 1) Continuous and compactly supported $f_i$, with uniformly positive weights.
	
Say $\tau_i\ge a_i>0$ a.e.\ in $\Omega$, and $f_i\ge -b_i$ in $K_i=\mathrm{supp}(f_i)\subset\subset\Omega$, for some $b_i>0$, $i=1,2$. In this case we choose a large constant $A > 0$ such that
	\begin{center}
		$ b_1 \leq A a_1 (\lambda^+_1-\lambda) (\psi^+_1)^q$\, a.e.\ in $K_1$, \;\; $ b_2 \leq A^p a_2 (\lambda^+_1-\lambda) (\varphi^+_1)^p$\, a.e.\ in $K_2$.
	\end{center}
	Then the pairs $u^* = A\varphi^+_1$,  $v^* = A^{1/q}\,\psi^+_1$  and $u_* \equiv 0$, $v_*\equiv 0$\, satisfy
	\begin{align*}
	F_1[u_*] + \lambda\tau_1(x)|v_*|^{q-1}v_* \geq f_1(x) \geq F_1[u^*] + \lambda\tau_1(x)|v^*|^{q-1}v^* \;\;&\textrm{ a.e.\ in\, $\Omega$},
	\\
	F_2[v_*]+\lambda\tau_2(x) |u_*|^{p-1}u_* \geq f_2(x) \geq F_2[v^*] +\lambda\tau_2(x) |u^*|^{p-1}u^* \;\;&\textrm{ a.e.\ in\, $\Omega$},
	\end{align*}
	with $u^* = u_* = 0$ and $v^* = v_* = 0$ on $\partial \Omega$. 
Thus we apply the same monotone iterations technique as in Step 1 in the proof of Theorem \ref{sublinear}, from the supersolution case instead of the subsolution one. This time is even a bit simpler since the supersolution already gives us a uniform bound from above on the uniform norm of the iterated solutions produced by the method.
	
	\medskip
	
	Step 2) General nonnegative $f_i\in L^\varrho(\Omega)$, but still uniformly positive weights.
	
	We take sequences $(f_1^k) , (f_2^k) \in L^\varrho (\Omega)$ of continuous nonpositive functions with compact support in $\Omega$ such that $f_1^k \to f_1$, $f_2^k \to f_2$  in $L^\varrho(\Omega)$. By Step~1, let $u_k,v_k \geq 0$ solving
	\begin{align*}
	F_1 [u_k]+\lambda\tau_1(x) v_k^{q} =f_1^k(x) , \quad
	F_2 [v_k]+\lambda\tau_2(x)u_k^{p} =f_2^k(x) \quad\textrm{ in }  \,\Omega, 
	\end{align*}
	with $u_k=v_k=0$ on $\partial\Omega$. We infer that
	\begin{equation}\label{uk, vk le C}
	\| u_k \|_{L^\infty(\Omega)} , \; \| v_k \|_{L^\infty(\Omega)}\leq C \textrm{ \; for all $k$.}
	\end{equation}
	Otherwise, assume for instance that $\theta_k=\| u_k \|_{L^\infty(\Omega)} \to \infty$ as $k\to \infty$, and set $u_k = \theta_k U_k$ and $v_k = \theta_k^{p}\, V_k$. Notice that the pair $(U_k,V_k)$ satisfies the equation
	\begin{align*}
	\textstyle F_1 [U_k]+\lambda\tau_1(x) V_k^{q} =\frac{f_1^k(x)}{\theta_k} , \quad
	F_2 [V_k]+\lambda\tau_2(x)U_k^{p} =\frac{f_2^k(x)}{\theta_k^p} \quad\textrm{ in }  \,\Omega, 
	\end{align*}
	and $\|U_k\|_\infty= 1$ for all $k$. Hence $\|V_k\|_\infty\leq C$ by Theorem \ref{ABP-MP} for scalar equations.
	Thus, by $C^{1,\alpha}$ estimates one gets that $\|U_k\|_{C^{1,\alpha}(\overline{\Omega})},\|V_k\|_{C^{1,\alpha}(\overline{\Omega})}\le C$.
	Extracting a subsequence if necessary we may assume $U_k\to U$ and $V_k\to V$ in $C^1(\overline{\Omega})$. Passing to limits, through stability of viscosity solutions we end up with $U,V\ge 0$ in $\Omega$ satisfying
	\begin{align*}
	\textstyle F_1 [U]+\lambda\tau_1(x) V^{q} =0 , \quad
	F_2 [V]+\lambda\tau_2(x)U^{p} =0\quad\textrm{ in }  \,\Omega, 
	\end{align*}
	with $U,V = 0$ on $\partial \Omega$ and $\|U\|_\infty=1$. Since $F_1 [U]\leq 0$, by SMP for scalar equations we have $U>0$ in $\Omega$. Whence $F_2[V]=-\lambda \tau_2(x)U^p \lneqq  0$, from which also $V>0$ in $\Omega$. By Corollary \ref{cor uniqueness} one derives $U=t\phi_1^+$ and $V=t^q \psi^q$ in $\Omega$, which contradicts the fact that $U,V$ satisfy
	\begin{align*}
	\textstyle F_1 [U]+\lambda_1^+\tau_1(x) V^{q} \gneqq 0 , \qquad
	F_2 [V]+\lambda_1^+\tau_2(x)U^{p} \gneqq 0\;\;\textrm{ in }  \,\Omega,
	\end{align*}
	since $\lambda<\lambda_1^+$.
	If instead $\norm{v_k}_{\infty} \to \infty$ as $k\to \infty$ the argument is analogous, by taking  $\theta_k=\norm{v_k}_{\infty}^{q}$.
	Therefore one proves \eqref{uk, vk le C}.
	Thus, again by $C^{1,\alpha}$ estimates, compact inclusion, and stability of viscosity solutions one finds a nonnegative limit solution pair $u,v\in C^{1,\alpha}(\overline{\Omega})$ of \eqref{Dir lambda mu}. 
	
	\medskip
	
	Step 3) General weights $ \tau_1, \tau_2\in L^\varrho_+ (\Omega)$ and general nonpositive $f\in L^\varrho (\Omega)$.
	
	This case is very similar to the preceding step, by arguing via uniform bounds and stability arguments. It is enough to pick up sequences of uniformly positive weights $\tau^k_1,\tau^k_2$ such that $\tau_1^k\to \tau_1$ and $\tau_2^k \to \tau_2$ in $L^\varrho (\Omega)$. By Step 2, we then take $u_k,v_k \geq 0$ solving
	\begin{align*}
	F_1 [u_k]+\lambda\tau_1^k(x) v_k^{q} =f_1(x) , \quad
	F_2 [v_k]+\lambda\tau_2^k(x)u_k^{p} =f_2(x) \quad\textrm{ in }  \,\Omega, 
	\end{align*}
	with $u_k=v_k=0$ on $\partial\Omega$.
	Again one produces \eqref{uk, vk le C} by arguing exactly as in Step 2. Thus, $C^{1,\alpha}$ estimates, compact inclusion, and stability of viscosity solutions conclude the proof.
\end{proof}

\section{An application: Isaac's operators}\label{appendix}

In this section we prove that Isaac's operators in the form \eqref{Isaacs} are examples for which all our results are new even in the scalar case. In order to do so, it is enough to prove that they satisfy \eqref{H strong}.
We recall that the $W^{2,\varrho}$ regularity results in \cite{regularidade} were extended in \cite[Theorem 5.2]{Swiech2020} to the context of unbounded coefficients and superlinear gradient growth, but only for convex or concave operators. It is known that the same proof there works if the corresponding pure second order operator $F(x,0,0,X)$ enjoys $C^{1,1}$ regularity estimates, see \cite[Remark 4.4]{Winter} for instance. 

The pure Isaac's operator associated to \eqref{Isaacs} is again in the form \eqref{Isaacs} with $\gamma_{s,t},\vartheta_{s,t}\equiv 0$ for all $s,t$.
It is worth mentioning that $C^{1,1}$ regularity estimates do not hold in general for these operators, see \cite{NVadv2011}. On the other hand, in \cite{W2punderW2p} the authors weakened the $C^{1,1}$ hypothesis to a $W^{2,p}$ one. More recently in \cite{Eaihpan2019}, regularity is proved for Isaac's operators with bounded drifts. 
Here we extend the preceding results to unbounded weights.
This is of independent interest in view of applications \cite{BFQjfaLL,  BFQsiam2010} to more general models driven by unbounded data.

As in \cite{Eaihpan2019}, we assume that there exists $\bar{A}_t$ satisfying:
\begin{align}\label{HS}
| A_{s,t}(x) - \bar{A}_t(x) |\le \epsilon_1 \quad\textrm{ uniformly in }\, x,s,t
\end{align}
where $\epsilon_1$ is the number of condition $A_2$ in \cite{Eaihpan2019}, 
with $	\bar{A}_t\in C(\overline{\Omega})$ unifomly in $t\in \n$. In this case the corresponding homogeneous Belmann operators generated by $\bar{A}_t$ have $W^{2,q}$ regularity estimates for $q>\varrho>n$, see \cite{CafCab}.

\begin{prop}[$W^{2,\varrho}$ regularity estimates for Isaac's operators] \label{W2,p reg Isaacs}
Let $\Omega\subset\rN$ be a bounded $C^{1,1}$ domain, $f\in L^\varrho(\Omega)$, $\varrho>N$, and assume \eqref{HS}. Then any viscosity solution $u\in C(\Omega)$ of 
\begin{center}
	$I[u] =f(x)$ in $\Omega$, \;\; where	$I[u]=\inf_{s\in \real}\sup_{t\in \real } L_{s,t} $ or $I[u]=\sup_{s\in \real}\inf_{t\in \real } L_{s,t} $ 
\end{center} 
where $L_{s,t}$ satisfies \eqref{Isaacs Ls,t}, belongs to $W^{2,\varrho}_{\mathrm{loc}}(\Omega)$ and satisfies the estimate
\begin{align*}
	\|u\|_{W^{2,\varrho}(\Omega^\prime)} \leq C\, \{ \| u\|_{L^{\infty} (\Omega)} + \|f \|_{L^p (\Omega)}   \},\;\; \textrm{ for all }\Omega^\prime \subset\subset \Omega,
	\end{align*}
	where $C$ depends only on $N,\varrho,\alpha,\beta$, $\| \gamma \|_{L^\varrho (\Omega)}$, $\| \vartheta \|_{L^\varrho (\Omega)}$, $\Omega^\prime$, $\mathrm{dist}(\Omega^\prime,\partial\Omega)$, and $\mathrm{diam} (\Omega)$.
\smallskip
	
If in addition $u=\psi$ on $\partial\Omega$, for some $\psi\in W^{2,\varrho}(\Omega)$, then $u\in W^{2,\varrho}(\Omega)$ and
\begin{align*}
\|u\|_{W^{2,\varrho}(\Omega)} \leq C\, \{ \| u\|_{L^{\infty} (\Omega)} + \|f \|_{L^p (\Omega)}  +\|\psi\|_{W^{2,\varrho}(\Omega)} \},
\end{align*}
where $C$ depends only on $N,\varrho,\alpha, \beta$, $\| \gamma \|_{L^\varrho (\Omega)}$, $\| \vartheta \|_{L^\varrho (\Omega)}$, $\partial\Omega$, and $\mathrm{diam} (\Omega)$. 
\end{prop}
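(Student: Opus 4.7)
The plan is to bootstrap in two stages: first obtain $C^{1,\alpha}$ estimates via Proposition \ref{C1,alpha regularity estimates geral}, and then upgrade to $W^{2,\varrho}$ by absorbing the unbounded drift and zero-order coefficients into the right-hand side, thereby reducing to the $W^{2,\varrho}$ regularity theory for pure second-order Isaac's operators established in \cite{Eaihpan2019} under the closeness condition \eqref{HS}.

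Concretely, Proposition \ref{C1,alpha regularity estimates geral} gives $u \in C^{1,\alpha}_{\mathrm{loc}}(\Omega)$ with $\|u\|_{C^{1,\alpha}(\overline{\Omega^\prime})} \le C(\|u\|_\infty + \|f\|_{L^\varrho})$. Denote by $I_0[u] := \sup_s \inf_t \operatorname{tr}(A_{s,t}(x) D^2 u)$ (or the corresponding $\inf\sup$) the pure second-order Isaac operator associated to $I$. The elementary pointwise inequality
$$ |I[u](x) - I_0[u](x)| \le \gamma(x)\,|Du(x)| + \ell\,\vartheta(x)\,|u(x)|, $$
combined with $I[u]=f$, implies that $u$ satisfies, in the two-sided $L^\varrho$-viscosity sense, $I_0[u] = g(x)$ where $|g-f| \le \gamma|Du|+\ell\vartheta|u|$ pointwise. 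Once Step 1 is in hand, $Du \in L^\infty_{\mathrm{loc}}$ and hence $g \in L^\varrho_{\mathrm{loc}}(\Omega)$ with a quantitative norm bound in terms of $\|u\|_\infty$, $\|f\|_\varrho$, $\|\gamma\|_\varrho$, $\|\vartheta\|_\varrho$. Since $I_0$ inherits hypothesis \eqref{HS} from $I$, the interior $W^{2,\varrho}$ estimate of \cite{Eaihpan2019} applies and yields $u \in W^{2,\varrho}_{\mathrm{loc}}(\Omega)$ with the stated bound.

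For the global estimate up to $\partial\Omega$, one subtracts a $W^{2,\varrho}$-extension of the boundary data $\psi$ (which shifts the $L^\varrho$ RHS by a controlled amount) and combines the global $C^{1,\alpha}$ bound from Proposition \ref{C1,alpha regularity estimates geral} with a flattening of $\partial\Omega$, admissible since $\partial\Omega\in C^{1,1}$, to apply the boundary version of the $W^{2,\varrho}$ regularity from \cite{Eaihpan2019} on half-balls. The main obstacle is to rigorously translate the pointwise inequality $|I[u]-I_0[u]|\le \gamma|Du|+\ell\vartheta|u|$ into a genuine $L^\varrho$-viscosity relation for $I_0$ with right-hand side in $L^\varrho$ of controlled norm: this requires the preliminary $C^{1,\alpha}$ regularity to ensure the perturbation is a bona fide $L^\varrho$ function of $x$, and motivates Step 1 as a nontrivial prerequisite. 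An alternative, parallel to \cite[Lemma 4.4 and Theorem 1]{regularidade}, is to rescale around each point $x_0\in\Omega$ on balls $B_r(x_0)$ and exploit the fact that the rescaled norms $r^{1-N/\varrho}\|\gamma\|_{L^\varrho(B_r)}$ and $r^{2-N/\varrho}\|\vartheta\|_{L^\varrho(B_r)}$ shrink as $r\to 0$, so that on $B_1$ the lower-order terms appear as small perturbations of a pure second-order Isaac operator to which the regularity of \cite{Eaihpan2019} applies.
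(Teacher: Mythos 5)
Your interior strategy---first $C^{1,\alpha}$ regularity via Proposition \ref{C1,alpha regularity estimates geral}, then absorption of the lower-order terms and an appeal to the pure second-order Isaacs theory of \cite{Eaihpan2019} under \eqref{HS}---is indeed the skeleton of the paper's argument, but the step you yourself single out as ``the main obstacle'' is a genuine gap, and it is precisely the step the paper spends its proof on. Knowing $u\in C^{1,\alpha}_{\mathrm{loc}}$ does not by itself turn the pointwise inequality $|I[u]-I_0[u]|\le \gamma|Du|+\ell\vartheta|u|$ into the statement that $u$ is an $L^\varrho$-viscosity solution of $I_0[u]=g$ with $g\in L^\varrho$: for an $L^p$-viscosity solution the quantity $I[u](x)$ has no pointwise meaning, and in the definition of viscosity solution the gradient term is evaluated on the test function, i.e.\ one sees $\gamma_{s,t}(x)|D\phi(x)|$, not $\gamma_{s,t}(x)|Du(x)|$. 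The tool that performs this transfer is \cite[Corollary 1.6]{Swiech}, which is not at your disposal for the unbounded coefficients $\gamma,\vartheta\in L^\varrho$; accordingly, the paper never applies it to $u$ directly. Instead it truncates, taking $\gamma_k,\vartheta_k\in L^\infty$ and $\ell_k=\ell-\varepsilon_k$ so as to keep $\ell_k<\lambda_1^+(\mathcal{L}^+_k(\vartheta_k))$, solves the regularized Dirichlet problems $I_k[u_k]=f_k$ on balls $B_\rho$ with boundary data $u$ (Theorem \ref{Dirichlet scalar}), applies the transfer of \cite{Swiech} and then \cite{Eaihpan2019} to each $u_k$, obtains \emph{uniform} $W^{2,\varrho}$ bounds from the generalized Nagumo lemma of \cite[Lemma 4.4]{regularidade} combined with the ABP bounds of Theorem \ref{ABP-MP} (this is where the spectral restriction on $\ell$ in \eqref{Isaacs Ls,t} enters), passes to the limit by stability, and finally identifies the limit with $u$ through the uniqueness assertion of Theorem \ref{Dirichlet scalar}, which is applicable exactly because the limit is a strong solution. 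Your proposal contains none of this approximation-plus-uniqueness machinery, and the alternative you sketch (zooming in so that $r^{1-N/\varrho}\|\gamma\|_{L^\varrho(B_r)}$ and $r^{2-N/\varrho}\|\vartheta\|_{L^\varrho(B_r)}$ are small) does not repair it: smallness of these norms neither legitimizes freezing $\gamma(x)|Du|$ into the right-hand side nor brings an operator carrying a gradient term within the scope of \cite{Eaihpan2019}.

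The boundary statement has a second gap. The result of \cite{Eaihpan2019} is interior, and there is no ready-made ``boundary version on half-balls'' for Isaacs operators under \eqref{HS}; moreover, after subtracting a $W^{2,\varrho}$ extension of $\psi$, the function $u-\psi$ does not solve a single Isaacs equation but only a pair of extremal inequalities, so your flattening argument is not a reduction to the same theorem. The paper handles the global case by running the identical approximation scheme on all of $\Omega$ with boundary data $\psi$, the global $W^{2,\varrho}$ bound for the approximating solutions coming from the global form of Nagumo's lemma on the $C^{1,1}$ domain rather than from any boundary estimate in \cite{Eaihpan2019}.
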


\begin{proof}
To fix the ideas we consider the first expression for $I$; the second case is identical. 
Let $\gamma_k,\vartheta_k\in L^\infty_+ (\Omega)$, $f_k\in C(\Omega)$, be such that $f_k\rightarrow f$, $\gamma_k\rightarrow \gamma$ and $\vartheta_k\rightarrow \vartheta$ in $L^\varrho(\Omega)$, with $\gamma_k\le \gamma$, $\vartheta_k\le \vartheta$, and $|f_k|\le |f|$. 
Up to a subsequence, one can choose $\ell_k =\ell -\varepsilon_k>0$ for some $\varepsilon_k\in (0,\ell)$, $\varepsilon_k\to 0$, such that \begin{center} $\ell_k<\lambda_1^+(\mathcal{L}^+_{k}(\vartheta_k),\Omega)$,\;\; where \, $\mathcal{L}_{k}^+[w]=\mathcal{M}^+(D^2w)+\gamma_k(x)|Dw|$ 
\end{center}
and $\ell$ as in \eqref{Isaacs Ls,t}. 
Note that $u\in C^{1,\alpha_0}_{\mathrm{loc}}(\Omega)$ by Proposition \ref{C1,alpha regularity estimates geral}.
Let $u_k \in C^{1,\alpha_1}(\overline{B}_{\varrho})$ be a viscosity solution of
	\begin{align}\label{app uk}
	I_k[u_k]= f_k(x) \;\textrm{ in } B_{\rho}, \quad
	u_k = u \;\textrm{ on }  \partial B_{\rho},
	\end{align}
given by Proposition \ref{solv scalar}, where $B_{\rho}$ is centered at $x_0\in \Omega$, and \begin{center}
	$I_k[w]:=\inf_{s\in \real}\sup_{t\in \real} L_{s,t}$, \; where $L_{s,t}[w]=\mathrm{tr}(A_{s,t}(x)D^2w)+\gamma_{s,t} (x)|Dw|+\ell_k \vartheta_{s,t} (x)w$,
\end{center}  with $A_{s,t}$ as in \eqref{Isaacs Ls,t}, but now $|\gamma_{s,t}|\le \gamma_k$ and $|\vartheta_{s,t}|\le \vartheta_k$ for all $s,t\in\real$. 

\smallskip

Note that \cite[Corollary 1.6]{Swiech} implies that $u_k$ is a viscosity solution of 
\begin{center}
	$\inf_{s\in \real}\sup_{t\in \real} \mathrm{tr}(A_{s,t}(x)D^2 u_k)=g_k(x)$, \; where $|g_k(x)|\le |f|+ \gamma(x)|Du_k|+ \vartheta(x)|u_k|\in L^\varrho(B_{\rho})$.
\end{center}
By \cite[Theorem 1.1]{Eaihpan2019} one has $u_k\in W^{2,\varrho}_{\mathrm{loc}}(B_{\rho})$, see also \cite{PMGisaacs}. Now by the second part of Proposition \ref{solv scalar}, we know that $u_k$ is the unique viscosity solution of \eqref{app uk}.

By the generalized Nagumo's lemma in \cite[Lemma 4.4]{regularidade} one gets 
	\begin{align} \label{ukW2,p}
	\|u_k\|_{W^{2,\varrho}(B_{r})} \leq C_k \, \{ \| u_k \|_{L^\infty(B_\rho)}  + \|f \|_{L^\varrho (B_\rho)}  \}, \;\textrm{ for all }r<\rho,
	\end{align}
where $C_k$ remains bounded, since $\gamma_k$ and $\vartheta_k$ are bounded in $L^\varrho(B_\rho)$.
Moreover, since \begin{center}
$\ell_k <\lambda_1^+(\mathcal{L}_{k}^+(\vartheta_k),\Omega)\le \lambda_1^+(\mathcal{L}_{k}^+(\vartheta_k),B_\rho)\le \lambda_1^-(\mathcal{L}_{k}^+(\vartheta_k),B_\rho)$, 
\end{center} 
then one may apply ABP-MP and ABP-mP in Theorem \ref{ABP-MP} to obtain
	$\|u_k\|_{L^\infty (B_\rho)}\leq \|u\|_{L^\infty (\partial B_\rho)} + C\, \|f\|_{L^\varrho(B_\rho)}$; again the constant does not depend on $k$.
This and \eqref{ukW2,p} yield
	$\|u_k\|_{W^{2,\varrho}(B_{r})} \leq C$. Hence there exists $v\in C^{1}(\overline{B}_r)$ such that $u_k\rightarrow v$ in $C^{1}(\overline{B}_r)$, for all $r<\rho$. Note that $\|u_k\|_{C^1(\overline{B}_\rho)}\le C$ by global $C^{1,\alpha_1}$ estimates in Proposition \ref{C1,alpha regularity estimates geral}.
Thus, Proposition \ref{stability} implies that $v$ is a viscosity solution of
\begin{align}\label{app v stability}
I[v]= f(x) \;\textrm{ in } B_{\rho}\, , \quad
v = u \;\textrm{ on }  \partial B_{\rho},
\end{align}
Now, since $W^{2,\varrho}(B_{r})$ is reflexive, there exists $\tilde{v}\in W^{2,\varrho}(B_r)$ such that $u_k$ converges weakly to $\tilde{v}$. By uniqueness of the limit, $\tilde{v}=v$ a.e.\ in $B_r$, and so $v\in W^{2,\varrho}(B_r)$, for all $r<\rho$.

Now, since $u$ is already a viscosity solution of \eqref{app v stability}, by using again the uniqueness assertion in Proposition \ref{solv scalar} one gets that $u\in W^{2,\varrho}_{\mathrm{loc}}(B_\rho)$.
Since the ball $B_\rho$ is arbitrary, and in each ball the solution is unique, a covering argument produces $u\in W^{2,\varrho}_{\mathrm{loc}}(\Omega)$.

In the case of global regularity the argument is simpler, by taking $\Omega$ instead of $B_\rho$ in \eqref{app uk}.
\end{proof}

\textbf{{Acknowledgments.}} We would like to thank Prof.\ A.\ {\'S}wi{\c{e}}ch for his valuable comments regarding our results on Isaac's operators, and Prof.\ I.\ Biridelli for kindly bringing reference \cite{BMS99} to our attention.

This project started during a visit of the first and second authors to the University of Lisbon. They appreciate the invitation, hospitality, and support provided by the Faculty of Sciences and FCT/Portugal.

\end{document}